\documentclass[reqno]{amsart}
\usepackage{graphicx}
\usepackage{amscd}
\usepackage{amsmath}
\usepackage{amsfonts}
\usepackage{amssymb}
\usepackage{cite}
\usepackage{xcolor}
\usepackage{tabularx,multirow}
\usepackage[caption=false]{subfig}
\oddsidemargin=-.0cm \evensidemargin=-.0cm \textwidth=16cm
\textheight=22cm \topmargin=0cm
\def\R {\mathbb{R}}
\def\C {\mathcal{C}}
\def\S {\mathcal{S}}
\def\P{\mathcal{P}}
\def\L{\mathcal{L}}
\def\a{\mathfrak{a}}
\def\b{\mathfrak{b}}
\def\s{\mathfrak{s}}
\def\z{\mathfrak{z}}

\newtheorem{proposition}{Proposition}[section]
\newtheorem{theorem}[proposition]{Theorem}
\newtheorem{corollary}{Corollary}[section]
\newtheorem{lemma}{Lemma}[section]
\theoremstyle{definition}
\newtheorem{definition}{Definition}[section]
\newtheorem{remark}{Remark}[section]
\numberwithin{equation}{section}

\begin{document}

\title[Contact Muskat Problem]
{On a local solvability of the contact Muskat problem}

\author[ N. Vasylyeva]
{ Nataliya Vasylyeva}

\address{Institute of Applied Mathematics and Mechanics of NAS of Ukraine
\newline\indent
G.Batyuka st.\ 19, 84100 Sloviansk, Ukraine; and
\newline\indent
Dipartimento di Matematica, Politecnico di Milano
\newline\indent
Piazza Leonardo da Vinci 32, 20133 Milano, Italy}
\email[N.Vasylyeva]{nataliy\underline{\ }v@yahoo.com}

\subjclass[2000]{Primary 35R35, 35J25; Secondary 35B65}
\keywords{elliptic equations, weighted H\"{o}lder spaces, nonsmooth
domains, Muskat problem, waiting time}

\begin{abstract}
In the paper, we discuss the two-dimensional contact Muskat problem
with zero surface tension on a free boundary. The initial shape of
the unknown interface is a smooth simple curve which forms acute
corners $\delta_{0}$ and $\delta_{1}$ with  fixed boundaries. Under
suitable assumptions on the given data, the one-to-one local
classical solvability of this problem is proved. We also describe
the sufficient conditions on the data in the model which provide
the existence of the "waiting time" phenomenon.
\end{abstract}

\maketitle

\section{Introduction}
\label{s1}

\noindent The Muskat problem proposed by Morris Muskat \cite{Mu} in
1934 is a classical model describing the viscous displacement in a
two-phase fluid system occupied a two-dimensional porous medium. As
noted in \cite{Mu,MW}, this problem describes  the encroachment of
water into oil sand and deals with the secondary phase of the oil
recovery process, where water injection is utilized to enlarge the
pressure in the oil reservoir and to move the oil to the production
well. The fluids' motion is subjected by the experimental Darcy law.
The boundary between the two liquid regions is an unknown needing to
be searched. This moving (unknown) interface is usually called a
free boundary and, accordingly, the Muskat problem is a free
boundary problem.

For the last 70 years, thanks to various applications in
hydrodynamics, chemistry and oil industry, the Muskat model and the
associated problems have attracted a wide scientific interests among
the mathematical, engineering and chemical community. As mentioned
in \cite{GGHP}, there is an enormous literature on these problems in
various geometries and physical settings, where analytical and
numerical investigations are carried out via various techniques and
approaches.

In this paper, we focus on  the two-dimensional  contact Muskat
problem in the case of the  zero surface tension (ZST) of the moving
interface. Let $\Omega\subset\R^{2}$ be a bounded domain with a
smooth boundary
 $\partial \Omega\in\C^{2+\beta},$ $\beta\in(0,1).$ For each $\tau\in[0,T],$ a simple
  curve $\Gamma (\tau)\subseteq\bar{\Omega}$ splits the domain $\Omega$ onto two sub-domains $\Omega_{1}(\tau)$ and $\Omega_{2}(\tau)$ such that
\[
\Omega=\Omega_{1}(\tau)\cup\Omega_{2}(\tau)\cup\Gamma(\tau),\quad
\Omega_{1}(\tau)\cap\Omega_{2}(\tau) =\emptyset,\quad
\partial\Omega_{1}(\tau)\cap\partial\Omega_{2}(\tau)=\Gamma(\tau)
\]
and, besides, $\Gamma(\tau)$ intersects with $\partial\Omega$ at two
corner points $\mathbf{A}_{0}$ and $\mathbf{A}_{1}$, i.e.
\[
\partial\Omega\cap\Gamma(\tau)=\{\mathbf{A}_{0},\mathbf{A}_{1}\}
\]
(see Figure \ref{fig:1} for geometric settings).
\begin{figure}[t]
\centering \subfloat[\label{fig:1a}] {
\includegraphics[width=0.2\linewidth]{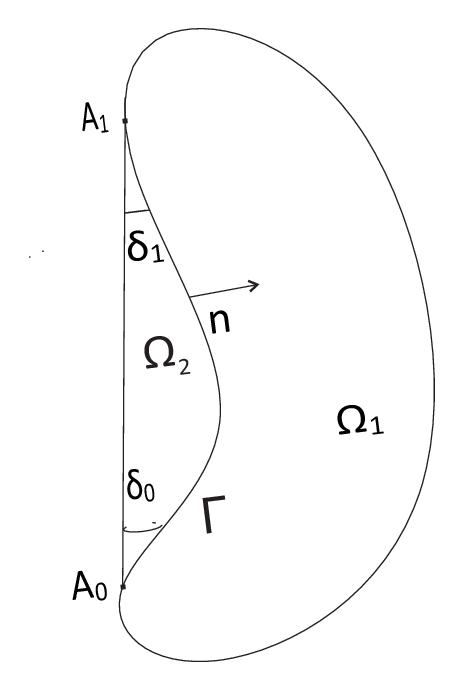}
} \subfloat[\label{fig:1b}] {
\includegraphics[width=0.2\linewidth]{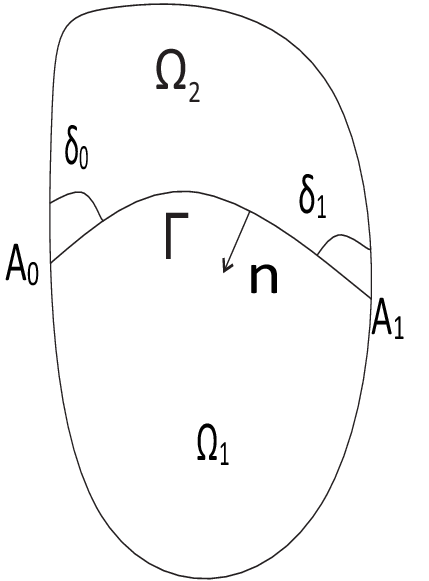}
} \caption{Typical configurations of the initial domains
$\Omega_{i}$ and $\Gamma$.} \label{fig:1}
\end{figure}

For fixed positive quantities $T,$ $k_{1}$ and $k_{2},$ $k_{1}\neq
k_{2}$,  we look for unknown fluid pressures
$\mathrm{P}_{1}=\mathrm{P}_{1}(y_{1},y_{2},\tau),$
$\mathrm{P}_{2}=\mathrm{P}_{2}(y_{1},y_{2},\tau)$ in the
corresponding liquid domains $\Omega_{1}(\tau)$ and
$\Omega_{2}(\tau)$, and the unknown interface $\Gamma(\tau)$
satisfying the system
\begin{equation}\label{1.1}
\begin{cases}
\Delta_{y}\mathrm{P}_{i}=0\quad\text{in}\quad\Omega_{i}(\tau),\quad \tau\in(0,T), \quad i=1,2,\\
\mathrm{P}_{1}-\mathrm{P}_{2}=0\quad\text{on}\quad \Gamma(\tau),\quad\tau\in[0,T],\\
V_{n}=-k_{1}\frac{\partial \mathrm{P}_{1}}{\partial n_{\tau}}=-k_{2}\frac{\partial \mathrm{P}_{2}}{\partial n_{\tau}} \quad\text{on}\quad \Gamma(\tau),\quad\tau\in[0,T],\\
\mathrm{P}_{1}=\mathrm{p}_{1}(y)\quad\text{on}\quad\partial\Omega_{1}(\tau)\backslash\Gamma(\tau),\quad\tau\in[0,T],\\
\mathrm{P}_{2}=\mathrm{p}_{2}(y)\quad\text{on}\quad\partial\Omega_{2}(\tau)\backslash\Gamma(\tau),\quad\tau\in[0,T],\\
\Gamma(0)=\Gamma\quad\text{is given}.
\end{cases}
\end{equation}
Here $\Delta_{y}=\frac{\partial^{2}}{\partial
y_{1}^{2}}+\frac{\partial^{2}}{\partial y_{2}^{2}}$, $n_{\tau}$
denotes the outward normal to $\Gamma(\tau)$ directed in
$\Omega_{1}(\tau)$, the functions $\mathrm{p}_{1}$ and
$\mathrm{p}_{2}$ are prescribed. Finally, the symbol $V_{n}$ stands
for the velocity of the free boundary $\Gamma(\tau)$ in the
direction of $n_{\tau}$.

It is apparent that the mathematical statement of the contactless
Muskat problem  is the similar to \eqref{1.1} where $\Gamma(\tau)$
is assumed to be a simple smooth or nonsmooth  closed curve
separating $\Omega$ into two corresponding subdomains. It is worth
noting that the accounting  surface tension of the interface
$\Gamma(\tau)$ in the $n$-dimensional Muskat model is achieved via
introducing the term $\sigma\mathrm{H}_{\Gamma(\tau)}$ in the
right-hand side of the first condition on $\Gamma(\tau)$ in
\eqref{1.1}, that is
\[
\mathrm{P}_{1}-\mathrm{P}_{2}=\sigma\mathrm{H}_{\Gamma(\tau)}\quad\text{on}\quad\Gamma(\tau),\,
\tau\in[0,T],
\]
where $\sigma$ is the coefficient of the surface tension, while
$\mathrm{H}_{\Gamma(\tau)}$ denotes the $(n-1)$-fold mean curvature.

The study of the (contactless) Muskat problem has a long history. We
do not provide, here, a complete survey on the outcomes related to
this and associated problems, but rather present some of them. The
first result on the classical solvability on a $2$-dim. domain in
the case of the nonzero surface tension (NZST) was established in
\cite{HTY}. The stability of a circular steady-state in the
unbounded domain is discussed in \cite{FT}. In particular, the
authors state that the equilibrium is not asymptotically stable. The
one-valued classical solvability of the Muskat problem in a
horizontally periodic geometry is proved in \cite{EEM} in the case
of presence of the surface tension and of gravity. Besides, the
authors obtain the exponential stability of certain flat equilibria
and, utilizing the bifurcation theory, they find finger shaped
(unstable) steady-states. We refer \cite{EEM,EMM,EMW}, where
employing the abstract parabolic theory, the authors refine and
extend the aforementioned results. In \cite{MM}, the well-posedness
of the Muskat problem with positive $\sigma$ including the criterion
of the global solvability is discussed in the subcritical Sobolev
spaces. In fine, we quote \cite{BV1}, where the existence of the
waiting time in the two-dimensional Muskat problem with NZST stated
in domains with singular boundaries is established. We recall that
the waiting time phenomenon in the Muskat model and the associated
problems means the existence of a positive time $T^{*}$, such that
the geometry of the free boundary at singularities preserves and/or
these singular points do not move for each $t\in[0,T^{*}]$.

Coming to the Muskat problem in the ZST case, it is well-known, that
this model can be ill-posed. Namely, this option occurs if the
Rayleigh-Taylor condition is not fulfilled. Other words, it is
happen if either the more viscous liquid displaces the less viscous
fluid, or the heavier liquid is located above the lighter one (see
for details \cite{Am}). In \cite{Y}, the first local classical
solvability was established by Newtonian iteration technique. We
refer works \cite{BCG,CCFG,CCGS,CGZ,CGS,CGO,EMM2,Mat,SCH,V5,Y1},
where various aspects associated with the equivalence of statement,
local and global well-posedness, regularity, breakdown of
smoothness, finite time turning and stability shifting are discussed
with employing energy approach, methods of complex analysis, the
Cauchy-Kowalewski theorem, fixed-point arguments, the abstract
well-posedness result based  on continuous maximal regularity. The
evolution of the singularities being located on the initial shape of
the moving boundaries are described in \cite{APW,BV2,GGHP,GGNP}. For
further acquaintance with results, we refer readers to the papers
\cite{CCFG,Mat,Mat2} and monographs \cite{PS}, where a brief
overview (including a historic survey) of the Muskat problem is
done.

As for the study of the contact free boundary problems to Laplace
equations similar to \eqref{1.1}, we mention \cite{BF,MV,V3}, where
the contact Hele-Shaw free boundary problems (sometimes calling as
the one-phase Muskat problem) with and without surface tension are
discussed. Namely, in these works, the unknown interface and the
fixed boundaries form either acute angles \cite{BF,V3} or the right
angle \cite{MV}. Besides, the authors proved the local classical
solvability of these problem and establish the existence of the
waiting time under certain assumptions on the given data in these
models.

To the author's best knowledge, there are no works in the literature
addressing the study of the evolution of the moving interface in the
contact Muskat problem \eqref{1.1}. The aim of this article is to
fill this gap, providing a (locally in time) well-posedness result
along with the regularity of solution in the weighted H\"{o}lder
spaces. Moreover, under certain assumptions on the initial geometry
of the domains and the given functions and parameters in the model,
we analyze   the existence of the waiting time phenomenon in
\eqref{1.1}.

In the course of this investigation, we exploit the following
technique including the $4^{th}$ main steps. In the first stage,
introducing the special type vector field and utilizing the
Hanzawa-type transformation (accounting the nonsmooth boundaries of
$\Omega_{i}(0)$), we reduce problem \eqref{1.1} to a nonlinear
problem defined in fixed domains $\Omega_{i}(0)\times(0,T),$
$i=1,2$. The we linearize this nonlinear problem on the initial data
$\mathrm{P}_{i}(y,0)$ and on the special constructed function
$\rho(y,t)$ related with the initial shape of the interface
$\Gamma.$ After that, we solve the nonclassical linear interface
problem to Poisson equations stated in $\Omega_{i}(0)\times(0,T)$
having nonsmooth boundaries $\partial\Omega_{i}(0)\times(0,T)$. The
nonclassical character of this problem is related with a dynamic
boundary condition as the one of the transmission conditions on
$\Gamma\times[0,T]$. To prove the one-valued classical  solvability
of this linear problem in the weighted H\"{o}lder classes (which is
more natural in the case of singular boundaries), we collect the
continuation approach with studying the corresponding model problems
providing the \textit{a priori} estimates. In fine, coming to the
nonlinear problem and utilizing contraction mapping principle, we
establish the local classical one-to-one solvability of \eqref{1.1}.
Further, appealing to the properties of the weighted H\"{o}lder
classes and employing the classical solvability of the contact
Muskat problem, we arrive at the existence of the waiting time
phenomenon under  assumptions on the given data providing the local
well-posedness. The later, in particular, means that
$$\delta_{0},\delta_{1}\in(0,\pi/4), \quad 0<k_{2}<k_{1}\quad \frac{\partial P_{i}(y,0)}{\partial n_{\tau}}|_{\Gamma}<0,\quad i=1,2.$$

\subsection*{Outline of the Paper} The paper is organized as
follows. In Section \ref{s2}, we introduce the functional setting
and some notations. Here, we also describe the key properties of the
weighted H\"{o}lder classes and the  special  transformations which
will be main tools in the analysis of the linear problems in
Sections \ref{s4}-\ref{s5}. The principal results of these paper are
stated in Theorems \ref{t3.1} and \ref{t6.2} and Corollary
\ref{3.1}, where theorems concern with the solvability of
\eqref{1.1}, while Corollary \ref{3.1} deals with the existence of
the waiting time phenomenon. In Section \ref{s3}, we reformulate
\eqref{1.1} in the form of the nonlinear problem in the fixed
domains and, then, making main assumptions, we establish the local
one-valued classical solvability of \eqref{1.1} (Theorem \ref{3.1})
in the case of $\delta_{0},\delta_{1}\in(0,\pi/4)$  being the
rational part of $\pi$. The proof of Theorem \ref{t3.1} is carried
out in Sections \ref{s4}-\ref{s5}. In particular, Sections \ref{s4}
is devoted to the analysis of the nonclassical transmission problems
with the dynamic boundary conditions in  plane corners. Actually,
the principal results of this section stated in Theorems
\ref{t4.1}-\ref{t4.3} have the independent significance in the
theory of transmission boundary value problems in nonsmooth domains.
 The local solvability of the corresponding
nonlinear problem is discussed in Section \ref{s5}. Theorem
\ref{t6.1} concerning with the  solvability of \eqref{1.1} in the
case of arbitrary $\delta_{i}\in(0,\pi/4)$, $i=1,2,$ is stated and
proved in Section \ref{s6}.


\section{Functional spaces and notations}
\label{s2}

\noindent
Throughout this work, the symbol $C$ will denote a generic positive constant, depending only on the structural quantities of the model.
 We will carry out our analysis in the framework of the weighted H\"{o}lder spaces (which is first  introduced in \cite{BV4}).

To introduce these classes, we first consider a domain $D\subset
\R^{2}$ with a boundary $\partial D$ having a finite number
 of the corner points
$\mathbf{A}_{0},\mathbf{A}_{1},...,\mathbf{A}_{\mathrm{q}}$.
Besides, for each fixed $T>0$, we denote $D_{T}=D\times(0,T)$,
$\partial D_{T}=\partial D\times [0,T]$.

\noindent Till the end of the paper,
\[
\alpha,\beta\in(0,1),\quad s\in\R
\]
are arbitrarily but fixed. For each points $y,\bar{y}\in\bar{D},$ we
put
\begin{align*}
r_{i}(y)&=|\mathbf{A}_{i}-y|,\quad r_{i}(\bar{y})=|\mathbf{A}_{i}-\bar{y}|,\quad i=0,1,...,\mathrm{q},\\
r(y)&=\min\{r_{0}(y),r_{1}(y),...,r_{\mathrm{q}}(y)\},\quad r(\bar{y})=\min\{r_{0}(\bar{y}),r_{1}(\bar{y}),...,r_{\mathrm{q}}(\bar{y})\},\\
r(y,\bar{y})&=\min\{r(y),r(\bar{y})\}.
\end{align*}

Denoting
\begin{align*}
\langle v\rangle_{s,D}^{(\beta)}&=\sup\Big\{r^{-s+\beta}(y,\bar{y})\frac{|v(y)-v(\bar{y})|}{|y-\bar{y}|^{\beta}}:\quad y,\bar{y}\in\bar{D},\quad 0<|y-\bar{y}|<r(y,\bar{y})/2\Big\},\\
\langle v\rangle_{y,s,D_T}^{(\beta)}&=\sup\Big\{r^{-s+\beta}(y,\bar{y})\frac{|v(y,t)-v(\bar{y},t)|}{|y-\bar{y}|^{\beta}}:\quad (y,t),(\bar{y},t)\in\bar{D}_{T},\quad 0<|y-\bar{y}|<r(y,\bar{y})/2\Big\},\\
\langle v\rangle_{t,s,D_T}^{(\alpha)}&=\sup\Big\{r^{-s}(y)\frac{|v(y,t_{1})-v(y,t_{2})|}{|t_{1}-t_{2}|^{\alpha}}:\quad (y,t_{1}),(y,t_{2})\in\bar{D}_{T},\quad t_{1}\neq t_{2}\Big\},\\
[v]_{s,D_T}^{(\beta,\alpha)}&=\sup\Big\{r^{-s+\beta}(y,\bar{y})\frac{|v(y,t_{1})-v(\bar{y},t_{1})-v(y,t_{2})+v(\bar{y},t_{2})|}{|y-\bar{y}|^{\beta}|t_{1}-t_{2}|^{\alpha}}:\quad (y,\bar{y})\in\bar{D},\quad t_{1},t_{2}\in[0,T],\\
&\qquad\qquad t_{1}\neq t_{2},\quad 0<|y-\bar{y}|<r(y,\bar{y})/2\Big\},
\end{align*}
we define spaces $E_{s}^{l+\beta}(\bar{D})$ and
$E_{s}^{l+\beta,\alpha,\beta}(\bar{D}_{T})$ for integer nonnegative
$l$.
\begin{definition}\label{d2.1}
Functions $v=v(y)$ and $u=u(y,t)$ belong to  classes $E_{s}^{l+\beta}(\bar{D})$ and $E_{s}^{l+\beta,\alpha,\beta}(\bar{D}_{T})$ if the norms here below are finite
\begin{align*}
\|v\|_{E_{s}^{l+\beta}(\bar{D})}&=\sum_{|\iota|=0}^{l}\underset{\bar{D}}{\sup}\,
r^{-s+|\iota|}(y)|\mathcal{D}_{y}^{\,\iota}v(y)|+
\sum_{|\iota|=l}\langle \mathcal{D}_{y}^{\,\iota}v\rangle_{s-l,D}^{(\beta)},\quad |\iota|=\iota_{1}+\iota_{2},\\
\|u\|_{E_{s}^{l+\beta,\alpha,\beta}(\bar{D}_T)}&=\sum_{|\iota|=0}^{l}\Big\{\underset{\bar{D}_T}{\sup}\,
r^{-s+|\iota|}(y)|\mathcal{D}_{y}^{\,\iota}u(y,t)|+ \langle
\mathcal{D}_{y}^{\,\iota}u\rangle_{y,s-|\iota|,D_T}^{(\beta)}
+\langle
\mathcal{D}_{y}^{\,\iota}u\rangle_{t,s-|\iota|,D_T}^{(\alpha)}
+[\mathcal{D}_{y}^{\,\iota}u]^{(\beta,\alpha)}_{s-|\iota|,D_{T}}
\Big\}.
\end{align*}
\end{definition}
\noindent The spaces $E_{s}^{l+\beta}(\partial D)$ and
$E_{s}^{l+\beta,\alpha,\beta}(\partial D_{T})$ are defined in a
similar way.

If the domain $D$ has a smooth boundary (in the case of the absence
of any corner points), then the classes $E_{s}^{l+\beta}(\bar{D})$
and $E_{s}^{l+\beta,\alpha,\beta}(\bar{D}_{T})$ boil down to the
usual H\"{o}lder spaces $\C^{l+\beta}(\bar{D})$ and
$\C^{l+\beta,\alpha,\beta}(\bar{D}_{T})$.
\begin{definition}\label{d2.2}
For any real number $\hat{s}$ and integer $l\geq 1$, we define the
Banach space
$\mathcal{E}_{s,\hat{s}}^{l+\beta,\alpha,\beta}(\bar{D}_{T})$
consisting of all functions $u:$ $r^{\hat{s}}(y)u\in
E_{s}^{l+\beta,\alpha,\beta}(\bar{D}_{T})$, $\frac{\partial
u}{\partial t}\in E_{s-1}^{l-1+\beta,\alpha,\beta}(\bar{D}_{T})$,
with the finite norm
\[
\|u\|_{\mathcal{E}_{s,\hat{s}}^{l+\beta,\alpha}(\bar{D}_{T})}=\|r^{\hat{s}}u\|_{E_{s}^{l+\beta,\alpha,\beta}(\bar{D}_{T})}+
\|\partial u/\partial t\|_{E_{s-1}^{l-1+\beta,\alpha,\beta}(\bar{D}_{T})}.
\]
\end{definition}
\noindent The space
$\mathcal{E}_{s,\hat{s}}^{l+\beta,\alpha,\beta}(\partial D_{T})$ is
introduced with the same manner.

Finally, in the spaces $E^{l+\beta,\alpha,\beta}_{s}(\bar{D}_{T})$ and
$\mathcal{E} ^{l+\beta,\alpha,\beta}_{s,\hat{s}}(\bar{D}_{T})$  we secrete the subspaces
 \begin{align*}
\underset{0}{E}\, ^{l+\beta,\alpha,\beta}_{s}(\bar{D}_{T})&=\{u\in E^{l+\beta,\alpha,\beta}_{s}(\bar{D}_{T}):\, \mathcal{D}_{y}^{\iota}u(y,0)=0,\,  |\iota|=0,1,...,l\},\\
\underset{0}{\mathcal{E}}\, ^{l+\beta,\alpha,\beta}_{s,\hat{s}}(\bar{D}_{T})&=
\Big\{u\in \mathcal{E}^{l+\beta,\alpha,\beta}_{s,\hat{s}}(\bar{D}_{T}):\, \mathcal{D}_{y}^{\iota}u(y,0)=0,\, |\iota|=0,1,...,l,\, \mathcal{D}_{y}^{j}\frac{\partial u}{\partial t}(y,0)=0,\, |j|=0,1,...,l-1\Big\},
\end{align*}

At this point, we recall some useful properties of these spaces that
will be exploited in the  further analysis. In particular, the
straightforward calculations provide the following claim.
\begin{corollary}\label{c2.1}
Let $D$ be a bounded domain. For any  positive  $s_{1}$, we assume that $v\in E_{s_{1}}^{l+\beta}(\bar{D})$ and $u\in E_{s_{1}}^{l+\beta,\alpha,\beta}(\bar{D}_{T}).$  Then for all $s\in[0,s_{1})$, there hold
\[
\|v\|_{E_{s}^{l+\beta}(\bar{D})}\leq C\|v\|_{E_{s_1}^{l+\beta}(\bar{D})}\quad \text{and}\quad
\|u\|_{E_{s}^{l+\beta,\alpha,\beta}(\bar{D}_{T})}\leq C\|u\|_{E_{s_1}^{l+\beta,\alpha,\beta}(\bar{D}_T)}
\]
with the positive quantity $C$ depending only on $|D|^{s_{1}-s}$, where $|D|$ is the Lebesgue measure of $D$.
\end{corollary}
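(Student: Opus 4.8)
The plan is to reduce the statement to the single elementary observation that the weight $r$ is \emph{bounded above} on $\bar D$. Indeed, each corner point $\mathbf{A}_{i}$ belongs to $\bar D$, so for every $y\in\bar D$ we have $r(y)=\min_{i}|\mathbf{A}_{i}-y|\le \mathrm{diam}\,\bar D=:d_{D}$, and likewise $r(y,\bar y)=\min\{r(y),r(\bar y)\}\le d_{D}$; the quantity $d_{D}$ depends only on $D$ (it is what the statement records, somewhat loosely, through $|D|^{s_{1}-s}$). Fix $s\in[0,s_{1})$ and set $\gamma:=s_{1}-s>0$. The key step is to split every weight appearing in $\|\cdot\|_{E_{s}^{l+\beta}}$ into the corresponding weight for $\|\cdot\|_{E_{s_{1}}^{l+\beta}}$ times a nonnegative power of $r$:
\[
r^{-s+|\iota|}(y)=r^{\gamma}(y)\,r^{-s_{1}+|\iota|}(y),\qquad r^{-(s-l)+\beta}(y,\bar y)=r^{\gamma}(y,\bar y)\,r^{-(s_{1}-l)+\beta}(y,\bar y),
\]
valid for any multi-index $\iota$. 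Since $\gamma>0$, the extra factors satisfy $r^{\gamma}(y)\le d_{D}^{\gamma}$ and $r^{\gamma}(y,\bar y)\le d_{D}^{\gamma}$.

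Plugging these identities into the definition, each term of $\|v\|_{E_{s}^{l+\beta}(\bar D)}$ is dominated by $d_{D}^{\gamma}$ times the corresponding term of $\|v\|_{E_{s_{1}}^{l+\beta}(\bar D)}$: for the derivative sup-terms, $\sup_{\bar D}r^{-s+|\iota|}|\mathcal{D}_{y}^{\iota}v|\le d_{D}^{\gamma}\sup_{\bar D}r^{-s_{1}+|\iota|}|\mathcal{D}_{y}^{\iota}v|$, and for the top-order seminorm, $\langle \mathcal{D}_{y}^{\iota}v\rangle_{s-l,D}^{(\beta)}\le d_{D}^{\gamma}\langle \mathcal{D}_{y}^{\iota}v\rangle_{s_{1}-l,D}^{(\beta)}$ — here one uses that the constraint $0<|y-\bar y|<r(y,\bar y)/2$ defining the supremum is identical for the two seminorms, so the same difference quotients are being compared. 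Summing over $|\iota|=0,\dots,l$ gives $\|v\|_{E_{s}^{l+\beta}(\bar D)}\le \max\{1,d_{D}^{\gamma}\}\,\|v\|_{E_{s_{1}}^{l+\beta}(\bar D)}$, i.e. the claimed inequality with $C=C\big(d_{D}^{\gamma}\big)=C\big(|D|^{s_{1}-s}\big)$. For the space–time norm the argument is verbatim: every one of the four families of seminorms in Definition \ref{d2.1} — namely $\langle\cdot\rangle_{y,s-|\iota|,D_{T}}^{(\beta)}$, $\langle\cdot\rangle_{t,s-|\iota|,D_{T}}^{(\alpha)}$, $[\cdot]_{s-|\iota|,D_{T}}^{(\beta,\alpha)}$, together with the weighted sup — carries a prefactor of the form $r^{-(s-|\iota|)}(y)$ or $r^{-(s-|\iota|)+\beta}(y,\bar y)$, i.e. precisely the weight handled above with $l$ replaced by $|\iota|$. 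Applying the same splitting and the termwise bound $r^{\gamma}\le d_{D}^{\gamma}$ yields $\|u\|_{E_{s}^{l+\beta,\alpha,\beta}(\bar D_{T})}\le \max\{1,d_{D}^{\gamma}\}\,\|u\|_{E_{s_{1}}^{l+\beta,\alpha,\beta}(\bar D_{T})}$.

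As the author already signals, the proof is entirely routine and presents no real obstacle; the only point worth a word is that the \emph{positivity} $\gamma=s_{1}-s>0$ is essential, since it is what makes a positive power of the weight appear — one that can be estimated \emph{from above} by $d_{D}^{\gamma}$ — rather than a negative power, which would blow up near the corners. The restriction $s\ge 0$ plays no special role beyond ensuring the target space is one of those under consideration.
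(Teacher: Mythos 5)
Your proof is correct and is precisely the routine calculation the paper gestures at (it offers no proof, only the remark that ``straightforward calculations provide the following claim''): factor each weight as $r^{-s+|\iota|}=r^{\gamma}\,r^{-s_{1}+|\iota|}$ with $\gamma=s_{1}-s>0$, bound the nonnegative power $r^{\gamma}$ by a constant depending only on the size of $D$, and apply the same splitting to every sup- and H\"older-seminorm, noting that the constraint $0<|y-\bar y|<r(y,\bar y)/2$ is unchanged. The one small point you are right to flag is that the natural constant is $(\operatorname{diam}\bar D)^{s_{1}-s}$ rather than $|D|^{s_{1}-s}$ as written in the corollary; these are not comparable for general bounded domains (a thin rectangle has small area but large diameter), so your formulation is in fact the more precise one, and the dependence the paper records should be read as shorthand for ``a geometric constant of $D$ raised to the power $s_{1}-s$.''
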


At last, we give equivalent definitions of the weighted H\"{o}lder spaces in the case of  $D$ being a plane corner. To this end, for some fixed $\delta\in[0,\pi/2),$ we define the plane corners:
\begin{align}\label{2.1}\notag
G_{1}&=\{(y_{1},y_{2}):\quad y_{1}>0,\quad y_{2}<y_{1}\tan\delta\},\quad G_{1,T}=G_{1}\times (0,T),\\
G_{2}&=\{(y_{1},y_{2}):\quad y_{1}>0,\quad y_{2}>
y_{1}\tan\delta\},\quad G_{2,T}=G_{2}\times (0,T),\\\notag
g&=\{(y_{1},y_{2}):\quad y_{1}\geq 0,\quad y_{2}=
y_{1}\tan\delta\},\quad g_{T}=g\times[0,T],
\end{align}
and introduce new independent variables
\begin{equation}\label{2.2}
x_{1}=\ln(y_{1}^{2}+y_{2}^{2})^{1/2}\quad\text{and}\quad x_{2}=\arctan(y_{2}/y_{1}).
\end{equation}

It is apparent that, this mapping  transforms the plane corners
$G_{i},$ $i=1,2,$ to the strips $B_{i}:$
\begin{align*}
B_{1}&=\{(x_{1},x_{2}):\quad x_{1}\in\R,\quad -\pi/2<x_{2}< \delta\},\quad B_{1,T}=B_{1}\times (0,T),\\
B_{2}&=\{(x_{1},x_{2}):\quad x_{1}\in\R,\quad
\delta<x_{2}<\pi/2\},\quad B_{2,T}=B_{2}\times (0,T).
\end{align*}
Besides, the image of $g$ after transformation \eqref{2.2} is
\[
b=\{(x_{1},x_{2}):\quad x_{1}\in\R,\quad x_{2}=\delta\}.
\]
Then, direct calculations yield  the following  assertion.
\begin{corollary}\label{c2.2}
For $i=1,2,$ there are  equivalences:
\begin{align*}
v_{i}&\in E_{s}^{l+\beta}(\bar{G}_{i})\quad\text{iff}\quad v_{i}(y(x))e^{-sx_{1}}\in\C^{l+\beta}(\bar{B}_{i}),\\
u_{i}&\in
E_{s}^{l+\beta,\alpha,\beta}(\bar{G}_{i,T})\quad\text{iff}\quad
u_{i}(y(x),t)e^{-sx_{1}}\in\C^{l+\beta,\alpha,\beta}(\bar{B}_{i,T}).
\end{align*}
\end{corollary}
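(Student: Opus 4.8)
The plan is to prove Corollary \ref{c2.2} by a direct change-of-variables computation, exploiting the fact that the logarithmic-polar substitution \eqref{2.2} converts the scaling behaviour built into the weights $r^{-s+|\iota|}$ into an ordinary exponential weight $e^{-sx_{1}}$, under which the weighted norms become plain H\"older norms. First I would record the basic Jacobian data: if $\rho=(y_{1}^{2}+y_{2}^{2})^{1/2}$ then $x_{1}=\ln\rho$, so $\rho=e^{x_{1}}$, and since the only corner point of $G_{i}$ is the origin we have $r(y)=\rho=e^{x_{1}}$ on all of $\bar G_{i}$. A short computation gives $\partial x_{1}/\partial y_{k}=y_{k}/\rho^{2}$ and $\partial x_{2}/\partial y_{k}=\mp y_{3-k}/\rho^{2}$, so that each first-order operator $\partial/\partial y_{k}$ is $\rho^{-1}$ times a combination of $\partial/\partial x_{1},\partial/\partial x_{2}$ with bounded (indeed real-analytic, $2\pi$-periodic-in-angle) coefficients; inductively, $\mathcal{D}_{y}^{\iota}$ equals $\rho^{-|\iota|}$ times a differential operator of order $|\iota|$ in $x$ with coefficients bounded together with all their derivatives on $\bar B_{i}$. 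This is the one genuine computation and it is elementary.

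Next I would translate each of the four seminorms in Definition \ref{d2.1} separately. For the sup-terms: writing $w_{i}(x)=v_{i}(y(x))e^{-sx_{1}}$, the identity $r^{-s+|\iota|}(y)\mathcal{D}_{y}^{\iota}v_{i}(y)=e^{(-s+|\iota|)x_{1}}\cdot e^{-|\iota|x_{1}}(\text{order-}|\iota|\text{ operator in }x)v_{i} = e^{sx_{1}}\!\cdot e^{-sx_{1}}(\cdots)$ shows, after distributing the factor $e^{-sx_{1}}$ onto $v_{i}$ and using the Leibniz rule together with boundedness of the coefficients, that $\sup_{\bar G_{i}}r^{-s+|\iota|}|\mathcal{D}_{y}^{\iota}v_{i}|$ is comparable to $\sum_{|\iota'|\le|\iota|}\sup_{\bar B_{i}}|\mathcal{D}_{x}^{\iota'}w_{i}|$. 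For the H\"older seminorm $\langle \mathcal{D}_{y}^{\iota}v_{i}\rangle^{(\beta)}_{s-l,G_{i}}$ at top order $|\iota|=l$, the key point is the restriction $0<|y-\bar y|<r(y,\bar y)/2$, which confines the increment to a region where $\rho(y)\simeq\rho(\bar y)\simeq e^{x_{1}}\simeq e^{\bar x_{1}}$ and where the map $y\mapsto x$ is bi-Lipschitz after rescaling by $\rho$, so that $|y-\bar y|\simeq e^{x_{1}}|x-\bar x|$; substituting this and the weight identity converts the weighted difference quotient into the ordinary $\beta$-H\"older quotient of $w_{i}$ on $\bar B_{i}$, up to lower-order sup terms absorbed as above. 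The time-dependent case in $E_{s}^{l+\beta,\alpha,\beta}$ is identical for the two $x$-seminorms, while the $t$-H\"older seminorm $\langle\cdot\rangle^{(\alpha)}_{t,s-|\iota|}$ and the mixed seminorm $[\cdot]^{(\beta,\alpha)}_{s-|\iota|}$ carry the same spatial weight $r^{-s+|\iota|}$ (respectively $r^{-s+|\iota|+\beta}$ on the spatial increment) and transform by exactly the same substitution, the temporal variable being untouched by \eqref{2.2}.

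The main obstacle — really the only delicate point — is handling the localization condition $|y-\bar y|<r(y,\bar y)/2$ when one passes to $x$-coordinates: one must check that this precisely matches (up to fixed constants) the unrestricted H\"older quotient on the strip $B_{i}$, i.e. that far-apart points in $x$ correspond to the regime $|y-\bar y|\gtrsim r(y,\bar y)$ where the weighted difference is already controlled by the sup-norm part, and that near points transform with comparable distances. I would dispatch this by the standard dichotomy: if $|x-\bar x|\le 1/4$ say, then $\rho(y),\rho(\bar y)$ are comparable and $|y-\bar y|\simeq e^{x_{1}}|x-\bar x|$, giving the two-sided bound directly; if $|x-\bar x|>1/4$ one splits $v_{i}(y)-v_{i}(\bar y)$ through any fixed intermediate angle/radius and estimates each piece by the sup of the gradient times the already-controlled sup-norm, which is dominated by $\|w_{i}\|_{\C^{l+\beta}(\bar B_{i})}$. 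Assembling the four seminorm comparisons gives the norm equivalence in both directions, which is the assertion; I would remark only briefly that the implied constants depend on $l,\beta,\alpha,s,\delta$ and not on the functions, and that the argument is uniform in $T$, so the time-dependent equivalence follows with the same constants.
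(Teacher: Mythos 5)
Your proof is correct and is essentially the ``direct calculations'' the paper invokes without spelling out: the substitution $\rho=e^{x_{1}}$ turns each $\partial/\partial y_{k}$ into $e^{-x_{1}}$ times a bounded-coefficient operator in $x$, the polynomial weight $r^{-s+|\iota|}$ becomes $e^{(-s+|\iota|)x_{1}}$, and the two factors cancel to leave the plain H\"older norm of $e^{-sx_{1}}v_{i}$ on the strip. Your dichotomy on $|x-\bar{x}|$ for handling the localization $|y-\bar{y}|<r(y,\bar{y})/2$ is the right device, isolating the regime where $|y-\bar{y}|\simeq e^{x_{1}}|x-\bar{x}|$ and dispatching far-apart points in the strip by the sup norm, and since the time variable is untouched the same computation covers the parabolic seminorms uniformly in $T$.
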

\noindent The similar results hold for the functions $v_{i}(y)$ and
$u_{i}(y,t)$ defined on $g$ and $g_{T}$, respectively.

Along the paper for $p>0$, we will also encounter the  weighted
Sobolev spaces  $W^{l,p}_{s}(D)$ with the norms
\[
\|v\|_{l,p,s;D}=\Big(\sum_{|\iota|\leq
l}\int_{D}r^{p(s+|\iota|-l)}(y)|\mathcal{D}_{y}^{\iota}v(y)|^{p}dy\Big)^{1/p}.
\]
The definition of these spaces and their properties in the case of
noninteger $l$ can be found, for instance, in \cite{Ad}.

In fine, we complete this section with the main notations using
throughout this paper.

\noindent$\bullet$ $\Omega_{1}=\Omega_{1}(0),$
$\Omega_{2}=\Omega_{2}(0),$ $\Gamma=\Gamma(0),$
$\Gamma_{1}=\partial\Omega_{1}\backslash \Gamma,$
$\Gamma_{2}=\partial\Omega_{2}\backslash \Gamma$.

 For each fixed
$T>0,$
\begin{align*}
\Omega_{1,T}&=\Omega_{1}\times (0,T),\quad
\Omega_{2,T}=\Omega_{2}\times (0,T),\quad
\Gamma_{T}=\Gamma\times[0,T],\\
 \Gamma_{1,T}&=\Gamma_{1}\times[0,T], \quad \Gamma_{2,T}=\Gamma_{2}\times[0,T].
 \end{align*}

\noindent$\bullet$ $\delta_{0}$ denotes the angle between $\Gamma$
and the positive $y_{2}-$axis at $\mathbf{A}_{0}$

\noindent$\bullet$ $\delta_{1}$ is the angles between $\Gamma$ and
the negative oriented $y_{2}-$axis at $\mathbf{A}_{1}$.

\noindent$\bullet$  For any fixed positive $\varepsilon$ and $j=1,2,$ the symbols $\mathcal{O}_{j\varepsilon}(\mathbf{A}_{0})$ and $\mathcal{O}_{j\varepsilon}(\mathbf{A}_{1})$ stand for the balls with the centers in $\mathbf{A}_{0}$ and $\mathbf{A}_{1}$, respectively, and with the radius $j\varepsilon$.

\noindent$\bullet$ $\omega$ denotes some parameter along $\Gamma$,
e.g. the arc length of $\Gamma$, $\omega\in \mathrm{W}$.

\noindent$\bullet$ $n=n(\omega)$ is the vector of outward normal to
$\Gamma$ directed in $\Omega_{1}$, while
$\mathfrak{l}=\mathfrak{l}(\omega)$ stands for the
$\C^{l+\beta}$-vector field ($l\geq 2$) on $\Gamma$, which is
transversal to $\Gamma$ such that $|\mathfrak{l}|=1$ and
\begin{equation}\label{0.1*}
\mathfrak{l}=
\begin{cases}
(0;1)\qquad\quad\text{in}\quad \Gamma\cap\bar{\mathcal{O}}_{\varepsilon}(\mathbf{A}_{1}),\\
(0;-1)\qquad\text{ in}\quad \Gamma\cap\bar{\mathcal{O}}_{\varepsilon}(\mathbf{A}_{0}),\\
n\qquad\qquad\quad\text{outside of }\quad
\{\Gamma\cap\bar{\mathcal{O}}_{2\varepsilon}(\mathbf{A}_{1})\}\cup\{\Gamma\cap\bar{\mathcal{O}}_{2\varepsilon}(\mathbf{A}_{0})\}.
\end{cases}
\end{equation}


\section{Main results}
\label{s3}
\subsection{Main Assumptions in \eqref{1.1}}\label{s3.1}
We first specify
 the geometrical configuration of the
initial domains $\Omega_{1}$ and $\Omega_{2}$. For simplicity
consideration, we will focus on
 the domains shown on Figure \ref{fig:1a}. It is worth noting
that, our analysis and results (may be with slightly modifications)
hold in the case of $\Omega_{i}$ given on Figure \ref{fig:1b}. First, we state
main hypothesis on the geometry of the domains and on the given
functions.

\begin{description}
\item[h1 (Smoothness of the given boundaries)] For integer $l\geq
3$ and $\beta\in(0,1)$, we assume that
\[
\Gamma_{1},\, \Gamma_{2},\,  \Gamma \quad\text{belong to}\quad
\C^{l+\beta}.\]

\smallskip
\item[h2 (Geometric configuration of $\Gamma_{i}$)] For some given
positive numbers $\a,$ $\a_{1}$ and $\a_{2}$, $0<\a<\a_{1}$, there
hold
\[
\Gamma_{2}=\{(y_{1},y_{2}):\quad y_{1}=0,\quad y_{2}\in(0,\a)\}
\]
and
\[
\{(y_{1},y_{2}):\quad y_{1}=0,\quad y_{2}\in(\a,\a_{1}]\}\cup
\{(y_{1},y_{2}):\quad y_{1}=0,\quad
y_{2}\in[-\a_{2},0)\}\subset\Gamma_{1}.
\]

\smallskip
\item[h3\, (Configuration of $\Gamma$)] We designate   points on
$\Gamma$ by
$$\mathbf{m}(\omega)=\{m_{1}(\omega),m_{2}(\omega)\},\quad \omega\in \mathrm{W},$$
 and, besides, this boundary near corner points
$\mathbf{A}_{0}=(0;0)$ and $\mathbf{A}_{1}=(0,\a)$ is prescribed
with
\[
y_{2}=\begin{cases} \varphi_{0}(y_{1})\qquad\text{if}\quad
(y_{1},y_{2})\in\overline{\Gamma\cap
\mathcal{O}_{2\varepsilon}(\mathbf{A}_{0})},\\
 \varphi_{1}(y_{1})\qquad\text{if}\quad
(y_{1},y_{2})\in\overline{\Gamma\cap
\mathcal{O}_{2\varepsilon}(\mathbf{A}_{1})},
\end{cases}
\]
where $\varepsilon$ is some fixed positive numbers,
$\varepsilon\in(0,\min\{\tfrac{\a_{1}-\a}{6};\tfrac{\a}{6};\tfrac{\a_{2}}{6}\})$,
and $\varphi_{0},$ $\varphi_{1}$ are given smooth functions
satisfying the following relations
\[
\varphi_{0}(0)=0,\quad \varphi'_{0}(0)=\cot\delta_{0}>0\quad
\text{and}\quad \varphi_{1}(0)=\mathfrak{a},\quad
\varphi'_{1}(0)=-\cot\delta_{1}<0
\]
with $\delta_{0},\delta_{1}\in(0,\pi/2)$.

\smallskip

\item[h4 (Condition of the well-posedness to \eqref{1.1})] We require
that
\[
k=\frac{k_{2}}{k_{1}}\in(0,1),\quad\text{and}\quad
\frac{\partial\mathcal{U}_{1,0}}{\partial n},\,
\frac{\partial\mathcal{U}_{2,0}}{\partial
n}<0\quad\text{on}\quad\Gamma.
\]

\smallskip
\item[h5 (Conditions on the angles)] We assume that
  $\delta_{0},$ $\delta_{1}\in(0,\pi/4)$ are rational part of $\pi$. That is,
  for any fixed
  $\mathfrak{q}_{0},\mathfrak{q}_{1},\mathfrak{p}_{0},\mathfrak{p}_{1}\in\mathbb{N}$,
  $\mathfrak{p}_{i}>2\mathfrak{q}_{i}$,
there are representations
  \[
\delta_{0}=\pi\Big(\frac{1}{2}-\frac{\mathfrak{q}_{0}}{\mathfrak{p}_{0}}\Big)\qquad\text{and}\qquad
\delta_{1}=\pi\Big(\frac{1}{2}-\frac{\mathfrak{q}_{1}}{\mathfrak{p}_{1}}\Big),
  \]
  where  $\frac{\mathfrak{q}_{0}}{\mathfrak{p}_{0}},$ $\frac{\mathfrak{q}_{1}}{\mathfrak{p}_{1}}$ are
  irreducible fractions.

\item[h6 (Conditions on the given functions)] We require that
$\mathrm{p}_{1}(y)$ and  $\mathrm{p}_{2}(y)$ are positive functions,
and $\mathrm{p}_{1}\in E^{3+\beta}_{s^{*}}(\bar{\Gamma}_{1})$ and
$\mathrm{p}_{2}\in E^{3+\beta}_{s^{*}}(\bar{\Gamma}_{2})$ with
$$s^{*}\in\Big(\max\Big\{\frac{13}{4},\frac{2\pi}{\pi-2\delta_{1}},\frac{2\pi}{\pi-2\delta_{0}}\Big\},4\Big)
\backslash\Big\{\frac{\pi}{2\delta_{0}},\frac{\pi}{2\delta_{1}}\Big\}.$$
\end{description}


\subsection{Reformulating problem \eqref{1.1}}\label{s3.2}
At this point,  we start with reformulating problem \eqref{1.1} in
more convenient form. Actually, following \cite{H,BV2,V3}, we reduce
the contact Muskat problem (1.1) with moving interface
$\Gamma(\tau)$
 to a nonlinear problem in  the domain with fixed boundaries.
To this end, working within the assumptions above, we introduce
unknown function $\s(\omega,t)$ satisfying relations
\begin{equation}\label{3.0*}
|\s(\omega,t)|<\b_{0}/4\quad\text{and}\quad \s(\omega,0)=0
\end{equation}
with enough small positive number $\b_{0}.$

Then, appealing to \eqref{0.1*}, we define the free boundary
$\Gamma(\tau)$ in the form
\begin{equation}\label{3.1}
\Gamma(\tau)=\{(y,\tau):\,
y(\omega,\tau)=\mathbf{m}(\omega)+\mathfrak{l}\,\s(\omega,\tau),\,
\tau\in[0,T]\}
\end{equation}
for each fixed $T>0$.

\noindent It is worth noting that, the restriction \eqref{3.0*} on
the function $\s(\omega,\tau)$ are related with the the local
classical solvability of \eqref{1.1}.

Next, we describe the mapping which reduces problem \eqref{1.1} with
the moving boundary $\Gamma(\tau)$ to the nonlinear problem in the
domain with fixed boundaries. To this end,  introducing the
nonintersecting $\omega-$lines:
\[
\mathbf{m}(\omega)+\eta\mathfrak{l},\quad|\eta|<2\b_{0},
\]
we define the mapping $(\omega,\eta)\to y=y(\omega,\eta)$ acting via
the rule
\[
y=(y_{1},y_{2})=\mathbf{m}(\omega)+\eta\mathfrak{l}
\]
and being a diffeomorphism  from
$\mathrm{M}=\mathrm{W}\times(-\b_{0},\b_{0})$ onto
$\mathrm{N}=\{y:\, y=\mathbf{m}(\omega)+\eta\mathfrak{l},\,
(\omega,\eta)\in \mathrm{M}\}$.

\noindent Denoting the inverse mapping by
$\Sigma:\mathrm{N}\to\mathrm{M},$ we have
\[
\Sigma:\, y\to(\omega(y),\eta(y)).
\]
After that, setting
\begin{equation}\label{3.0}
\Phi_{\s}(y,\tau)=\eta(y)-\s(\omega(y),\tau),\quad
(y,\tau)\in\mathrm{N}\times[0,T],
\end{equation}
we arrive at the equation
\[
\Phi_{\s}(y,\tau)=0,
\]
which identifies the free boundary $\Gamma(\tau)$.

Let $\chi(\lambda)$ be a smooth cut-off function possessing the
properties
\begin{equation}\label{3.1*}
\chi\in\C_{0}^{\infty}(\R),\quad 0\leq\chi\leq
1,\quad|\chi'|\leq\frac{C_{0}}{\b_{0}}, \quad \chi(\lambda)=
\begin{cases}
1,\qquad|\lambda|<\varepsilon_{1}\b_{0},\\
0,\qquad |\lambda|>2\varepsilon_{1}\b_{0},
\end{cases}
\end{equation}
with some positive $C_{0}$ and
 $\varepsilon_{1}$ such that
\begin{equation}\label{3.10}
\varepsilon_{1}\b_{0}<\frac{\varepsilon}{2}
\end{equation}
with $\varepsilon$ is chosen in (h3).

After that, utilizing the coordinates $(\omega,\eta)$, we define the
diffeomorphism
\[
\mathfrak{e}_{\s}:(x,t)\to(y,\tau)
\]
from $\mathbb{X}_{T}=\R^{2}\times[0,T]$ onto
$\mathbb{Y}_{T}=\R^{2}\times[0,T]$ by setting
\begin{equation}\label{3.2}
\begin{cases}
t=\tau,\\
\omega(y)=\omega(x),\,\eta(y)=\lambda(x)+\chi(\lambda)\s(\omega,\tau)\quad\text{if}\quad
(\omega(x),\lambda(x))\in\mathrm{N},\\
y=x\qquad\qquad\qquad\qquad\qquad\qquad\qquad\qquad\text{otherwise}.
\end{cases}
\end{equation}
Here, $\omega(x)$ and $\lambda(x)$ are the coordinates in
$\mathbb{X}_{T}$, which are similar to the coordinates $\omega(y)$
and $\eta(y)$ in $\mathbb{Y}_{T}$.

It is apparent that $\mathfrak{e}^{-1}_{\s}$ maps $\Omega_{i}(\tau)$
onto $\Omega_{i},$ $i=1,2,$ and $\Gamma(\tau)$ onto $\Gamma$ for
each $\tau\in[0,T]$. Besides, the moving boundary is given by
\[
\mathfrak{e}_{\s}(\{\lambda(x)=0\}).
\]
Bearing in mind \eqref{3.2}, we set
\begin{align}\label{3.00}\notag
\Psi_{1}&=\Psi_{1}(x,t)=\mathrm{p}_{1}(y)\circ\mathfrak{e}_{\s}(x,t),\quad
\Psi_{2}=\Psi_{2}(x,t)=\mathrm{p}_{2}(y)\circ\mathfrak{e}_{\s}(x,t),\\
\mathcal{U}_{1}&=\mathcal{U}_{1}(x,t)=\mathrm{P}_{1}(y,\tau)\circ\mathfrak{e}_{\s}(x,t),\quad
\mathcal{U}_{2}=\mathcal{U}_{2}(x,t)=\mathrm{P}_{2}(y,\tau)\circ\mathfrak{e}_{\s}(x,t),\\\notag
\nabla_{x}&=\Big(\frac{\partial}{\partial
x_{1}},\frac{\partial}{\partial x_{2}}\Big),\quad
\nabla_{\s}=(J_{\s}^{*})^{-1}\nabla_{x},
\end{align}
where $J_{\s}$ is the Jacobi matrix of the mapping
$y=\mathfrak{e}_{\s}(x,t)$.
\begin{remark}\label{r3.0}
In fact, relations  \eqref{3.1*}, \eqref{3.10}, \eqref{3.00} tell
that we need in $\mathrm{p}_{1}$ and $\mathrm{p}_{2}$ defined in
$\{x_{1}=0,\,
x_{2}\in(0,\varepsilon]\cup[\mathfrak{a}-\varepsilon,\mathfrak{a})\}$
 and
 $\{x_{1}=0,\,
x_{2}\in[-\varepsilon,0)\cup(\mathfrak{a},\mathfrak{a}+\varepsilon]\}$,
respectively. The  assumption (h6) on $\mathrm{p}_{1}$ and
$\mathrm{p}_{2}$ allow to extend (in the same classes) them in
arbitrary way. To this end, for example, it is possible to exploit
the even extensions of $\mathrm{p}_{1}$ and $\mathrm{p}_{2}$ in the
segments
$(0,\varepsilon/2]\cup[\mathfrak{a}-\varepsilon/2,\mathfrak{a})$ and
$[-\varepsilon/2,0)\cup(\mathfrak{a},\mathfrak{a}+\varepsilon/2]$,
correspondingly, and then to utilize  the cut-off functions.
\end{remark}

At this point, performing the change of variables \eqref{3.2} in
\eqref{1.1}, we arrive at the relations
\[
\begin{cases}
\nabla_{\s}^{2}\mathcal{U}_{i}(x,t)=0\qquad\qquad\quad\text{in}\quad
\Omega_{i,T},\quad i=1,2,\\
\mathcal{U}_{1}(x,t)-\mathcal{U}_{2}(x,t)=0\quad\text{ on}\quad\Upsilon_{T},\\
\mathcal{U}_{1}(x,t)=\Psi_{1}(x,t)\qquad\quad\text{on}\quad\Gamma_{1,T},\\
\mathcal{U}_{2}(x,t)=\Psi_{2}(x,t)\qquad\quad\text{on}\quad\Gamma_{2,T}.
\end{cases}
\]
In order to rewrite two last conditions on the free boundary in
\eqref{1.1}, we exploit the definition of the function
$\Phi_{\s}(y,\tau)$ (see \eqref{3.0}) to derive the following
equalities satisfying on $\Gamma_{T}$:
\begin{align*}
n_{\tau}&=\frac{\nabla_{y}\Phi_{\s}}{|\nabla_{y}\Phi_{\s}|},\quad
V_{n}=-\frac{\frac{\partial\Phi_{\s}}{\partial
\tau}}{|\nabla_{y}\Phi_{\s}|}=\frac{\frac{\partial
\s(\omega,\tau)}{\partial\tau}}{|\nabla_{y}\Phi_{\s}|},\\
\langle\nabla_{y}\mathrm{P}_{i},\nabla_{y}\Phi_{\s}\rangle&=
\langle\nabla_{\s}\mathcal{U}_{i},\nabla_{\s}\Phi_{\s}\rangle=
\S(\omega,\s,\tfrac{\partial\s}{\partial\omega})\tfrac{\partial\mathcal{U}_{i}}{\partial\lambda}+
\S_{1}(\omega,\s,\tfrac{\partial\s}{\partial\omega})\tfrac{\partial\mathcal{U}_{i}}{\partial\omega},\quad
i=1,2.
\end{align*}
Here, the symbol $\langle\cdot,\cdot\rangle$ stands for the inner
product, and $\S(\omega,\s,\tfrac{\partial\s}{\partial\omega})$,
$\S_{1}(\omega,\s,\tfrac{\partial\s}{\partial\omega})$ are smooth
functions calculated via formulas:
\begin{equation}\label{3.12}
\S(\omega,\s,\tfrac{\partial\s}{\partial\omega})=\langle\nabla_{\s}\lambda,\nabla_{\s}\lambda\rangle,\quad
\S_{1}(\omega,\s,\tfrac{\partial\s}{\partial\omega})=\langle\nabla_{\s}\omega,\nabla_{\s}\lambda\rangle.
\end{equation}
Exploiting these relations, we rewrite the remaining conditions on
the free boundary in the form
\begin{align*}
\frac{\partial\s(\omega,t)}{\partial
t}&=-k_{1}[\S(\omega,\s,\tfrac{\partial\s}{\partial\omega})\tfrac{\partial\mathcal{U}_{1}}{\partial\lambda}+
\S_{1}(\omega,\s,\tfrac{\partial\s}{\partial\omega})\tfrac{\partial\mathcal{U}_{1}}{\partial\omega}]\\
&=-k_{2} [
\S(\omega,\s,\tfrac{\partial\s}{\partial\omega})\tfrac{\partial\mathcal{U}_{2}}{\partial\lambda}+
\S_{1}(\omega,\s,\tfrac{\partial\s}{\partial\omega})\tfrac{\partial\mathcal{U}_{2}}{\partial\omega}].
\end{align*}
Summing up our arguments, we reformulate problem \eqref{1.1} in new
unknown functions and new variables. Namely, we look for unknowns
$\mathcal{U}_{1}:\Omega_{1,T}\to\R$,
$\mathcal{U}_{2}:\Omega_{2,T}\to\R$ and $\s:\Gamma_{T}\to\R$ by the
conditions
\begin{equation}\label{3.3}
\begin{cases}
\nabla_{\s}^{2}\mathcal{U}_{i}(x,t)=0\qquad\qquad\qquad\text{in}\quad\Omega_{i,T},\quad
i=1,2,\\
\mathcal{U}_{1}(x,t)-\mathcal{U}_{2}(x,t)=0\qquad\quad\text{on}\quad\Gamma_{T},\\
\frac{\partial\s(\omega,t)}{\partial
t}=-k_{1}[\S(\omega,\s,\tfrac{\partial\s}{\partial\omega})\tfrac{\partial\mathcal{U}_{1}}{\partial\lambda}+
\S_{1}(\omega,\s,\tfrac{\partial\s}{\partial\omega})\tfrac{\partial\mathcal{U}_{1}}{\partial\omega}]\\
\qquad\quad=-k_{2} [
\S(\omega,\s,\tfrac{\partial\s}{\partial\omega})\tfrac{\partial\mathcal{U}_{2}}{\partial\lambda}+
\S_{1}(\omega,\s,\tfrac{\partial\s}{\partial\omega})\tfrac{\partial\mathcal{U}_{2}}{\partial\omega}]
\quad\text{on}\quad\Gamma_{T},\\
\mathcal{U}_{i}(x,t)=\Psi_{i}(x,t)\qquad\quad\text{on}\quad\Gamma_{i,T},\,
i=1,2,\\
\s(\omega,0)=0\qquad\qquad\qquad\text{on}\quad \Gamma.
\end{cases}
\end{equation}
Relations \eqref{3.1}-\eqref{3.3} suggest that the initial
distribution of the pressure
\[
\mathcal{U}_{1,0}=\mathcal{U}_{1}(x,0)=\mathrm{P}_{1}(y,\tau)\circ\mathfrak{e}_{\s}(x,t)|_{t=0}\quad\text{and}\quad
\mathcal{U}_{2,0}=\mathcal{U}_{2}(x,0)=\mathrm{P}_{2}(y,\tau)\circ\mathfrak{e}_{\s}(x,t)|_{t=0},
\]
solves the transmission problem
\begin{equation}\label{3.4}
\begin{cases}
\Delta_{x}\mathcal{U}_{i,0}(x)=0\qquad\qquad\text{in}\quad\Omega_{i},\quad
i=1,2,\\
\mathcal{U}_{1,0}(x)-\mathcal{U}_{2,0}=0\qquad\text{ on}\quad\Gamma,\\
k_{1}\frac{\partial\mathcal{U}_{1,0}}{\partial
n}=k_{2}\frac{\partial\mathcal{U}_{2,0}}{\partial n} \qquad\text{ on}\quad\Gamma,\\
\mathcal{U}_{1,0}(x)=\mathrm{p}_{1}(x)\qquad\quad\text{on}\quad\Gamma_{1},\\
\mathcal{U}_{2,0}(x)=\mathrm{p}_{2}(x)\qquad\quad\text{on}\quad\Gamma_{2}.
\end{cases}
\end{equation}

\subsection{Main Result}\label{s3.3}
We mention that the solvability of \eqref{3.3} are discussed in the
weighted H\"{o}lder space $E^{2+\beta,\beta,\beta}_{s+2}$. Thus,
before stating the main results, we need in the last assumptions in
this model concerns to the value $s$. To this end, we first need in
the following key magnitudes related with the parameters in
\eqref{3.3}. Denoting
\begin{align*}
q^{*}&=\frac{1+k}{1-k},\qquad\mathfrak{x}_{0}=\frac{\mathfrak{p}_{0}}{2\mathfrak{q}_{0}},\qquad
\mathfrak{x}_{1}=\frac{\mathfrak{p}_{1}}{2\mathfrak{q}_{1}},\\
\alpha_{0}&=\frac{\partial \mathcal{U}_{1,0}}{\partial r_{0}(y)}
\Big(\frac{\partial\mathcal{U}_{1,0}}{\partial
n}\Big)^{-1}\Big|_{y=\mathbf{A}_{0}},\quad\qquad
\alpha_{1}=-\frac{\partial \mathcal{U}_{1,0}}{\partial r_{1}(y)}
\Big(\frac{\partial\mathcal{U}_{1,0}}{\partial
n}\Big)^{-1}\Big|_{y=\mathbf{A}_{1}},
\end{align*}
we define values $\Theta_{i}$ and $Q_{i}$ along with the functions
$S^{+}_{i},$ $S^{-}_{i},$ $i=0,1,$ via formulas
\begin{align*}
Q_{i}&=\frac{\sin\delta_{i}}{1-k}\sqrt{(1-k)^{2}+(2k\alpha_{i}+(1+k)\cot\delta_{i})^{2}},\\
\sin\Theta_{i}&=\frac{1-k}{\sqrt{(1-k)^{2}+(2k\alpha_{i}+(1+k)\cot\delta_{i})^{2}}},\qquad
\cos\Theta_{i}=-\frac{2k\alpha_{i}+(1+k)\cot\delta_{i}}{\sqrt{(1-k)^{2}+(2k\alpha_{i}+(1+k)\cot\delta_{i})^{2}}},\\
S_{i}^{+}(\z)&=\sin(\z-\delta_{i})+Q_{i}\sin(\z\mathfrak{x}_{i}-\Theta_{i}),\qquad\qquad
S_{i}^{-}(\z)=\sin\z-q^{*}\sin\z\mathfrak{x}_{i},\quad\z\in\mathbb{C}.
\\
\end{align*}
In further analysis, in the case of $Q_{i}\neq 1$, we need in the
zeros of these functions in the strip
$Re\,\z\in[0,4\pi\mathfrak{q}_{i}),$ $i=0,1$. It is worth noting
that the properties of $S_{i}^{\pm}$ are studied and described in
detail in \cite[Section 5]{V4}. For readers' convenience, we report
them (rewritten in our notations) in Corollaries
\ref{c4.2}-\ref{c4.4} in Section \ref{s4.1}. In particular, there
are $K_{i}-$zeros and $2\mathfrak{p}_{i}-$zeros of $S_{i}^{+}(\z)$
and $S_{i}^{-}(\z)$, respectively, in the strip
$Re\,\z\in[0,4\pi\mathfrak{q}_{i}),$ $i=0,1,$ which are real and
nondecreasing. Denoting these zeros by

\noindent$\bullet$ $\mathfrak{h}^{+}_{l},$ $l=0,1,...,K_{0}-1,$ and
$\mathfrak{h}^{-}_{j}$, $j=0,1,...,2\mathfrak{p}_{0}-1,$ for
$S_{0}^{+}$ and $S_{0}^{-}$, correspondingly,

\noindent$\bullet$ $\mathfrak{f}^{+}_{l},$
$j=0,1,...,2\mathfrak{p}_{1}-1,$ and $\mathfrak{f}^{-}_{j}$,
$l=0,1,...,K_{1}-1,$
 for  $S_{1}^{+}$ and $S_{1}^{-}$,
correspondingly,

\noindent we select integer quantities
$l_{i}^{*}\in\{1,2,...,K_{i}-1\},$ $i=0,1,$ satisfying the following
inequalities
\begin{align*}
&\frac{\mathfrak{h}^{+}_{l^{*}_{0}-1}}{\pi-2\delta_{0}}<\min\Big\{3,\frac{2\pi}{\pi-2\delta_{0}},\frac{2\pi}{\pi-2\delta_{1}}\Big\}\leq
\frac{\mathfrak{h}^{+}_{l^{*}_{0}}}{\pi-2\delta_{0}},\\
&\frac{\mathfrak{f}^{+}_{l^{*}_{1}-1}}{\pi-2\delta_{1}}<\min\Big\{3,\frac{2\pi}{\pi-2\delta_{0}},\frac{2\pi}{\pi-2\delta_{1}}\Big\}\leq
\frac{\mathfrak{f}^{+}_{l^{*}_{1}}}{\pi-2\delta_{1}}.
\end{align*}
At last, we introduce the quantities
\begin{align}\label{0.1}\notag
\underline{\mathfrak{h}}&=\max\Big\{-\frac{\mathfrak{h}^{-}_{j}}{\pi-2\delta_{0}}+(s^{*}-2)(m-d_{0}),\quad
m\in\mathbb{K}_{0}^{-},\quad j\in\{1,2,...,l_{0}^{*}+2\},\quad
d_{0}\in[0,1]\Big\},\\\notag
\underline{\mathfrak{f}}&=\max\Big\{-\frac{\mathfrak{f}^{-}_{j}}{\pi-2\delta_{1}}+(s^{*}-2)(m-d_{0}),\quad
m\in\mathbb{K}_{1}^{-},\quad j\in\{1,2,...,l_{1}^{*}+2\},\quad
d_{0}\in[0,1]\Big\},\\
\overline{\mathfrak{h}}&=\max\Big\{\frac{\mathfrak{h}^{+}_{l}}{\pi-2\delta_{0}}-(s^{*}-2)(m-1),\quad
m\in\mathbb{K}_{0}^{+},\quad l\in\{0,1,...,l_{0}^{*}-1\}\Big\},\\
\notag
\overline{\mathfrak{f}}&=\max\Big\{\frac{\mathfrak{f}^{+}_{l}}{\pi-2\delta_{1}}-(s^{*}-2)(m-1),\quad
m\in\mathbb{K}_{1}^{+},\quad l\in\{0,1,...,l_{1}^{*}-1\}\Big\},
\end{align}
where   sets
$\mathbb{K}^{+}_{i}\subset\{0,1,...,\mathcal{K}^{+}_{i}\}$ and
$\mathbb{K}^{-}_{i}\subset\{0,1,...,\mathcal{K}^{-}_{i}\}$ with
\begin{align*}
\mathcal{K}^{+}_{0}&=\max\Big\{m\in\mathbb{N}:\,m<1+\frac{\mathfrak{h}_{l_{0}^{*}-1}^{+}-\mathfrak{h}_{1}^{-}}{(s^{*}-2)(\pi-2\delta_{0})}\Big\},\quad
\mathcal{K}^{+}_{1}=\max\Big\{m\in\mathbb{N}:\,m<1+\frac{\mathfrak{f}_{l_{1}^{*}-1}^{+}-\mathfrak{f}_{1}^{-}}{(s^{*}-2)(\pi-2\delta_{1})}\Big\},\\
\mathcal{K}^{-}_{0}&=\max\Big\{m\in\mathbb{N}:\,m<1+\frac{\mathfrak{h}_{l_{0}^{*}}^{+}+\mathfrak{h}_{l_{0}^{*}+2}^{-}}{(s^{*}-2)(\pi-2\delta_{0})}\Big\},\quad
\mathcal{K}^{-}_{1}=\max\Big\{m\in\mathbb{N}:\,m<1+\frac{\mathfrak{f}_{l_{1}^{*}}^{+}+\mathfrak{f}_{l_{1}^{*}+2}^{-}}{(s^{*}-2)(\pi-2\delta_{1})}\Big\},
\end{align*}
are built with the same way as the construction of the sets
$\mathbb{M}^{\pm}_{i}$ (see \eqref{4.0*}) described in Section
\ref{s4.1}.

It is worth noting that the existence of values $l_{i}^{*}$,
$\underline{\mathfrak{h}},$ $\underline{\mathfrak{f}},$
$\overline{\mathfrak{h}},$ $\overline{\mathfrak{f}}$ are provided by
Corollaries \ref{c4.2}--\ref{c4.4}.

\noindent Now we  complete  stipulating    assumptions in the model
\eqref{3.3} by means of the requirement on the weight $s$.
\begin{description}
\item[h7 (Condition of the weight $s$)] We require that
\[
s+2\in\begin{cases}
(\max\{2,\mathfrak{h}^{*},\mathfrak{f}^{*}\},\min\{3,\tfrac{2\pi}{\pi-2\delta_{0}},\tfrac{2\pi}{\pi-2\delta_{1}}\})\backslash
\{\tfrac{\pi}{2\delta_{0}},\tfrac{\pi}{2\delta_{1}}\}\qquad
\text{if}\quad Q_{0},Q_{1}\neq 1,\\
\\
(\max\{2,\mathfrak{f}^{*}\},\min\{3,\tfrac{2\pi}{\pi-2\delta_{0}},\tfrac{2\pi}{\pi-2\delta_{1}}\})\backslash
\{\tfrac{\pi}{2\delta_{0}},\tfrac{\pi}{2\delta_{1}}\}\qquad\quad
\text{ if}\quad Q_{0}=1,Q_{1}\neq 1,\\
\\
(\max\{2,\mathfrak{h}^{*}\},\min\{3,\tfrac{2\pi}{\pi-2\delta_{0}},\tfrac{2\pi}{\pi-2\delta_{1}}\})\backslash
\{\tfrac{\pi}{2\delta_{0}},\tfrac{\pi}{2\delta_{1}}\}\qquad\quad
\text{if}\quad Q_{0}\neq 1,Q_{1}= 1,\\
\\
(2,\min\{3,\tfrac{2\pi}{\pi-2\delta_{0}},\tfrac{2\pi}{\pi-2\delta_{1}}\})\backslash
\{\tfrac{\pi}{2\delta_{0}},\tfrac{\pi}{2\delta_{1}}\}\qquad\qquad\qquad\quad
\text{if}\quad Q_{0},Q_{1}= 1,\\
\end{cases}
\]
with
\[
\mathfrak{h}^{*}=\max\{\underline{\mathfrak{h}},\overline{\mathfrak{h}}\}\qquad\text{and}\qquad
\mathfrak{f}^{*}=\max\{\underline{\mathfrak{f}},\overline{\mathfrak{f}}\}.
\]
\end{description}


Now we are ready to state the main results of this paper.
\begin{theorem}\label{t3.1}
Under assumptions (h1)-(h7), problem \eqref{3.3} admits a unique
local classical solution $(\mathcal{U}_{1},\mathcal{U}_{2},\s)$ in
some small time interval $[0,T^{*}]$. Besides, this solution has the
regularity
\[
\mathcal{U}_{1}\in
E_{s+2}^{2+\beta,\beta,\beta}(\bar{\Omega}_{1,T^{*}}),\quad
\mathcal{U}_{2}\in
E_{s+2}^{2+\beta,\beta,\beta}(\bar{\Omega}_{2,T^{*}}),\quad
\s\in\mathcal{E}^{2+\beta,\beta,\beta}_{s+2,s^{*}-1}(\Gamma_{T^{*}}),
\]
the free boundary $\Gamma(\tau)$ is defined via \eqref{3.1} for each
$\tau\in[0,T^{*}]$, and the initial distribution of the pressure
$(\mathcal{U}_{1,0},\mathcal{U}_{2,0})$ is a unique classical
solution of \eqref{3.4}.
\end{theorem}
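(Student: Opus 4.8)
The plan is to carry out the four-stage program announced in the Introduction, working throughout with the reformulated problem \eqref{3.3}. First I would solve the initial-data problem \eqref{3.4}: since $\Gamma,\Gamma_{1},\Gamma_{2}$ meet at the acute corners $\mathbf{A}_{0},\mathbf{A}_{1}$, this is a linear elliptic transmission problem in a domain with corners, and its one-valued classical solvability in the weighted H\"{o}lder classes follows from the corner model analysis of Section \ref{s4} (Theorems \ref{t4.1}--\ref{t4.3}) together with interior and smooth-boundary Schauder estimates. Hypotheses (h5)--(h6) --- the angles being rational parts of $\pi$, and $\mathrm{p}_{i}\in E^{3+\beta}_{s^{*}}$ with $s^{*}$ in the stated range and off $\tfrac{\pi}{2\delta_{0}},\tfrac{\pi}{2\delta_{1}}$ --- guarantee that no resonance occurs and give $\mathcal{U}_{i,0}\in E^{3+\beta}_{s^{*}}(\bar{\Omega}_{i})$, while (h4) yields $\partial\mathcal{U}_{i,0}/\partial n<0$ on $\Gamma$, which is precisely what renders the dynamic boundary condition in \eqref{3.3} parabolic in the correct direction.

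Next I would linearize \eqref{3.3}: setting $\mathcal{U}_{i}=\mathcal{U}_{i,0}+v_{i}$ and $\s=\rho+\mu$, with $\rho$ an explicitly constructed corrector built from $\mathcal{U}_{i,0}$ and a fixed time cut-off satisfying $\rho(\cdot,0)=0$, one obtains for $(v_{1},v_{2},\mu)$ a nonlinear problem whose principal part is the nonclassical transmission problem: $\Delta_{x}v_{i}=F_{i}$ in $\Omega_{i,T}$, $v_{1}-v_{2}=0$ on $\Gamma_{T}$, $\partial\mu/\partial t$ plus a fixed linear combination of $\partial v_{i}/\partial\lambda$ and $\partial v_{i}/\partial\omega$ equal to $G$ on $\Gamma_{T}$, $v_{i}=0$ on $\Gamma_{i,T}$, with zero initial data. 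The function $\rho$ is chosen so that the residuals on $\Gamma_{T}$ vanish at $t=0$ to the order needed for $F_{i},G$ to lie in the zero-initial-value subspaces $\underset{0}{E}$, $\underset{0}{\mathcal{E}}$; all nonlinear and lower-order terms are absorbed into $F_{i},G$. This linear operator is then inverted using Theorems \ref{t4.1}--\ref{t4.3}, which give the classical solvability of the model problems in the plane corners $G_{i}$ (equivalently, via \eqref{2.2}, in the strips $B_{i}$); a partition of unity localizing near $\mathbf{A}_{0},\mathbf{A}_{1}$, a continuation argument over the smooth part, and standard Schauder theory then yield the a priori estimate $\|(v_{1},v_{2},\mu)\|\leq C(\|F_{1}\|+\|F_{2}\|+\|G\|)$ in the norms of Theorem \ref{t3.1} together with solvability, \emph{provided} $s$ obeys (h7). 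Condition (h7) is exactly what places $s+2$ in the admissible strip --- between the relevant real zeros of $S^{\pm}_{i}$ and off $\tfrac{\pi}{2\delta_{i}}$ --- so that the symbol of the corner problem is invertible on the line determined by $s$.

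Finally I would set up a contraction. The map $\mathcal{T}$ sending $(v_{1},v_{2},\mu)$ to the solution of the linear problem with right-hand sides $F_{i}[v,\mu],G[v,\mu]$ acts on a small ball of $E^{2+\beta,\beta,\beta}_{s+2}(\bar{\Omega}_{1,T^{*}})\times E^{2+\beta,\beta,\beta}_{s+2}(\bar{\Omega}_{2,T^{*}})\times\mathcal{E}^{2+\beta,\beta,\beta}_{s+2,s^{*}-1}(\Gamma_{T^{*}})$ of functions with zero initial data. Every nonlinear contribution to $F_{i},G$ carries a factor --- $\s$, or $\nabla_{\s}-\nabla_{x}$, or $J_{\s}-\mathrm{Id}$, or $\S-\S|_{\s=0}$ --- vanishing at $t=0$ and hence of weighted H\"{o}lder norm $O((T^{*})^{\theta})$ on $[0,T^{*}]$ for some $\theta>0$; consequently $\mathcal{T}$ is a contraction carrying the ball into itself once $T^{*}$ is small enough. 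Its fixed point is the required local classical solution, and undoing $\mathcal{U}_{i}=\mathcal{U}_{i,0}+v_{i}$, $\s=\rho+\mu$ and the Hanzawa map $\mathfrak{e}_{\s}$ gives the stated regularity and the representation \eqref{3.1} of $\Gamma(\tau)$.

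I expect the genuine obstacle to be the inversion of the nonclassical linear operator near the corners --- the sharp weighted a priori estimates for the corner transmission problem with a dynamic (parabolic) boundary condition --- which is why that analysis is deferred to Section \ref{s4}. Within the present proof the delicate points are the construction of $\rho$ so that the $t=0$ compatibility holds to the exact order dictated by the zero-initial-value subspaces, and the verification of the contraction bounds for $F_{i},G$, which requires careful bookkeeping of the weight exponents $s$, $s^{*}$, $s^{*}-1$ through the composition with $\mathfrak{e}_{\s}$ and through the smooth coefficients $\S$, $\S_{1}$.
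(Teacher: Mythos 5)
Your overall strategy matches the paper's --- solve the initial-data transmission problem, linearize around $(\mathcal{U}_{i,0},\rho)$, invert the linear operator via the corner model analysis of Section~\ref{s4}, and close by contraction --- but the linearization ansatz you propose has a gap that would prevent the contraction from closing.

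You set $\mathcal{U}_{i}=\mathcal{U}_{i,0}+v_{i}$ and $\s=\rho+\mu$, and then assert that the principal linear part of the resulting problem features $v_{1}-v_{2}=0$ on $\Gamma_{T}$. The paper's change of variables is different and deliberately so: one takes
$U_{i}=\mathcal{U}_{i}-\mathcal{U}_{i,0}-\langle\nabla_{x}\mathcal{U}_{i,0},\mathbf{e}\rangle$
(see~\eqref{5.6}), i.e.\ the first-order Hanzawa shift of $\mathcal{U}_{i,0}$ is \emph{subtracted}. Because $\mathcal{U}_{1,0}-\mathcal{U}_{2,0}=0$ on $\Gamma$ but $\partial(\mathcal{U}_{1,0}-\mathcal{U}_{2,0})/\partial n=(1-1/k)\,\partial\mathcal{U}_{1,0}/\partial n\neq 0$, this subtraction produces the nondegenerate Dirichlet transmission condition
$U_{1}-U_{2}=A_{0}(x)\,\sigma$
with $A_{0}<0$, which is precisely the form appearing in the linear problem~\eqref{5.10} and, at the model level, the condition $u_{1}-u_{2}=a_{0}r_{0}^{s^{*}-1}\varrho$ in~\eqref{4.2}. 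This is the term that couples the interface unknown $\sigma$ to the elliptic unknowns $(w_{1},w_{2})$ and makes the linear operator genuinely pseudo-parabolic, so that Theorem~\ref{t5.1} (resting on Theorems~\ref{t4.1}--\ref{t4.3}) gives a bounded inverse without loss of derivatives.

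In your version the backward coupling disappears from the linear operator: given the right-hand sides, $(v_{1},v_{2})$ is determined by an elliptic transmission problem independent of $\mu$, and $\mu$ is recovered by integrating $\partial\mu/\partial t=G$ in time. The dominant $\mu$-dependence is then pushed into the nonlinear terms, where it enters through the linearized Hanzawa shift with a fixed coefficient $\nabla\mathcal{U}_{i,0}$, $\nabla^{2}\mathcal{U}_{i,0}$ --- a genuine first-order contribution, not a small perturbation. The iteration $\mu^{(n)}\mapsto(v_{1}^{(n+1)},v_{2}^{(n+1)})\mapsto\mu^{(n+1)}$ then loses a tangential derivative: the flux data inherit the regularity $E^{1+\beta}_{s+1}$ of $\mu^{(n)}_{\omega}$, the Dirichlet-to-Neumann map preserves that order, and the single time integration cannot restore the missing spatial derivative needed to land back in $\mathcal{E}^{2+\beta,\beta,\beta}_{s+2,s^{*}-1}$. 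So the smallness factor you invoke (coefficients vanishing at $t=0$) controls the zeroth-order quantities but not the $O(1)$ Lipschitz constant of the highest-order terms. Absent the $A_{0}\sigma$ coupling in the linear operator, the scheme does not close. A minor additional misattribution: the solvability of the stationary transmission problem~\eqref{3.4} in weighted H\"{o}lder classes is a consequence of Lemma~\ref{l5.0}, which relies on the static corner transmission theory of~\cite{V1}, not of Theorems~\ref{t4.1}--\ref{t4.3}; the latter treat the dynamic (time-dependent) boundary condition and are not applicable to~\eqref{3.4}.
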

\begin{remark}\label{r3.2}
Recalling that in the Muskat model
\[
k_{1}=\frac{\bar{k}}{\mu_{1}}\quad\text{and}\quad
k_{2}=\frac{\bar{k}}{\mu_{2}}
\]
with positive constants $\bar{k},$ $\mu_{i}$ having the sense of the
permeability of the porous media and the viscosity of the fluids in
$\Omega_{i}(\tau),$ respectively,
 condition (h4) tells that a more viscous fluid is
displaced by a less viscous fluid. Thus, the contact Muskat problem
\eqref{1.1} (or its reformulated form \eqref{3.3}) without surface
tension is well-posed (see e.g. \cite{Am}).
\end{remark}
\noindent Appealing to the definition of the weighted H\"{o}lder 
spaces and bearing in mind of Theorem \ref{t3.1}, we arrive at the following behavior
\[
|\mathcal{U}_{1}|,|\mathcal{U}_{2}|\lesssim\begin{cases}
r_{0}^{s+2}(x)\quad\text{if}\quad x\to\mathbf{A}_{0},\\
r_{1}^{s+2}(x)\quad\text{if}\quad x\to\mathbf{A}_{1},
\end{cases}
\quad
\text{and}\quad 
|\mathfrak{s}_{t}|,|\mathfrak{s}|\lesssim
\begin{cases}
r_{0}^{s+1}(x)\quad\text{if}\quad x\to\mathbf{A}_{0},\\
r_{1}^{s+1}(x)\quad\text{if}\quad x\to\mathbf{A}_{1},
\end{cases}
\]
which in turn provides
the following property so-called as "waiting time"
phenomenon.
\begin{corollary}\label{c3.1}
If assumptions of Theorem \ref{t3.1} hold, then for each
$t\in[0,T^{*}]$, the corner points $\mathbf{A}_{0}$ and
$\mathbf{A}_{1}$ do not move and the geometry of the initial shape
of the moving boundary near these points is preserved. Other words,
Theorem \ref{t3.1} provides the existence of "waiting time" in the
contact Muskat problem \eqref{1.1} in the case of zero surface
tension.
\end{corollary}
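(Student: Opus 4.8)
The plan is to deduce the corollary directly from the quantitative vanishing of $\mathfrak{s}$ at the corners that is already built into Theorem \ref{t3.1}; no new analytic ingredient is required. First I would note that (h7) forces $s+2>2$, hence $s>0$. From $\mathfrak{s}\in\mathcal{E}^{2+\beta,\beta,\beta}_{s+2,s^{*}-1}(\Gamma_{T^{*}})$ one has in particular $\partial\mathfrak{s}/\partial t\in E^{1+\beta,\beta,\beta}_{s+1}(\Gamma_{T^{*}})$, so by the definition of that space
\[
\Big|\frac{\partial\mathfrak{s}}{\partial t}(\omega,t)\Big|\lesssim r_{i}^{\,s+1}(\mathbf{m}(\omega)),\qquad
\Big|\frac{\partial^{2}\mathfrak{s}}{\partial t\,\partial\omega}(\omega,t)\Big|\lesssim r_{i}^{\,s}(\mathbf{m}(\omega)),\qquad i=0,1,
\]
uniformly on $[0,T^{*}]$. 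Since $\mathfrak{s}(\cdot,0)\equiv0$ on $\Gamma$ (whence also $\partial_{\omega}\mathfrak{s}(\cdot,0)\equiv0$), integrating in time yields $|\mathfrak{s}(\omega,t)|\lesssim t\,r_{i}^{\,s+1}$ and $|\partial_{\omega}\mathfrak{s}(\omega,t)|\lesssim t\,r_{i}^{\,s}$; these are precisely the two estimates displayed just before the corollary, the point being that the auxiliary weight $s^{*}-1$ never enters them.

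Next, I would show that the corner points are frozen. Let $\omega_{i}\in\mathrm{W}$ be the value of the parameter with $\mathbf{m}(\omega_{i})=\mathbf{A}_{i}$, so that $r_{i}(\mathbf{m}(\omega_{i}))=0$. Because $s+1>0$, the first bound gives $\mathfrak{s}(\omega_{i},t)=0$ for every $t\in[0,T^{*}]$, and then the representation \eqref{3.1} yields $y(\omega_{i},\tau)=\mathbf{m}(\omega_{i})+\mathfrak{l}(\omega_{i})\mathfrak{s}(\omega_{i},\tau)=\mathbf{A}_{i}$, so $\mathbf{A}_{i}\in\Gamma(\tau)$ for all $\tau\in[0,T^{*}]$: the endpoints of the moving interface do not leave $\mathbf{A}_{0},\mathbf{A}_{1}$. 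For the corner angle I would use that, by \eqref{0.1*}, $\mathfrak{l}\equiv(0,-1)$ on $\Gamma\cap\bar{\mathcal{O}}_{\varepsilon}(\mathbf{A}_{0})$ and $\mathfrak{l}\equiv(0,1)$ on $\Gamma\cap\bar{\mathcal{O}}_{\varepsilon}(\mathbf{A}_{1})$, so near each corner the Hanzawa perturbation \eqref{3.1} is purely vertical and $\Gamma(\tau)$ remains a graph over the same $y_{1}$-interval. Near $\mathbf{A}_{0}$, with the graph description $y_{2}=\varphi_{0}(y_{1})$ from (h3) and the arc-length parameter (for which $r_{0}\asymp y_{1}$ and $d\omega/dy_{1}=\sqrt{1+\varphi_{0}'(y_{1})^{2}}$ is bounded as $y_{1}\to0^{+}$), the interface is $y_{2}=\varphi_{0}(y_{1})-\mathfrak{s}(\omega(y_{1}),\tau)$, and its one-sided slope at $\mathbf{A}_{0}$ equals
\[
\varphi_{0}'(0)-\lim_{y_{1}\to0^{+}}\frac{d\omega}{dy_{1}}\,\partial_{\omega}\mathfrak{s}(\omega(y_{1}),\tau)=\varphi_{0}'(0)=\cot\delta_{0},
\]
because $|\partial_{\omega}\mathfrak{s}|\lesssim r_{0}^{\,s}\to0$ (here $s>0$ is essential). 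Hence the tangent of $\Gamma(\tau)$ at $\mathbf{A}_{0}$ coincides with that of $\Gamma$, i.e. the corner angle $\delta_{0}$ is unchanged; the same computation with $\varphi_{1}$, $\delta_{1}$ and $\mathfrak{l}=(0,1)$ settles $\mathbf{A}_{1}$. Combined with the previous paragraph, this is exactly the assertion, which in the terminology of the introduction is the existence of a waiting time $T^{*}$ for \eqref{1.1}.

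The argument contains essentially no hard step, since everything rests on Theorem \ref{t3.1}; the only point deserving care is the passage from the weighted norm $\|\mathfrak{s}\|_{\mathcal{E}^{2+\beta,\beta,\beta}_{s+2,s^{*}-1}}$ to the clean one-variable decay $|\mathfrak{s}|+|\partial_{\omega}\mathfrak{s}|\lesssim r_{i}^{\,s}$ near the tips. I would handle this by estimating and integrating the bound on $\partial_{t}\mathfrak{s}\in E^{1+\beta,\beta,\beta}_{s+1}$ in $t$ rather than working with $\mathfrak{s}$ directly, and by invoking the strict inequality $s+2>2$ in (h7) — which is precisely what turns the tangent-direction correction $\partial_{\omega}\mathfrak{s}$ from merely bounded into vanishing at the corners. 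I would isolate this as a one-line remark placed just before the two-step argument above.
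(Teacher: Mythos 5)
Your proof is correct and takes essentially the same route as the paper, which simply asserts the decay $|\mathfrak{s}_t|,|\mathfrak{s}|\lesssim r_i^{s+1}$ near $\mathbf{A}_i$ and then declares the waiting-time conclusion. You have (rightly) filled in the two details the paper leaves tacit: deriving the bound on $\mathfrak{s}$ itself by integrating the $E_{s+1}^{1+\beta,\beta,\beta}$ bound on $\partial_t\mathfrak{s}$ in time (the direct weighted bound on $\mathfrak{s}$ carries the extra factor $r^{s^{*}-1}$ and is not immediately of the stated form), and using $s>0$ together with the purely vertical $\mathfrak{l}$ near the tips to show the one-sided tangent slopes, hence the corner angles, are preserved.
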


The proof of Theorem \ref{t3.1} is carried out in Sections
\ref{s4}--\ref{s5}. In Section \ref{s6}, we discuss how restriction
(h5) on the form of $\delta_{0}$ and $\delta_{1}$ can be removed,
that we extend the results of  Theorem \ref{t3.1}  to the case of
arbitrary $\delta_{0},\delta_{1}\in(0,\pi/4)$.

\section{A Transmission Problem with a Dynamic Boundary Condition in $(G_{1}\cup G_{2})_{T}$}
\label{s4}

\noindent In this section, we discuss the one-to-one classical
solvability of a nonclassical transmission problem to Poisson
equations in $(G_{1}\cup G_{2})_{T}$ with $\delta$ being a rational
part of $\pi$ (see \eqref{2.1}) with a transmission condition
containing the time derivative of the unknown function. We recall
that this kind of boundary conditions is often called a dynamic
boundary condition. This problem is a key tool in Section \ref{s5.3}
where we   discuss the solvability of  the linear problem
corresponding to nonlinear \eqref{3.3}.

 For any fixed
$\delta\in[0,\pi/2]$ having the form
\begin{equation}\label{4.1}
\delta=\frac{q\pi}{p}\quad\text{with}\quad
\frac{q}{p}\quad\text{being  irreducible fraction,}\quad
q,p\in\mathbb{N},\quad p>2q,
\end{equation}
we focus on the nonclassical transmission problem in the unknowns
$u_{1}=u_{1}(y,t):G_{1,T}\to\R$, $u_{2}=u_{2}(y,t):G_{2,T}\to\R$,
$\varrho=\varrho(y,t):g_{T}\to\R,$
\begin{equation}\label{4.2}
\begin{cases}
\Delta_{y} u_{i}=f_{0,i}(y,t)\qquad\text{in}\quad G_{i,T},\quad
i=1,2,\\
u_{1}-u_{2}=a_{0}r_{0}^{s^{*}-1}\varrho\qquad\text{on}\quad
g_{T},\\
a_{1}\frac{\partial\varrho}{\partial t}-\Big(\frac{\partial
u_{1}}{\partial n}-\frac{\partial u_{2}}{\partial
n}\Big)-a_{2}\Big(\frac{\partial u_{1}}{\partial
r_{0}}-\frac{\partial u_{2}}{\partial r_{0}}\Big)=f_{1}(y,t)\qquad
\text{on}\quad g_{T},\\
\frac{\partial u_{1}}{\partial n}-k\frac{\partial u_{2}}{\partial
n}-ka_{3}\Big(\frac{\partial u_{1}}{\partial r_{0}}-\frac{\partial
u_{2}}{\partial r_{0}}\Big)=f_{2}(y,t)\qquad
\text{on}\quad g_{T},\\
u_{1}=0\quad\text{on}\quad \partial G_{1,T}\backslash
g_{T},\\
u_{2}=0\quad\text{on}\quad \partial G_{2,T}\backslash
g_{T},\\
u_{i}(y,0)=0\qquad\text{in}\quad \bar{G}_{i},\, i=1,2.
\end{cases}
\end{equation}
Here $n$ denotes the outward normal to $g$ directed in $G_{1}$,
$f_{0,i},$ $i=1,2,$ and $f_{j}$ are prescribed below functions,
while $a_{l},$ $l=0,1,2,3,$ $k,$ $s^{*}$ are given real numbers
satisfying the inequalities:
\begin{equation}\label{4.3}
a_{0}<0,\quad a_{1},a_{2}>0,\quad 0<k<1,\quad  s^{*}\geq 3.
\end{equation}

It is worth noting that in this section we analyze model \eqref{4.2}
in the more general assumptions on the data than it is required in
Section \ref{s5.3}. Moreover, in this section (in contrast with the
remaining  part of the paper), we discuss global classical
solvability of \eqref{4.2}, i.e. solvability for any but fixed $T$.

\subsection{Assumptions on the given data in the model}\label{s4.1}

To state the hypothesis in \eqref{4.1}, we need in the following
values: $\theta_{1},\theta_{2},$ $q_{2},q_{1}$ defined as
\begin{align*}
q_{1}&=\frac{p}{2q},\quad
q_{2}=\sqrt{\frac{(1-k)^{2}+(a_{2}(1+k)+2ka_{3})^{2}}{(1-k)^{2}(1+a_{2}^{2})}},\\
\sin\theta_{1}&=\frac{1}{\sqrt{1+a_{2}^{2}}},\qquad
\cos\theta_{1}=\frac{a_{2}}{\sqrt{1+a_{2}^{2}}},\\
\sin\theta_{2}&=\frac{1-k}{\sqrt{(1-k)^{2}+(2ka_{3}+(1+k)a_{2})^{2}}},\quad
\cos\theta_{2}=-\frac{2ka_{3}+(1+k)a_{2}}{\sqrt{(1-k)^{2}+(2ka_{3}+(1+k)a_{2})^{2}}}.
\end{align*}
It is worth noting that in the particular case of
$a_{2}=\cot\delta_{i}$ and $a_{3}=\alpha_{i}$, the introduced
quantities boil down with $\mathfrak{x}_{i},Q_{i},\Theta_{i}$.

The straightforward technical calculations provide the following
properties of these values.
\begin{corollary}\label{c4.1}
Let $k\in(0,1)$ and \eqref{4.1}, \eqref{4.3} hold. Then $q^{*}>1,$
$\theta_{1}\in(0,\pi/2),$ $\theta_{2}\in(0,\pi)$ and, besides,
\begin{align*}
\theta_{2}&\in(0,\pi/2]\quad\text{if}\quad 2ka_{3}+(1+k)a_{2}\leq
0,\\
\theta_{2}&\in(\pi/2,\pi)\quad\text{if}\quad 2ka_{3}+(1+k)a_{2}>0.
\end{align*}
Moreover,

\noindent$\bullet$ $q_{2}>1$ if the one of the following conditions
holds:

\noindent(i) either $a_{3}\geq 0$,

\noindent(ii) or $a_{2}>-a_{3}>0$,

\noindent(iii) or $0<a_{2}<-k a_{3}$;

\noindent$\bullet$ $q_{2}=1$ if either $a_{2}=-ka_{3}>0$ or
$a_{2}=-a_{3}>0$;

\noindent$\bullet$ $q_{2}<1$ if $a_{3}<0$ and $
0<-ka_{3}<a_{2}<-a_{3}. $
\end{corollary}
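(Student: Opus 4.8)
The assertion is purely a collection of algebraic facts about the auxiliary quantities $q^{*}$, $\theta_{1}$, $\theta_{2}$, $q_{2}$, so the plan is simply to unwind the definitions carefully. For $q^{*}>1$: by hypothesis \eqref{4.3} we have $k\in(0,1)$, hence $1-k\in(0,1)$ and $1+k>1$, so $q^{*}=\frac{1+k}{1-k}>1$ trivially (note $q^{*}$ here refers to the quantity with the same name from Section \ref{s3.3}, $q^{*}=\frac{1+k}{1-k}$, which coincides with what is used implicitly). For $\theta_{1}$: since $\sin\theta_{1}=\frac{1}{\sqrt{1+a_{2}^{2}}}>0$ and $\cos\theta_{1}=\frac{a_{2}}{\sqrt{1+a_{2}^{2}}}>0$ because $a_{2}>0$ by \eqref{4.3} (indeed $a_{2}=\cot\delta_i$ with $\delta_i\in(0,\pi/4)$ gives $a_2>1>0$, but more generally $a_2>0$ is assumed in the model via $a_{2}>0$), both sine and cosine are positive, which pins $\theta_{1}$ to the first quadrant $(0,\pi/2)$.

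For $\theta_{2}$: from $\sin\theta_{2}=\frac{1-k}{\sqrt{(1-k)^{2}+(2ka_{3}+(1+k)a_{2})^{2}}}>0$ we get $\theta_{2}\in(0,\pi)$. The sign of $\cos\theta_{2}=-\frac{2ka_{3}+(1+k)a_{2}}{\sqrt{(1-k)^{2}+(2ka_{3}+(1+k)a_{2})^{2}}}$ is the opposite of the sign of $2ka_{3}+(1+k)a_{2}$; hence $\cos\theta_{2}\geq 0$ (so $\theta_{2}\in(0,\pi/2]$) exactly when $2ka_{3}+(1+k)a_{2}\leq 0$, and $\cos\theta_{2}<0$ (so $\theta_{2}\in(\pi/2,\pi)$) exactly when $2ka_{3}+(1+k)a_{2}>0$. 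That is the second displayed dichotomy.

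The only part requiring genuine (though still elementary) work is the trichotomy for $q_{2}$. Writing $q_{2}^{2}=\frac{(1-k)^{2}+(a_{2}(1+k)+2ka_{3})^{2}}{(1-k)^{2}(1+a_{2}^{2})}$, the inequality $q_{2}\gtreqless 1$ is equivalent to
\[
(a_{2}(1+k)+2ka_{3})^{2}\ \gtreqless\ (1-k)^{2}a_{2}^{2}.
\]
Taking square roots and using $1-k>0$, $a_{2}>0$, this becomes $|a_{2}(1+k)+2ka_{3}|\gtreqless (1-k)a_{2}$, i.e. the product $\big(a_{2}(1+k)+2ka_{3}-(1-k)a_{2}\big)\big(a_{2}(1+k)+2ka_{3}+(1-k)a_{2}\big)\gtreqless 0$. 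Simplifying the two factors gives $\big(2a_{2}k+2ka_{3}\big)\big(2a_{2}+2ka_{3}\big)=4k(a_{2}+a_{3})(a_{2}+ka_{3})$, and since $k>0$ the sign of $q_{2}-1$ is that of $(a_{2}+a_{3})(a_{2}+ka_{3})$. Now a short case analysis on the signs of $a_{2}+a_{3}$ and $a_{2}+ka_{3}$, keeping in mind $a_{2}>0$ and $0<k<1$ (so $a_{2}+a_{3}$ and $a_{2}+ka_{3}$ are automatically positive when $a_{3}\geq 0$, while for $a_{3}<0$ one has $a_{2}+ka_{3}>a_{2}+a_{3}$ since $ka_{3}>a_{3}$), yields precisely the listed cases: $q_{2}>1$ iff both factors are strictly positive, which happens exactly when $a_{3}\geq 0$, or $a_{2}>-a_{3}>0$ (both factors positive), or $0<a_{2}<-ka_{3}$ (both factors negative, as then $a_{2}<-ka_{3}<-a_{3}$); $q_{2}=1$ iff one factor vanishes, i.e. $a_{2}=-a_{3}>0$ or $a_{2}=-ka_{3}>0$; and $q_{2}<1$ iff the two factors have opposite signs, i.e. $a_{3}<0$ with $-ka_{3}<a_{2}<-a_{3}$. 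I expect no real obstacle here; the only care needed is to track which of $a_{2}+a_{3}$, $a_{2}+ka_{3}$ is the larger when $a_3<0$, and to note the edge cases where a factor is zero.
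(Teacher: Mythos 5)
Your proof is correct and is exactly the "straightforward technical calculations" the paper alludes to without writing them out. The difference-of-squares factorization $q_2^2-1 = \dfrac{4k(a_2+a_3)(a_2+ka_3)}{(1-k)^2(1+a_2^2)}$, combined with the observation that $a_2+ka_3>a_2+a_3$ when $a_3<0$, cleanly reproduces the stated trichotomy, and the sign arguments for $\theta_1$, $\theta_2$, $q^*$ are the obvious ones.
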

Further, keeping in mind these quantities, we introduce the
functions:
\begin{equation}\label{4.0}
S^{+}=S^{+}(\mathfrak{z})=\sin(\z-\theta_{1})+q_{2}\sin(q_{1}\z-\theta_{2})\quad\text{and}\quad
S^{-}=S^{-}(\mathfrak{z})=\sin\z-q^{*}\sin q_{1}\z
\end{equation}
for $\z\in\mathbb{C}$.

The properties of these functions are the key technical tool in this
section, which are established in \cite[Section 5]{V4} and reported
here below in very particular form tailored for our goals.
\begin{corollary}\label{c4.2}
Let $k\in(0,1)$ and assumptions \eqref{4.1}, \eqref{4.3} hold. Then
 $S^{-}$ is the entire odd periodic function with the period
 $\mathbb{T}_{q}=4q\pi$. Besides,

 \noindent(i) all zeros $\z_{i}^{-}$ of $S^{-}$ in the strip $Re\z\in[0,4q
 \pi)$ are real and strictly increasing, and their number equals to
 $2p$, that is
 \[
0\leq\z_{0}^{-}<\z_{1}^{-}<...<\z_{2p-1}^{-}<4q\pi,\quad
\z_{i}^{-}\in\Big[\frac{\pi(2i-1)}{2q_{1}},\frac{\pi(2i+1)}{2q_{1}}\Big],\quad
i=0,1,...,2p-1;
 \]

\noindent(ii) all zeros of $S^{-}$ in $\mathbb{C}$ are real and
computed via formulas:
\[
\z_{i,n}^{-}=\z_{i}^{-}+n\mathbb{T}_{q},\quad
\z_{i,-n}^{-}=-\z_{i,n}^{-},\quad n\in\mathbb{N},\quad
i=0,1,...,2p-1;
\]

\noindent(iii) the following factorization holds in $\mathbb{C},$
\[
S^{-}(\z)=(1-q_{1}q^{*})\z\prod_{i=1}^{2p-1}\frac{(\z_{i}^{-}-\z)(\z_{i}^{-}+\z)}{(\z_{i}^{-})^{2}}
\prod_{n=1}^{\infty}\prod_{i=0}^{2p-1}\frac{(\z_{i,n}^{-}-\z)(\z_{i,n}^{-}+\z)}{(\z_{i,n}^{-})^{2}};
\]
\end{corollary}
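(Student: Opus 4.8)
The plan is to analyze $S^{-}(\z)=\sin\z-q^{*}\sin q_{1}\z$ directly, exploiting that $q_{1}=p/(2q)$ is rational. First I would observe that $\sin\z$ has period $2\pi$ and $\sin q_{1}\z$ has period $2\pi/q_{1}=4q\pi/p$; their common period is the least common multiple, which is $4q\pi=\mathbb{T}_{q}$ (using $\gcd(p,2q)=1$ from the irreducibility of $q/p$ and $p>2q$). Oddness is immediate since both summands are odd, and entireness (hence "entire odd periodic") is clear as $S^{-}$ is a finite combination of sines. This establishes the opening sentence.

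For part (i), the key is a sign-change / counting argument on the real line. On the real axis write $S^{-}(x)=\sin x-q^{*}\sin q_{1}x$ with $q^{*}>1$ (Corollary \ref{c4.1}). I would track the zeros of the dominant term $\sin q_{1}x$: on one period $[0,4q\pi)$ the function $\sin q_{1}x$ vanishes exactly $2p$ times, at the points $x=k\pi/q_{1}$, $k=0,\dots,2p-1$, and alternates sign between consecutive such points. At each such node where $\sin q_{1}x$ changes sign, $S^{-}$ inherits that sign (up to the bounded perturbation $\sin x$, $|\sin x|\le 1 < q^{*}\cdot(\text{local slope magnitude})$—more carefully, one checks $S^{-}$ at the midpoints $x=\pi(2i+1)/(2q_{1})$ where $|\sin q_{1}x|=1$, so $|q^{*}\sin q_{1}x|=q^{*}>1\ge|\sin x|$, forcing $S^{-}$ to have the sign of $-q^{*}\sin q_{1}x$ there). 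Consecutive midpoints carry opposite signs, so the intermediate value theorem produces a zero in each interval $[\pi(2i-1)/(2q_{1}),\pi(2i+1)/(2q_{1})]$, giving at least $2p$ real zeros per period. To see there are \emph{exactly} $2p$ (with multiplicity) and that they are simple and strictly increasing, I would invoke the argument principle: count zeros of $S^{-}$ inside a tall rectangle $Re\,\z\in[0,4q\pi)$, $|Im\,\z|\le R$, by estimating $\oint \frac{(S^{-})'}{S^{-}}$; for large $|Im\,\z|$ the term with the larger frequency $q_{1}\z$ dominates (since $q_{1}>1$, $|\sin q_{1}\z|$ grows like $\tfrac12 e^{q_{1}|Im\,\z|}$, beating $|\sin\z|\sim\tfrac12 e^{|Im\,\z|}$), so on the horizontal sides and by periodicity on the vertical sides the winding number is governed by $\sin q_{1}\z$, whose zero count in the strip is $2p$. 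Matching the $\ge 2p$ real zeros already found with the total count $2p$ forces all zeros in the strip to be real, simple, and exactly the ones located in the stated intervals.

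Part (ii) is then a direct consequence of periodicity and oddness: $S^{-}(\z+\mathbb{T}_{q})=S^{-}(\z)$ gives the translates $\z_{i,n}^{-}=\z_{i}^{-}+n\mathbb{T}_{q}$, and $S^{-}(-\z)=-S^{-}(\z)$ gives $\z_{i,-n}^{-}=-\z_{i,n}^{-}$; since part (i) shows all zeros in one period strip are real, every zero in $\mathbb{C}$ is of this form. For part (iii), the factorization, I would apply the Hadamard factorization theorem: $S^{-}$ is entire of order $1$ (exponential type $q_{1}$), finite type, with a simple zero at $\z=0$ and the simple real zeros catalogued above; the canonical product of genus $1$ collapses—because the zeros come in $\pm$ pairs the linear exponential factors cancel—to $\prod (1-\z^{2}/(\z_{i,n}^{-})^{2})$, and the leading constant is fixed by matching derivatives at $\z=0$: $(S^{-})'(0)=1-q_{1}q^{*}$, which pins the prefactor $(1-q_{1}q^{*})\z$ in front of the product. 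Arranging the product as the finite piece over $\z_{1}^{-},\dots,\z_{2p-1}^{-}$ (the zero $\z_{0}^{-}=0$ being already extracted) times the double product over $n\ge 1$ yields exactly the displayed formula.

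The main obstacle is the \emph{exactness} of the zero count in part (i)—i.e.\ ruling out extra complex zeros and higher multiplicities. The sign-change argument only delivers a lower bound of $2p$; upgrading to equality requires the argument-principle estimate, which in turn rests on a uniform lower bound for $|S^{-}(\z)|$ on a suitable grid of contours avoiding the zeros, using that the higher-frequency term $q^{*}\sin q_{1}\z$ strictly dominates $\sin\z$ away from the real axis. This is precisely the type of computation carried out in \cite[Section 5]{V4}, so in the write-up I would either reproduce that estimate or cite it; all the remaining steps (periodicity, oddness, Hadamard product, normalization constant) are routine.
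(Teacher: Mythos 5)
Your plan is mathematically sound, and it is worth noting that the paper itself gives no proof of Corollary~\ref{c4.2}: the text immediately preceding it states that the properties of $S^{\pm}$ ``are established in \cite[Section 5]{V4} and reported here below,'' so the corollary is justified purely by citation. Your outline is therefore a reconstruction of the argument the citation is relying on, and it hits the right ingredients in the right order.

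A few points worth tightening in a full write-up. The statement that $4q\pi$ is the least common period is not needed (and, depending on the parities of $p$ and $2q$, may not even be true); all that matters is that $4q\pi$ is \emph{a} common period of $\sin\z$ and $\sin q_{1}\z$, which follows at once since $2q\cdot 2\pi=4q\pi$ and $p\cdot(2\pi/q_{1})=4q\pi$. In part (i), your sign argument at the ``midpoints'' $x=\pi(2i+1)/(2q_{1})$ is exactly right and yields at least one zero in each interval $[\pi(2i-1)/(2q_{1}),\pi(2i+1)/(2q_{1})]$; the upgrade to \emph{exactly} $2p$ zeros (hence simplicity and reality of all zeros in the strip) does require the argument-principle count you describe, where the dominant term $-q^{*}\sin q_{1}\z$ controls the winding on the horizontal edges (exponential type $q_{1}>1$) and the vertical-edge contributions cancel by periodicity; that gives a zero count of $q_{1}\cdot 4q=2p$ per period strip. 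You correctly flag this as the only nontrivial step. In part (iii), the Hadamard argument is complete once you observe that oddness of $S^{-}$ forces the exponential prefactor $e^{a\z+b}$ in the genus-one canonical product to be even, so $a=0$, and then $(S^{-})'(0)=1-q_{1}q^{*}$ fixes the constant. So the proposal is correct and essentially fills in a proof that the paper delegates to a reference.
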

\begin{corollary}\label{c4.3}
Let $q_{2}\neq 1$ and assumptions of Corollary \ref{c4.2} hold. Then
$S^{+}$ is the entire periodic function with the period
$\mathbb{T}_{q}=4q\pi$, which has only real zeros in $\mathbb{C}$
computed via formulas
\[
\z_{i,n}^{+}=\z^{+}_{i}+n\mathbb{T}_{q},\quad
\z_{i,-n}^{+}=\z^{+}_{i}-n\mathbb{T}_{q},\quad n\in\mathbb{N},\quad
i=0,1,...,K^{+}-1,
\]
where $\z_{i}^{+}$ are the zeros  in the strip $Re\,\z\in[0,4q\pi)$,
and $K^{+}=2p$ if $q_{2}>1$, while
\[
K^{+}\in
\begin{cases}
[4q,2p]\quad\text{if}\quad q_{2}<1,\, q_{2}\neq 1/q_{1},\\
(4q,6p)\quad\text{if}\quad q_{2}=1/q_{1}.
\end{cases}
\]
Moreover,

\noindent$\bullet$ if  $q_{2}\neq 1/q_{1}$, then $\z_{i}^{+}$,
$i=0,1,...,K^{+}-1,$ are simple positive and strictly increasing
such that
\[
\z_{i}^{+}\in
\begin{cases}
\Big[\frac{\pi(2i-1)+2\theta_{2}}{2q_{1}},\frac{\pi(2i+1)+2\theta_{2}}{2q_{1}}\Big]\quad\text{if}\quad
q_{2}>1,\\
\,\\
 \bigcup_{l=1_{1}}^{l=l_{2}}\mathfrak{G}_{l}\qquad\qquad\qquad\text{if}\quad
q_{2}<1\quad\text{and}\quad q_{2}\neq 1/q_{1}.
\end{cases}
\]
with
\begin{align*}
l_{1}&=0\qquad\text{and}\quad l_{2}=2q\qquad\quad\text{if}\quad
\theta_{1}<\arcsin q_{2}<\pi+\theta_{1},\\
l_{1}&=0\qquad\text{and}\quad l_{2}=2q-1\quad\text{if}\quad
\theta_{1}-\pi<\arcsin q_{2}<\theta_{1},\\
l_{1}&=-1\quad\text{ and}\quad l_{2}=2q\qquad\text{ if}\quad
\pi-\theta_{1}<\arcsin q_{2}<\pi,
\end{align*}
and
\[
\mathfrak{G}_{l}=[\theta_{1}-\arcsin q_{2}+2\pi l,\theta_{1}+\arcsin
q_{2}+2\pi l]\cup[\theta_{1}+\pi-\arcsin q_{2}+2\pi
l,\theta_{1}+\pi+\arcsin q_{2}+2\pi l];
\]

\noindent$\bullet$ if $q_{2}=1/q_{1}$ then all $\z_{i}^{+}$ are
positive nondecreasing and some of them have the third order;
$\z_{i}^{+}\in\cup_{l=l_{1}}^{l_{2}}\mathfrak{G}_{l},$
$i=0,1,...,K^{+}-1$;

\noindent$\bullet$ there is the following factorization in
$\mathbb{C},$
$$S^{+}(\z)=-[\sin\theta_{1}+q_{2}\sin\theta_{2}]\prod_{i=0}^{K^{+}-1}\frac{[\z_{i}^{+}-z]}{\z_{i}^{+}}\prod_{n=1}^{\infty}
\prod_{i=0}^{K^{+}-1}\frac{[\z_{i,n}^{+}-z][-\z_{i,-n}^{+}+z]}{(-\z_{i,-n}^{+})\z_{i,n}^{+}}.$$
\end{corollary}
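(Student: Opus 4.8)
The plan is to treat $S^{+}$ as an entire function of exponential type and to pin down all of its zeros by combining an \emph{exact} count per period with a count of sign changes on $\mathbb{R}$; the assertion is then the specialization to the present notation of the pencil analysis of \cite[Section~5]{V4}. Concretely I would proceed in four steps: (i)~periodicity and order of $S^{+}$; (ii)~confinement of the zeros to a horizontal strip; (iii)~location and multiplicity of the real zeros; (iv)~the Hadamard factorization.

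Steps (i)--(ii). Since $\sin(\z-\theta_{1})$ is $2\pi$-periodic and $\sin(q_{1}\z-\theta_{2})$ is $\tfrac{4q\pi}{p}$-periodic, and $4q\pi$ is a common multiple of both, $S^{+}$ is $\mathbb{T}_{q}$-periodic with $\mathbb{T}_{q}=4q\pi$; from $|\sin(q_{1}\z-\theta_{2})|\asymp e^{q_{1}|\mathrm{Im}\,\z|}$ one reads off that $S^{+}$ is entire of order $1$ and exponential type $q_{1}$. For (ii) I would use $q_{1}=p/(2q)>1$, which holds by \eqref{4.1}: for $|\mathrm{Im}\,\z|$ large one has $|q_{2}\sin(q_{1}\z-\theta_{2})|\geq \tfrac14 q_{2}e^{q_{1}|\mathrm{Im}\,\z|}>e^{|\mathrm{Im}\,\z|}\geq|\sin(\z-\theta_{1})|$, hence $S^{+}(\z)\neq0$; so there is $Y_{0}>0$ with all zeros of $S^{+}$ contained in $\{|\mathrm{Im}\,\z|\leq Y_{0}\}$.

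Step (iii) is the heart of the matter. Setting $v=e^{i\z/(2q)}$ one has $e^{i\z}=v^{2q}$, $e^{iq_{1}\z}=v^{p}$, and $S^{+}(\z)=0$ is equivalent to $\widetilde{P}(v):=q_{2}e^{-i\theta_{2}}v^{2p}+e^{-i\theta_{1}}v^{p+2q}-e^{i\theta_{1}}v^{p-2q}-q_{2}e^{i\theta_{2}}=0$, a polynomial of degree exactly $2p$ (as $p>2q$) with nonvanishing constant term; since $\z\mapsto v$ maps the period strip $\{\mathrm{Re}\,\z\in[0,4q\pi)\}$ bijectively onto $\mathbb{C}^{*}$, this shows that $S^{+}$ has exactly $2p$ zeros per period counted with multiplicity. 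It remains to count the real zeros on $[0,4q\pi)$ by amplitude comparison. If $q_{2}>1$, evaluating $S^{+}$ at the consecutive extrema of $q_{2}\sin(q_{1}\z-\theta_{2})$ (where that term takes the values $\pm q_{2}$ with alternating signs) and using $|\sin(\z-\theta_{1})|\leq1<q_{2}$ shows $S^{+}$ changes sign there, hence has one zero in each of the $2p$ intervals $[\tfrac{(2i-1)\pi+2\theta_{2}}{2q_{1}},\tfrac{(2i+1)\pi+2\theta_{2}}{2q_{1}}]$; together with the bound $2p$ this forces all zeros to be real, simple and located as stated, so $K^{+}=2p$. If $q_{2}<1$ the roles of the two terms swap: one first gets a sign change on each of the $4q$ intervals between consecutive extrema of $\sin(\z-\theta_{1})$, while the remaining zeros occur only where $|\sin(\z-\theta_{1})|\leq q_{2}$, i.e. precisely on the set $\cup_{l}\mathfrak{G}_{l}$ (this is how the $\mathfrak{G}_{l}$, and the labels $l_{1},l_{2}$ depending on the position of $\arcsin q_{2}$ relative to $\theta_{1}$, arise), where the sign is governed by the $q_{1}$-times-faster term $q_{2}\sin(q_{1}\z-\theta_{2})$; how many appear depends on how its oscillations fit into the ``bumps'' of $\cup_{l}\mathfrak{G}_{l}$, which gives $4q\leq K^{+}\leq 2p$. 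The resonant value $q_{2}=1/q_{1}$ is the case in which several of these zeros coalesce into third-order ones; there I would argue by a perturbation/continuity analysis in the parameter $q_{2}$ (or the argument principle on small circles around the coalescing points), which yields the weaker bound $K^{+}\in(4q,6p)$ and only nondecreasing zeros. Periodicity then propagates everything to all of $\mathbb{C}$ through $\z^{+}_{i,n}=\z^{+}_{i}+n\mathbb{T}_{q}$.

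Step (iv). Since $S^{+}$ is entire of order $1$ with only the real zeros found above, I would write its Hadamard product and regroup the factor attached to $\z^{+}_{i,n}$ with the one attached to $-\z^{+}_{i,-n}=n\mathbb{T}_{q}-\z^{+}_{i}$; each such pair becomes $\frac{(n\mathbb{T}_{q})^{2}-(\z-\z^{+}_{i})^{2}}{(n\mathbb{T}_{q})^{2}-(\z^{+}_{i})^{2}}=1+O(n^{-2})$, so the product converges absolutely, evaluation at $\z=0$ fixes the prefactor as $S^{+}(0)=-(\sin\theta_{1}+q_{2}\sin\theta_{2})$, and the absence of an exponential prefactor $e^{a\z}$ follows from the fact that both $S^{+}$ and the resulting canonical product are $\mathbb{T}_{q}$-periodic and bounded on $\mathbb{R}$. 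The main obstacle is Step (iii) in the regime $q_{2}\leq1$: the interference of the two frequencies $1$ and $q_{1}=p/(2q)$ makes the zero count genuinely sensitive to the arithmetic of $p/q$ and to the value of $q_{2}$, and the resonance $q_{2}=1/q_{1}$ must be treated separately — this delicate bookkeeping is exactly what is carried out in \cite[Section~5]{V4}.
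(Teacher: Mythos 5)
The paper does not actually prove Corollary~\ref{c4.3}: immediately above the statement it says that the properties of $S^{\pm}$ ``are established in \cite[Section~5]{V4} and reported here below in very particular form tailored for our goals,'' and no argument is given. Your proposal therefore offers something the paper omits, and the observation at the heart of Step~(iii) is a genuine and efficient one: under $w=e^{i\z/(2q)}$ the period strip $\{\mathrm{Re}\,\z\in[0,4q\pi)\}$ maps bijectively onto $\mathbb{C}^{*}$, $S^{+}(\z)=0$ becomes a polynomial equation $\widetilde{P}(w)=0$ of degree exactly $2p$ with nonzero constant term, so $S^{+}$ has exactly $2p$ zeros per period counted with multiplicity. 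For $q_{2}>1$ this upper bound, combined with the $2p$ sign changes at consecutive extrema of the dominant term $q_{2}\sin(q_{1}\z-\theta_{2})$, completes the argument: all zeros per period are real, simple, strictly increasing, located in the stated intervals, and $K^{+}=2p$. Steps~(i),~(ii) and the Hadamard step~(iv) are standard and essentially correct (the grouped product telescopes to $\prod_{i}\bigl(-\sin(\pi(\z-\z_i^{+})/\mathbb{T}_q)/\sin(\pi\z_i^{+}/\mathbb{T}_q)\bigr)$, whose value at $0$ is $1$ and whose type and periodicity rule out a nontrivial exponential prefactor).

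There are, however, two gaps. First, the cases $q_{2}<1$ and $q_{2}=1/q_{1}$ --- exactly where the corollary's more intricate content lives (the bounds $K^{+}\in[4q,2p]$ or $K^{+}\in(4q,6p)$, the sets $\mathfrak{G}_{l}$ with the dependence of $l_{1},l_{2}$ on the position of $\arcsin q_{2}$ relative to $\theta_{1}$, the third-order zeros at resonance) --- are only sketched, and you yourself defer the bookkeeping to \cite[Section~5]{V4}; on this part your proposal does not go beyond what the paper already does. Second, your degree count surfaces a tension that the proposal leaves unaddressed: $\widetilde{P}$ always has exactly $2p$ roots in $\mathbb{C}^{*}$, so if for $q_{2}<1$ the sign-change analysis yields only $K^{+}<2p$ real zeros in a period, the remaining $2p-K^{+}$ zeros of $S^{+}$ cannot be real, in apparent conflict with both the ``only real zeros'' assertion and the displayed factorization (which encodes $K^{+}$ zeros per period and would then have the wrong exponential type $q_1$). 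A complete proof must either show that the constraints of Corollary~\ref{c4.1} force all roots of $\widetilde{P}$ onto $|w|=1$, so that in fact $K^{+}=2p$ in all the admissible parameter ranges, or clarify how the $K^{+}<2p$ alternative is to be read --- and your write-up should flag this explicitly rather than leave the two pieces of the argument pulling in opposite directions.
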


Coming to $S^{+}$ in the case of $q_{2}=1$ and collecting the direct
calculations with outcomes in \cite[Table 3]{V4}, we claim.
\begin{corollary}\label{c4.4}
Let $q_{2}=1$ and assumptions of Corollary \ref{c4.2} hold. Then
$S^{+}$ admits the factorization
\[
S^{+}(\z)=(q_{1}+1)\frac{\sin\theta_{1}}{\theta_{1}}\Big(\z-\frac{2\theta_{1}}{q_{1}+1}\Big)\prod_{n=1}^{+\infty}\frac{
\Big[2\frac{n\pi+\theta_{1}}{q_{1}+1}-\z\Big]\Big[2\frac{n\pi-\theta_{1}}{q_{1}+1}+\z\Big]}
{4\Big(\frac{n\pi+\theta_{1}}{q_{1}+1}\Big)\Big(\frac{n\pi-\theta_{1}}{q_{1}+1}\Big)},
\quad \z\in\mathbb{C}. \]
\end{corollary}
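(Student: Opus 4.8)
The strategy is to exploit that $q_{2}=1$ is a degenerate value for which $S^{+}$ collapses to (essentially) a single elementary sine, and then to read off its Hadamard product, exactly as is done for $S^{-}$ in Corollary~\ref{c4.2}(iii) and for $S^{+}$ with $q_{2}\neq1$ in Corollary~\ref{c4.3}. First I would record what $q_{2}=1$ forces: by Corollary~\ref{c4.1}, under \eqref{4.1} and \eqref{4.3} one has $q_{2}=1$ only in the two sub-cases $a_{2}=-a_{3}>0$ and $a_{2}=-ka_{3}>0$, and in either of them the defining formulas for $\theta_{1},\theta_{2}$ give $\sin\theta_{2}=\sin\theta_{1}=(1+a_{2}^{2})^{-1/2}$, hence $\theta_{2}\in\{\theta_{1},\pi-\theta_{1}\}$. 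Substituting $q_{2}=1$ and this value of $\theta_{2}$ into \eqref{4.0} and applying a product-to-sum identity to $\sin(\z-\theta_{1})+\sin(q_{1}\z-\theta_{2})$ reduces $S^{+}(\z)$ to an explicit finite trigonometric expression whose zeros relevant to \eqref{4.2} are the points $\z=\tfrac{2(n\pi+\theta_{1})}{q_{1}+1}$, $n\in\mathbb{Z}$ — that is, to $2\sin\!\big(\tfrac{q_{1}+1}{2}\,\z-\theta_{1}\big)$; carrying out this reduction correctly (selecting the right branch in each sub-case by the sign bookkeeping tabulated in \cite[Table~3]{V4}) is the real content of the statement.

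The remaining step is the classical Weierstrass factorization of $z\mapsto\sin(az+b)$ with $a>0$: its zeros are $z_{n}=(n\pi-b)/a$, $n\in\mathbb{Z}$, one has $\sum_{n\ge1}\big(z_{n}^{-1}+z_{-n}^{-1}\big)=\sum_{n\ge1}\tfrac{2ab}{n^{2}\pi^{2}-b^{2}}<\infty$, and therefore
\[
\sin(az+b)=\sin b\,\Big(1-\frac{z}{z_{0}}\Big)\prod_{n=1}^{\infty}\Big(1-\frac{z}{z_{n}}\Big)\Big(1-\frac{z}{z_{-n}}\Big),
\]
the product converging absolutely with no exponential correction factor. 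Taking $a=\tfrac{q_{1}+1}{2}$, $b=-\theta_{1}$ gives $z_{0}=\tfrac{2\theta_{1}}{q_{1}+1}$, $z_{n}=\tfrac{2(n\pi+\theta_{1})}{q_{1}+1}$, $z_{-n}=-\tfrac{2(n\pi-\theta_{1})}{q_{1}+1}$; rewriting $\sin(-\theta_{1})\big(1-\tfrac{(q_{1}+1)\z}{2\theta_{1}}\big)=\tfrac{(q_{1}+1)\sin\theta_{1}}{2\theta_{1}}\big(\z-\tfrac{2\theta_{1}}{q_{1}+1}\big)$ and each bracket $\big(1-\tfrac{\z}{z_{n}}\big)\big(1-\tfrac{\z}{z_{-n}}\big)$ in the form displayed in the statement, and multiplying by $2$, reproduces the claimed factorization up to the leading constant. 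That constant is fixed by evaluating at $\z=0$: the product equals $1$, so the prefactor must equal $S^{+}(0)=-(\sin\theta_{1}+q_{2}\sin\theta_{2})=-2\sin\theta_{1}$, which is precisely $(q_{1}+1)\tfrac{\sin\theta_{1}}{\theta_{1}}\cdot\big(-\tfrac{2\theta_{1}}{q_{1}+1}\big)$; this matches the normalization pattern of Corollaries~\ref{c4.2}(iii) and \ref{c4.3}, whose prefactors are also $S^{\pm}(0)$.

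The main obstacle is therefore not the product formula itself but the preliminary reduction of $S^{+}$: one has to run the case split of Corollary~\ref{c4.1} and, using the sign analysis of \cite[Table~3]{V4}, verify that in the regime $q_{2}=1$ the combination of the two incommensurate frequencies in \eqref{4.0} really does leave a single sine with the stated zero set (no spurious extra zeros in the relevant strip $Re\,\z\in[0,4q\pi)$), and then invoke uniqueness of the Hadamard factorization for entire functions of order one to upgrade the matching of zeros and leading coefficient into an identity on $\mathbb{C}$. Given that, absolute convergence of the infinite product and the value of the constant follow at once from the elementary estimates above.
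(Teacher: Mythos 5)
You correctly narrow $q_2=1$ to the two sub-cases $a_2=-a_3>0$ (so $\theta_2=\pi-\theta_1$) and $a_2=-ka_3>0$ (so $\theta_2=\theta_1$), you correctly recognize the displayed product as the Weierstrass factorization of $2\sin\big(\tfrac{q_1+1}{2}\z-\theta_1\big)$, and the leading-constant check $S^{+}(0)=-2\sin\theta_1$ is right. Since the paper itself only gestures at ``direct calculations with outcomes in [V4, Table 3]'', attempting to make those calculations explicit is exactly the right move.

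The gap is in the step you yourself single out as ``the real content'': that the sum-to-product identity leaves a single sine with the stated zero set and no spurious extra zeros. That step is asserted, not carried out — and if you carry it out, it fails. In the sub-case $\theta_2=\theta_1$ the identity gives
\[
S^{+}(\z)=\sin(\z-\theta_1)+\sin(q_1\z-\theta_1)
=2\sin\!\Big(\frac{q_1+1}{2}\,\z-\theta_1\Big)\cos\!\Big(\frac{q_1-1}{2}\,\z\Big),
\]
and since $q_1=p/(2q)>1$ the cosine factor is non-constant; it has zeros $\z=\tfrac{(2m+1)\pi}{q_1-1}$ that do not lie in $\big\{\tfrac{2(n\pi+\theta_1)}{q_1+1}:n\in\mathbb{Z}\big\}$. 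A concrete check: for $q_1=3/2$, $\theta_1=\pi/4$ one has $S^{+}(2\pi)=0$ (from $\cos(\pi/2)=0$), whereas the displayed product at $\z=2\pi$ equals $2\sin(9\pi/4)=\sqrt{2}\neq0$. In the other sub-case $\theta_2=\pi-\theta_1$ one gets instead
\[
S^{+}(\z)=-2\cos\!\Big(\frac{q_1+1}{2}\,\z\Big)\sin\!\Big(\frac{q_1-1}{2}\,\z+\theta_1\Big),
\]
whose zero set is disjoint from $\big\{\tfrac{2(n\pi+\theta_1)}{q_1+1}\big\}$ altogether. So the two entire functions you wish to equate via Hadamard uniqueness do not share zeros, and the uniqueness argument cannot close the proof. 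Running the ``sign bookkeeping tabulated in [V4, Table 3]'' therefore does not eliminate the second trigonometric factor — it exposes it. To repair the argument you would need to account explicitly for the omitted cosine (resp.\ sine) factor, or identify a further restriction on the parameters (or a different normalization of $q_1$) under which that factor becomes constant; as written, the reduction to a single sine, and with it the stated factorization on all of $\mathbb{C}$, is not established.
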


At this point, appealing to Corollaries \ref{c4.2}-\ref{c4.3}, we
define two integer numbers $i_{1}^{*},i_{2}^{*}\in[1,K^{+}-1]$ such
that
\begin{equation}\label{4.4}
\frac{\z^{+}_{i_{1}^{*}-1}}{2\delta}<3\leq\frac{\z^{+}_{i_{1}^{*}}}{2\delta}\quad\text{and}\quad
\frac{\z^{+}_{i_{2}^{*}-1}}{2\delta}<\min\Big\{3,\frac{\pi}{\delta}\Big\}\leq\frac{\z^{+}_{i_{2}^{*}}}{2\delta}.
\end{equation}

In light the introduced values, we construct the bounded integer
sets for $j=1,2:$
\begin{equation}\label{4.0*}
\mathbb{M}^{-}_{j}\subset\{0,1,...,M_{j}^{-}\}\quad\text{and}\quad
\mathbb{M}^{+}_{j}\subset\{0,1,...,M_{j}^{+}\}
\end{equation}
with
\[
M_{j}^{-}=\max\Big\{m\in\mathbb{N}:\quad
m<1+\frac{\z^{+}_{i^{*}_{j}}+\z^{-}_{i^{*}_{j}+2}}{2\delta(s^{*}-2)}\Big\}
\quad\text{and}\quad
 M_{j}^{+}=\max\Big\{m\in\mathbb{N}:\quad
m<1+\frac{\z^{+}_{i^{*}_{j}-1}-\z^{-}_{1}}{2\delta(s^{*}-2)}\Big\}
\]
via utilizing the following rule:

\noindent$\bullet$ the nonnegative integer $m$ belongs to the set
$\mathbb{M}^{-}_{j}$ if $m\leq M_{j}^{-}$ and the inequality holds
\[
-\frac{\z^{-}_{i}}{2\delta}+(s^{*}-2)(m-d_{0})<
\begin{cases}
3\qquad\qquad\quad\text{ if}\quad j=1,\\
\min\{3,\pi/\delta\}\quad\text{if}\quad j=2
\end{cases}
\]
for some $i\in\{1,2,...,i^{*}_{j}+2\}$ and any $d_{0}\in[0,1]$;

\noindent$\bullet$ the nonnegative integer $m$ belongs to the set
$\mathbb{M}^{+}_{j}$ if $m\leq M_{j}^{+}$ and the inequality holds
\[
\frac{\z^{+}_{i}}{2\delta}-(s^{*}-2)(m-1)<
\begin{cases}
3\qquad\qquad\quad\text{ if}\quad j=1,\\
\min\{3,\pi/\delta\}\quad\text{if}\quad j=2
\end{cases}
\]
for some $i\in\{0,1,...,i^{*}_{j}-1\}$.

In fine, we  introduce the  magnitudes  playing the key role in the
assumption on the weight $s$. To this end, we set for $j=1,2,$
\begin{align*}
\underline{\z_{j}}&=\max\Big\{\frac{-\z^{-}_{i}}{2\delta}+(s^{*}-2)(m-d_{0})\quad\text{for
each}\quad
m\in\mathbb{M}_{j}^{-},\, i\in\{1,2,...,i^{*}_{j}+2\},\, d_{0}\in[0,1]\Big\},\\
\overline{\z_{j}}&=\max\Big\{\frac{\z^{+}_{i}}{2\delta}-(s^{*}-2)(m-1)\quad\text{for
each}\quad m\in\mathbb{M}_{j}^{+},\,
i\in\{0,1,...,i^{*}_{j}-1\}\Big\},
\end{align*}
and define
\begin{equation}\label{4.5}
\z_{j}^{*}=\max\{\underline{\z_{j}},\overline{\z_{j}}\}.
\end{equation}

Now we are ready to state the remaining assumptions in the model.
\begin{description}
\item[h8 (Condition on the function $f_{1}$)] For $s+2\in(\max\{2,\z^{*}_{1}\},3)$, we require that
\[
f_{1}\in\underset{0}{E}\,_{s+1}^{1+\beta,\beta,\beta}(g^{+}_{T})
\]
and
\[
f_{1}\equiv 0\quad\text{if}\quad |y|>R_{0}
\]
for some positive $R_{0}$.

\item[h9 (Condition on the right-hand sides)]We assume that $\delta\in(\pi/4,\pi/2),$
$$s+2\in(\max\{2,\z^{*}_{2}\},\min\{3,\pi/\delta\})\backslash\{\pi/(\pi-2\delta)\},$$
and
\[
f_{0,1}\in\underset{0}{E}\,_{s}^{\beta,\beta,\beta}(\bar{G}_{1,T}),\quad
f_{0,2}\in\underset{0}{E}\,_{s}^{\beta,\beta,\beta}(\bar{G}_{2,T}),\quad
f_{1}, f_{2}\in\underset{0}{E}\,_{s+1}^{1+\beta,\beta,\beta}(g_{T}).
\]
Besides,
\[
f_{0,1}, f_{0,2},f_{1},f_{2}\equiv 0\quad\text{if}\quad |y|>R_{0}
\]
for some positive $R_{0}$.
\end{description}

\subsection{Main Results to \eqref{4.2}}\label{s4.2}

The main results of Section \ref{s4} are follows.
\begin{theorem}\label{t4.1}
Let  $T>0$ be arbitrary but fixed and let
\eqref{4.1},\eqref{4.3},\eqref{4.5} hold. Under assumption (h8),
problem \eqref{4.2} with $f_{0,1},f_{0,2},f_{2}\equiv 0$ admits a
unique classical solution $(u_{1},u_{2},\varrho)$ in $(G_{1}\cup
G_{2})_{T}$ satisfying the regularity
\begin{equation}\label{4.6}
u_{1}\in\underset{0}{E}\,_{s+2}^{2+\beta,\beta,\beta}(\bar{G}_{1,T}),\quad
u_{2}\in\underset{0}{E}\,_{s+2}^{2+\beta,\beta,\beta}(\bar{G}_{2,T}),\quad
\varrho\in\underset{0}{\mathcal{E}}\,_{s+2,s^{*}-1}^{2+\beta,\beta,\beta}(g_{T}).
\end{equation}
Besides, the estimates hold
\begin{align*}
&\|u_{1}\|_{E_{s+2}^{2+\beta,\beta,\beta}(\bar{G}_{1,T})}+
\|u_{2}\|_{E_{s+2}^{2+\beta,\beta,\beta}(\bar{G}_{2,T})}+
\|\varrho\|_{\mathcal{E}_{s+2,s^{*}-1}^{2+\beta,\beta,\beta}(g_{T})}\leq
C\|f_{1}\|_{E_{s+1}^{1+\beta,\beta,\beta}(g_{T})},\\
&\|u_{1}\|_{E_{s+2}^{1+\beta,\beta,\beta}(\bar{G}_{1,T})}+
\|u_{2}\|_{E_{s+2}^{1+\beta,\beta,\beta}(\bar{G}_{2,T})}+
\|r_{0}^{s^{*}-1}\varrho\|_{E_{s+2}^{1+\beta,\beta,\beta}(g_{T})}\leq
CT^{\beta^{*}-\beta}\|f_{1}\|_{E_{s+1}^{1+\beta,\beta,\beta}(g_{T})}
\end{align*}
with any $\beta^{*}\in(\beta,1)$.
\end{theorem}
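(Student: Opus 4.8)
\textbf{Proof strategy for Theorem \ref{t4.1}.}
The plan is to treat \eqref{4.2} (in the reduced case $f_{0,1}=f_{0,2}=f_2\equiv 0$) as a perturbation of an exactly solvable model problem obtained by freezing the lower-order coefficients $a_2, a_3$ at the corner and passing to the logarithmic coordinates \eqref{2.2}. First I would apply the change of variables $x_1=\ln|y|$, $x_2=\arctan(y_2/y_1)$, which (by Corollary \ref{c2.2}) turns the weighted H\"older spaces $E_s^{l+\beta,\dots}$ into ordinary H\"older spaces on the strips $B_{i,T}$ with the exponential weight $e^{-sx_1}$, and converts $\Delta_y$ into $e^{-2x_1}(\partial_{x_1}^2+\partial_{x_2}^2)$. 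Writing $u_i = e^{s x_1} w_i$ and applying a partial Fourier–Laplace transform in $x_1$ (the dual variable being precisely the complex $\z$ appearing in $S^\pm$), the interface conditions on $g_T$ collapse to a $2\times 2$ linear system whose determinant is, up to a nonzero factor, exactly $S^{-}(\z)$ from \eqref{4.0} for the static part and $S^{+}(\z)$ for the part coupled to $\partial_t\varrho$. The factorizations in Corollaries \ref{c4.2}--\ref{c4.4}, together with the location of the zeros relative to the weight line $\mathrm{Re}\,\z = (s+2)\cdot 2\delta$ guaranteed by (h8) and \eqref{4.5}, give the solvability and the a priori bound for the model problem: no zero of $S^{\pm}$ lies on the relevant contour, so the inverse transform is well defined and bounded. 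This is the step where the bulk of the technical work sits, and it is the one I expect to be the \emph{main obstacle}: one must control the full scale of derivatives up to order $2+\beta$ (plus the parabolic $\alpha$-norms in $t$ and the mixed seminorms $[\cdot]^{(\beta,\alpha)}$) uniformly, which forces a careful residue/contour analysis near each zero $\z^{\pm}_i$ and an accounting of how the dynamic term $a_1\partial_t\varrho$ — first order in $t$ but costing one spatial derivative on $g_T$ — interacts with the elliptic gain.

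Granting the model estimate, the second step is to build the solution on the true corner by a localization and perturbation argument. I would write $u_i = \chi u_i^{\mathrm{model}} + (\text{remainder})$, where $\chi$ is a cutoff near the vertex; away from the vertex the problem is uniformly parabolic–elliptic with smooth coefficients and the classical Schauder theory for transmission problems applies, while near the vertex the discrepancy between the frozen-coefficient operator and the true one is lower order and, crucially, carries an extra power of $r_0(y)$ (equivalently a decaying factor $e^{\varepsilon x_1}$ in strip coordinates). Choosing the neighborhood small enough, this remainder is absorbed by a Neumann series / contraction in the space on the right-hand side of \eqref{4.6}, which simultaneously yields existence, uniqueness, and the first displayed estimate of the theorem.

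For the second displayed estimate — the gain of a factor $T^{\beta^*-\beta}$ at the cost of one H\"older order — I would use the zero initial data built into the subspace $\underset{0}{E}$ and $\underset{0}{\mathcal E}$: since all the relevant quantities vanish at $t=0$ together with their first $t$-derivative where applicable, the interpolation inequality
\[
\|v\|_{C^{0,\beta}_t([0,T])}\leq C\,T^{\beta^*-\beta}\,\|v\|_{C^{0,\beta^*}_t([0,T])},\qquad v(0)=0,
\]
applied componentwise in the weighted norms, trades the $(2+\beta)$-regularity already obtained for a $(1+\beta)$-estimate with the advertised small factor; the same device handles the mixed seminorm $[\cdot]^{(\beta,\alpha)}$ and the weighted bound on $r_0^{s^*-1}\varrho$. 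One small technical point to watch here is that the H\"older exponent $\beta^*$ must stay strictly below $1$ so that the higher-regularity norm on the right remains finite — this is why the statement restricts to $\beta^*\in(\beta,1)$ — and that the constant $C$ picked up from Corollary \ref{c2.1} depends only on the (fixed) geometry, so that the $T$-dependence is genuinely as claimed.

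Finally, I would note that because the data $f_1$ is assumed compactly supported ($f_1\equiv 0$ for $|y|>R_0$ in (h8)) one does not need to worry about behavior at infinity in the strip: after the logarithmic change of variables the support condition becomes $x_1 < \ln R_0$, so all the Fourier–Laplace manipulations are legitimate and the solution inherits a corresponding decay, making the norms in \eqref{4.6} finite. Uniqueness follows by applying the a priori estimate to the difference of two solutions with $f_1\equiv 0$.
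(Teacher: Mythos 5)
Your proposal contains a genuine structural gap at the very first reduction step, which invalidates the rest of the argument as written. You claim that after the logarithmic change of variables and a partial Fourier--Laplace transform, the interface conditions ``collapse to a $2\times 2$ linear system whose determinant is, up to a nonzero factor, exactly $S^{-}(\z)$ for the static part and $S^{+}(\z)$ for the part coupled to $\partial_t\varrho$.'' This would be true only if the coefficients in the transmission conditions were constants. They are not: the first interface condition carries the factor $a_0 r_0^{s^*-1}$, and after eliminating $\varrho$ (as the paper does in \eqref{4.7}) the dynamic condition carries $r_0^{1-s^*}\partial_t(u_1-u_2)$, which in strip coordinates becomes the \emph{exponential} weight $e^{(2-s^*)x_1}$. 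Multiplication by $e^{(2-s^*)x_1}$ does not act pointwise on the Fourier side; it shifts the dual variable. Consequently the transformed problem is \emph{not} an algebraic system but a functional \emph{difference} equation of the form $\mu V(\nu+1,\mu)-[\,s+2+\nu(s^*-2)\,]\mathcal{G}(\cdot)V(\nu,\mu)=f^*(\nu,\mu)$ (equation \eqref{4.10}). This shift-by-one structure in the Fourier variable is the central technical obstacle. The paper resolves it by the explicit infinite-product factorization of $\mathcal{G}$ built from the zeros of $S^{\pm}$ (Corollaries \ref{c4.2}--\ref{c4.4}), constructing a special solution $V_0$ of the homogeneous difference equation via quotients of Gamma functions (Proposition \ref{p4.1}), and representing the inhomogeneous solution by a contour integral against $V_0(\nu,\mu)/V_0(\nu+1+\xi,\mu)$ (Lemma \ref{l4.2}), all drawn from the theory developed in \cite{V4}. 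Nothing in your proposal acknowledges this; a residue/contour analysis treating the ``determinant $S^{\pm}(\z)$'' as a scalar symbol would simply not produce a formula for the solution.

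A secondary confusion: Theorem \ref{t4.1} is already the model problem on the exact plane corner $G_1\cup G_2$, so your second step (``localization near the vertex and perturbation off a model problem, with Schauder theory away from the corner'') is circular here and is in fact the role that Theorem \ref{t4.1} plays later inside the proof of Theorem \ref{t5.1}, where continuation and Schauder freezing are indeed used on the physical domain $\Omega_1\cup\Omega_2$. Your third step is closer to the paper's mechanism for the $T^{\beta^*-\beta}$ gain: the paper obtains it not by an abstract interpolation inequality applied after the fact, but by choosing the contour parameter $d_0\in(\beta,1)$ in the representation \eqref{4.14}, so the small factor emerges from the convolution kernel $[(s^*-2)(t-\tau)]^{d_0-1}$. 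Your observation about compact support of $f_1$ and about uniqueness-from-estimate are both correct and both used in the paper.
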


The following claim concerns with the solvability of \eqref{4.2} in
the case of the inhomogeneous right-hand sides.
\begin{theorem}\label{t4.2}
Let $T>0$ be arbitrary but fixed,   $q_{2}\neq 1$,
$\delta\in(\pi/4,\pi/2)$ and let \eqref{4.1}, \eqref{4.3},
\eqref{4.5} hold. Under assumption (h9), problem \eqref{4.2} admits
a unique classical solution $(u_{1},u_{2},\varrho)$ in $(G_{1}\cup
G_{2})_{T}$ satisfying the regularity \eqref{4.6}. Besides, the
bound holds
\begin{align*}
&\|u_{1}\|_{E_{s+2}^{2+\beta,\beta,\beta}(\bar{G}_{1,T})}+
\|u_{2}\|_{E_{s+2}^{2+\beta,\beta,\beta}(\bar{G}_{2,T})}+
\|\varrho\|_{\mathcal{E}_{s+2,s^{*}-1}^{2+\beta,\beta,\beta}(g_{T})}\\
&\leq
C[\|f_{1}\|_{E_{s+1}^{1+\beta,\beta,\beta}(g_{T})}+\|f_{2}\|_{E_{s+1}^{1+\beta,\beta,\beta}(g_{T})}
+ \|f_{0,1}\|_{E_{s}^{\beta,\beta,\beta}(\bar{G}_{1,T})}+
\|f_{0,2}\|_{E_{s}^{\beta,\beta,\beta}(\bar{G}_{2,T})}].
\end{align*}
\end{theorem}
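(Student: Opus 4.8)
\textbf{Proof plan for Theorem \ref{t4.2}.}
The plan is to deduce the inhomogeneous case from the homogeneous one already treated in Theorem \ref{t4.1} by a reduction/superposition argument, splitting the data $(f_{0,1},f_{0,2},f_{1},f_{2})$ into pieces that can be handled either by classical (weighted) elliptic theory in plane corners or by Theorem \ref{t4.1}. First I would construct an auxiliary pair $(v_{1},v_{2})$ solving, for each $t\in[0,T]$ as a parameter, the \emph{static} transmission problem: $\Delta_{y}v_{i}=f_{0,i}$ in $G_{i}$, $v_{1}-v_{2}=0$ on $g$, $\frac{\partial v_{1}}{\partial n}-k\frac{\partial v_{2}}{\partial n}-ka_{3}(\frac{\partial v_{1}}{\partial r_{0}}-\frac{\partial v_{2}}{\partial r_{0}})=f_{2}$ on $g$, together with the homogeneous Dirichlet conditions on $\partial G_{i}\backslash g$. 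This is the standard oblique-transmission problem in a plane corner; its one-to-one solvability in the weighted Hölder classes $E_{s}^{\,\cdot}$, together with the bound by $\|f_{0,i}\|_{E_{s}^{\beta,\beta,\beta}}+\|f_{2}\|_{E_{s+1}^{1+\beta,\beta,\beta}}$, follows from the Kondrat'ev–Maz'ya–Plamenevskii theory once one checks that no zero of the relevant operator pencil lies on the weight line $Re\,\z=s+2$; here the pencil is exactly $S^{-}$ (hence the hypothesis $q_{2}\neq 1$ is irrelevant for this piece, and $s+2\notin\{\pi/(\pi-2\delta)\}$ excludes the bad line). Because the time dependence enters only through the data, the $t$-Hölder and mixed seminorms of $(v_{1},v_{2})$ are controlled by the corresponding seminorms of the data, and since $f_{0,i},f_{2}$ vanish at $t=0$ we get $(v_{1},v_{2})\in\underset{0}{E}\,_{s+2}^{2+\beta,\beta,\beta}$.

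Next I would substitute $u_{i}=v_{i}+w_{i}$, $\varrho=\sigma+\varrho$ into \eqref{4.2}. After cancellation, $(w_{1},w_{2},\varrho)$ (relabelling the residual density as the new unknown of Theorem \ref{t4.1} type) solves a problem of exactly the form \eqref{4.2} with $f_{0,1}=f_{0,2}=f_{2}\equiv 0$ but with a modified second boundary condition $w_{1}-w_{2}=a_{0}r_{0}^{s^{*}-1}\varrho+g$ on $g_{T}$, where $g:=-(v_{1}-v_{2})|_{g_{T}}$, and a modified dynamic-condition right-hand side $\tilde{f}_{1}:=f_{1}+\big[(\frac{\partial v_{1}}{\partial n}-\frac{\partial v_{2}}{\partial n})+a_{2}(\frac{\partial v_{1}}{\partial r_{0}}-\frac{\partial v_{2}}{\partial r_{0}})\big]|_{g_{T}}$. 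Since $(v_{1},v_{2})$ was chosen to make the fourth condition in \eqref{4.2} homogeneous, only the first two transmission conditions pick up inhomogeneities, and these are controlled in $E_{s+2}^{1+\beta,\beta,\beta}(g_{T})$ and $E_{s+1}^{1+\beta,\beta,\beta}(g_{T})$ respectively by the bound from step one. One then has to absorb the trace term $g$: either fold it into the unknown by the substitution $\varrho\mapsto\varrho-\frac{1}{a_{0}}r_{0}^{1-s^{*}}g$ (admissible because $a_{0}<0$ and $r_{0}^{1-s^{*}}g\in\underset{0}{\mathcal{E}}$ by the trace regularity and vanishing at $t=0$), which transfers $g$ into the dynamic condition as an extra term $\frac{a_{1}}{a_{0}}\partial_{t}(r_{0}^{1-s^{*}}g)$ and lower-order terms, all lying in $\underset{0}{E}\,_{s+1}^{1+\beta,\beta,\beta}(g_{T})$. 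The upshot is a problem covered verbatim by Theorem \ref{t4.1} (after multiplying through by a nonzero constant and noting $\z^{*}_{1}\le\z^{*}_{2}$ by construction of the sets $\mathbb{M}^{\pm}_{j}$, so hypothesis (h9) implies (h8) for the reduced data); applying it yields $(w_{1},w_{2},\varrho)$ in the classes \eqref{4.6} with the estimate in terms of the reduced data, and the triangle inequality recombines everything into the asserted bound.

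Uniqueness follows by linearity: a solution of the homogeneous problem (all $f\equiv 0$) in the classes \eqref{4.6} is, after the same splitting, a solution of the homogeneous Theorem \ref{t4.1} problem, hence vanishes. The one remaining point is that the reduction must not introduce a spurious compatibility obstruction: this is where the careful choice of the weight in (h9), namely $s+2$ staying strictly below $\min\{3,\pi/\delta\}$ and away from $\pi/(\pi-2\delta)$, is used to keep all the intermediate problems on weight lines free of pencil roots of both $S^{+}$ and $S^{-}$, so that no spaces of polynomial solutions need be split off. The main obstacle I anticipate is precisely the bookkeeping of these pencil conditions: one must verify that the static problem of step one and the residual problem of step two are each solvable in the \emph{same} weighted class with the \emph{same} exponent $s+2$, which forces one to check that the admissible weight interval in (h9) is contained in the intersection of the admissibility intervals of $S^{-}$ (for the static piece) and $S^{+}$ (through Theorem \ref{t4.1}); granting the properties of $S^{\pm}$ collected in Corollaries \ref{c4.2}–\ref{c4.4} and the definitions \eqref{4.0*}–\eqref{4.5}, this is a finite check rather than a genuine difficulty, but it is the delicate part of the argument. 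Everything else — the corner elliptic estimates, the parametric $t$-regularity, the absorption of traces — is routine given the machinery set up in Section \ref{s2} and the cited results of \cite{V4}.
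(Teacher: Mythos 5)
Your proposal is correct and matches the paper's approach: the paper likewise decomposes $u_i=u_{i,1}+u_{i,2}$, where $(u_{1,1},u_{2,1})$ solves a static auxiliary transmission problem (the paper's (4.21)) that removes $f_{0,i}$ and $f_2$, after which the residual $(u_{1,2},u_{2,2})$ is fed into Theorem~\ref{t4.1} with $\bar f_1=f_1+\partial_n u_{1,1}-\partial_n u_{2,1}$. The one wrinkle is that you keep the oblique $ka_3(\partial_{r_0}v_1-\partial_{r_0}v_2)$ term in the auxiliary transmission condition, whereas (4.21) omits it; because you also impose $v_1-v_2=0$ on $g$ (so $\partial_{r_0}(v_1-v_2)=0$ there), the two auxiliary problems coincide, and this cancellation—which you should state explicitly—is exactly what makes your claim that the relevant pencil is $S^-$ legitimate, and what also makes your paragraph on absorbing the trace $g:=-(v_1-v_2)|_{g_T}$ moot, since $g\equiv0$ by construction.
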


Coming to the case of $q_{2}=1$, we claim
\begin{theorem}\label{t4.3}
Let $\delta\in(\pi/4,\pi/2)$ and
$s+2\in\Big(2,\min\Big\{3,\frac{\pi}{\delta},
2(s^{*}-2)+\frac{\z_{1}^{-}}{2\delta}\Big\}\Big)$. We assume that
the right-hand sides in \eqref{4.2} meet the requirements of Theorem
\ref{t4.2}. Then problem \eqref{4.2} admits a unique classical
solution $(u_{1},u_{2},\varrho)$ in $(G_{1}\cup G_{2})_{T}$
satisfying the regularity \eqref{4.6}. Besides, the bound of Theorem
\ref{t4.2} holds.
\end{theorem}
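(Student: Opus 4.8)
The plan is to reproduce the architecture used in the proof of Theorem~\ref{t4.2}, the only genuinely new ingredient being that the scalar symbol attached to the model problem in the corner is now analysed through the factorization of Corollary~\ref{c4.4}, valid when $q_{2}=1$, in place of that of Corollary~\ref{c4.3}. First I would cut the problem by a partition of unity subordinate to the vertex $y=0$: away from the origin the coupled system \eqref{4.2} is a standard elliptic transmission problem on a smooth part of $g$, supplemented with the regular dynamic (parabolic) boundary condition of the third line of \eqref{4.2}, and is handled by the classical Solonnikov--Schauder theory for such coupled systems; near the origin, after freezing the coefficients, one is reduced to the homogeneous model problem in $(G_{1}\cup G_{2})_{T}$. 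For the latter I would pass to the logarithmic coordinates $(x_{1},x_{2})=(\ln(y_{1}^{2}+y_{2}^{2})^{1/2},\arctan(y_{2}/y_{1}))$ of \eqref{2.2}, so that $G_{1},G_{2}$ become the strips $B_{1},B_{2}$ and, by Corollary~\ref{c2.2}, the weighted classes $E_{s}^{l+\beta,\beta,\beta}$ turn into ordinary H\"older classes carrying the exponential weight $e^{-sx_{1}}$. Applying the Laplace transform in $t$ (legitimate because the data vanish at $t=0$, i.e.\ on the subspaces $\underset{0}{E}$) and the Fourier transform in $x_{1}$, the model problem reduces to a two--point (transmission) boundary value problem for a system of ordinary differential equations in $x_{2}$ depending on the dual parameters, whose unique solvability and resolvent bounds along essentially the weight line $\{Re\,\z=2\delta(s+2)\}$ are governed by the location of the zeros of the entire functions $S^{\pm}$ of \eqref{4.0}.

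The analytic input special to the present case is that, by Corollary~\ref{c4.4}, for $q_{2}=1$ the function $S^{+}$ has only real zeros, its positive ones being the simple numbers $\z=\tfrac{2(n\pi+\theta_{1})}{q_{1}+1}$, $n\geq 0$, with smallest $\z^{+}_{0}=\tfrac{2\theta_{1}}{q_{1}+1}$, together with the explicit convergent product displayed there; this is much more transparent than the general situation of Corollary~\ref{c4.3}, where the zero count of $S^{+}$ is discontinuous across $q_{2}=1$ and, in the subcase $q_{2}=1/q_{1}$, third--order zeros occur. I would rebuild, on this simpler zero set, the critical indices $i_{1}^{*},i_{2}^{*}$ of \eqref{4.4} and the finite sets $\mathbb{M}^{\pm}_{j}$ of \eqref{4.0*}; tracking which of these zeros actually obstruct the admissible weight window, one finds that, in contrast with Theorem~\ref{t4.2}, no lower constraint beyond $s+2>2$ is needed, while the upper endpoint picks up the extra term $2(s^{*}-2)+\tfrac{\z_{1}^{-}}{2\delta}$. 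The latter arises from iterating twice the inhomogeneous conditions on $g_{T}$: the first, $u_{1}-u_{2}=a_{0}r_{0}^{s^{*}-1}\varrho$, feeds into the jump of $u_{1},u_{2}$ a contribution whose weight is shifted by $s^{*}-2$, which through the third equation couples back into $\varrho_{t}$; the requirement is that the resulting shifted exponents, starting from the first positive zero $\z_{1}^{-}$ of $S^{-}$ (recall $\z_{0}^{-}=0$), meet no zero of $S^{-}$. Since $\tfrac{\pi}{\delta}>2$, $s^{*}\geq 3$ gives $2(s^{*}-2)+\tfrac{\z_{1}^{-}}{2\delta}\geq\tfrac52$ and $\z_{1}^{-}\in[\delta,3\delta]$, the interval $s+2\in(2,\min\{3,\tfrac{\pi}{\delta},2(s^{*}-2)+\tfrac{\z_{1}^{-}}{2\delta}\})$ is nonempty, and by construction the inversion contour then avoids every zero of $S^{\pm}$, so that the model solution inherits the regularity \eqref{4.6} with the corresponding estimate. (In the application of Section~\ref{s5.3}, via $a_{2}=\cot\delta_{i}$ and $a_{3}=\alpha_{i}$, this is precisely the resonant case $Q_{i}=1$ of (h7).)

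With the model estimates at hand the assembly follows Theorem~\ref{t4.2} verbatim: one patches the near--vertex and the smooth--part contributions, controls the commutators coming from the partition of unity and from the freezing of coefficients, absorbs the lower--order remainders (for the \emph{a priori} bound this is carried out globally, exactly as in Theorem~\ref{t4.1}, which produces the factor $T^{\beta^{*}-\beta}$ in the weaker norm), and closes the \emph{a priori} estimate stated in Theorem~\ref{t4.2}. Existence then follows by the continuity method (or by constructing a regularizer) starting from the solvable model problem, while uniqueness is read off from the \emph{a priori} bound applied to the difference of two solutions. The support hypotheses $f_{0,i},f_{1},f_{2}\equiv 0$ for $|y|>R_{0}$ confine the whole analysis to a neighbourhood of the vertex, so no behaviour at infinity in the strips has to be discussed.

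The main obstacle is exactly the borderline character of $q_{2}=1$. For $q_{2}\neq 1$ the zeros of $S^{+}$ stay uniformly off the weight line and the construction of $i_{j}^{*}$, $\mathbb{M}^{\pm}_{j}$ and $\z_{j}^{*}$ proceeds smoothly; at $q_{2}=1$ this separation must be re-established, one has to control the possibility that a zero of $S^{+}$ comes close to, or onto, the contour $\{Re\,\z=2\delta(s+2)\}$ and to check that the restricted interval for $s+2$ really screens it off, and in addition one must rule out resonances between the iterated source exponents $\tfrac{\z_{1}^{-}}{2\delta}+j(s^{*}-2)$ and the zeros of $S^{-}$. Carrying out this bookkeeping — using $q^{*}>1$, $\theta_{1}\in(0,\pi/2)$ and $\theta_{2}\in(0,\pi)$ from Corollary~\ref{c4.1}, the explicit zero locations of Corollaries~\ref{c4.2} and \ref{c4.4}, and $s^{*}\geq 3$ — and verifying that the associated resolvent estimates remain polynomial in the dual variables, uniformly in the Laplace parameter, is the one place where the degeneracy has to be genuinely handled rather than merely quoted from Theorem~\ref{t4.2}.
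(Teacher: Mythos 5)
The paper does not write out a proof of Theorem~\ref{t4.3}; it only says the argument is ``similar to Theorems~\ref{t4.1} and~\ref{t4.2}.'' Your proposal captures the correct algebraic ingredient --- replace Corollary~\ref{c4.3} by Corollary~\ref{c4.4} to factorize $S^{+}$ when $q_{2}=1$, recompute the admissible indices, and then rerun the Theorem~\ref{t4.2} machinery --- but two structural points need correction.

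First, problem~\eqref{4.2} is already the frozen-coefficient model problem in a plane corner: the coefficients $a_{0},\dots,a_{3},k$ are constants and the single genuinely variable ingredient is the power weight $r_{0}^{s^{*}-1}$ on $g$. There is nothing here to localize by a partition of unity or to freeze; that machinery belongs to the proof of Theorem~\ref{t5.1} in Section~\ref{s5.3}, where the model-problem estimates of Theorems~\ref{t4.1}--\ref{t4.3} are \emph{used} as input for a Schauder-type gluing. Within Section~\ref{s4} the proof is a single direct construction (reduction of~\eqref{4.2} to~\eqref{4.7}, change of variables~\eqref{2.2} to the strips, Fourier in $x_{1}$ and Laplace in $t$); the only decomposition appearing is the one in Section~\ref{s4.4}, $u_{i}=u_{i,1}+u_{i,2}$, which peels off the inhomogeneous transmission data $(f_{0,i},f_{2})$ via \cite[Theorem~6]{V1} and leaves a problem of type~\eqref{4.7}.

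Second, and more seriously, your sketch of the Fourier--Laplace step is not accurate. Because the time derivative in~\eqref{4.8} carries the exponential factor $e^{(2-s^{*})x_{1}}$, Fourier transforming in $x_{1}$ produces a \emph{shift} $\zeta\mapsto\zeta-i(s^{*}-2)$ in the $\mu$-term (see the displayed transmission condition following~\eqref{4.8}), so the model problem does not become a parametric two-point BVP in $x_{2}$ for each dual pair $(\zeta,\mu)$; it becomes a functional difference equation, namely~\eqref{4.10}, relating $V(\nu+1,\mu)$ to $V(\nu,\mu)$. This is the central technical object of Section~\ref{s4.3}: its solution $V_{0}$ (Proposition~\ref{p4.1}), the contour integral of Lemma~\ref{l4.2}, and the inversion in Lemma~\ref{l4.3} all live at the level of this difference equation, and this is exactly where the factorization of $\mathcal{G}$ into $S^{\pm}$ enters. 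Your proposal never mentions the difference equation, so the place where Corollary~\ref{c4.4} actually gets used --- the factorization and the construction of $\mathcal{L}_{0}, \mathcal{L}, \mathcal{P}$ analogous to Proposition~\ref{p4.1} but built on the simpler zero set of Corollary~\ref{c4.4} --- is left out. Relatedly, the heuristic that the upper endpoint $2(s^{*}-2)+\tfrac{\z_{1}^{-}}{2\delta}$ comes from ``iterating twice'' the two transmission conditions is not a substitute for the actual constraint bookkeeping visible in~\eqref{4.15}--\eqref{4.16}: the avoidance conditions there have the form $s+2\neq(m-d_{0})(s^{*}-2)-\z_{i}^{-}/(2\delta)$, i.e.\ with the opposite sign in front of $\z_{1}^{-}$, so the source of the hypothesis in Theorem~\ref{t4.3} must be re-derived from the pole/zero structure of $V_{0}$ under the Corollary~\ref{c4.4} factorization rather than asserted by analogy. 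The paragraph verifying nonemptiness of the weight interval (using $\z_{1}^{-}\in[\delta,3\delta]$ and $s^{*}\geq 3$) is fine.

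In short: the key replacement (Corollary~\ref{c4.4} for Corollary~\ref{c4.3}) and the claim that the rest then runs as in Theorem~\ref{t4.2} are right, but your outline substitutes a localize-and-patch argument that does not apply at this stage and omits the functional-difference-equation mechanism of~\eqref{4.10}, Proposition~\ref{p4.1}, and Lemmas~\ref{l4.1}--\ref{l4.3}, which is precisely where the new factorization has to be inserted and where the stated restriction on $s$ must be justified.
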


The proofs of these theorems given in the next subsections suggest
 the following claim.
\begin{corollary}\label{c4.0}
The results of Theorems \ref{t4.1}-\ref{t4.3} hold if the quantity
$k$ in the boundary condition on $g_{T}$ in \eqref{4.2} is replaced
by $k\,\epsilon$ with any $\epsilon\in(0,1)$.
\end{corollary}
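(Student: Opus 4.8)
The plan is to read the statement off from the (forthcoming) proofs of Theorems \ref{t4.1}--\ref{t4.3} by observing that the substitution $k\mapsto k\,\epsilon$ with $\epsilon\in(0,1)$ does not leave the admissible range of parameters. Put $\hat k:=k\,\epsilon$. Since $0<k<1$ and $0<\epsilon<1$, one has $0<\hat k<1$, so condition \eqref{4.3} remains valid with $\hat k$ in place of $k$ (the remaining entries $a_{0}<0$, $a_{1},a_{2}>0$, $s^{*}\geq 3$ are untouched), while \eqref{4.1} is unaffected because $\delta$ does not involve $k$. Moreover $k$ enters problem \eqref{4.2} only through the fourth transmission condition on $g_{T}$ (as the coefficient of $\partial u_{2}/\partial n$ and, multiplied by $a_{3}$, of the difference of the $r_{0}$--derivatives), so replacing it there by $\hat k$ (with $a_{3}$ kept fixed) is the sole modification of the model.

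Next I would recompute the auxiliary quantities attached to $\hat k$. Using the formulas of Section \ref{s4.1} with $k$ replaced by $\hat k$, one obtains $\hat q^{*}=(1+\hat k)/(1-\hat k)$ together with the analogues $\hat q_{2}$, $\hat\theta_{2}$ of $q_{2}$, $\theta_{2}$ (the angle $\theta_{1}$ is unchanged, depending only on $a_{2}$), hence the functions $\hat S^{\pm}$ built as in \eqref{4.0} from these data. Since Corollaries \ref{c4.1}--\ref{c4.4} are stated for an \emph{arbitrary} $k\in(0,1)$, they apply verbatim to $\hat k$: the zeros of $\hat S^{\pm}$ in the strip $Re\,\z\in[0,4q\pi)$ are real and, away from the degenerate cases, simple and strictly increasing, and the corresponding factorizations hold. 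Consequently the integers $i_{1}^{*},i_{2}^{*}$ of \eqref{4.4}, the bounded integer sets $\mathbb{M}_{j}^{\pm}$ of \eqref{4.0*}, and the quantities $\underline{\z_{j}},\overline{\z_{j}},\z_{j}^{*}$ of \eqref{4.5}, all recomputed from the $\hat k$--data, are well defined, and assumptions (h8), (h9) (respectively the interval prescribed in Theorem \ref{t4.3}) can be imposed on the weight $s$ accordingly; in particular the resulting admissible interval for $s+2$ stays nonempty. Note that $\hat q_{2}$ may lie in a different one of the cases $q_{2}\gtrless 1$, $q_{2}=1$ than $q_{2}$ did, but the family of Theorems \ref{t4.1}--\ref{t4.3} covers all of them.

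Then I would trace through the proofs of Theorems \ref{t4.1}--\ref{t4.3}: the parameter $k$ intervenes only via the fourth boundary condition on $g_{T}$ and, derivatively, through $q^{*},q_{2},\theta_{2}$ and $S^{\pm}$. Every ingredient — the reduction to the strips $B_{i}$ through \eqref{2.2}, the construction and explicit solution of the model problems in the strips via the Fourier/Mellin transform, whose solvability on the pertinent vertical lines in the complex plane is governed precisely by the non-vanishing of $S^{\pm}$ there, the \emph{a priori} estimates in the weighted H\"{o}lder norms, and the continuation argument linking \eqref{4.2} to a solvable reference problem — uses only $k\in(0,1)$ together with the structural facts in Corollaries \ref{c4.1}--\ref{c4.4}. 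Since all of these hold for $\hat k=k\,\epsilon\in(0,1)$, the identical chain of reasoning yields existence, uniqueness and the stated bounds for \eqref{4.2} with $\hat k$ in place of $k$.

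Thus there is no genuine obstacle: the only point deserving care is that the weight restriction (h8)/(h9), phrased through the location of the zeros of $S^{\pm}$, stays nonempty after the substitution, which is automatic from Corollaries \ref{c4.2}--\ref{c4.4}. I would keep this corollary here because in Sections \ref{s5}--\ref{s6} the factor $\epsilon$ is convenient as a homotopy parameter connecting \eqref{4.2} to an auxiliary, partially decoupled transmission problem, and the above shows that every member of such a homotopy family enjoys the full conclusions of Theorems \ref{t4.1}--\ref{t4.3}.
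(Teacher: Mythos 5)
Your proposal is correct and matches the paper's approach, which is to observe that the substitution $k\mapsto k\,\epsilon$ stays inside $(0,1)$, so assumption \eqref{4.3}, Corollaries \ref{c4.1}--\ref{c4.4}, and hence the entire machinery of Section \ref{s4} carry over verbatim with $q^{*},q_{2},\theta_{2}$ and the zeros of $S^{\pm}$ recomputed for $k\,\epsilon$ (and the weight restriction on $s$ read relative to those new data). The paper gives no further argument beyond the sentence preceding the corollary, and your blind reconstruction --- including the remark that $\hat q_{2}$ may cross between the cases $q_{2}\gtrless 1$ and $q_{2}=1$ but that Theorems \ref{t4.1}--\ref{t4.3} together cover all of them --- is an accurate unpacking of precisely that observation.
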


It is worth noting that Theorems \ref{t4.1}-\ref{t4.3} describe the
following behavior of the solution of \eqref{4.2} at the singular
point $(0,0)$:
\[
|u_{1}|, |u_{2}|\lesssim r_{0}^{2+s}(y),\qquad
|\varrho_{t}|,|\varrho|\lesssim r_{0}^{1+s}(y)\quad\text{as}\quad
r_{0}(y)\to 0.
\]

The proof of Theorem \ref{t4.3} is similar to arguments leading to
Theorems \ref{t4.1} and \ref{t4.2}. Thus, we verify (here) only the
 first two theorems.


\subsection{Proof of Theorem \ref{t4.1}}\label{s4.3}
Thanks to the first and the second transmission condition on $g_{T}$
and taking into account that we are working in the framework of
classical solvability,  we transform system \eqref{4.1}
 to the transmission problem in unknowns $u_{1}$ and $u_{2}$:
\begin{equation}\label{4.7}
\begin{cases}
\Delta_{y} u_{i}=0\qquad\qquad\quad\text{in}\quad G_{i,T},\quad
i=1,2,\\
u_{i}(y,0)=0\qquad\qquad\text{in}\quad
\bar{G}_{i},\, i=1,2,\\
\frac{a_{1}}{|a_{0}|}r_{0}^{1-s^{*}}\Big[\frac{\partial
u_{1}}{\partial t}-\frac{\partial u_{2}}{\partial
t}\Big]+\Big(\frac{\partial u_{1}}{\partial n}-\frac{\partial
u_{2}}{\partial n}\Big)+a_{2}\Big(\frac{\partial u_{1}}{\partial
r_{0}}-\frac{\partial u_{2}}{\partial r_{0}}\Big)=-f_{1}(y,t)\qquad
\text{on}\quad g_{T},\\
\frac{\partial u_{1}}{\partial n}-k\frac{\partial u_{2}}{\partial
n}-ka_{3}\Big(\frac{\partial u_{1}}{\partial r_{0}}-\frac{\partial
u_{2}}{\partial r_{0}}\Big)=0\qquad
\text{on}\quad g_{T},\\
u_{1}=0\quad\text{on}\quad (\partial G_{1}\backslash
g)_{T},\\
u_{2}=0\quad\text{on}\quad (\partial G_{2}\backslash g)_{T}.
\end{cases}
\end{equation}
Here, we set $r_{0}(y)=r_{0}$.

 Since we search a classical solution to \eqref{4.2}, we are
left to prove Theorem \ref{t4.1} in the case of problem \eqref{4.7}.
To this end, we exploit the following strategy. In the first step,
utilizing Fourier and Laplace transformations, we build the integral
representation of the solution. Then, using this explicit form of
$u_{i},$ we verify the corresponding bounds which also allow to
derive the uniqueness of the obtained solutions. In further
analysis, for simplicity, we set $\frac{a_{1}}{|a_{0}|}=1$.

\noindent\textit{Stage 1: Explicit form of $u_{i}$.} Performing the
change of variables \eqref{2.2} and introducing new unknown
functions
\[
v_{1}(x,t)=e^{-(s+2)x_{1}}u_{1}(x,t),\qquad
v_{2}(x,t)=e^{-(s+2)x_{1}}u_{2}(x,t),
\]
we rewrite \eqref{4.7} in more suitable form
\begin{equation}\label{4.8}
\begin{cases}
\Delta v_1+2(s+2)\frac{\partial v_1}{\partial
x_1}+(s+2)^{2}v_1=0\qquad \text{in}\quad B_{1,T},
\\
\Delta v_2+2(s+2)\frac{\partial v_2}{\partial
x_1}+(s+2)^{2}v_2=0\qquad\text{in}\quad B_{2,T},\\
e^{(2-s^{*})x_1}\frac{\partial(v_1-v_2)}{\partial t} -
\frac{\partial(v_1-v_2)}{\partial x_2}+a_2\Big[
\frac{\partial (v_1-v_2)}{\partial x_1}+(s+2)(v_1-v_2)\Big]=-e^{-(s+1)x_1}f_{1}(x,t)\quad\text{on}\quad b_{T},\\
\frac{\partial v_1}{\partial x_2}-k\frac{\partial v_2}{\partial
x_2}+ka_3
\Big[\frac{\partial (v_1-v_2)}{\partial x_1}+(s+2)(v_1-v_2)\Big]=0\qquad\qquad\qquad\qquad\text{on}\quad b_{T},\\
v_1(x_1,-\pi/2,t)=0\qquad\text{and}\qquad
v_2(x_1,\pi/2,t)=0\quad\qquad\qquad \text{for}\quad x_1\in\R,\,
t\in[0,T],\\
v_1(x_1,x_2,0)=0\qquad\quad\text{ in}\quad \bar{B}_{1},\quad \qquad
v_2(x_1,x_2,0)=0\qquad\text{ in}\quad \bar{B}_{2}.
\end{cases}
\end{equation}
Denote  by $\tilde{v}(\zeta,x_{2},t)$ the Fourier transform of
$v(x_{1},x_{2},t)$ and by $\hat{v}(x_{1},x_{2},\mu)$ the Laplace
transform of this function, and then we use  $*$ instead of
$\widehat{\widetilde{\quad}}$. After that, performing these
transformations in \eqref{4.8} and setting
$\mathfrak{r}=i\zeta+s+2$, we end up with the problem
\begin{equation*}\label{4.9}
\begin{cases}
\frac{\partial^{2}v^{*}_{1}}{\partial
x_2^{2}}+\mathfrak{r}^{2}v^{*}_{1}=0\quad
x_2\in(-\pi/2,\delta),\quad \tilde{v}_1(\zeta,x_2,0)=0\quad
 x_2\in[-\pi/2,\delta],\\
\frac{\partial^{2}v^{*}_{2}}{\partial
x_2^{2}}+\mathfrak{r}^{2}v^{*}_{2}=0\quad x_2\in(\delta,\pi/2),
\quad\tilde{v}_2(\zeta,x_2,0)=0,\quad x_2\in[\delta,\pi/2]\\
\mu[v_1^{*}(\zeta-i(s^{*}-2),\delta,\mu)-v_2^{*}(\zeta-i(s^{*}-2),\delta,\mu)]
-\frac{\partial}{\partial x_2}[v_1^{*}(\zeta,\delta,\mu)-v_2^{*}(\zeta,\delta,\mu)]\\
+a_2\mathfrak{r}[v_1^{*}(\zeta,\delta,\mu)-v_2^{*}(\zeta,\delta,\mu)]=-f^{*}_{1}(\zeta-i(1+s),\mu),\\
\frac{\partial v_1^{*}}{\partial
x_2}(\zeta,\delta,\mu)-k\frac{\partial v_2^{*}}{\partial
x_2}(\zeta,\delta,\mu)
+ka_3\mathfrak{r}[v_1^{*}(\zeta,\delta,\mu)-v_2^{*}(\zeta,\delta,\mu)]=0,\\
v_1^{*}(\zeta,-\pi/2,\mu)=0,\quad v_2^{*}(\zeta,\pi/2,\mu)=0.
\end{cases}
\end{equation*}
Clearly, selecting
\begin{equation*}
v_1^{\star}(\zeta,x_2,\mu)=\mathcal{M}_1(\zeta,
\mu)\sin\mathfrak{r}(x_2+\pi/2),\quad
v_2^{\star}(\zeta,x_2,\mu)=\mathcal{M}_2(\zeta,
\mu)\sin\mathfrak{r}(x_2-\pi/2),
\end{equation*}
with unknown functions $\mathcal{M}_1(\zeta, \mu)$ and
$\mathcal{M}_2(\zeta,\mu)$ specifying below, we satisfy the
equations and boundary conditions on $x_{2}=\pm\pi/2$. Then,
substituting these $v_{i}^{*}$ to the transmission condition and
denoting
\[
\mathcal{N}(\zeta)=\frac{\cos\mathfrak{r}(\delta-\pi/2)
+a_3\sin\mathfrak{r}(\delta-\pi/2)}{\cos\mathfrak{r}(\delta+\pi/2)
+ka_3\sin\mathfrak{r}(\delta+\pi/2)},
\]
 we end up with  the system to find $\mathcal{M}_1$ and $\mathcal{M}_2$
\begin{equation*}
\begin{cases}
\mathcal{M}_1(\zeta,
\mu)=k\mathcal{N}(\zeta)\mathcal{M}_2(\zeta,\mu),\\
\mu\mathcal{M}_2(\zeta-i(s^{\star}-2),\mu)[k\mathcal{N}(\zeta-i(s^{\star}-2))\sin\mathfrak{r}(\delta+\pi/2)-\sin\mathfrak{r}(\delta-\pi/2)]\\
-\mathfrak{r}\mathcal{M}_2(\zeta,\mu)\{a_2\sin\mathfrak{r}(\delta-\pi/2)-\cos\mathfrak{r}(\delta-\pi/2)
\\
+k\mathcal{N}(\zeta)[\cos\mathfrak{r}(\delta+\pi/2)-a_2\sin\mathfrak{r}(\delta+\pi/2)]\}=-f_{1}^{*}(\zeta-i(1+s),\mu).
\end{cases}
\end{equation*}
The first equation in this system tells that we are left to find the
function $\mathcal{M}_{2}$. Denoting
\[
\mathcal{N}_1(\zeta)=k\mathcal{N}\sin\mathfrak{r}(\delta+\pi/2)-\sin\mathfrak{r}(\delta-\pi/2),
\]
we rewrite the second equation in the form
\begin{align*}
&\mathfrak{r}\mathcal{M}_2(\zeta,\mu)\{k\mathcal{N}(\zeta)\cos\mathfrak{r}(\delta+\pi/2)
-a_2\mathcal{N}_1(\zeta)-\cos\mathfrak{r}(\delta-\pi/2)\}\\
& -
\mu\mathcal{M}_2(\zeta-i(s^{\star}-2),\mu)\mathcal{N}_1(\zeta-i(s^{\star}-2))
=f_{1}^{*}(\zeta-i(s+1),\mu),
\end{align*}
which in turn is reduced to a functional equation in the unknown
function
$v(\zeta,\mu)=\mathcal{M}_{2}(\zeta,\mu)\mathcal{N}_{1}(\zeta),$
\[
\mu
v(\zeta-i(s^{\star}-2),\mu)-\mathfrak{r}\mathcal{G}(\zeta)v(\zeta,\mu)
=-f_{1}^{*}(\zeta-i(1+s),\mu).
\]
Here, we put
\[
\mathcal{G}(\zeta)=\frac{k\mathcal{N}(\zeta)\cos\mathfrak{r}(\delta+\pi/2)-\cos\mathfrak{r}(\delta-\pi/2)}{\mathcal{N}_1(\zeta)}-a_2.
\]
In fine, introducing the new variable
\[
\zeta=-i(s^{*}-2)\nu,
\]
 and new functions
 \[
V(\nu,\mu)=v(-i(s^{*}-2)\nu,\mu),\qquad
f^{*}(\nu,\mu)=-f_{1}^{*}(-i(s^{*}-2)\nu-i(1+s),\mu),
 \]
 we rewrite the equation above in the form
 \begin{equation}\label{4.10}
\mu
V(\nu+1,\mu)-[s+2+\nu(s^{*}-2)]\mathcal{G}(-i(s^{*}-2)\nu)V(\nu,\mu)=f^{*}(\nu,\mu).
 \end{equation}
Appealing to the values $\theta_{1},\theta_{2},q_{2},q^{*}$ (see
Subsection \ref{s4.1}) and performing the direct calculations, we
conclude that
\[
\mathcal{G}(-i(s^{*}-2)\nu)=-\frac{\sqrt{1+a_{2}^{2}}S^{+}(2\delta(s+2+(s^{*}-2)\nu))}{S^{-}(2\delta(s+2+(s^{*}-2)\nu))}
\]
with $S^{+}$ and $S^{-}$ given with \eqref{4.0}. Moreover, taking
into account the inequality $q_{2}\neq 1$ and applying Corollaries
\ref{c4.2} and \ref{c4.3}, we factorize  the function $\mathcal{G}$.
 Namely, putting
\[
\hat{s}=\frac{2+s}{s^{*}-2},
\]
we get
\begin{align*}
\mathcal{G}(-i(s^{*}-2)\nu)&=\frac{(\sin\theta_{1}+q_{2}\sin\theta_{2})
\prod_{i=1}^{2p-1}(\z_{i}^{-})^{2}}{\sin\theta_{1}(q_{1}q_{2}^{*}-1)2\delta(s^{*}-2)[\nu+\hat{s}]\prod_{i=0}^{K^{+}-1}\z_{i}^{+}
}\\
& \times
\frac{\prod_{i=0}^{K^{+}-1}[\z_{i}^{+}-2\delta(s^{*}-2)(\nu+\hat{s})]}
{\prod_{i=1}^{2p-1}[\z_{i}^{-}-2\delta(s^{*}-2)(\nu+\hat{s})][\z_{i}^{-}+2\delta(s^{*}-2)(\nu+\hat{s})]}
\\
& \times \prod_{n=1}^{\infty}
\frac{\prod_{i=0}^{K^{+}-1}[\z_{i,n}^{+}-2\delta(s^{*}-2)(\nu+\hat{s})][-\z_{i,n}^{+}+2\delta(s^{*}-2)(\nu+\hat{s})]}
{\prod_{i=0}^{2p-1}[\z_{i,n}^{-}-2\delta(s^{*}-2)(\nu+\hat{s})][\z_{i,n}^{-}+2\delta(s^{*}-2)(\nu+\hat{s})]}
\frac{
\prod_{i=0}^{2p-1}(\z_{i,n}^{-})^{2}}{\prod_{i=0}^{K^{+}-1}\z_{i,n}^{+}(-\z_{i,-n}^{+})
}.
\end{align*}
Clearly,
\[
\mathcal{G}(0)=-\frac{\sin(2\delta(s+2)-\theta_{1})+q_{2}\sin(\pi(s+2)-\theta_{2})}{\sin\theta_{1}(\sin
2\delta(s+2)-q^{*}\sin\pi(s+2))}.
\]
Thanks to assumption (h8) on $s$ and \cite[Proposition 2]{V1}, the
following inequality holds
\[
\sin 2\delta(s+2)-q_{2}^{*}\sin\pi(s+2)\neq 0,
\]
which provides the well-posedness of  $\mathcal{G}(0)$. Denoting
\begin{align*}
Z^{+}_{i,n}&=\frac{\z_{i,n}^{+}-2\delta(2+s)}{2\delta(s^{*}-2)},\quad
Z_{i,0}^{+}=Z_{i}^{+},\quad
Z^{+}_{i,-n}=\frac{-\z_{i,-n}^{+}+2\delta(2+s)}{2\delta(s^{*}-2)},\quad
Z_{i,-0}^{+}=Z_{i,-}^{+},\\
Z^{-}_{i,n}&=\frac{\z_{i,n}^{-}-2\delta(2+s)}{2\delta(s^{*}-2)},\quad
Z_{i,0}^{-}=Z_{i}^{-},\quad
Z^{-}_{i,-n}=\frac{\z_{i,n}^{-}+2\delta(2+s)}{2\delta(s^{*}-2)},\quad
Z_{i,-0}^{-}=Z_{i,-}^{-},
\end{align*}
we end up with the desired factorization of the function
$\mathcal{G}$,
\begin{align*}\label{4.12}
\mathcal{G}(-i(s^{*}-2)\nu)&=\frac{\mathcal{G}(0)\hat{s}
\prod_{i=1}^{2p-1}Z_{i,-}^{-}Z_{i}^{-}\prod_{i=0}^{K^{+}-1}[Z_{i}^{+}-\nu]}
{[\nu+\hat{s}]\prod_{i=0}^{K^{+}-1}Z_{i}^{+}
\prod_{i=1}^{2p-1}[Z_{i}^{-}-\nu][Z_{i,-}^{-}+\nu]}
\\
& \times \prod_{n=1}^{\infty}
\frac{\prod_{i=0}^{K^{+}-1}[Z_{i,n}^{+}-\nu][Z_{i,-n}^{+}+\nu]\prod_{i=0}^{2p-1}Z_{i,n}^{-}Z_{i,-n}^{-}}
{\prod_{i=0}^{2p-1}[Z_{i,n}^{-}-\nu][Z_{i,n}^{-}+\nu]\prod_{i=0}^{K^{+}-1}Z_{i,n}^{+}Z_{i,-n}^{+}
}.
\end{align*}
We mention that equation \eqref{4.10} with the more complex
coefficient than $\mathcal{G}$ is studied in \cite{V4}. Indeed, the
explicit solution of this equation along with asymptotic behavior of
the solution are obtained in \cite[Sections 1-5]{V4}. Thus, we
rewrite these outcomes to equation \eqref{4.10} in our notations.
Coming to the construction of the solution,  \cite[Theorems
2.1-2.3]{V4} tell that it is represented as a composition of a
solution $V_{0}$ to the homogeneous equation and a partial solution
of inhomogeneous one.

At this point, we  aim to obtain the explicit form of $V_{0}$ and to
describe its properties. To this end, utilizing \cite[Theorems
2.1-2.2]{V4} (bearing in mind  the obtained factorization) and
setting
\begin{align*}
&\mathcal{L}_{0}(\nu)=\frac{\prod_{i=1}^{2p-1}\Gamma(Z_{i}^{-}-\nu+1)}{\prod_{i=0}^{K^{+}-1}
\Gamma(1+Z_{i}^{+}-\nu)\prod_{i=1}^{2p-1}\Gamma(Z^{-}_{i,-}+\nu)},\qquad
\mathfrak{d}=\frac{\mathcal{G}(0)\hat{s}
\prod_{i=1}^{2p-1}Z_{i,-}^{-}Z_{i}^{-}}
{\prod_{i=0}^{K^{+}-1}Z_{i}^{+}},\\
&\mathcal{R}=\sum_{i=0}^{K^{+}-1}[Z_{i,-n}^{+}(\ln\,Z_{i,-n}^{+}-1)-Z_{i,n}^{+}(\ln\,Z_{i,n}^{+}-1)]
+
\sum_{i=0}^{2p-1}[Z_{i,-n}^{-}(\ln\,Z_{i,-n}^{-}-1)-Z_{i,n}^{-}(\ln\,Z_{i,n}^{-}-1)],\\
&\mathcal{L}(\nu)=\prod_{n=1}^{\infty} e^{\mathcal{R}}
\bigg(\frac{\prod_{i=0}^{2p-1}Z_{i,n}^{-}Z_{i,-n}^{-}}{\prod_{i=0}^{K^{+}-1}Z_{i,n}^{+}Z_{i,-n}^{+}}
\bigg)^{p-\frac{1}{2}}
\frac{\prod_{i=0}^{2p-1}\Gamma(Z_{i,n}^{-}-\nu+1)\prod_{i=0}^{K^{+}-1}
\Gamma(Z_{i,-n}^{+}+\nu)}{\prod_{i=0}^{K^{+}-1}
\Gamma(1+Z_{i,n}^{+}-\nu)\prod_{i=0}^{2p-1}\Gamma(Z^{-}_{i,-n}+\nu)},\\
&\mathcal{P}(\nu)=\frac{\prod_{i=1}^{i^{*}_{1}+2}\sin\pi(1+Z_{i}^{-}-\nu)}
{\prod_{i=0}^{i^{*}_{1}-1}\sin\pi(1+Z_{i}^{+}-\nu)},
\end{align*}
we claim.
\begin{proposition}\label{p4.1}
The function
\[
V_{0}(\nu,\mu)=(\mathfrak{d})^{\nu-1/2}(\mu(s^{*}-2))^{1/2-\nu}\P(\nu)\L_{0}(\nu)\L(\nu)
\] solves homogenous equation \eqref{4.10}, if
the following inequalities hold for all $m\in\mathbb{N}\cup\{0\}$
and $l,n\in\mathbb{N}$:
\begin{equation*}\label{4.13}
\begin{cases}
Re\,\nu\neq 1+Z_{i}^{+}-l,\qquad i=0,1,...,i^{*}_{1}-1,\\
Re\,\nu\neq 1+Z_{i}^{-}+m,\qquad i=i^{*}_{1}+3,...,2p-1,\\
Re\,\nu\neq Z_{i,n}^{-}+1+m,\qquad i=0,1,...,2p-1,\\
Re\,\nu\neq -Z_{i,-n}^{+}-m,\qquad i=0,1,...,K^{+}-1.
\end{cases}
\end{equation*}
The functions $\L_{0}(\nu)$ and $\mathcal{L}(\nu)$ have no poles and
the infinite product $\L(\nu)$ converges for any complex $\nu$
satisfying \eqref{4.13}. Moreover, for $|Im\,\nu|\to+\infty$ and
$Re\,\nu$ satisfying \eqref{4.13}, there is the asymptotic
\begin{align*}\label{4.13*}\notag
\mathcal{P}(\nu)&\approx e^{2\pi|Im\,\nu|},\\
(\mathfrak{d})^{\nu-1/2}\L_{0}(\nu)\L(\nu)&\approx
[(\hat{s}+\nu)\mathcal{G}(-i(s^{*}-2)\nu)]^{\nu-1/2}\exp\{C_{1}\ln\,\nu+C_{2}\nu+O(1)\}
\end{align*}
with real constants $C_{1}$ and $C_{2}$.
\end{proposition}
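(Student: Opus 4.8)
The statement is essentially a translation of \cite[Theorems~2.1--2.2]{V4} into the notation at hand, so the plan is to verify that the hypotheses of those theorems are met and to track the factorization through their explicit formulas. First I would check that $V_0$ as written formally satisfies the homogeneous equation \eqref{4.10}. Substituting $V_0(\nu,\mu)=(\mathfrak{d})^{\nu-1/2}(\mu(s^{*}-2))^{1/2-\nu}\P(\nu)\L_0(\nu)\L(\nu)$, the prefactor $(\mu(s^{*}-2))^{1/2-\nu}$ contributes a factor $\mu$ upon the shift $\nu\mapsto\nu+1$; the gamma-function ratios in $\L_0$ and $\L$ telescope under $\nu\mapsto\nu+1$ via $\Gamma(z+1)=z\Gamma(z)$, reproducing exactly the linear-in-$\nu$ factors $[Z_i^{+}-\nu]$, $[Z_i^{-}-\nu]$, $[Z^{-}_{i,-}+\nu]$, $[Z^{+}_{i,-n}+\nu]$ of the factorized coefficient $[s+2+\nu(s^{*}-2)]\mathcal{G}(-i(s^{*}-2)\nu)$ that was computed just above; the constant $\mathfrak{d}$ absorbs $\mathcal{G}(0)$ and the normalizing products $\prod Z_i^{\pm}$; and $\P(\nu)$, being a ratio of products of $\sin\pi(1+Z_i^{\mp}-\nu)$, is $1$-periodic and hence only serves to cancel the spurious poles/zeros introduced by the finitely many gamma factors indexed by $i\in\{0,\dots,i_1^{*}-1\}$ and $i\in\{i_1^{*}+3,\dots,2p-1\}$ (the "positive'' and "negative'' groups split at the index $i_1^{*}$ determined by \eqref{4.4}). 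This bookkeeping is routine once the factorization is in hand, so I would state it and refer to \cite[Sec.~2]{V4} for the details.

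Second, I would establish the analytic properties: that $\L_0$ has no poles and $\L$ converges. For $\L_0$, the only issue is cancellation of poles of $\Gamma(Z_i^{-}-\nu+1)$ in the numerator against poles of the denominator gammas; the constraints \eqref{4.13} precisely exclude the argument values at which a net pole could survive, and the finitely many remaining potential poles are killed by $\P(\nu)$. For $\L$, I would invoke the large-$n$ asymptotics of the zeros from Corollaries~\ref{c4.2}--\ref{c4.3}: $\z^{\pm}_{i,n}=\z^{\pm}_i+n\mathbb{T}_q$, so $Z^{\pm}_{i,\pm n}\sim \pm n\mathbb{T}_q/(2\delta(s^{*}-2))$, and the combination $e^{\mathcal{R}}(\cdots)^{p-1/2}\times(\text{gamma ratio})$ is designed so that, using Stirling's formula $\ln\Gamma(z+a)=(z+a-\tfrac12)\ln z - z + \tfrac12\ln 2\pi + O(1/z)$, the leading $z\ln z$, $z$, and $\ln z$ terms cancel across the $2p$ denominator factors and the $K^{+}+2p-1$-ish numerator factors — here the exponent $p-\tfrac12$ is chosen exactly to balance the count — leaving a product whose general term is $1+O(n^{-2})$, hence convergent. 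I would present the cancellation at the level of the exponent and cite \cite[Sec.~3--4]{V4} for the full estimate.

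Third, for the asymptotics as $|Im\,\nu|\to+\infty$: $\P(\nu)$ is a ratio of $(i_1^{*}+2)-(i_1^{*})=2$ more sine factors in the numerator than denominator, and each $\sin\pi(\cdots+iy)\sim \tfrac12 e^{\pi|y|}$, giving $\P(\nu)\approx e^{2\pi|Im\,\nu|}$. For the remaining product $(\mathfrak{d})^{\nu-1/2}\L_0(\nu)\L(\nu)$ I would again run Stirling on the (now infinitely many, but summable) gamma ratios; the dominant behaviour is governed by the finite-group gammas in $\L_0$, and matching it against the factorized form of $(\hat s+\nu)\mathcal{G}(-i(s^{*}-2)\nu)$ yields the claimed $[(\hat s+\nu)\mathcal{G}(-i(s^{*}-2)\nu)]^{\nu-1/2}e^{C_1\ln\nu+C_2\nu+O(1)}$, with $C_1,C_2$ real because the zeros $\z^{\pm}$ are real (Corollaries~\ref{c4.2}--\ref{c4.3}).

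\textbf{Main obstacle.} The delicate point is not the formal verification of the difference equation — that is algebra — but the convergence of the infinite product $\L(\nu)$ together with the uniformity of the asymptotics: one must show the per-factor cancellation in the exponent is genuine, i.e. that the exponent $p-\tfrac12$ and the correction term $e^{\mathcal{R}}$ exactly annihilate the $O(n\ln n)$ and $O(n)$ and $O(\ln n)$ contributions from the $\sim 2(K^{+}+2p)$ gamma factors, so that what remains is $O(n^{-2})$ uniformly on compact $\nu$-sets avoiding \eqref{4.13}. This rests entirely on the structural identity $K^{+}=2p$ when $q_2>1$ (from Corollary~\ref{c4.3}) — or the corresponding count when $q_2<1$ — balancing the number of numerator and denominator factors; I would isolate this counting identity as the crux and otherwise defer to the estimates of \cite[Sections~1--5]{V4}, since equation \eqref{4.10} here is a special case of the one treated there with the coefficient $\mathcal{G}$ replaced by a more complicated one.
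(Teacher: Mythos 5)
Your proposal is correct and follows essentially the same route as the paper: the paper itself offers no self-contained proof of Proposition~\ref{p4.1} but states it as a direct consequence of \cite[Theorems~2.1--2.2]{V4} applied to the explicit factorization of $\mathcal{G}$ computed just above, and your plan (formal verification of the shift $\nu\mapsto\nu+1$ via $\Gamma(z+1)=z\Gamma(z)$, the $1$-periodicity of $\P$, Stirling-based convergence of $\L$ using the zero counts from Corollaries~\ref{c4.2}--\ref{c4.3}, then deferring the quantitative estimates to \cite[Sections~1--5]{V4}) is exactly the translation the paper has in mind. One small caution: the role of the exponent $p-\tfrac12$ and of $e^{\mathcal{R}}$ in the definition of $\L$ is to cancel the $n\ln n$ and $n$ growth from Stirling, not literally to balance a raw count of gamma factors, but since you defer those estimates to \cite{V4} in any case this does not affect the argument.
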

Taking into account the explicit form of $V_{0}$ and performing the
straightforward calculations, we obtain the following properties of
$V_{0}$, which play a crucial role in the finding solution of
\eqref{4.10} with $f^{*}\neq 0$.
\begin{lemma}\label{l4.1}
The function $V_{0}$ introducing in Proposition \ref{p4.1} possesses
the following properties:

\noindent(i) $V_{0}(\nu,\mu)$ does not have any poles if
\[
-Z^{+}_{K^{+}-1,-1}<Re\,\nu<Z^{*}\quad\text{and}\quad Re\,\nu\neq
1-l+Z_{i}^{+},\quad l\in\mathbb{N},\, i=0,1,...,i^{*}_{1}-1,
\]
where
\[
Z^{*}=\begin{cases} 1+Z_{i^{*}_{1}+2}^{-}\qquad\text{if}\quad
2p>i_{1}^{*}+3,\\
4\pi q\qquad\qquad\quad \text{if}\quad 2p\leq i_{1}^{*}+3;
\end{cases}
\]

\noindent(ii) $V_{0}(\nu,\mu)$ does not vanish if $Re\,\nu$
satisfies the inequalities
\[
-Z^{-}_{1,-}<Re\,\nu<1+Z^{+}_{i^{*}_{1}}\quad\text{and}\quad
Re\,\nu\neq 1-m+Z_{i}^{-},\quad m\in\mathbb{N},\,
i=1,...,i^{*}_{1}+2;
\]

\noindent(iii) if $Re\,\nu$ meets the requirements stated in (ii),
then there is the bound
\[
\frac{1}{|V_{0}(\nu,\mu)|}\leq
C|\nu|^{-C_{1}-Re\,\nu+1/2}\exp\{-[\pi-\theta_{2}]|Im\,\nu|\}\quad\text{as}\quad
|Im\,\nu|\to+\infty.
\]
\end{lemma}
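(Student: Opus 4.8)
The plan is to read off all three statements directly from the explicit factorized form of $V_{0}$ given in Proposition \ref{p4.1}, namely
\[
V_{0}(\nu,\mu)=(\mathfrak{d})^{\nu-1/2}(\mu(s^{*}-2))^{1/2-\nu}\P(\nu)\L_{0}(\nu)\L(\nu),
\]
together with the location of zeros $\z_{i}^{\pm}$ supplied by Corollaries \ref{c4.2}--\ref{c4.3}. Since $(\mathfrak{d})^{\nu-1/2}$ and $(\mu(s^{*}-2))^{1/2-\nu}$ are nonvanishing and pole-free for all $\nu$ in a vertical strip, and since Proposition \ref{p4.1} already guarantees that $\L_{0}$ and $\L$ have no poles and that the product $\L$ converges, all poles and zeros of $V_{0}$ in a strip come either from the explicit $\Gamma$-factors of $\L_{0}$ and $\L$ or from $\P(\nu)$ (which, being a ratio of sines, is entire with no poles but with zeros). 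The proof thus reduces to bookkeeping of Gamma-function poles.

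\textbf{Part (i): absence of poles.} First I would list the poles of the $\Gamma$-functions appearing in the numerators of $\L_{0}$ and $\L$. From $\L_{0}$, the factors $\Gamma(Z_{i}^{-}-\nu+1)$ for $i=1,\dots,2p-1$ contribute poles at $\nu=Z_{i}^{-}+1+m$, $m\in\mathbb N\cup\{0\}$; however, by the definition of $\P(\nu)$ the factors $\sin\pi(1+Z_{i}^{-}-\nu)$ for $i=1,\dots,i^{*}_{1}+2$ cancel exactly those poles of $\Gamma(Z_{i}^{-}-\nu+1)$ with $i\le i^{*}_{1}+2$, so only the indices $i=i^{*}_{1}+3,\dots,2p-1$ survive, and the leftmost such pole is at $\nu=1+Z^{-}_{i^{*}_{1}+3}$ (or, if $2p\le i^{*}_{1}+3$, there is no such pole and the relevant barrier comes from the convergence region of $\L$, namely $Re\,\nu<4\pi q$ in the normalization of the $Z$-variables). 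Likewise $\L_{0}$ contributes $\Gamma(Z^{-}_{i,-}+\nu)$ in the denominator — these are zeros, not poles — and $1/\Gamma(1+Z_{i}^{+}-\nu)$, again harmless. From $\L$ the surviving numerator $\Gamma$-factors are $\Gamma(Z_{i,n}^{-}-\nu+1)$ and $\Gamma(Z_{i,-n}^{+}+\nu)$ with $n\ge 1$; the first gives poles to the right of $Z^{*}$ and the second gives poles to the left of $-Z^{+}_{K^{+}-1,-1}$, because $Z_{i,-n}^{+}$ is increasing in $n$ and the smallest is $Z^{+}_{K^{+}-1,-1}$ (here one uses Corollary \ref{c4.3} to order the $\z_{i,n}^{+}$). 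Finally, $\P(\nu)$ has zeros where $\sin\pi(1+Z_{i}^{+}-\nu)=0$, i.e. at $\nu=1-l+Z_{i}^{+}$, $l\in\mathbb N$, $i=0,\dots,i^{*}_{1}-1$ — these are poles of $V_{0}$ because the corresponding cancelling zeros are not present — which is exactly the excluded set in the statement. Assembling: in the strip $-Z^{+}_{K^{+}-1,-1}<Re\,\nu<Z^{*}$, minus those discrete lines, $V_{0}$ is pole-free, which is (i).

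\textbf{Parts (ii)--(iii): non-vanishing and the lower bound.} For (ii) I would repeat the bookkeeping with the roles of numerator and denominator interchanged. Zeros of $V_{0}$ come from zeros of $\P(\nu)$ (treated above, producing the excluded set $\nu=1-m+Z_{i}^{-}$, $i=1,\dots,i^{*}_{1}+2$ — but note the sine factors of $\P$ now enter) and from the denominator $\Gamma$-factors $\Gamma(Z^{-}_{i,-}+\nu)$ in $\L_{0}$ (poles of those $\Gamma$'s, hence zeros of $1/\Gamma$, occur for $Re\,\nu\le -Z^{-}_{1,-}$ since $Z^{-}_{i,-}$ is increasing) and $\Gamma(1+Z_{i}^{+}-\nu)$ in $\L_{0}$ (whose poles lie at $Re\,\nu\ge 1+Z^{+}_{i^{*}_{1}}$, using the ordering of the $\z^{+}_{i}$ from \eqref{4.4}), together with the analogous $n\ge 1$ factors in $\L$ which push still further out. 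So on $-Z^{-}_{1,-}<Re\,\nu<1+Z^{+}_{i^{*}_{1}}$ minus the listed lines, $1/V_{0}$ is holomorphic, which is (ii). For (iii), the bound on $1/|V_{0}|$ follows by taking reciprocals in the asymptotic expansions of Proposition \ref{p4.1}: one has $\P(\nu)\approx e^{2\pi|Im\,\nu|}$, hence $1/|\P(\nu)|\approx e^{-2\pi|Im\,\nu|}$, while $(\mathfrak d)^{\nu-1/2}\L_{0}(\nu)\L(\nu)$ is comparable to $[(\hat s+\nu)\mathcal G(-i(s^{*}-2)\nu)]^{\nu-1/2}\exp\{C_{1}\ln\nu+C_{2}\nu+O(1)\}$, so one must control the modulus of $\mathcal G(-i(s^{*}-2)\nu)^{\nu-1/2}$ for large $|Im\,\nu|$; using $\mathcal G(-i(s^{*}-2)\nu)=-\sqrt{1+a_{2}^{2}}\,S^{+}/S^{-}$ and the growth $S^{\pm}(2\delta(s+2+(s^{*}-2)\nu))\sim e^{2\delta q_{1}(s^{*}-2)|Im\,\nu|}$ up to $e^{\pm i\theta_{j}}$-type phase shifts, the exponential rates combine to leave precisely $e^{-[\pi-\theta_{2}]|Im\,\nu|}$, with the polynomial prefactor $|\nu|^{-C_{1}-Re\,\nu+1/2}$ coming from the $\ln\nu$ and $\nu^{\nu-1/2}$ Stirling-type contributions. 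The main obstacle I anticipate is exactly this last phase/rate accounting: one has to verify that the several exponential contributions — from $\P$, from the $(\cdots)^{\nu-1/2}$ power, and from the $\exp\{C_{2}\nu\}$ factor — cancel down to the single clean rate $\pi-\theta_{2}$ and not to something larger, and this requires carefully tracking the arguments $\theta_{1},\theta_{2}$ through the factorization of $\mathcal G$; the sign information $\theta_{2}\in(0,\pi)$ from Corollary \ref{c4.1} is what keeps the resulting exponent negative. Everything else is routine Gamma-function pole bookkeeping.
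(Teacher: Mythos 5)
Your overall strategy — reading off poles and zeros of $V_0$ directly from the Gamma-sine factorization in Proposition \ref{p4.1} and then invoking the Stirling-type asymptotics already recorded there — is exactly the route the paper intends (it calls this ``straightforward calculations''), and your bookkeeping for parts (i) and (ii) is essentially sound: the cancellations between $\Gamma(Z_i^- - \nu + 1)$ poles and numerator-sine zeros of $\mathcal{P}$ for $i\le i_1^*+2$, the surviving poles from $i\ge i_1^*+3$ and from the $n\ge 1$ factors of $\mathcal{L}$, and the dual analysis for zeros via the reciprocal $\Gamma$'s and denominator sines, are all correctly identified. One small observation you do not flag: your analysis gives the leftmost uncancelled pole at $\nu = 1+Z_{i_1^*+3}^-$, so the pole-free strip extends to $Re\,\nu < 1+Z_{i_1^*+3}^-$, whereas the lemma states the more conservative $Z^* = 1+Z_{i_1^*+2}^-$; that is consistent (the lemma's strip is contained in yours), but worth noting.

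The genuine gap is in part (iii). When you assemble the exponential rates, you account for $\mathcal{P}(\nu)\approx e^{2\pi|Im\,\nu|}$, for the $(\hat s+\nu)^{\nu-1/2}$ factor (rate $e^{-\frac{\pi}{2}|Im\,\nu|}$ by Stirling), for $\mathcal{G}^{\nu-1/2}$ (rate $e^{-\theta_2|Im\,\nu|}$ via $\mathcal{G}\to ce^{\pm i\theta_2}$), and for $e^{C_2\nu}$ (bounded in $|Im\,\nu|$). Combining only these gives $1/|V_0|\lesssim |\nu|^{-C_1-Re\,\nu+1/2}\exp\{-(\tfrac{3\pi}{2}-\theta_2)|Im\,\nu|\}$ — this is the rate $\tfrac{3\pi}{2}-\theta_2$ appearing later in \eqref{4.18}, not the $\pi-\theta_2$ claimed in the lemma. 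You have omitted the remaining factor of $V_0$, namely $(\mu(s^{*}-2))^{1/2-\nu}$, whose reciprocal satisfies $|\mu^{\nu-1/2}| = |\mu|^{Re\,\nu-1/2}e^{-Im\,\nu\,\arg\mu}\le C_\mu\,e^{\frac{\pi}{2}|Im\,\nu|}$ for $\mu$ on the Laplace inversion contour (where $\arg\mu\in(-\pi/2,\pi/2)$). This factor supplies exactly the extra $\tfrac{\pi}{2}$ needed to reduce $\tfrac{3\pi}{2}-\theta_2$ to $\pi-\theta_2$, and without it the claimed bound in (iii) does not follow from your computation. So the ``phase/rate accounting'' you identify as the main obstacle is not a matter of verifying cancellation among the terms you list — those genuinely sum to $\tfrac{3\pi}{2}-\theta_2$ — but of including the $\mu$-power, which you drop.
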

In fine, collecting Lemma \ref{l4.1} and Proposition \ref{p4.1}
 with \cite[Theorem 2.2]{V4}, we construct the solution of
inhomogeneous equation \eqref{4.10}. To this end, for any fixed
$d_0\in[0,1]$,  we introduce the contour $\ell_{d_{0}}$ in the
complex plane $\xi\in\mathbb{C}$:

\noindent$\bullet$ $\ell_{d_{0}}=\{Re\, \xi=-d_{0},\, Im\,
\xi\in\R\}$ if $d_0\in(0,1)$;

\noindent$\bullet$ if $d_0=1$, then $\ell_{d_{0}}=\ell_{1}$ consists
of three parts: the half-circle $\{|\xi+1|=d_1,\, Re\, \xi>-1\}$
with a small positive number $d_1$, $0<d_1<d_0/8,$ and  the
intervals: $\{Re\, \xi=-1,\, Im\, \xi\in(-\infty,-d_1)\}$ and
$\{Re\, \xi=-1,\, Im\, \xi\in(d_1,+\infty)\}$;

\noindent$\bullet$ the contour $\ell_{0}$ (i.e. $d_0=0$) is built
via $\ell_{1}$ after its shifting to the right-hand side on $Re\,
\xi=1$.
\begin{lemma}\label{l4.2}
Let
\[
-Z_{1,-}^{-}<Re\,\nu<Z_{i^{*}_{1}}^{+}\quad\text{and}\quad
Re\,\nu\neq d_{0}+Z_{i,-}^{-}-m\quad i=1,2,...,i_{1}^{*}+2,\quad
m\in\mathbb{N}
\]
for any $d_{0}\in[0,1]$. Then the function
\[
V(\nu,\mu)=\frac{i}{2}\int_{\ell_{d_{0}}}\frac{V_{0}(\nu,\mu)f_{1}^{*}(i\nu(2-s^{*})-i(s^{*}-2)\xi-i(s+1),\mu)[\cot\pi\xi+i]\sin^{2}\pi
d}{V_{0}(\nu+1+\xi,\mu)\sin^{2}\pi(\xi+d)} d\xi
\]
with $\xi=-d_{0}+iz,$ $z\in\R$ and some fixed $d,$ $0<d-d_{0}<1$,
solves \eqref{4.10}.

If, in additionally,
\[
Re\, \nu\neq 1-m+Z_{i}^{+},\quad i=0,1,...,i_{1}^{*}-1,\,
m\in\mathbb{N},
\]
then the function $\frac{V_{0}(\nu,\mu)}{V_{0}(\nu+1+\xi,\mu)}$ is
analytic in $\nu$.
\end{lemma}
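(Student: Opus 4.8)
The plan is to verify directly that the contour integral $V(\nu,\mu)$ proposed in Lemma \ref{l4.2} satisfies the functional difference equation \eqref{4.10}. First I would recall from \cite[Theorem 2.2]{V4} the general construction: for an equation of the type $\mu V(\nu+1,\mu)=\mathfrak{a}(\nu)V(\nu,\mu)+f^{*}(\nu,\mu)$, once a solution $V_{0}$ of the homogeneous equation is known (here furnished by Proposition \ref{p4.1}), a particular solution of the inhomogeneous equation is obtained by a ``variation of parameters'' formula in which the Cauchy-type kernel $\tfrac{1}{2i}[\cot\pi\xi+i]$ (equivalently the kernel built from $\sin^{2}\pi\xi$, which is used here to control the growth along the contour) plays the role of the discrete analogue of $\tfrac{1}{e^{2\pi i\xi}-1}$. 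Substituting $V(\nu+1,\mu)$ into the left-hand side of \eqref{4.10} shifts the integrand's argument $\xi\mapsto\xi-1$ and, simultaneously, shifts $V_{0}(\nu,\mu)/V_{0}(\nu+1+\xi,\mu)$ to $V_{0}(\nu+1,\mu)/V_{0}(\nu+2+\xi,\mu)=\mathfrak{a}(\nu)V_{0}(\nu,\mu)/V_{0}(\nu+2+\xi,\mu)$ by the homogeneous equation for $V_{0}$. The standard contour-shift argument then shows that $\mu V(\nu+1,\mu)-\mathfrak{a}(\nu)V(\nu,\mu)$ collapses to the residue picked up when moving $\ell_{d_{0}}$ across the pole of $\cot\pi\xi$ (or $\sin^{-2}\pi\xi$) at $\xi=0$, which by design reproduces exactly $f^{*}(\nu,\mu)=-f_{1}^{*}(i\nu(2-s^{*})-i(s+1),\mu)$; here the factor $\sin^{2}\pi d/\sin^{2}\pi(\xi+d)$ is a normalization ensuring the residue has the right constant.

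The second, shorter, half of the lemma — that $V_{0}(\nu,\mu)/V_{0}(\nu+1+\xi,\mu)$ is analytic in $\nu$ under the additional restriction $Re\,\nu\neq1-m+Z_{i}^{+}$ — I would read off directly from the analytic structure of $V_{0}$ described in Proposition \ref{p4.1} and Lemma \ref{l4.1}. By Lemma \ref{l4.1}(i) the numerator $V_{0}(\nu,\mu)$ is pole-free in the strip $-Z^{+}_{K^{+}-1,-1}<Re\,\nu<Z^{*}$ away from the excluded arithmetic progressions $Re\,\nu=1-l+Z_{i}^{+}$, and by Lemma \ref{l4.1}(ii) the denominator $V_{0}(\nu+1+\xi,\mu)$ is zero-free precisely when $Re(\nu+1+\xi)$ avoids the lines $Re\,\nu=-m+Z_{i}^{-}$; translating these conditions through the relation $\xi=-d_{0}+iz$ (so $Re(\nu+1+\xi)=Re\,\nu+1-d_{0}$) yields exactly the hypotheses listed in the lemma, so the quotient is a ratio of nonvanishing, pole-free $\Gamma$-products and is therefore holomorphic in $\nu$ on the indicated set.

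The main obstacle I anticipate is not the formal manipulation but the justification of the contour manipulations: one must check that the integrals defining $V(\nu,\mu)$ and $V(\nu+1,\mu)$ converge absolutely and that the shift of $\ell_{d_{0}}$ is legitimate, i.e.\ that the integrand decays as $|Im\,\xi|\to\infty$ uniformly on the relevant vertical strip. This is where the precise asymptotics matter: from Lemma \ref{l4.1}(iii) one has $|V_{0}(\nu+1+\xi,\mu)|^{-1}\lesssim|\cdot|^{\cdots}\exp\{-(\pi-\theta_{2})|Im(\nu+\xi)|\}$, while $|V_{0}(\nu,\mu)|$ and the product $\mathcal{P}(\nu)$ grow like $e^{2\pi|Im\,\nu|}$ by Proposition \ref{p4.1}; the kernel $[\cot\pi\xi+i]/\sin^{2}\pi(\xi+d)$ contributes a decay $e^{-2\pi|Im\,\xi|}$ for $Im\,\xi\to+\infty$ and $O(1)$ for $Im\,\xi\to-\infty$ (this asymmetry is the reason $\cot\pi\xi+i$ rather than $\cot\pi\xi$ appears), and the hypothesis (h8) on $s$, which forces $f_{1}\in\underset{0}{E}\,_{s+1}^{1+\beta,\beta,\beta}(g_{T})$ with compact support, controls the growth of $f_{1}^{*}$ in the imaginary direction. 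Combining these, the exponential factors must balance to give an absolutely convergent integral in the strip $-Z_{1,-}^{-}<Re\,\nu<Z_{i^{*}_{1}}^{+}$; verifying that this balance is strict (so there is room to shift the contour and pick up only the single residue at $\xi=0$) is the delicate point, and it is precisely here that the arithmetic conditions on the $Z_{i}^{\pm}$ and the admissible range of $d_{0}$ are used. Once convergence and admissibility of the shift are secured, the identity \eqref{4.10} follows from the residue computation sketched above.
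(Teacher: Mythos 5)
Your proposal is correct and follows essentially the same route as the paper, which obtains Lemma \ref{l4.2} by invoking \cite[Theorem 2.2]{V4} together with Proposition \ref{p4.1} and Lemma \ref{l4.1}; your residue/contour-shift explanation is precisely the mechanism inside that cited theorem, and the analyticity claim is read off from Lemma \ref{l4.1} as you describe. Two cosmetic slips worth noting, though they do not affect the argument: the homogeneous relation gives $V_{0}(\nu+1,\mu)=\mathfrak{a}(\nu)V_{0}(\nu,\mu)/\mu$ rather than $\mathfrak{a}(\nu)V_{0}(\nu,\mu)$ (the extra $\mu$ cancels against the $\mu$ in \eqref{4.10}), and the quoted kernel decay rates are off — one has $\cot\pi\xi+i=e^{i\pi\xi}/\sin\pi\xi$, which decays like $e^{-2\pi\,Im\,\xi}$ as $Im\,\xi\to+\infty$ and is $O(1)$ as $Im\,\xi\to-\infty$, with the factor $\sin^{-2}\pi(\xi+d)$ contributing an additional $e^{-2\pi|Im\,\xi|}$ in both directions.
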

At this point, we are ready to build solutions $v_{1}(x,t)$ and
$v_{2}(x,t)$. To this end, setting
\[
\mathcal{N}_{2}(\mathfrak{r})=\frac{\cot\mathfrak{r}(\delta-\pi/2)+a_{3}}{\cot\mathfrak{r}(\delta+\pi/2)+k\a_{3}},
\]
and substituting $V(\nu,\mu)$ in the expressions of
$\mathcal{M}_{1}$ and $\mathcal{M}_{2}$, we arrive at
\begin{align*}
v_{1}^{*}(\zeta,x_{2},\mu)&=\frac{ik\mathcal{N}_{2}(i\zeta+s+2)\sin(i\zeta+s+2)(x_{2}+\pi/2)}
{[k\mathcal{N}_{2}(i\zeta+s+2)+1]\sin(i\zeta+s+2)(\delta+\pi/2)}\\
& \times
\int_{\ell_{d_{0}}}\frac{V_{0}(i\zeta(s^{*}-2)^{-1},\mu)f_{1}^{*}(\zeta-i(s^{*}-2)\xi-i(s+1),\mu)[\cot\pi\xi+i]\sin^{2}\pi
d}{V_{0}(1+\xi+i\zeta(s^{*}-2)^{-1},\mu)\sin^{2}\pi(\xi+d)}
d\xi,\\
v_{2}^{*}(\zeta,x_{2},\mu)&=\frac{i\sin(i\zeta+s+2)(x_{2}-\pi/2)}
{[k\mathcal{N}_{2}(i\zeta+s+2)+1]\sin(i\zeta+s+2)(\delta-\pi/2)}\\
& \times
\int_{\ell_{d_{0}}}\frac{V_{0}(i\zeta(s^{*}-2)^{-1},\mu)f_{1}^{*}(\zeta-i(s^{*}-2)\xi-i(s+1),\mu)[\cot\pi\xi+i]\sin^{2}\pi
d}{V_{0}(1+\xi+i\zeta(s^{*}-2)^{-1},\mu)\sin^{2}\pi(\xi+d)} d\xi,
\end{align*}
At last, appealing to the easily verified equality
\[
\frac{1}{2i\pi}\int_{-i\infty}^{+\infty}\mu^{-d_{0}+iz}d\mu=\frac{t^{d_{0}-1-iz}}{\Gamma(d_{0}-iz)}
\]
and performing the inverse Laplace and Fourier transformations in
$v_{1}^{*}$ and $v_{2}^{*}$, we arrive at the integral
representation of solution to \eqref{4.8}. Namely, denoting
\begin{align*}
f(x_{1},t)&=f_{1}(x_{1},t)e^{-(1+s)x_{1}},\\
\mathcal{H}(z,t,x_{1})&=\frac{\exp\{-i(s^{*}-2)(\ln(s^{*}-2)t+x_{1})\}
\sin^{2}\pi d}{\sin\pi(d_{0}-iz)\Gamma(d_{0}-iz)\sin^{2}\pi(d-d_{0}+iz)},\\
\mathcal{A}_{1}(\zeta,x_{1},z)&=\frac{\mathcal{P}(i\zeta/(s^{*}-2))\mathcal{L}_{0}(i\zeta/(s^{*}-2))
\mathcal{L}(i\zeta/(s^{*}-2))\mathfrak{d}^{-iz}} {
\mathcal{P}(1-d_{0}+iz+i\zeta/(s^{*}-2))\mathcal{L}_{0}(1-d_{0}+iz+i\zeta/(s^{*}-2))
\mathcal{L}(1-d_{0}+iz+i\zeta/(s^{*}-2))}\\
& \times
\frac{ik\mathcal{N}_{2}(i\zeta+s+2)e^{ix_{1}(\zeta+(s^{*}-2)z)}}{[k\mathcal{N}_{2}(i\zeta+s+2)+1]},\\
\mathcal{A}_{2}(\zeta,x_{1},z)&=\frac{\mathcal{P}(i\zeta/(s^{*}-2))\mathcal{L}_{0}(i\zeta/(s^{*}-2))
\mathcal{L}(i\zeta/(s^{*}-2))\mathfrak{d}^{-iz}} {
\mathcal{P}(1-d_{0}+iz+i\zeta/(s^{*}-2))\mathcal{L}_{0}(1-d_{0}+iz+i\zeta/(s^{*}-2))
\mathcal{L}(1-d_{0}+iz+i\zeta/(s^{*}-2))}\\
& \times
\frac{ie^{ix_{1}(\zeta+(s^{*}-2)z)}}{[k\mathcal{N}_{2}(i\zeta+s+2)+1]},\\
\mathcal{B}_{1}(\zeta,x_{2})&=\frac{\sin(i\zeta+s+2)(x_{2}+\pi/2)}
{\sin(i\zeta+s+2)(\delta+\pi/2)},\quad
\mathcal{B}_{2}(\zeta,x_{2})=\frac{\sin(i\zeta+s+2)(x_{2}-\pi/2)}
{\sin(i\zeta+s+2)(\delta-\pi/2)},
\end{align*}
and exploiting Lemma \ref{l4.2}, we claim the following.
\begin{lemma}\label{l4.3}
Let $f\in\C^{1+\beta,\beta,\beta}(b_{T}^{+})$,  and \eqref{4.1},
\eqref{4.3} hold. We assume that $Im\,\zeta$ satisfies the following
inequalities
\[
\begin{cases}
-Z_{1,-}^{-}<-\frac{Im\,\zeta}{s^{*}-2}< Z_{i^{*}_{1}}^{+},\\
-\frac{Im\,\zeta}{s^{*}-2}\neq d_{0}+Z_{i,-}^{-}-m,\quad
i=1,2,...,i_{1}^{*}+2,\\
-\frac{Im\,\zeta}{s^{*}-2}\neq 1+Z_{j}^{+}-m,\quad
j=0,1,...,i_{1}^{*}-1
\end{cases}
\]
for each $m\in\mathbb{N}$ and any $d_{0}\in[0,1]$. Then problem
\eqref{4.8} admits solution
\begin{align}\label{4.14}\notag
v_{1}(x_{1},x_{2},t)&=const.\int_{0}^{t}\frac{d\tau}{[(s^{*}-2)(t-\tau)]^{1-d_{0}}}\int_{-\infty}^{\infty}d\varsigma
f(x_{1}-\varsigma,\tau)e^{(s^{*}-2)d_{0}(x_{1}-\varsigma)}\\\notag &
\times\int_{-\infty}^{+\infty}dz\mathcal{H}(z,t-\tau,x_{1})\int_{-\infty}^{+\infty}d\zeta\,\mathcal{A}_{1}(\zeta,\varsigma,z)\mathcal{B}_{1}(\zeta,x_{2}),\\
v_{2}(x_{1},x_{2},t)&=const.\int_{0}^{t}\frac{d\tau}{[(s^{*}-2)(t-\tau)]^{1-d_{0}}}\int_{-\infty}^{\infty}d\varsigma
f(x_{1}-\varsigma,\tau)e^{(s^{*}-2)d_{0}(x_{1}-\varsigma)}\\\notag &
\times\int_{-\infty}^{+\infty}dz\mathcal{H}(z,t-\tau,x_{1})\int_{-\infty}^{+\infty}d\zeta\,\mathcal{A}_{2}(\zeta,\varsigma,z)\mathcal{B}_{2}(\zeta,x_{2}).
\end{align}
\end{lemma}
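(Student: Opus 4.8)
The plan is to verify that the functions $v_1$ and $v_2$ given in \eqref{4.14} actually solve \eqref{4.8}, by unwinding the chain of transforms that produced them. First I would note that the whole derivation preceding the lemma is essentially a formal computation: starting from \eqref{4.8}, applying the Fourier transform in $x_1$ and the Laplace transform in $t$ reduces the two Laplace-type equations to the ODE system in $x_2$ whose solution is $v_i^\star = \mathcal{M}_i \sin\mathfrak{r}(x_2 \mp \pi/2)$; substituting into the two transmission conditions on $b_T$ and eliminating $\mathcal{M}_1 = k\mathcal{N}(\zeta)\mathcal{M}_2(\zeta)$ leaves the single functional equation \eqref{4.10} for $V(\nu,\mu) = \mathcal{M}_2(\zeta,\mu)\mathcal{N}_1(\zeta)$ after the substitution $\zeta = -i(s^*-2)\nu$. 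So the core of the proof is: (a) Lemma~\ref{l4.2} (together with Proposition~\ref{p4.1} and Lemma~\ref{l4.1}) guarantees that the contour integral defining $V(\nu,\mu)$ solves \eqref{4.10}; (b) reversing the substitutions recovers $\mathcal{M}_1^\star, \mathcal{M}_2^\star$ and hence $v_i^\star(\zeta,x_2,\mu)$; (c) applying the inverse Laplace transform (using the stated identity $\frac{1}{2i\pi}\int \mu^{-d_0+iz}\,d\mu = t^{d_0-1-iz}/\Gamma(d_0-iz)$) and the inverse Fourier transform yields precisely \eqref{4.14}. The bulk of the work is bookkeeping: collecting the factors $\mathcal{N}_2$, $\mathcal{P}$, $\mathcal{L}_0$, $\mathcal{L}$, $\mathfrak{d}^{-iz}$, $\mathcal{B}_i$ into the kernels $\mathcal{A}_i$, $\mathcal{H}$ as defined right before the lemma.

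The step that needs genuine care, rather than mere algebra, is justifying that all the integral manipulations are legitimate — i.e., that the Fubini interchanges of the $\tau$-, $\varsigma$-, $z$- and $\zeta$-integrals are valid and that differentiating under the integral sign is permitted, so that the formal solution is an honest classical solution. This is exactly where the hypotheses on $Im\,\zeta$ in the lemma statement enter: the constraints $-Z_{1,-}^{-} < -Im\,\zeta/(s^*-2) < Z_{i^*_1}^{+}$ and the avoidance of the exceptional lines $Re\,\nu = d_0 + Z_{i,-}^{-} - m$ and $Re\,\nu = 1 + Z_j^{+} - m$ are precisely the conditions under which, by Lemma~\ref{l4.1}(i)--(ii), the ratio $V_0(\nu,\mu)/V_0(\nu+1+\xi,\mu)$ is analytic and pole-free along the integration contour, and — by the asymptotics in Proposition~\ref{p4.1} and the decay estimate in Lemma~\ref{l4.1}(iii) — the integrand decays like $\exp\{-[\pi-\theta_2]|Im\,\nu| - \dots\}$ fast enough in $|Im\,\zeta|$, $|Im\,\mu|$ for the iterated integrals to converge absolutely. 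I would lay out these decay bounds explicitly for each of the four integration variables, check that the exponential $e^{2\pi|Im\,\nu|}$ coming from $\mathcal{P}(\nu)$ is dominated by the $\exp\{-[\pi-\theta_2]|Im\,\nu|\}$-type factor (using $\theta_2 \in (0,\pi)$, so $\pi - \theta_2 > 0$ but possibly small — this is where the constraint may be tight), and then invoke dominated convergence/Fubini to legitimize each interchange and each differentiation.

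Concretely, the proof would proceed in the order: (1) recall the reduction of \eqref{4.8} to \eqref{4.10} for $V$ (already carried out above, so this is just a pointer); (2) invoke Lemma~\ref{l4.2} to get $V(\nu,\mu)$ as the stated contour integral solving \eqref{4.10}, under the indicated non-resonance conditions on $Re\,\nu = -Im\,\zeta/(s^*-2)$; (3) substitute back through $V \mapsto \mathcal{M}_2 \mapsto \mathcal{M}_1 \mapsto v_i^\star$, obtaining the displayed formulas for $v_1^\star, v_2^\star$ and checking they satisfy the $x_2$-ODEs and the Dirichlet conditions at $x_2 = \pm\pi/2$ by construction, and the two transmission conditions at $x_2 = \delta$ by the functional equation; (4) apply inverse Laplace then inverse Fourier, using the $\Gamma$-function identity, and regroup to match \eqref{4.14}; (5) verify convergence of all integrals and admissibility of the interchanges using the asymptotics of Proposition~\ref{p4.1}/Lemma~\ref{l4.1} together with $f \in \C^{1+\beta,\beta,\beta}(b_T^+)$, which controls the decay of $f^\star$ in $Im\,\zeta$ and $Im\,\mu$. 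The main obstacle is step (5): pinning down, for each integration variable, an integrable majorant that is uniform in the remaining variables, and confirming that the window of allowed $Im\,\zeta$ is nonempty and consistent with later use — this is the only place where the argument is not purely mechanical.
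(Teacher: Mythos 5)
Your outline matches the paper's route: the paper derives \eqref{4.14} exactly by the chain you describe (Fourier+Laplace $\to$ ODE in $x_2$ $\to$ eliminate $\mathcal{M}_1=k\mathcal{N}\mathcal{M}_2$ $\to$ functional equation \eqref{4.10} $\to$ $V(\nu,\mu)$ via Lemma~\ref{l4.2} $\to$ invert), and the hypotheses on $\operatorname{Im}\zeta$ are indeed lifted verbatim from Lemma~\ref{l4.2} so that $V_0(\nu,\mu)/V_0(\nu+1+\xi,\mu)$ is pole-free along the contour. The only substantive wrinkle is in your step~(5). You propose that the $e^{2\pi|\operatorname{Im}\nu|}$ growth from $\mathcal{P}(\nu)$ is beaten by a factor $e^{-(\pi-\theta_{2})|\operatorname{Im}\nu|}$, but since $\theta_{2}\in(0,\pi)$ one always has $2\pi>\pi-\theta_{2}$, so a direct domination argument of this form cannot close. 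The actual cancellation in the paper comes from the ratio structure of the kernels: $\mathcal{A}_{j}$ is built from the quotient $\mathcal{A}(i(\mathfrak{K}-z))/\mathcal{A}(i\mathfrak{K}+1-d_{0})$, so the $\mathcal{P}$- and $\Gamma$-growth in $\operatorname{Im}\nu$ largely cancels between numerator and denominator, and the residual decay needed in $z$ is supplied by $\mathcal{H}$ (which behaves like $e^{-\frac{5\pi}{2}|z|}$). The paper in fact defers this convergence bookkeeping to Lemma~\ref{l4.4} rather than carrying it out inside the proof of Lemma~\ref{l4.3} — so while your step~(5) correctly flags the delicate point, your proposed mechanism for resolving it would not work as stated.
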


Clearly, conditions on $Im\,\zeta$ stated in Lemma \ref{l4.3}
dictate the restrictions on the weight $s$. Namely, the integrands
$\mathcal{A}_{i}$ and $\mathcal{B}_{i}$ in \eqref{4.14} should be
left analytic in $\zeta$ if, in particulary, $Im\,\zeta=0$. This
fact leads to the inequalities
\[
Z_{i^{*}_{1}}^{+}>0, \quad Z_{1,-}^{-}>0,\quad 0\neq
d_{0}+Z_{i,-}^{-}-m,\quad 0\neq 1-m+Z_{j}^{+},
\]
for $i=1,2,...,i_{1}^{*}+2,$ $j=0,1,...,i_{1}^{*}-1$ and each
$d_{0}\in[0,1]$, which in turn provide
\begin{equation}\label{4.15}
\begin{cases}
-\frac{\z^{-}_{1}}{2\delta}<s+2<\frac{\z^{+}_{i^{*}_{1}}}{2\delta},\\
s+2\neq (s^{*}-2)(m-d_{0})-\frac{\z^{-}_{i}}{2\delta},
\quad i=1,2,...,i_{1}^{*}+2,\quad d_{0}\in[0,1],\\
s+2\neq (s^{*}-2)(1-m)+\frac{\z^{+}_{j}}{2\delta},\quad
j=0,1,...,i_{1}^{*}-1,\quad m\in\mathbb{N}.
\end{cases}
\end{equation}
Here, we appealed to Corollary \ref{c4.2}, which, in particulary,
provides the bound
\[
\frac{\z^{-}_{1}}{2\delta}<1/2.
\]
Obviously, if $s$ meets the requirements of Theorem \ref{t4.1}, then
\eqref{4.15} holds. Here,  we restrict ourself with verification of
this fact in the case of $q_{2}>1$ and $\theta_{2}>\pi/2$ (which
corresponds to (ii) in Corollary \ref{c4.1}). The remaining cases
are analyzed with the similar arguments. To this end, exploiting
Corollary \ref{c4.3} and conditions \eqref{4.1} and \eqref{4.4}, we
easily deduce that
\[
i_{1}^{*}=
\begin{cases}
3\qquad\text{if}\quad \frac{\z_{2}^{+}}{2\delta}<3,\\
2\qquad\text{if}\quad \frac{\z_{2}^{+}}{2\delta}=3
\end{cases}
\]
It is apparent that in the studied case $K^{+}-1=2p-1\geq 5$.

Further, setting, for simplicity,  $i_{1}^{*}=3$, we  rewrite
\eqref{4.15} in the form
\begin{equation}\label{4.16*}
\begin{cases}
-\frac{\z^{-}_{1}}{2\delta}<s+2<\frac{\z^{+}_{3}}{2\delta},\\
s+2\neq (s^{*}-2)(m_{1}-d_{0})-\frac{\z^{-}_{i}}{2\delta},
\quad i=1,2,...,i_{1}^{*}+2,\quad d_{0}\in[0,1],\\
s+2\neq (s^{*}-2)(1-m_{2})+\frac{\z^{+}_{j}}{2\delta},\quad
j=0,1,...,i_{1}^{*}-1,\quad m\in\mathbb{N},
\end{cases}
\end{equation}
where $i=1,2,...,5,$ $j=0,1,2,$ and integer $m_{1},$ $m_{2}$ satisfy
the inequalities
\begin{equation}\label{4.16}
0<m_{1}<1+\frac{\z_{3}^{+}+\z^{-}_{5}}{2\delta(s^{*}-2)},\qquad
0<m_{2}<1+\frac{\z_{2}^{+}-\z^{-}_{1}}{2\delta(s^{*}-2)}.
\end{equation}
Then, keeping in mind these relations and coming to the definition
of  sets $\mathbb{M}_{1}^{+}$ and $\mathbb{M}_{1}^{-}$ (see
\eqref{4.0*}), we yield
\[
\mathbb{M}_{1}^{-}\subset\{0,1,...,M_{1}^{-}\}\quad\text{with
integer}\quad M_{1}^{-}\in[4,11];\quad
\mathbb{M}_{1}^{+}\subset\{0,1,...,M_{1}^{+}\}\quad\text{with
integer}\quad M_{1}^{+}\in[1,6].
\]
We notice that the restrictions on $M_{1}^{+}$ and $M_{1}^{-}$
follow from Corollaries \ref{c4.2}-\ref{c4.3} and inequalities
\eqref{4.16}.

Thus, appealing to \eqref{4.5}, we compute $\z_{1}^{*}$ by the
values
\begin{align*}
\underline{\z_{1}}&=\max\Big\{-\frac{\z_{i}^{-}}{2\delta}+[s^{*}-2][m_{1}-d_{0}],\,
m_{1}\in\mathbb{M}_{1}^{-},\, i=1,2,...,5,\,
d_{0}\in[0,1]\Big\},\\
\overline{\z_{1}}&=\max\Big\{\frac{\z_{i}^{+}}{2\delta}-[s^{*}-2][m_{2}-1],\,
m_{2}\in\mathbb{M}_{1}^{+},\, i=0,1,2\Big\}.
\end{align*}

At last, collecting  \eqref{4.16*}, \eqref{4.16} with the definition
of $\mathbb{M}_{1}^{-}$ and $\mathbb{M}_{1}^{+}$, we conclude that
the value $s$ satisfying assumption (h8) fits to conditions
\eqref{4.15}. Thus, the first stage in our arguments is completed.

\smallskip
\noindent\textit{Stage 2: Estimates of $u_{i}$.} It is worth noting
that the estimates of the solution $(u_{1},u_{2})$ and, accordingly,
$\varrho$ are simple consequence of the corresponding bounds of
$v_{1}$ and $v_{2}$ constructed in Lemma \ref{l4.3}.
\begin{lemma}\label{l4.4}
Let assumptions of Theorem \ref{t4.1} holds. Then there are the
estimates
\begin{align*}
&\sum_{i=1}^{2}[\|v_{i}\|_{\C^{2+\beta,\beta,\beta}(\bar{B}_{i,T})}+\|e^{(2-s^{*})x_{1}}\tfrac{\partial
v_{i}}{\partial t}\|_{\C^{1+\beta,\beta,\beta}(b_{T})}]\leq
C\|f\|_{\C^{1+\beta,\beta,\beta}(b_{T})},\\
&\sum_{i=1}^{2}\|v_{i}\|_{\C^{2+\beta,\beta,\beta}(\bar{B}_{i,T})}\leq
C
T^{\beta^{*}-\beta}R_{0}^{\beta^{*}(s^{*}-2)}\|f\|_{\C^{1+\beta,\beta,\beta}(b_{T})}
\end{align*}
with any $\beta^{*}\in(\beta,1)$.
\end{lemma}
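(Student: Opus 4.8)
The plan is to derive the bounds for the lemma directly from the explicit integral representation \eqref{4.14} of $v_1$ and $v_2$ obtained in Lemma \ref{l4.3}. The strategy is standard for this kind of Mellin/Fourier--Laplace parametrix: decompose the integrals over $\zeta$ and $z$ via residue calculus and asymptotic estimates on the integrands, then turn the convolution in $x_1$ and $t$ into a fractional-integration estimate. First I would fix the contour parameter $d_0\in(\beta,1)$ (so that the kernel $\mathcal H(z,t,x_1)$, which behaves like $|t|^{d_0-1}$ in time and is $L^1$ in $z$ because of the double factor $\sin^{-2}\pi(d-d_0+iz)$ together with $\Gamma(d_0-iz)^{-1}$ decaying super-exponentially, is integrable), and check that the choice of $s$ from (h8) places $\mathrm{Im}\,\zeta=0$ strictly inside the admissible window \eqref{4.15} so that no poles of $\mathcal A_i$ cross the line of integration. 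The analyticity and pole-free statements of Proposition \ref{p4.1} and Lemma \ref{l4.1}(i)--(ii) are exactly what guarantees the $\zeta$-integral is well defined on the real line.

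The core quantitative step is to estimate the inner double integral
\[
\Lambda_i(\varsigma,x_1,x_2,t-\tau)=\int_{-\infty}^{+\infty}dz\,\mathcal H(z,t-\tau,x_1)\int_{-\infty}^{+\infty}d\zeta\,\mathcal A_i(\zeta,\varsigma,z)\mathcal B_i(\zeta,x_2),
\]
together with its $x$-derivatives up to second order and its H\"older seminorms in $x$ and $t$. For this I would use the asymptotics recorded in Proposition \ref{p4.1}: as $|\mathrm{Im}\,\nu|\to\infty$ the product $\mathcal P(\nu)$ grows like $e^{2\pi|\mathrm{Im}\,\nu|}$, while $(\mathfrak d)^{\nu-1/2}\mathcal L_0(\nu)\mathcal L(\nu)$ has the Stirling-type behavior $[(\hat s+\nu)\mathcal G(-i(s^*-2)\nu)]^{\nu-1/2}\exp\{C_1\ln\nu+C_2\nu+O(1)\}$; combined with Lemma \ref{l4.1}(iii) for the reciprocal of $V_0$ evaluated at the shifted argument $1-d_0+iz+i\zeta/(s^*-2)$, the ratio $V_0(\cdot)/V_0(\cdot+1+\xi)$ in \eqref{4.14} decays like $e^{-(\pi-\theta_2)|\mathrm{Im}(\cdot)|}$ up to a polynomial factor, and the trigonometric factors $\mathcal B_i$ contribute at most $e^{(\delta+\pi/2-|x_2\pm\pi/2|)|\mathrm{Im}\,\zeta|}\le e^{\pi|\mathrm{Im}\,\zeta|}$ since $x_2$ ranges over $[-\pi/2,\delta]$ or $[\delta,\pi/2]$. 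The point is that the exponential gain from $V_0^{-1}$ together with the geometric factor from $\mathcal B_i$ beats the growth of $\mathcal P$ and of $\mathcal N_2$, so that the $\zeta$- and $z$-integrals converge absolutely and produce a kernel $\Lambda_i$ that is smooth in $x_1$, in $x_2$ away from $b$, and has the expected singular homogeneity $e^{(s^*-2)d_0(x_1-\varsigma)}$ absorbed against the weight already extracted. Differentiation in $x_1$ pulls down a factor $\zeta+(s^*-2)z$, differentiation in $x_2$ a factor $i\zeta+s+2$, each of which is dominated by the residual exponential decay; H\"older continuity is obtained by the usual splitting of $|e^{i(x_1-\bar x_1)(\zeta+\cdots)}-1|\le C|(x_1-\bar x_1)(\zeta+\cdots)|^{\beta}$ against the decaying tail.

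With $\Lambda_i$ controlled, the outer structure of \eqref{4.14} is a convolution in $x_1$ against $f(\cdot,\tau)e^{(s^*-2)d_0(\cdot)}$ followed by a fractional integral $\int_0^t (t-\tau)^{d_0-1}(\cdot)\,d\tau$ in time. Since $f=f_1e^{-(1+s)x_1}$ and, by (h8), $f_1\in\underset{0}{E}{}^{1+\beta,\beta,\beta}_{s+1}(g_T^+)$ vanishes for $|y|>R_0$, the exponential weights combine to leave a bounded, compactly supported $\C^{1+\beta,\beta,\beta}(b_T)$ function of total norm $\le C\|f\|_{\C^{1+\beta,\beta,\beta}(b_T)}$; Young's inequality for the $x_1$-convolution and the elementary bound $\int_0^t(t-\tau)^{d_0-1}d\tau=t^{d_0}/d_0$ then give the first estimate of the lemma once we note that $\partial_t v_i$ picks up the factor $e^{(2-s^*)x_1}$ precisely because of the shift $\zeta\mapsto\zeta-i(s^*-2)\xi$ in the argument of $f_1^*$, i.e. $e^{(2-s^*)x_1}\partial_t v_i$ is again of the same convolution form with $d_0$ replaced by $d_0$ and one less power of $(t-\tau)$. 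For the second, time-gaining estimate I would interpolate: bounding one H\"older difference in $t$ of order $\beta^*$ against the fractional-integral kernel yields a factor $T^{\,d_0-\beta^*}$; choosing $d_0$ so that $d_0-\beta^*$ can be taken as close as desired to, and in particular equal to, $\beta^*-\beta$ is the reason the final power is $T^{\beta^*-\beta}$, while the compact support $|y|\le R_0$ of $f_1$ confines $|x_1-\varsigma|$ and hence the exponential $e^{(s^*-2)d_0(x_1-\varsigma)}$ to contribute at most $R_0^{\beta^*(s^*-2)}$.

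The main obstacle I anticipate is the bookkeeping of the exponential rates at infinity: one has to verify that $(\pi-\theta_2)+(\delta+\pi/2-|x_2\pm\pi/2|)-2\pi>0$ is \emph{not} what is needed — rather, the decay from $V_0^{-1}$ is only borderline, so the genuine cancellation comes from pairing $\mathcal P(\nu)\sim e^{2\pi|\mathrm{Im}\,\nu|}$ in the numerator against $\mathcal P$ in the denominator of $\mathcal A_i$ at the \emph{shifted} argument, whose imaginary part is $z+\zeta/(s^*-2)$, so that the net exponent in the joint $(\zeta,z)$-integration is strictly negative only after this cancellation together with the $\Gamma(d_0-iz)^{-1}$ super-exponential decay and the $\sin^{-2}$ factors are all taken into account. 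Making this simultaneous two-variable decay estimate rigorous — essentially re-deriving in our notation the convergence lemmas of \cite[Sections 3--4]{V4} for the particular coefficient $\mathcal G$ factorized above — is the technically delicate part; everything downstream (derivatives, H\"older seminorms, the $T^{\beta^*-\beta}$ and $R_0^{\beta^*(s^*-2)}$ factors) is then routine.
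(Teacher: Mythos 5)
Your approach coincides with the paper's: both read off the estimates from the explicit representation \eqref{4.14}, using the asymptotics of Proposition \ref{p4.1} and Lemma \ref{l4.1}, the shifted-argument cancellation between $V_0$ evaluated at $\nu$ and at $\nu+1+\xi$, and the convolution-in-$x_1$ plus fractional-integral-in-$t$ structure. The paper makes the two-variable decay precise by passing to the variable $\mathfrak{K}=z+\zeta/(s^*-2)$ (your ``shifted argument''), decomposing the $(\mathfrak{K}_1,z)$-plane into the sixteen regions $\mathcal{D}_i^\pm$, tabulating the behavior of $\mathcal{A}_j e^{-i\varsigma(s^*-2)\mathfrak{K}}$ in each, and then deferring the remaining verification to \cite[Sections 3--6]{V2} (that reference, not \cite{V4}, carries the convergence arguments you want to re-derive). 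One concrete discrepancy: you fix $d_0\in(\beta,1)$ from the outset, whereas the paper explicitly remarks that $d_0=1$ (with the half-circle detour contour $\ell_1$) must be used when estimating the senior spatial derivatives for the \emph{first} bound; the choice $d_0\in(\beta,1)$ is reserved for the second, time-gaining estimate. Also, your prescription $d_0-\beta^*=\beta^*-\beta$, i.e.\ $d_0=2\beta^*-\beta$, lies in $(\beta,1)$ only when $\beta^*<(1+\beta)/2$, so it does not by itself cover the full asserted range $\beta^*\in(\beta,1)$; this is reparable but would need a different balancing for $\beta^*$ near $1$. These are refinements rather than a change of route --- the substance of your argument matches the paper's.
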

\begin{proof}
First of all, assumptions on $f_{1}$ along with Corollary \ref{c2.2}
provide the regularity $f\in \C^{1+\beta,\beta,\beta}(b_{T})$ and
the bound
\[
\|f\|_{\C^{1+\beta,\beta,\beta}(b_{T})}\leq
C\|f_{1}\|_{E_{s+1}^{1+\beta,\beta,\beta}(g_{T})}.
\]
Next, to estimate $v_{1}$ and $v_{2}$, we will exploit the arguments
of \cite[Sections 3-6]{V2}. To this end, we need in the asymptotic
behavior of the functions $\mathcal{B}_{i},$ $\mathcal{A}_{i},$
$\mathcal{H}$. Performing technical computations, we deduce that
\begin{align*}
\mathcal{B}_{i}(\zeta,\delta)&=1,\quad
\mathcal{B}_{1}(\zeta,x_{2})\approx\begin{cases}
\text{const.}\qquad\qquad\qquad\,\text{ if}\quad |Re\,\zeta|<1,\\
\text{const.}e^{-(\delta-x_{2})|Re\,\zeta|}\quad\text{if}\quad
|Re\,\zeta|\to+\infty,
\end{cases}\\
\mathcal{B}_{2}(\zeta,x_{2})&\approx\begin{cases}
\text{const.}\qquad\qquad\qquad\,\text{ if}\quad |Re\,\zeta|<1,\\
\text{const.}e^{-(x_{2}-\delta)|Re\,\zeta|}\quad\text{if}\quad
|Re\,\zeta|\to+\infty,
\end{cases}\\
\mathcal{H}(z,t,x_{1})&\approx
\begin{cases}
\text{const.}\qquad\qquad\qquad\,\text{ if}\quad |z|<1,\\
\text{const.}\exp\{iz\ln|z|+d_{0}-iz-iz
x_{1}-iz\ln(s^{*}-2)t-\tfrac{5\pi|z|}{2}\}\quad\text{if}\quad
|z|\to+\infty.
\end{cases}
\end{align*}
Here, we utilized the well-known asymptotic
\begin{align*}
\Gamma(iz_{1}+z_{2})&\approx const.
\exp\{-\frac{\pi|z_{1}|}{2}+iz_{1}\ln|iz_{1}+z_{2}|-iz_{1}+O(1)\}|iz_{1}+z_{2}|^{z_{2}-1/2},\\
\sin\pi(iz_{1}+z_{2})&\approx e^{\pi|z_{1}|}
\end{align*}
as $|z_{1}|\to+\infty$ and bounded $z_{2}$.

At this point, we aim to get the asymptotic to the functions
$\mathcal{A}_{1}$ and $\mathcal{A}_{2}$. To this end, collecting
Proposition \ref{p4.1} with the easily verified  relations:
\begin{equation}\label{4.17}
\mathcal{G}(\zeta)\approx\frac{q_{2}}{q^{*}\sin\theta_{1}}\begin{cases}
e^{i\theta_{2}}\quad\, Re\,\zeta\to+\infty,\\
e^{-i\theta_{2}}\quad Re\,\zeta\to-\infty,
\end{cases}\,
\mathcal{N}_{2}(i\zeta+s+2),\frac{\mathcal{N}_{2}(i\zeta+s+2)}{1+k\mathcal{N}_{2}(i\zeta+s+2)}\approx\text{const.,}\quad
|Re\,\zeta|\to +\infty
\end{equation}
with $Im\,\zeta$ meeting requirements of Lemma \ref{l4.3}, we end up
with the asymptotic
\begin{align}\label{4.18}\notag
&\mathcal{A}(\nu)=(\mathfrak{d})^{\nu-1/2}\mathcal{L}_{0}(\nu)\mathcal{L}(\nu)\mathcal{P}(\nu)
\approx[(Im\,\nu)^{2}+(\hat{s}+Re\,\nu)^{2}]^{\frac{Re\,\nu-1/2}{2}}
[(Im\,\nu)^{2}+(Re\,\nu)^{2}]^{-\frac{C_{1}}{2}}\\
& \times \exp\{(\tfrac{3\pi}{2}-\theta_{2})|Im\,\nu|+\tfrac{i
Im\,\nu}{2}\ln[(Im\,\nu)^{2}+(\hat{s}+Re\,\nu)^{2}]-i
C_{3}Im\,\nu+O(1)\}\quad|Im\,\nu|\to+\infty,
\end{align}
where
\[
C_{3}=C_{2}+\ln\frac{q_{2}}{q^{*}\sin\theta_{1}},\quad
Im\,\nu=\frac{Re\,\zeta}{s^{*}-2},\quad
Re\,\nu=-\frac{Im\,\zeta}{s^{*}-2}.
\]
After that, performing the change of variable
\[
\mathfrak{K}=\mathfrak{K}_{1}+i\mathfrak{K}_{2}=z+\frac{\zeta}{s^{*}-2}
\]
in the integrals in \eqref{4.14}, we have
\begin{align}\label{4.19}\notag
\mathcal{A}_{1}((s^{*}-2)(\mathfrak{K}-z),\varsigma,z)&=\frac{k\mathcal{N}_{2}(s+2+i(s^{*}-2)
(\mathfrak{K}-z))\mathcal{A}(i(\mathfrak{K}-z))}{[1+k\mathcal{N}_{2}(s+2+i(s^{*}-2)
(\mathfrak{K}-z))]\mathcal{A}(i\mathfrak{K}+1-d_{0})}\exp\{i\varsigma\mathfrak{K}(s^{*}-2)\}\\
\mathcal{A}_{2}((s^{*}-2)(\mathfrak{K}-z),\varsigma,z)&=\frac{\exp\{i\varsigma\mathfrak{K}(s^{*}-2)\}\mathcal{A}(i(\mathfrak{K}-z))}{[1+k\mathcal{N}_{2}(s+2+i(s^{*}-2)
(\mathfrak{K}-z))]\mathcal{A}(i\mathfrak{K}+1-d_{0})}.
\end{align}
Then, following \cite[(4.4)]{V2}, we decompose the plane
$(\mathfrak{K}_{1},z)$ in the subdomains
\[
(\mathfrak{K}_{1},z)=(\cup_{i=1}^{8}\mathcal{D}_{i}^{+})\cup
(\cup_{i=1}^{8}\mathcal{D}_{i}^{-}),
\]
where for sufficiently large  value $K$ we set
\begin{align*}
&\mathcal{D}_{1}^{+}
=\{(\mathfrak{K}_{1},z):z\in(0,2K),\mathfrak{K}_{1}\geq3K\},\qquad\qquad
\mathcal{D}_{2}^{+}    =\{(\mathfrak{K}_{1},z):z\geq2K,\mathfrak{K}_{1}\geq z+K\},\\
&\mathcal{D}_{3}^{+}    =\{(\mathfrak{K}_{1},z):z\geq2K,z-K\leq
\mathfrak{K}_{1}<z+K\},\quad
\mathcal{D}_{4}^{+}   =\{(\mathfrak{K}_{1},z):z\geq2K,K<\mathfrak{K}_{1}\leq z-K\},\\
&\mathcal{D}_{5}^{+}    =\{(\mathfrak{K}_{1},z):z\geq2K,-K\leq
\mathfrak{K}_{1}<K\},\qquad\qquad
\mathcal{D}_{6}^{+}    =\{(\mathfrak{K}_{1},z):z\geq2K,\mathfrak{K}_{1}<-K\},\\
&\mathcal{D}_{7}^{+}
=\{(\mathfrak{K}_{1},z):z\in(0,2K),\mathfrak{K}_{1}\leq-3K\},\qquad\qquad
\mathcal{D}_{8}^{+}    =\{(\mathfrak{K}_{1},z):z\in(0,2K),-3K\leq
\mathfrak{K}_{1}\leq3K\}.
\end{align*}
Replacing $z$ on $-z$ in $\mathcal{D}_{j}^{+}$ arrives at
$\mathcal{D}_{j}^{-},$ $j=1,2,...,8$.

At last, collecting \eqref{4.17}-\eqref{4.19} end up with  the
following asymptotic
\begin{equation*}
\mathcal{A}_{j}e^{-i\varsigma(s^{*}-2)\mathfrak{K}}\sim
\left\{%
\begin{array}{ll}
   \mathfrak{R}_{j,1}(\mathfrak{K},z)\frac{e^{-(\frac{3\pi}{2}-\theta_{2})z+iz[C_{3}-\ln(\mathfrak{K}_{1}-z)]
   +i\mathfrak{K}_{1}\ln\frac{\mathfrak{K}_{1}-z}{\mathfrak{K}_{1}}}}
    {\mathfrak{K}_{1}^{1-d_{0}}}\left(\frac{\mathfrak{K}_{1}-z}{\mathfrak{K}_{1}}\right)^{-\mathfrak{K}_{2}-C_{1}-1/2}\quad\quad \text{in}\ \mathcal{D}_{1}^{+},  & \hbox{} \\
   \mathfrak{R}_{j,2}(\mathfrak{K},z)
   \frac{e^{-(\frac{3\pi}{2}-\theta_{2})z+iz[C_{3}-\ln(\mathfrak{K}_{1}-z)]+i\mathfrak{K}_{1}\ln\frac{\mathfrak{K}_{1}-z}{\mathfrak{K}_{1}}}}
    {\mathfrak{K}_{1}^{1-d_{0}}}\left(\frac{\mathfrak{K}_{1}-z}{\mathfrak{K}_{1}}\right)^{-\mathfrak{K}_{2}-C_{1}-1/2}\quad\quad\, \text{ in}\ \mathcal{D}_{2}^{+},  & \hbox{} \\
   \mathfrak{R}_{j,3}(\mathfrak{K},z)
  e^{-(\frac{3\pi}{2}-\theta_{2})\mathfrak{K}_{1}+i\mathfrak{K}_{1}[C_{3}-\ln\mathfrak{K}_{1}]}
 \mathfrak{K}_{1}^{-1/2+d_{0}+k_{2}+C_{1}}\qquad\qquad\qquad\quad\quad\text{ in}\ \mathcal{D}_{3}^{+},  & \hbox{} \\
   \mathfrak{R}_{j,4}(\mathfrak{K},z)
   \frac{e^{(\frac{3\pi}{2}-\theta_{2})(z-2\mathfrak{K}_{1})+iz[C_{3}-\ln|\mathfrak{K}_{1}-z|]+i\mathfrak{K}_{1}\ln\frac{|\mathfrak{K}_{1}-z|}
{\mathfrak{K}_{1}}}}{\mathfrak{K}_{1}^{1-d_{0}}}\left(\frac{|\mathfrak{K}_{1}-z|}{\mathfrak{K}_{1}}\right)^{-\mathfrak{K}_{2}-C_{1}-\frac{1}{2}}\
 \text{ in}\ \mathcal{D}_{4}^{+},  & \hbox{} \\
   \mathfrak{R}_{j,5}(\mathfrak{K},z)
e^{(\frac{3\pi}{2}-\theta_{2})(z-\mathfrak{K}_{1})+i(\mathfrak{K}_{1}-z)[C_{3}+\ln|\mathfrak{K}_{1}-z|]}
|\mathfrak{K}_{1}-z|^{-\mathfrak{K}_{2}-C_{1}-1/2}\quad\qquad \text{ in}\ \mathcal{D}_{5}^{+},  & \hbox{} \\
   \mathfrak{R}_{j,6}(\mathfrak{K},z)
\frac{e^{(\frac{3\pi}{2}-\theta_{2})z+iz[C_{3}-\ln|\mathfrak{K}_{1}-z|]+i\mathfrak{K}_{1}\ln\frac{|\mathfrak{K}_{1}-z|}{\mathfrak{K}_{1}}}}
    {|\mathfrak{K}_{1}|^{1-d_{0}}}\left|\frac{\mathfrak{K}_{1}-z}{\mathfrak{K}_{1}}\right|^{-\mathfrak{K}_{2}-C_{1}-1/2}\qquad\qquad \text{in}\ \mathcal{D}_{6}^{+},
    & \hbox{} \\
\mathfrak{R}_{j,7}(\mathfrak{K},z)
\frac{e^{(\frac{3\pi}{2}-\theta_{2})z+iz[C_{3}-\ln|\mathfrak{K}_{1}-z|]+i\mathfrak{K}_{1}\ln\frac{|\mathfrak{K}_{1}-z|}{\mathfrak{K}_{1}}}}
    {|\mathfrak{K}_{1}|^{1-d_{0}}}\left|\frac{\mathfrak{K}_{1}-z}{\mathfrak{K}_{1}}\right|^{-\mathfrak{K}_{2}-C_{1}-1/2}\qquad\qquad \text{in}\  \mathcal{D}_{7}^{+},  & \hbox{} \\
   \mathfrak{R}_{j,8}(\mathfrak{K},z),\qquad\qquad\qquad\qquad \qquad\qquad\text{in}\quad \mathcal{D}_{8}^{+},  & \hbox{} \\
    \end{array}%
    \right.
\end{equation*}
where the functions $\mathfrak{R}_{j,i}(\mathfrak{K},z)$,
$i=1,2,...,8,$ $j=1,2,$ along with $\frac{\partial
\mathfrak{R}_{j,i}}{\partial z}$ are uniformly bounded in
$\mathfrak{K}_{1}$ and $z$. Namely, there are the following
estimates
\begin{align*}
&\sum_{i=1,i\neq 3,5,8}^{7}|\mathfrak{R}_{j,i}(\mathfrak{K},z)|\leq
C[1+\mathfrak{K}_{1}^{-2}+(\mathfrak{K}_{1}-z)^{-2}],\\
&|\mathfrak{R}_{j,3}(\mathfrak{K},z)|\leq
C[1+\mathfrak{K}_{1}^{-2}],\quad
|\mathfrak{R}_{j,5}(\mathfrak{K},z)|\leq
C[1+(\mathfrak{K}_{1}-z)^{-2}],\quad
|\mathfrak{R}_{j,8}(\mathfrak{K},z)|\leq C,\\
&
\Big|\frac{\partial\mathfrak{R}_{j,1}}{\partial\mathfrak{K}_{1}}\Big|+
\Big|\frac{\partial\mathfrak{R}_{j,2}}{\partial\mathfrak{K}_{1}}\Big|+
\Big|\frac{\partial\mathfrak{R}_{j,6}}{\partial\mathfrak{K}_{1}}\Big|
+
\Big|\frac{\partial\mathfrak{R}_{j,7}}{\partial\mathfrak{K}_{1}}\Big|\leq
C|\mathfrak{K}_{1}|^{-3}.
\end{align*}
We notice that the similar asymptotic and estimates are true in
$\mathcal{D}^{-}_{i},$ $i=1,2,...,8$.

In fine, collecting these asymptotic with the corresponding behavior
of $\mathcal{B}_{i},$ $\mathcal{H}$ and recasting step by step the
arguments of \cite[Sections 3-6]{V2}, we end up with the desired
estimates of $v_{i}.$ It is worth noting that, these arguments
contain a lot of technical calculations but do not bring additional
conceptual difficulties. Therefore, we  omit them here and only
express two notions.

\noindent$\bullet$ In order to evaluate the senior derivatives of
$v_{i}$, we should chose $d_{0}=1$ in the representation
\eqref{4.14} to $v_{i}$ and in the corresponding asymptotic, while
to verify the second estimate in this lemma,  we select any
$d_{0}\in(\beta,1)$.

\noindent$\bullet$ Thanks to the appropriate selection of the value
$\mathfrak{K}_{2}$ (satisfying requirements of Lemma \ref{l4.3}),
 the factor
$\exp\{-\varsigma\mathfrak{K}_{2}(s^{*}-2)\}$ along with the
properties of the function $f$ ensure the convergence of the
corresponding  integrals in $\varsigma$ (in \eqref{4.14}).
\end{proof}


\subsection{The Proof of Theorem \ref{t4.2}}\label{s4.4}
The verification of this claim is based on  \cite[Theorem 6,
Proposition 2]{V1} and recast (with very minor modifications) the
arguments of Section \ref{s4.3}.

Indeed, we look for a solution of \eqref{4.2} in the form
\[
u_{1}=u_{1,1}+u_{1,2}\qquad u_{2}=u_{2,1}+u_{2,2},
\]
where $(u_{1,1}, u_{2,1})$ solves the inhomogeneous transmission
problem
\begin{equation}\label{4.21}
\begin{cases}
\Delta u_{i,1}=f_{0,i},\quad\qquad\qquad y\in G_{i},\quad
t\in(0,T),\quad
i=1,2,\\
\frac{\partial u_{1,1}}{\partial n}-k\frac{\partial
u_{2,1}}{\partial n}=f_{2},\quad\quad y\in g,\quad t\in[0,T]\\
u_{1,1}-u_{2,1}=0,\qquad\quad\quad y\in g,\quad t\in[0,T],\\
u_{1,1}=0,\qquad\qquad\qquad\quad y\in\partial G_{1}\backslash g, \,
t\in[0,T],\\
u_{2,1}=0,\qquad\qquad\qquad\quad y\in\partial G_{2}\backslash g, \,
t\in[0,T],\\
 u_{i,1}(y,0)=0,\qquad\qquad\quad\text{in}\quad \bar{G}_{i},\quad i=1,2,
\end{cases}
\end{equation}
while $(u_{1,2},u_{2,2})$ is a unique classical solution of
\eqref{4.7} with new right-hand side
\[
\bar{f}_{1}=f_{1}+\frac{\partial u_{1,1}}{\partial n}-\frac{\partial
u_{2,1}}{\partial n}.
\]
Utilizing  \cite[Theorem 6 and Proposition 2]{V1} to \eqref{4.21},
we end up with the one-valued classical solvability of \eqref{4.21}
and, besides,
\[
u_{1,1}\in\underset{0}{E}\,_{s+2}^{2+\beta,\beta,\beta}(\bar{G}_{1,T})\quad\text{and}\quad
u_{2,1}\in\underset{0}{E}\,_{s+2}^{2+\beta,\beta,\beta}(\bar{G}_{2,T})
\]
with $s+2\in(2,\min\{3,\pi/\delta\})\backslash\{\pi/2\delta\}$ and
$\delta\in(\pi/4,\pi/2)$.

Hence, we are left to prove the uniqueness and existence of
\eqref{4.7} in unknowns $(u_{1,2},u_{2,2})$. To this end, we only
need to show that the  $\bar{f}_{1}$ satisfies assumption (h8) with
the weight $s$ meeting requirements of (h9). After that, recasting
all arguments of Section \ref{s4.3} completes the proof of Theorem
\ref{t4.2}.

At this point, coming to $\bar{f}_{1}$, we verify each conditions in
(h8) separately.

\noindent$\bullet$ Clearly, the smoothness of $u_{i,1}$, $i=1,2,$
and properties of $f_{1}$ (see (h9)) provides the desired regularity
of $\bar{f}_{1}$. Namely, $\bar{f}_{1}\in
E_{s+1}^{1+\beta,\beta,\beta}(g_{T})$.

\noindent$\bullet$ Assumption (h9) on the right-hand sides in
\eqref{4.21} along with Corollary \ref{c2.1} suggest that
\[
f_{0,i}\in E_{s_{1}}^{\beta,\beta,\beta}(\bar{G}_{i,T}),\quad
f_{2}\in E_{s_{1}+1}^{1+\beta,\beta,\beta}(g_{T})
\]
with any
$s_{1}\in(\max\{-4,-\pi/2\delta-2\},-3)\backslash\{-\pi/2\delta-2\}$.
After that, recasting the arguments leading to  \cite[Theorem
1]{V1}, we conclude  that the obtained solution $(u_{1,1},u_{2,1})$
belongs also $E_{s_{1}+2}^{2+\beta,\beta,\beta}(\bar{G}_{1,T})\times
E_{s_{1}+2}^{2+\beta,\beta,\beta}(\bar{G}_{2,T})$. In particulary,
this means that the solution of \eqref{4.21} together with all its
derivatives vanish as $|y|\to+\infty$ for each $t\in[0,T]$.

Collecting these statements and bearing in mind assumption on
$f_{1}$ (see (h9)), we deduce that the right-hand side meets all
requirements in (h8).\qed


\section{Proof of Theorem \ref{t3.1}}
\label{s5}

\noindent In this section, we will follow the strategy containing
the $4^{\text{th}}$ main steps. First, we demonstrate that, under
the assumptions (h1)-(h4), (h6), the initial distribution of the
pressure $(\mathcal{U}_{1,0},\mathcal{U}_{2,0})$ is a unique
classical solution of \eqref{3.4}. Then, we linearize the nonlinear
system \eqref{3.3} on the initial data and rewrite it in the form
\begin{equation}\label{5.1}
\mathfrak{L}\mathrm{z}=\mathfrak{N}(\mathrm{z})\quad\text{with the
element}\quad\mathrm{ z}=(\mathcal{U}_{1},\mathcal{U}_{2},\sigma).
\end{equation}
Here, $\mathfrak{L}$ denotes  a linear operator, while the symbol
$\mathfrak{N}$ stands for a nonlinear perturbation of \eqref{3.3}.
On the step 3, collecting the continuation approach with the results
of Section \ref{s4}, we claim the boundedness of the linear operator
$\mathfrak{L}^{-1}$ in the corresponding functional space. In fine,
utilizing these outcomes, we rewrite the nonlinear problem
\eqref{5.1} in the form
\[
\mathrm{z}=\mathfrak{L}^{-1}\mathfrak{N}(\mathrm{z})
\]
and prove that the mapping $\mathfrak{L}^{-1}\mathfrak{N}$ is
contraction for sufficiently small $T=T^{*}$. Then, appealing to the
contraction mapping principle ends up with the existence of a unique
fixed point in \eqref{5.1}, that completes the proof of Theorem
\ref{t3.1}.

\subsection{Solvability of \eqref{3.4}}\label{s5.1}
First, we discuss the solvability of the transmission problem having
more general form than \eqref{3.4}. Indeed, for any fixed $T>0$, we
look for the unknown $(W_{1}, W_{2})$ (depending on time $t\in[0,T]$
as a parameter) satisfying the relations
\begin{equation}\label{7.1}
\begin{cases}
\Delta_{x}W_{i}=\phi_{0,i}(x,t)\qquad\qquad\qquad\text{in}\quad \Omega_{i,T},\, i=1,2,\\
W_{i}(x,0)=0\qquad\qquad\qquad\qquad\text{ in}\quad\bar{\Omega}_{i},\, i=1,2,\\
W_{1}-W_{2}=\phi_{1}(x,t)\qquad\qquad\quad\text{on}\quad\Gamma_{T},\\
k_{1}\frac{\partial W_{1}}{\partial n}=k_{2}\frac{\partial
W_{2}}{\partial n}+\phi_{2}(x,t)\qquad\text{ on}\quad\Gamma_{T},\\
W_{1}=\phi_{3}(x,t)\qquad\qquad\qquad\qquad\text{on}\quad\Gamma_{1,T},\\
W_{2}=\phi_{4}(x,t)\qquad\qquad\qquad\qquad\text{on}\quad\Gamma_{2,T}.
\end{cases}
\end{equation}
\begin{lemma}\label{l5.0}
Let $\delta_{0},\delta_{1}\in(0,\pi/2),$ $0<k_{2}<k_{1}$, and let
(h1) and (h2) hold. We assume that consistency condition is
fulfilled, i.e.
\[
\phi_{0,i}(x,0)=0\quad \text{in }\, \bar{\Omega}_{i},\quad
\phi_{1}(x,0),\phi_{2}(x,0)=0\quad\text{on }\,\Gamma
,\quad\phi_{3}(x,0)=0\quad\text{on }\, \Gamma_{1},\quad
\phi_{4}(x,0)=0\quad\text{on }\, \Gamma_{2}.
\]

\noindent(i) If
\begin{align}\label{7.2}\notag
\phi_{0,i}&\in E_{s}^{\beta,\beta,\beta}(\bar{\Omega}_{i,T}),\quad
i=1,2,\quad \phi_{1}\in
E^{2+\beta,\beta,\beta}_{s+2}(\Gamma_{T}),\quad
\phi_{2}\in E^{1+\beta,\beta,\beta}_{s+1}(\Gamma_{T}),\\
\phi_{3}&\in
E^{2+\beta,\beta,\beta}_{s+2}(\Gamma_{1,T}),\quad\phi_{4}\in
E^{2+\beta,\beta,\beta}_{s+2}(\Gamma_{2,T}),
\end{align}
where
$s+2\in(\max\{3;\tfrac{2\pi}{\pi-2\delta_{0}},\tfrac{2\pi}{\pi-2\delta_{1}}\};4)\backslash\{\pi/2\delta_{0},\pi/2\delta_{1}\},$
then problem \eqref{7.1} admits a unique classical solution
satisfying the regularity
\[
W_{1}\in
E_{2+s}^{2+\beta,\beta,\beta}(\bar{\Omega}_{1,T})\qquad\text{and}\qquad
W_{2}\in E_{2+s}^{2+\beta,\beta,\beta}(\bar{\Omega}_{2,T}).
\]
Besides,
\begin{align}\label{7.3}\notag
\sum_{i=1}^{2}\|W_{i}\|_{E_{2+s}^{2+\beta,\beta,\beta}(\bar{\Omega}_{i,T})}&\leq
C\Big[\sum_{i=1}^{2}\|\phi_{0,i}\|_{E_{s}^{\beta,\beta,\beta}(\bar{\Omega}_{i,T})}
+ \|\phi_{1}\|_{E_{2+s}^{2+\beta,\beta,\beta}(\Gamma_{T})} +
\|\phi_{2}\|_{E_{1+s}^{1+\beta,\beta,\beta}(\Gamma_{T})}\\
& + \|\phi_{3}\|_{E_{2+s}^{2+\beta,\beta,\beta}(\Gamma_{1,T})} +
\|\phi_{4}\|_{E_{2+s}^{2+\beta,\beta,\beta}(\Gamma_{2,T})}
 \Big].
\end{align}
If, additionally,
\[
\phi_{0,i},\phi_{1},\phi_{2}\equiv 0,\quad \text{and}\quad
\phi_{3}\in E_{2+s}^{3+\beta,\beta,\beta}(\Gamma_{1,T}),\,\,
\phi_{4}\in E_{2+s}^{3+\beta,\beta,\beta}(\Gamma_{2,T}),
\]
then this solution has a higher regularity, $W_{1}\in
E_{2+s}^{3+\beta,\beta,\beta}(\bar{\Omega}_{1,T}),$ $W_{2}\in
E_{2+s}^{3+\beta,\beta,\beta}(\bar{\Omega}_{2,T}),$ and the estimate
similar to \eqref{7.3} holds.

\noindent(ii) If $\delta_{0},\delta_{1}\in(0,\pi/4)$ and \eqref{7.2}
holds with
\begin{equation}\label{7.3*}
s+2\in(2;\max\{3;\tfrac{2\pi}{\pi-2\delta_{0}},\tfrac{2\pi}{\pi-2\delta_{1}})\backslash\{\pi/2\delta_{0},\pi/2\delta_{1}\},
\end{equation}
then the results stated in (i) hold.

\noindent(iii) If
\[
\phi_{0,i},\phi_{2},\phi_{3},\phi_{4}\equiv 0,\quad \text{and}\quad
\phi_{1}\in E_{2+s}^{2+\beta,\beta,\beta}(\Gamma_{T}),\,\,
\frac{\partial \phi_{1}}{\partial t}\in
E_{1+s}^{1+\beta,\beta,\beta}(\Gamma_{T})
\]
with $s$ satisfying \eqref{7.3*}, then there is the bound
\begin{equation}\label{7.4}
\sum_{i=1}^{2}\underset{\bar{\Omega}_{i,T}}{\sup}\,
r^{-s}(x)|\tfrac{\partial W_{i}}{\partial t}(x,t)|\leq C_{1}T
\underset{\Gamma_{T}}{\sup}\, r^{-s-1}(x)|\tfrac{\partial
\phi_{1}}{\partial t}(x,t)|.
\end{equation}
\end{lemma}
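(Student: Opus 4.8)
The plan is to use that, under the reductions in (iii), problem \eqref{7.1} becomes an elliptic transmission problem in $x$ in which $t$ enters only as a parameter, through the single jump $\phi_1(\cdot,t)$ across $\Gamma$. By part (ii), for each fixed $t\in[0,T]$ the problem
\[
\Delta_x W_i=0\ \text{in}\ \Omega_i,\quad W_1-W_2=\phi_1(\cdot,t)\ \text{on}\ \Gamma,\quad k_1\tfrac{\partial W_1}{\partial n}=k_2\tfrac{\partial W_2}{\partial n}\ \text{on}\ \Gamma,\quad W_i=0\ \text{on}\ \Gamma_i
\]
has a unique classical solution $W_i(\cdot,t)=\mathcal{T}_i[\phi_1(\cdot,t)]$, where $\mathcal{T}=(\mathcal{T}_1,\mathcal{T}_2)$ is a bounded linear operator \emph{independent of $t$}. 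Since $\partial_t\phi_1\in E_{1+s}^{1+\beta,\beta,\beta}(\Gamma_T)$, the curve $t\mapsto\phi_1(\cdot,t)$ is of class $C^1$ into the relevant weighted trace space on $\Gamma$; boundedness and $t$-independence of $\mathcal{T}$ then give that $W_i$ is $C^1$ in $t$ with $V_i:=\partial_tW_i=\mathcal{T}_i[\partial_t\phi_1(\cdot,t)]$, i.e. for each $t$ the pair $(V_1,V_2)$ solves the same transmission problem with $\phi_1(\cdot,t)$ replaced by $\partial_t\phi_1(\cdot,t)$ and all remaining data zero.

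I would then apply the weighted maximum principle for this transmission problem from \cite{V1} (Theorem 6, Proposition 2): because $0<k_2<k_1$, and because the hypotheses $\delta_0,\delta_1\in(0,\pi/4)$ together with \eqref{7.3*} place the weight $s$ strictly below the leading corner exponent of the transmission operator at $\mathbf{A}_0,\mathbf{A}_1$ (and avoid the excluded values $\pi/2\delta_0,\pi/2\delta_1$), one obtains, for every $t$, a pointwise $r^{-s}$–weighted bound for $V_i(\cdot,t)$ in terms of the weighted sup of its boundary data on $\Gamma$. The factor $T$ is then extracted from the consistency condition $\phi_1(\cdot,0)=0$, which forces $W_i(\cdot,0)=0$: writing $\phi_1(\cdot,t)=\int_0^t\partial_\tau\phi_1(\cdot,\tau)\,d\tau$ yields $\sup_{\Gamma}r^{-s-1}|\phi_1(\cdot,t)|\le T\sup_{\Gamma_T}r^{-s-1}|\partial_t\phi_1|$, and coupling this with the static weighted estimate — so that the single power of $r$ separating the $r^{-s}$ bound for $V_i$ from the $r^{-s-1}$ norm of $\partial_t\phi_1$ on $\Gamma$ is balanced against the time increment $t\le T$ — produces \eqref{7.4} after taking the supremum over $t\in[0,T]$; the constant $C_1$ depends only on $k_1,k_2,\delta_0,\delta_1,s$ and the geometry of $\Omega_i$.

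The reduction to a parameter-elliptic problem and the differentiation in $t$ are routine once parts (i)–(ii) of the lemma and \cite{V1} are in hand. I expect the real obstacle to be the last step: obtaining the \emph{explicit smallness} $C_1T$ while measuring $\partial_tW_i$ only in the weakened weight $r^{-s}$ rather than in the natural weight $r^{-s-1}$. This requires pairing the static weighted estimate with the vanishing of $\phi_1$ (and hence of $W_i$) at $t=0$ — trading one power of the distance to the corners for one power of $T$ — and, throughout, keeping the weight inside the admissible range, which is precisely why the restrictions $\delta_0,\delta_1\in(0,\pi/4)$ and \eqref{7.3*} enter the argument.
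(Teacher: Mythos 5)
Your opening moves match the paper's: treat \eqref{7.1} as a family of elliptic transmission problems in $x$ with $t$ a parameter, observe that $\partial_t W_i$ solves the same transmission problem with jump $\partial_t\phi_1$ (and all other data zero), and then aim for a weighted pointwise bound. This is exactly the differentiation-in-$t$ step the paper takes. From there, though, the two arguments part ways and yours has a genuine gap at the very point you flag as the difficulty.

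First, the tool you invoke is not the one the paper uses, and it is not clear it exists in the form you need. You appeal to ``the weighted maximum principle for this transmission problem from \cite{V1} (Theorem 6, Proposition 2)'' to get a sup-to-sup bound for $V_i=\partial_t W_i$. Those results from \cite{V1} are weighted H\"older solvability theorems, not maximum principles yielding pointwise $\sup\, r^{-s}|V_i|$ in terms of $\sup_\Gamma r^{-s-1}|\partial_t\phi_1|$. The paper's route for (iii) is instead through the solvability of \eqref{7.1} in the weighted Sobolev spaces $W^{2,p}_\gamma$ combined with an embedding theorem, following the scheme of \cite[Lemma 4]{V5}; this is how it converts a boundary control on $\partial_t\phi_1$ into an interior pointwise bound for $\partial_t W_i$ with the weaker weight $r^{-s}$.

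Second, and more seriously, your mechanism for extracting the factor $C_1T$ in \eqref{7.4} does not work. You note $\phi_1(\cdot,0)=0$, write $\phi_1(\cdot,t)=\int_0^t\partial_\tau\phi_1(\cdot,\tau)\,d\tau$, and conclude $\sup_{\Gamma}r^{-s-1}|\phi_1(\cdot,t)|\le T\sup_{\Gamma_T}r^{-s-1}|\partial_t\phi_1|$. That is a bound on $\phi_1$, but $V_i=\mathcal{T}_i[\partial_t\phi_1]$ is determined \emph{only} by $\partial_t\phi_1$, for each fixed $t$, and is completely insensitive to the size of $\phi_1$ itself. Saying that ``the single power of $r$ separating $r^{-s}$ from $r^{-s-1}$ is balanced against the time increment $t\le T$'' has no mathematical content: $r$ is a spatial quantity and $T$ a length of time, and the inequality \eqref{7.4} must hold pointwise in $x$ and uniformly in $t\in[0,T]$; there is no exchange rate between them. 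As written, the last step of your argument is not a proof. To repair it one needs the paper's actual mechanism: a weighted $W^{2,p}_\gamma$ estimate for the transmission problem satisfied by $\partial_tW_i$, an embedding of $W^{2,p}_\gamma$ into the pointwise weighted space with exponent $s$, and it is within this Sobolev/embedding chain (as in \cite[Lemma 4]{V5}) that the admissibility constraints $\delta_0,\delta_1\in(0,\pi/4)$, \eqref{7.3*}, and the vanishing of the data at $t=0$ are actually exploited.
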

\begin{proof}
The verification of this claim in the case of $\Gamma$ being a
smooth closed curve are carried out in \cite[Proposition 2.3]{V5}.
The proof of this lemma in the case of singular $\partial\Omega_{i}$
is simple consequence of results in \cite{V1} and arguments leading
to \cite[Proposition 2.3]{V5}. Indeed, if assumptions stated in (i)
hold, then \cite[Theorem 6]{V1} provides the one-valued solvability
of \eqref{7.1} in $E_{2+s}^{2+\beta}$ for all $t\in[0,T]$. After
that, computing the left-hand sides of \eqref{7.1} on the difference
$W_{i}(x,t_{1})-W_{i}(x,t_{2})$ with any $t_{1},t_{2}\in[0,T]$, and
exploiting the smoothness of the right-hand sides, we easily arrive
at (i) in this lemma. The point (ii) in this claim is verified with
the similar arguments where we appeal  to \cite[Theorem 6,
Proposition 2]{V1}. Coming to (iii) and differentiating \eqref{7.1}
 with respect to time, we end up with the  transmission problem
 similar to \eqref{7.1} with unknowns $\bar{W}_{i}=\frac{\partial W_{i}}{\partial
 t},$ $i=1,2,$ and the new right-hand side $\frac{\partial \phi_{2}}{\partial t}.$
After that, recasting the arguments leading to \cite[Lemma 4]{V5}
arrives at the desired bound. We notice that, this arguments
utilize the embedding theorem and the solvability of \eqref{7.1}
 in the weighted Sobolev spaces $W_{\gamma}^{2,p}$ with some
 $\gamma\in(0,1)$ and $p>1$.
\end{proof}
\begin{remark}\label{r5.0*}
It is apparent that statements in (i) and (ii) of Lemma \ref{l5.0}
hold if the right-hand sides are time-independent. In this case, the
constructed solution belongs to $E_{s+2}^{l+\beta}$ with $l=2$ or
$3$.
\end{remark}

Exploiting Lemma \ref{l5.0} and Remark \ref{r5.0*} to problem
\eqref{3.4}, we  achieve to the following results.
\begin{lemma}\label{l5.1}
Let assumptions (h1)-(h3) and (h6) hold, $k\in(0,1),$
$\delta_{0},\delta_{1}\in(0,\pi/4)$.
Then transmission problem \eqref{3.4} admits a unique classical
solution $(\mathcal{U}_{1,0},\mathcal{U}_{2,0})$ having regularity
\[
\mathcal{U}_{1,0}\in E_{s^{*}}^{3+\beta}(\bar{\Omega}_{1}),\quad
\mathcal{U}_{2,0}\in E_{s^{*}}^{3+\beta}(\bar{\Omega}_{2}).
\]
Besides,
\[
\|\mathcal{U}_{1,0}\|_{ E_{s^{*}}^{3+\beta}(\bar{\Omega}_{1})}+
\|\mathcal{U}_{2,0}\|_{ E_{s^{*}}^{3+\beta}(\bar{\Omega}_{2})} \leq
C(\|\mathrm{p}_{1}\|_{ E_{s^{*}}^{3+\beta}(\bar{\Gamma}_{1})} +
\|\mathrm{p}_{2}\|_{ E_{s^{*}}^{3+\beta}(\bar{\Gamma}_{2})}).
\]
\end{lemma}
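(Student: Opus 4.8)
The plan is to recognize that \eqref{3.4} is nothing but the stationary (time--independent) instance of the transmission problem \eqref{7.1}, so that Lemma \ref{l5.0} together with Remark \ref{r5.0*} applies almost verbatim. Concretely, one identifies the data of \eqref{7.1} with
\[
\phi_{0,1}=\phi_{0,2}\equiv 0,\qquad \phi_{1}\equiv 0,\qquad \phi_{2}\equiv 0,\qquad \phi_{3}=\mathrm{p}_{1},\qquad \phi_{4}=\mathrm{p}_{2},
\]
and the weight $s$ of Lemma \ref{l5.0} with $s^{*}-2$, i.e. $s+2=s^{*}$. Since all data are independent of $t$, the consistency conditions of Lemma \ref{l5.0} are vacuous and, by Remark \ref{r5.0*}, the solution produced is $t$--independent; hence solving \eqref{7.1} on $\Omega_{i}\times(0,T)$ is the same as solving the elliptic system \eqref{3.4}.

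Next I would verify that the hypotheses of Lemma \ref{l5.0}(i) are in force. Assumption (h1) yields $\Gamma,\Gamma_{1},\Gamma_{2}\in\C^{l+\beta}$ with $l\geq 3$, while (h2)--(h3) fix the geometric configuration of $\Omega_{1},\Omega_{2}$ near $\mathbf{A}_{0},\mathbf{A}_{1}$, identifying $\delta_{0},\delta_{1}\in(0,\pi/4)\subset(0,\pi/2)$ as the corner openings; combined with $0<k_{2}<k_{1}$ (that is $k\in(0,1)$), this gives the structural assumptions of Lemma \ref{l5.0}. By (h6) we have $\mathrm{p}_{1}\in E_{s^{*}}^{3+\beta}(\bar{\Gamma}_{1})$ and $\mathrm{p}_{2}\in E_{s^{*}}^{3+\beta}(\bar{\Gamma}_{2})$, so \eqref{7.2} holds with the vanishing data $\phi_{0,i},\phi_{1},\phi_{2}$ and with $\phi_{3},\phi_{4}$ lying in the higher class $E_{s^{*}}^{3+\beta}$. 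Finally, because $\tfrac{13}{4}>3$, the admissibility range for $s^{*}$ in (h6) is contained in $\big(\max\{3;\tfrac{2\pi}{\pi-2\delta_{0}},\tfrac{2\pi}{\pi-2\delta_{1}}\};4\big)\backslash\{\tfrac{\pi}{2\delta_{0}},\tfrac{\pi}{2\delta_{1}}\}$, which is exactly the weight window of Lemma \ref{l5.0}(i). (The positivity of $\mathrm{p}_{1},\mathrm{p}_{2}$ assumed in (h6) is irrelevant for this linear statement; it is only needed later to guarantee the sign conditions in (h4).)

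Then the higher--regularity part of Lemma \ref{l5.0}(i), read through Remark \ref{r5.0*}, delivers at once a unique classical solution $(\mathcal{U}_{1,0},\mathcal{U}_{2,0})$ of \eqref{3.4} with $\mathcal{U}_{1,0}\in E_{s^{*}}^{3+\beta}(\bar{\Omega}_{1})$, $\mathcal{U}_{2,0}\in E_{s^{*}}^{3+\beta}(\bar{\Omega}_{2})$, and the estimate \eqref{7.3} specializes to the asserted bound since all $\phi$--terms on its right--hand side except $\|\mathrm{p}_{1}\|_{E_{s^{*}}^{3+\beta}(\bar{\Gamma}_{1})}$ and $\|\mathrm{p}_{2}\|_{E_{s^{*}}^{3+\beta}(\bar{\Gamma}_{2})}$ vanish. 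Uniqueness is inherited from Lemma \ref{l5.0}; alternatively, for the homogeneous problem one tests the equations for $\mathcal{U}_{1,0}$ and $\mathcal{U}_{2,0}$ against themselves on $\Omega_{1}$ and $\Omega_{2}$ and, using the two transmission conditions on $\Gamma$ with $k\in(0,1)$ and the homogeneous Dirichlet data on $\Gamma_{1},\Gamma_{2}$, concludes $\nabla\mathcal{U}_{i,0}\equiv 0$.

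Since the whole argument is a reduction to Lemma \ref{l5.0} (hence ultimately to \cite[Theorem 6]{V1}), there is no substantial obstacle: the only points requiring a short check are the inclusion of the weight window of (h6) into that of Lemma \ref{l5.0}(i), and the fact that time--independence of the data renders the consistency conditions of Lemma \ref{l5.0} automatic via Remark \ref{r5.0*}.
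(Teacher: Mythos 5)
Your proof is correct and takes essentially the same route as the paper: the paper itself states Lemma~\ref{l5.1} as an immediate consequence of Lemma~\ref{l5.0} (with the higher--regularity part) and Remark~\ref{r5.0*}, applied to the time--independent data $\phi_{0,i}=\phi_{1}=\phi_{2}\equiv 0$, $\phi_{3}=\mathrm{p}_{1}$, $\phi_{4}=\mathrm{p}_{2}$, $s+2=s^{*}$. Your check that the weight window of (h6) sits inside that of Lemma~\ref{l5.0}(i) (since $\tfrac{13}{4}>3$) is exactly the verification the paper leaves implicit.
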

In the following section, we will need in the new function
$\rho(\omega,t)\in\C([0,T],E^{2+\beta}_{s^{*}-1}(\Gamma))$
constructed by the initial data.
\begin{corollary}\label{c5.1}
Let assumptions of Lemma \ref{l5.1} hold, then the function
\[
\rho=\rho(\omega,t)=t\frac{\partial\s(\omega,0)}{\partial
t}\Big|_{\Gamma}
\]
satisfies the equalities
\begin{equation}\label{5.4}
\rho(\omega,0)=0\qquad\text{and}\quad\frac{\partial\rho}{\partial
t}(\omega,0)=\frac{\partial\s}{\partial
t}(\omega,0)\quad\text{on}\quad\Gamma.
\end{equation}
Besides, the inequalities are fulfilled
\[
\sum_{l=0}^{2}\|\tfrac{\partial^{l}\rho}{\partial
t^{l}}\|_{\C([0,T],E^{2+\beta}_{s^{*}-1}(\Gamma))}\leq
C(1+T)\sum_{i=1}^{2}\|\mathcal{U}_{i,0}\|_{E^{3+\beta}_{s^{*}}(\bar{\Omega}_{i})}\leq
C(1+T)\sum_{i=1}^{2}\|\mathrm{p}_{i}\|_{E^{3+\beta}_{s^{*}}(\bar{\Gamma}_{i})}.
\]
\end{corollary}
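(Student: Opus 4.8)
The plan is to read off $\partial\s(\omega,0)/\partial t|_{\Gamma}$ explicitly from the dynamic transmission condition in \eqref{3.3} and then to exploit the $t$-affine structure of $\rho$. First I would note that, since $\s(\omega,0)\equiv 0$ on $\Gamma$ (hence $\partial_\omega\s(\omega,0)\equiv 0$ as well), evaluating the third equation of \eqref{3.3} at $t=0$ gives
\[
\frac{\partial\s(\omega,0)}{\partial t}=-k_{1}\Big[\S(\omega,0,0)\,\frac{\partial\mathcal{U}_{1,0}}{\partial\lambda}+\S_{1}(\omega,0,0)\,\frac{\partial\mathcal{U}_{1,0}}{\partial\omega}\Big]\qquad\text{on }\ \Gamma,
\]
where, by \eqref{3.12}, $\S(\omega,0,0)$ and $\S_{1}(\omega,0,0)$ are the traces on $\Gamma$ of fixed $\C^{l+\beta}$-smooth, bounded functions (built from the diffeomorphism $(\omega,\eta)\mapsto y$ and the field $\mathfrak{l}$, which is $\C^{l+\beta}$ by (h1) and \eqref{0.1*}), and $\partial_\lambda,\partial_\omega$ are, near $\Gamma$, fixed smooth combinations of $\partial_{x_1},\partial_{x_2}$. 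Thus $\partial\s(\cdot,0)/\partial t|_{\Gamma}$ is a smooth linear combination of the traces on $\Gamma$ of the first derivatives of $\mathcal{U}_{1,0}$. The identities \eqref{5.4} are then immediate from $\rho(\omega,t)=t\,\partial\s(\omega,0)/\partial t|_{\Gamma}$: indeed $\rho(\omega,0)=0$; $\partial_t\rho(\omega,t)\equiv\partial\s(\omega,0)/\partial t|_{\Gamma}$ is $t$-independent, so at $t=0$ it equals $\partial\s(\omega,0)/\partial t$ on $\Gamma$; and $\partial_t^2\rho\equiv 0$.

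For the norm bound I would invoke the trace theorem for the weighted H\"older classes. By Lemma \ref{l5.1}, $\mathcal{U}_{i,0}\in E^{3+\beta}_{s^{*}}(\bar{\Omega}_{i})$; since one differentiation lowers both the smoothness index and the weight index by one (cf. Definition \ref{d2.1}), $\nabla_x\mathcal{U}_{i,0}\in E^{2+\beta}_{s^{*}-1}(\bar{\Omega}_{i})$, and its trace on $\Gamma$ lies in $E^{2+\beta}_{s^{*}-1}(\Gamma)$ with $\|\nabla_x\mathcal{U}_{i,0}\|_{E^{2+\beta}_{s^{*}-1}(\Gamma)}\le C\|\mathcal{U}_{i,0}\|_{E^{3+\beta}_{s^{*}}(\bar{\Omega}_{i})}$ — here one uses that $\Gamma\in\C^{l+\beta}$ and that the corner points of $\Gamma$ are exactly $\mathbf{A}_0,\mathbf{A}_1$, so the weight is anchored correctly. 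Since $\S(\omega,0,0)$ and $\S_{1}(\omega,0,0)$ act as bounded multipliers on $E^{2+\beta}_{s^{*}-1}(\Gamma)$, the displayed identity gives $\partial\s(\cdot,0)/\partial t|_{\Gamma}\in E^{2+\beta}_{s^{*}-1}(\Gamma)$ together with $\|\partial\s(\cdot,0)/\partial t\|_{E^{2+\beta}_{s^{*}-1}(\Gamma)}\le C\sum_{i=1}^{2}\|\mathcal{U}_{i,0}\|_{E^{3+\beta}_{s^{*}}(\bar{\Omega}_{i})}$. Then, since $\rho(\cdot,t)=t\,\partial\s(\cdot,0)/\partial t|_{\Gamma}$, one has $\|\rho(\cdot,t)\|_{E^{2+\beta}_{s^{*}-1}(\Gamma)}\le T\|\partial\s(\cdot,0)/\partial t\|_{E^{2+\beta}_{s^{*}-1}(\Gamma)}$ uniformly in $t\in[0,T]$, $\|\partial_t\rho\|_{\C([0,T],E^{2+\beta}_{s^{*}-1}(\Gamma))}=\|\partial\s(\cdot,0)/\partial t\|_{E^{2+\beta}_{s^{*}-1}(\Gamma)}$, and the $l=2$ term vanishes; summing over $l=0,1,2$ produces the factor $(1+T)$. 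The second inequality in the claim is then precisely the estimate of Lemma \ref{l5.1}.

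The only step requiring genuine care is the trace/multiplier bookkeeping for the weighted H\"older classes: checking that differentiation shifts the weight index by exactly one consistently with Definition \ref{d2.1}, that the trace onto $\Gamma$ loses no regularity given $\Gamma\in\C^{l+\beta}$ and that $\Gamma$'s only singular points are $\mathbf{A}_0,\mathbf{A}_1$, and that $\C^{l+\beta}$ functions bounded away from those corners are admissible multipliers on $E^{2+\beta}_{s^{*}-1}(\Gamma)$. I expect no real obstruction here; all the remaining steps are elementary.
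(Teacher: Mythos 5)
Your proposal is correct and follows essentially the same route as the paper: both derive the explicit expression for $\partial\mathfrak{s}(\omega,0)/\partial t|_{\Gamma}$ from the dynamic transmission condition in \eqref{3.3} evaluated at $t=0$, observe that $\mathcal{S}$, $\mathcal{S}_{1}$ are smooth multipliers, and then invoke Lemma \ref{l5.1} to close the estimate, with the $(1+T)$ factor coming from the $t$-affine structure of $\rho$. You merely spell out the trace and weighted-differentiation bookkeeping a bit more explicitly than the paper does, which is a reasonable and harmless expansion.
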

\begin{proof}
It is apparent that equalities in \eqref{5.4} are simple consequence
of the representation of $\rho(\omega,t)$. Thus, we are left only to
verify the smoothness of $\rho(\omega,t)$ providing with the
estimate in this claim. To this end, we notice that the boundary
condition on $\Gamma_{T}$ in \eqref{3.3} arrives at the equalities
\begin{align}\label{5.5}\notag
\tfrac{\partial\s}{\partial
t}(\omega,0)\Big|_{\Gamma}&=-k_{1}[\S(\omega,0,0,0)\tfrac{\partial\mathcal{U}_{1,0}}{\partial\lambda}
+\S_{1}(\omega,0,0,0)\tfrac{\partial\mathcal{U}_{1,0}}{\partial\omega}]\\
& =
-k_{2}[\S(\omega,0,0,0)\tfrac{\partial\mathcal{U}_{2,0}}{\partial\lambda}
+\S_{1}(\omega,0,0,0)\tfrac{\partial\mathcal{U}_{2,0}}{\partial\omega}].
\end{align}
It is worth noting that the explicit forms of the function $\S$ and
$\S_{1}$ beyond
$\overline{\Gamma\cap\mathcal{O}_{2\varepsilon}(\mathbf{A}_{i})},$
$i=0,1,$ for each $t\in[0,T],$ are given in \eqref{3.12}, which tell
that these functions belong to $\C^{2+\beta}$. Moreover, in the
proof of Lemma \ref{l5.2}, we will demonstrate that this regularity
holds in
$(\overline{\Gamma\cap\mathcal{O}_{2\varepsilon}(\mathbf{A}_{i})})_{T}.$

In fine, collecting \eqref{5.5} with Lemma \ref{l5.1}, we
immediately end up with the desired estimate which completes the
proof of this claim.
\end{proof}

\subsection{A perturbation form of system \eqref{3.3}}\label{s5.2}
To linearize system \eqref{3.3}, we introduce the new unknown
functions
\begin{equation}\label{5.6}
\sigma=\sigma(\omega,t)=\s(\omega,t)-\rho(\omega,t),\quad
U_{i}=U_{i}(x,t)=\mathcal{U}_{i}(x,t)-\mathcal{U}_{i,0}(x)-\langle\nabla_{x}\mathcal{U}_{i,0},\mathbf{e}\rangle,\quad
i=1,2
\end{equation}
with
\[
\mathbf{e}=(\tfrac{\partial
x_{1}}{\partial\lambda}\chi(\lambda)\sigma(\omega,t);\tfrac{\partial
x_{2}}{\partial\lambda}\chi(\lambda)\sigma(\omega,t)),
\]
and rewrite system \eqref{3.3} in the form
\begin{equation}\label{5.7}
\begin{cases}
\Delta_{x}U_{1}=\mathfrak{N}_{0,1}(U_{1},\sigma)\qquad\qquad\text{in}\quad\Omega_{1,T},\\
\Delta_{x}U_{2}=\mathfrak{N}_{0,2}(U_{2},\sigma)\qquad\qquad\text{in}\quad\Omega_{2,T},\\
U_{1}-U_{2}=A_{0}(x)\sigma\qquad\qquad\quad\text{on}\quad\Gamma_{T},\\
\frac{\partial\sigma}{\partial t}=A_{1}(x)[\tfrac{\partial
U_{1}}{\partial n}-\tfrac{\partial U_{2}}{\partial
n}]+A_{2}(x)[\tfrac{\partial U_{1}}{\partial \omega}-\tfrac{\partial
U_{2}}{\partial \omega}]+\mathfrak{N}_{1}(U_{1},U_{2},\sigma)\quad\text{on}\quad\Gamma_{T},\\
\tfrac{\partial U_{1}}{\partial n}-k\tfrac{\partial U_{2}}{\partial
n}-kA_{3}(x)[\tfrac{\partial U_{1}}{\partial \omega}-\tfrac{\partial
U_{2}}{\partial \omega}]=\mathfrak{N}_{2}(U_{1},U_{2},\sigma)\qquad\qquad\quad\text{ on}\quad\Gamma_{T},\\
U_{1}=\mathfrak{N}_{3}(\sigma)\qquad\qquad\text{on}\quad\Gamma_{1,T},\\
U_{2}=\mathfrak{N}_{4}(\sigma)\qquad\qquad\text{on}\quad\Gamma_{2,T},\\
\sigma(\omega,0)=0\qquad\qquad\qquad\text{in}\quad\Gamma.
\end{cases}
\end{equation}
Comparing system \eqref{5.7} with \eqref{5.1}, we easily conclude
that the linear operator $\mathfrak{L}$ is constructed by the
left-hand side of \eqref{5.7}, while the nonlinear operator
$\mathfrak{N}$ is defined by $\mathfrak{N}_{0,i},$ $i=1,2,$
$\mathfrak{N}_{j},$ $j=1,2,3,4$ (other words by the right-hand side
of \eqref{5.7}).

We notice that, since we discuss the classical solution in
\eqref{5.7}, then collecting the last condition in \eqref{5.7} with
the representation \eqref{5.6} immediately provides
\[
U_{i}(x,0)=0\quad\text{in}\quad \bar{\Omega}_{i}.
\]

The properties of the coefficients $A_{l}(x),$ and the functions
$\mathfrak{N}_{0,i},$  $\mathfrak{N}_{j}$ are stated in the
following claim.
\begin{lemma}\label{l5.2}
Let assumptions (h1)--(h7) hold, and $\mathcal{U}_{i,0}$ and $\rho$
be described in Lemma \ref{l5.1} and Corollary \ref{c5.1},
respectively. If $U_{i}\in
E_{s+2}^{2+\beta,\beta,\beta}(\bar{\Omega}_{i,T})$ and
$\sigma\in\underset{0}{\mathcal{E}}^{2+\beta,\beta,\beta}_{s+2,s^{*}-1}(\Gamma_{T})$,
then there are the following relations:

\noindent(i) $A_{1}(x)$ and $-A_{0}(x)$ are  positive functions, and
$A_{0}\in E_{s^{*}-1}^{2+\beta}(\Gamma),$
$A_{j}\in\C^{2+\beta}(\Gamma),$ $j=1,2,3.$ Besides,
\begin{align*}
A_{0}(x)&=
\begin{cases}
\tfrac{1-k}{k}[1+(\varphi_{0}')^{2}]^{-1/2}\tfrac{\partial\mathcal{U}_{1,0}}{\partial
n}\qquad\qquad\text{on}\quad\Gamma\cap\mathcal{O}_{\varepsilon}(\mathbf{A}_{0}),\\
\tfrac{1-k}{k}[1+(\varphi_{1}')^{2}]^{-1/2}\tfrac{\partial\mathcal{U}_{1,0}}{\partial
n}\qquad\qquad\text{on}\quad\Gamma\cap\mathcal{O}_{\varepsilon}(\mathbf{A}_{1
}),\\
\tfrac{1-k}{k}\tfrac{\partial\mathcal{U}_{1,0}}{\partial
n}\qquad\qquad\qquad\qquad\qquad \text{
on}\quad\Gamma\backslash\{(\Gamma\cap\mathcal{O}_{2\varepsilon}(\mathbf{A}_{1
}))\cup(\Gamma\cap\mathcal{O}_{2\varepsilon}(\mathbf{A}_{0}))\},
\end{cases}\\
A_{1}(x)&=
\begin{cases}
\tfrac{k_{2}}{1-k}[1+(\varphi_{0}')^{2}]^{1/2}\qquad\qquad\qquad\quad\text{on}\quad\Gamma\cap\mathcal{O}_{\varepsilon}(\mathbf{A}_{0}),\\
\tfrac{k_{2}}{1-k}[1+(\varphi_{1}')^{2}]^{1/2}\qquad\qquad\qquad\quad\text{on}\quad\Gamma\cap\mathcal{O}_{\varepsilon}(\mathbf{A}_{1}),\\
\tfrac{k_{2}}{1-k}\qquad\qquad\qquad\qquad\qquad\quad\qquad\text{on}\quad\Gamma\backslash\{(\Gamma\cap\mathcal{O}_{2\varepsilon}(\mathbf{A}_{1
}))\cup(\Gamma\cap\mathcal{O}_{2\varepsilon}(\mathbf{A}_{0}))\},
\end{cases}\\
A_{2}(x)&=
\begin{cases}
\tfrac{k_{2}}{1-k}\varphi_{0}'[1+(\varphi_{0}')^{2}]^{1/2}\qquad\qquad\qquad\text{on}\quad\Gamma\cap\mathcal{O}_{\varepsilon}(\mathbf{A}_{0}),\\
-\tfrac{k_{2}}{1-k}\varphi_{1}'[1+(\varphi_{1}')^{2}]^{1/2}\qquad\quad\qquad\text{on}\quad\Gamma\cap\mathcal{O}_{\varepsilon}(\mathbf{A}_{1}),\\
0\qquad\qquad\qquad\qquad\qquad\qquad\qquad\text{on}\quad\Gamma\backslash\{(\Gamma\cap\mathcal{O}_{2\varepsilon}(\mathbf{A}_{1
}))\cup(\Gamma\cap\mathcal{O}_{2\varepsilon}(\mathbf{A}_{0}))\},
\end{cases}\\
A_{3}(x)&=
\begin{cases}
\tfrac{\partial\mathcal{U}_{1,0}}
{\partial r_{0}}/\tfrac{\partial\mathcal{U}_{1,0}}{\partial n}\qquad\qquad\qquad\qquad\qquad\text{on}\quad\Gamma\cap\mathcal{O}_{\varepsilon}(\mathbf{A}_{0}),\\
-\tfrac{\partial\mathcal{U}_{1,0}}{\partial
r_{1}}/\tfrac{\partial\mathcal{U}_{1,0}}{\partial
n}\qquad\quad\qquad\qquad\qquad\text{on}
\quad\Gamma\cap\mathcal{O}_{\varepsilon}(\mathbf{A}_{1}),\\
(k-1)^{-1}\tfrac{\partial\S_{1}(\omega,0,0)}{\partial \rho_{\omega}}
\tfrac{\partial\mathcal{U}_{1,0}}{\partial
\omega}/\tfrac{\partial\mathcal{U}_{1,0}}{\partial n}
\qquad\text{on}\quad\Gamma\backslash\{(\Gamma\cap\mathcal{O}_{2\varepsilon}(\mathbf{A}_{1
}))\cup(\Upsilon\cap\mathcal{O}_{2\varepsilon}(\mathbf{A}_{0}))\},
\end{cases}
\end{align*}
where $r_{0}=r_{0}(y)$ and $r_{1}=r_{1}(y)$.

 \noindent(ii) The following equalities hold:
\[
\mathfrak{N}_{0,i}(U_{i},\sigma)|_{t=0}=0,
\quad\mathfrak{N}_{i}(U_{1},U_{2},\sigma)|_{t=0}=0,\,
i=1,2,\quad\mathfrak{N}_{j}(\sigma)|_{t=0}=0,\, j=3,4.
\]
\noindent(iii) The functions $ \mathfrak{N}_{0,i}(U_{i},\sigma)$,
$\mathfrak{N}_{i}(U_{1},U_{2},\sigma)$, $i=1,2$,
$\mathfrak{N}_{j}(\sigma),$ $j=3,4,$ consist in the following terms:

\noindent$\bullet$ higher derivatives of $U_{i},$ and $\sigma$ with
coefficients vanishing as $t$ goes to zero,

\noindent$\bullet$ the "quadratic" terms with respect to
$U_{i},\sigma$ and their corresponding derivatives,

\noindent$\bullet$  minor derivatives of unknown functions.

\noindent(iv) The inequality holds
\begin{align*}
&\|\mathfrak{N}_{0,1}(0,0)\|_{E_{s}^{\beta,\beta,\beta}(\bar{\Omega}_{1,T})}
+\|\mathfrak{N}_{0,2}(0,0)\|_{E_{s}^{\beta,\beta,\beta}(\bar{\Omega}_{2,T})}
+\|\mathfrak{N}_{1}(0,0,0)\|_{E_{s+1}^{1+\beta,\beta,\beta}(\Gamma_{T})}\\
&+
\|\mathfrak{N}_{2}(0,0,0)\|_{E_{s+1}^{1+\beta,\beta,\beta}(\Gamma_{T})}
+\|\mathfrak{N}_{3}(0)\|_{E_{s+2}^{2+\beta,\beta,\beta}(\bar{\Gamma}_{1,T})}+
\|\mathfrak{N}_{4}(0)\|_{E_{s+2}^{2+\beta,\beta,\beta}(\bar{\Gamma}_{2,T})}
\\&\leq C
T^{1-\beta}\Big[\|\rho\|_{C^{2}([0,T],E^{2+\beta}_{s^{*}-1}(\Gamma))}+\|\mathcal{U}_{1,0}\|_{E_{s^{*}}^{3+\beta}(\bar{\Omega}_{1})}
+\|\mathcal{U}_{2,0}\|_{E_{s^{*}}^{3+\beta}(\bar{\Omega}_{2})}\Big
].
\end{align*}
\end{lemma}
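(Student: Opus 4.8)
The plan is to insert the change of unknowns \eqref{5.6} into the nonlinear system \eqref{3.3}, to exploit that $(\mathcal{U}_{1,0},\mathcal{U}_{2,0})$ solves the initial transmission problem \eqref{3.4} and that, by \eqref{5.4}--\eqref{5.5}, the auxiliary function $\rho$ is tailored to cancel the leading time derivative, and then to split each resulting equation into its part that is affine in $(U_{1},U_{2},\sigma)$ and their derivatives — which defines the operator $\mathfrak{L}$ together with the coefficients $A_{l}$ — and a genuine remainder, collected into $\mathfrak{N}_{0,i},\mathfrak{N}_{j}$. The basic observation throughout is that the Hanzawa-type map $\mathfrak{e}_{\s}$ in \eqref{3.2} is the identity when $\s\equiv0$, so that $J_{\s}$, and hence $\nabla_{\s}=(J^{*}_{\s})^{-1}\nabla_{x}$, equals $\nabla_{x}$ plus a matrix-valued term that is smooth in $(\omega,\s,\partial_{\omega}\s)$ and vanishes at $\s=0$; likewise $\S$ and $\S_{1}$ from \eqref{3.12} are smooth in $(\omega,\s,\partial_{\omega}\s)$, and near $\mathbf{A}_{0},\mathbf{A}_{1}$ the field $\mathfrak{l}=(0,\mp1)$ is constant by \eqref{0.1*}, which makes $\mathfrak{e}_{\s}$ explicit there and yields the $\mathcal{C}^{2+\beta}$-regularity of $\S,\S_{1}$ up to the corners used in the proof of Corollary \ref{c5.1}.

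Then I process the five equations of \eqref{3.3} one by one. For the Laplace equations, since $\Delta_{x}\mathcal{U}_{i,0}=0$, the identity $\nabla^{2}_{\s}\mathcal{U}_{i}=0$ rearranges into $\Delta_{x}U_{i}=\mathfrak{N}_{0,i}(U_{i},\sigma)$, where $\mathfrak{N}_{0,i}$ gathers $-(\nabla^{2}_{\s}-\Delta_{x})U_{i}$, $-(\nabla^{2}_{\s}-\Delta_{x})\mathcal{U}_{i,0}$ and $-\nabla^{2}_{\s}\langle\nabla_{x}\mathcal{U}_{i,0},\mathbf{e}\rangle$; each summand either carries a coefficient smooth in $(\omega,\s,\partial_{\omega}\s)$ and vanishing at $\s=0$, or is at least quadratic in $(U_{i},\sigma,\mathbf{e})$ and their derivatives. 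On $\Gamma$ one has $\mathcal{U}_{1,0}=\mathcal{U}_{2,0}$, equal tangential derivatives, and $\partial_{n}\mathcal{U}_{2,0}=k^{-1}\partial_{n}\mathcal{U}_{1,0}$ by \eqref{3.4}; expanding $\mathcal{U}_{1}-\mathcal{U}_{2}$ in \eqref{5.6} and keeping the $\sigma$-linear part gives $U_{1}-U_{2}=A_{0}\sigma+(\text{quadratic})$ with $A_{0}$ equal to the jump of the normal derivative of $\mathcal{U}_{i,0}$, i.e. a constant multiple of $\tfrac{1-k}{k}\partial_{n}\mathcal{U}_{1,0}$, times the geometric factor $[1+(\varphi_{i}')^{2}]^{-1/2}$ near the corners (from $\mathfrak{l}=(0,\mp1)$ and $\Gamma=\{y_{2}=\varphi_{i}(y_{1})\}$) and $1$ away from them; this gives the stated form of $A_{0}$, its membership in $E^{2+\beta}_{s^{*}-1}(\Gamma)$ from Lemma \ref{l5.1}, and $-A_{0}>0$ from (h4) and $k\in(0,1)$. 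For the third line of \eqref{3.3} I substitute $\s=\sigma+\rho$ and linearize the two right-hand sides $-k_{i}[\S\,\partial_{\lambda}\mathcal{U}_{i}+\S_{1}\,\partial_{\omega}\mathcal{U}_{i}]$ around $(\s,\mathbf{e},U_{j})=(0,0,0)$; the $\mathcal{U}_{1,0}$-contribution to the first equality exactly cancels $\partial_{t}\rho$ by \eqref{5.5}, leaving the evolution law $\partial_{t}\sigma=A_{1}[\partial_{n}U_{1}-\partial_{n}U_{2}]+A_{2}[\partial_{\omega}U_{1}-\partial_{\omega}U_{2}]+\mathfrak{N}_{1}$, while subtracting the two equalities produces $\partial_{n}U_{1}-k\partial_{n}U_{2}-kA_{3}[\partial_{\omega}U_{1}-\partial_{\omega}U_{2}]=\mathfrak{N}_{2}$; here $\partial_{\lambda}$ is re-expressed through $\partial_{n},\partial_{\omega}$ near the corners (producing the factors $[1+(\varphi_{i}')^{2}]^{1/2}$ in $A_{1}$ and $\pm\varphi_{i}'[1+(\varphi_{i}')^{2}]^{1/2}$ in $A_{2}$, with $A_{2}\equiv0$ off a neighbourhood of the corners since $\S_{1}\equiv0$ there), and $A_{3}$ arises from the $\S_{1}\partial_{\omega}$-term combined with the $\mathbf{e}$-correction, which injects the ratio $\partial_{r_{0}}\mathcal{U}_{1,0}/\partial_{n}\mathcal{U}_{1,0}$ near $\mathbf{A}_{0}$ (resp. $-\partial_{r_{1}}\mathcal{U}_{1,0}/\partial_{n}\mathcal{U}_{1,0}$ near $\mathbf{A}_{1}$); these ratios are $\mathcal{C}^{2+\beta}$ because the leading homogeneous parts of numerator and denominator cancel, which is precisely where the finiteness of $\alpha_{0},\alpha_{1}$ (the corner asymptotics of $\mathcal{U}_{1,0}$) is used, and positivity of $A_{1}$ is immediate from $k_{2}>0$, $k<1$. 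Finally, on $\Gamma_{i}$ one has $U_{i}=\mathrm{p}_{i}\circ\mathfrak{e}_{\s}-\mathcal{U}_{i,0}-\langle\nabla_{x}\mathcal{U}_{i,0},\mathbf{e}\rangle$; Taylor-expanding $\mathrm{p}_{i}\circ\mathfrak{e}_{\s}$, using $\mathrm{p}_{i}=\mathcal{U}_{i,0}$ on $\Gamma_{i}$ and that $\mathbf{e}$ reproduces the $\sigma$-linear part of the displacement, leaves $\mathfrak{N}_{j}(\sigma)$, $j=3,4$, as the $\rho$-linear part of $\langle\nabla_{x}\mathrm{p}_{i},(\text{displacement})\rangle$ plus quadratic terms in $\s$. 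Since at $t=0$ one has $\sigma(\cdot,0)=0$, $\rho(\cdot,0)=0$, $\s(\cdot,0)=0$, $\mathbf{e}(\cdot,0)=0$, $U_{i}(\cdot,0)=0$ and $\mathfrak{e}_{\s}|_{t=0}=\mathrm{id}$, every remainder term vanishes at $t=0$, which is (ii), and the above description gives (iii).

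It remains to establish (iv). By the structure just obtained, each of $\mathfrak{N}_{0,i}(0,0)$, $\mathfrak{N}_{i}(0,0,0)$, $\mathfrak{N}_{j}(0)$ is a finite sum of products of a smooth bounded geometric coefficient, or a function built from $\mathcal{U}_{1,0},\mathcal{U}_{2,0}$ and their first or second derivatives, with a positive power of $\rho,\partial_{\omega}\rho,\partial^{2}_{\omega}\rho$ or $\partial_{t}\rho$; the would-be $\rho$-free term is absent precisely because $\mathbf{e}|_{\sigma=0}=0$, so at $\sigma=0$ the remainders feel $\s=\rho$ only through $\mathfrak{e}_{\rho},\S,\S_{1}$. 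Standard multiplicativity of the weighted H\"{o}lder classes (roughly, $E^{l+\beta}_{s_{1}}\cdot E^{l+\beta}_{s_{2}}\subset E^{l+\beta}_{s_{1}+s_{2}}$ and weighted rescalings $r^{\hat s}E^{l+\beta}_{s}\subset E^{l+\beta}_{s+\hat s}$), licit for the exponents fixed in (h6)--(h7), together with Lemma \ref{l5.1}, bounds the "spatial" factor by $C(\|\mathcal{U}_{1,0}\|_{E^{3+\beta}_{s^{*}}(\bar{\Omega}_{1})}+\|\mathcal{U}_{2,0}\|_{E^{3+\beta}_{s^{*}}(\bar{\Omega}_{2})})$. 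For the temporal factor I use that $\rho=t\,\partial_{t}\s(\omega,0)|_{\Gamma}$ is linear in $t$ and vanishes at $t=0$, so $\sup_{[0,T]}\|\rho(t)\|\le T\|\partial_{t}\rho\|$ and $|t_{1}-t_{2}|^{-\beta}|\rho(t_{1})-\rho(t_{2})|=|t_{1}-t_{2}|^{1-\beta}\|\partial_{t}\s(\cdot,0)\|\le T^{1-\beta}\|\partial_{t}\s(\cdot,0)\|$ (uniformly in the weight), with the same mechanism for $\partial^{k}_{\omega}\rho$ and for the mixed seminorm $[\,\cdot\,]^{(\beta,\beta)}$; for a product $\rho^{m}$ with $m\ge2$ one factor already carries the full $T$ and the rest are bounded, so at least $T^{1-\beta}$ is extracted overall. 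Combining the two factors and using Corollary \ref{c5.1} to bound $\sum_{i}\|\mathcal{U}_{i,0}\|_{E^{3+\beta}_{s^{*}}(\bar{\Omega}_{i})}+\|\rho\|_{C^{2}([0,T],E^{2+\beta}_{s^{*}-1}(\Gamma))}$ by $C(1+T)\sum_{i}\|\mathrm{p}_{i}\|_{E^{3+\beta}_{s^{*}}(\bar{\Gamma}_{i})}$ yields (iv). I expect the main obstacle to be exactly this last bookkeeping: verifying that every product of derivatives of $\mathcal{U}_{i,0}$, of the corner weights $r_{0}^{s^{*}-1},r_{1}^{s^{*}-1}$, and of $\rho$ and its derivatives actually lands in the target spaces $E^{\beta,\beta,\beta}_{s}$, $E^{1+\beta,\beta,\beta}_{s+1}$, $E^{2+\beta,\beta,\beta}_{s+2}$ with the correct weight and with a uniformly extractable factor $T^{1-\beta}$ — this is where the arithmetic restrictions on $s,s^{*}$ in (h6)--(h7) are genuinely used.
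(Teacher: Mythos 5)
Your overall strategy coincides with the paper's: substitute \eqref{5.6} into \eqref{3.3}, use that $\mathfrak{e}_{\s}|_{\s=0}=\mathrm{id}$ and that $(\mathcal{U}_{1,0},\mathcal{U}_{2,0})$ solves \eqref{3.4}, and split each resulting equation into its affine part (giving $\mathfrak{L}$ and the $A_{l}$) and a remainder. The paper executes this by working in the explicit local coordinates near $\mathbf{A}_{0}$ and $\mathbf{A}_{1}$, where the Hanzawa map reads $y_{1}=x_{1}$, $y_{2}=x_{2}-\chi\,\s(\omega,t)$, and your sketch is consistent with that.

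However, the derivation of $A_{1},A_{2},A_{3}$ relies on two algebraic manipulations that your outline does not surface, and without which the linearized system simply is not of the form \eqref{5.10}, so Theorem \ref{t5.1} would not apply. After substituting \eqref{5.6}, the two flux-type conditions on $\Gamma_{T}$ still contain $\partial\sigma/\partial r_{0}$ multiplied by first derivatives of $\mathcal{U}_{1,0}$; e.g.\ the flux transmission condition near $\mathbf{A}_{0}$ takes the form
$\partial_{n}U_{1}-k\,\partial_{n}U_{2}-\tfrac{1-k}{\sqrt{1+(\varphi_{0}')^{2}}}\,\tfrac{\partial\mathcal{U}_{1,0}}{\partial r_{0}}\,\tfrac{\partial\sigma}{\partial r_{0}}+\widetilde{\mathfrak{N}}_{2}=0$.
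The target problem \eqref{5.10} allows no tangential derivative of $\sigma$ in the linear part. The paper removes it by differentiating the first transmission relation $U_{1}-U_{2}=A_{0}\sigma$ with respect to $r_{0}$, solving for $\partial\sigma/\partial r_{0}$ in terms of $\partial_{r_{0}}(U_{1}-U_{2})$ and $\sigma$, and back-substituting; this back-substitution is precisely what produces $A_{3}=\partial_{r_{0}}\mathcal{U}_{1,0}/\partial_{n}\mathcal{U}_{1,0}$ and the $A_{2}\,\partial_{r_{0}}(U_{1}-U_{2})$ term. In parallel, the lone $\partial_{n}U_{1}$ in the dynamic condition is converted into $\partial_{n}U_{1}-\partial_{n}U_{2}$ by adding $\tfrac{k_{1}\sqrt{1+(\varphi_{0}')^{2}}}{1-k}$ times the flux transmission condition to it, which is where the factor $\tfrac{k_{2}}{1-k}$ in $A_{1}$ and the $\widetilde{\mathfrak{N}}_{2}$ contribution to $\mathfrak{N}_{1}$ come from. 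Attributing $A_{3}$ to ``the $\S_{1}\partial_{\omega}$-term combined with the $\mathbf{e}$-correction'' glosses over both of these reshufflings; you should spell them out. The rest of your outline, including the cancellation of $\partial_{t}\rho$ via \eqref{5.5} and the structural estimate for (iv) exploiting $\rho=t\,\partial_{t}\s(\cdot,0)$ to extract $T^{1-\beta}$, matches the paper's reasoning.
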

\begin{proof}
It is worth noting that if
$x\in\bar{\Omega}\backslash\{\overline{\Omega\cap\mathcal{O}_{\varepsilon}(\mathbf{A}_{0})}
\cup\overline{\Omega\cap\mathcal{O}_{\varepsilon}(\mathbf{A}_{1})}\}$,
then this claim follows from the arguments of \cite[Sections
3.1-3.2]{V5}, where the explicit forms of the coefficients and the
right-hand sides in \eqref{5.7} are given if $\Gamma$ is a  smooth
closed curve. Thus, we are left to verify this lemma only in the
case of
$x\in\overline{\Omega\cap\mathcal{O}_{\varepsilon}(\mathbf{A}_{0})}$
 $
\cup\overline{\Omega\cap\mathcal{O}_{\varepsilon}(\mathbf{A}_{1})}$.
Here, we provide the detailed proof if $x$ is of  the
$\varepsilon-$neighborhood of the corner point $\mathbf{A}_{0}$. The
 remaining case is analyzed with the similar arguments and left to
 the interested readers. In the further consideration, we utilized
 the following strategy. On the first step, appealing to assumption
 (h3) and recasting the arguments of Section \ref{s3.1}, we obtain
 the explicit forms of $A_{l}(x)$ and $\mathfrak{N}_{0,i},$
 $\mathfrak{N}_{j}$ in the $\varepsilon-$ neighborhood of $\mathbf{A}_{0}$. After that, utilizing Lemma \ref{l5.1} and Corollary
 \ref{c5.1} ends up with the desired statements.
Throughout this proof, we consider that $x$ or $y$ belongs to the
set
$\overline{\Omega\cap\mathcal{O}_{\varepsilon}(\mathbf{A}_{0})}$.

Coming to the change of variables \eqref{3.2}, in this particular
case, it has a simple form
\[
\omega=\tfrac{x_{1}}{\sin\delta_{0}}=r_{0},\qquad
\lambda=x_{2}-\varphi_{0}(x_{1})
\]
and accordingly,
\[
\begin{cases}
y_{1}=x_{1},\\
y_{2}=x_{2}-\chi(\lambda)\s(\omega,t).
\end{cases}
\]
Differentiating these relations with respect to $y_{1}$ and $y_{2}$,
we arrive at
\[
\begin{cases}
\tfrac{\partial x_{1}}{\partial y_{1}}=1,\qquad\qquad
\tfrac{\partial
x_{1}}{\partial y_{2}}=0,\\
\tfrac{\partial x_{2}}{\partial
y_{1}}=\tfrac{\chi\s_{x_{1}}-\chi'\s}{1-\chi'\s},\quad\tfrac{\partial
x_{2}}{\partial y_{2}}=\tfrac{1}{1-\chi'\s}.
\end{cases}
\]
Here, we note that the restriction on $\s$ (\eqref{3.0*},
\eqref{3.1*} and \eqref{3.10}) ensures the well-posedness of these
relations.

Differentiating once more the last two equalities, we have
\[
\begin{cases}
\tfrac{\partial^{2}x_{2}}{\partial
y_{1}^{2}}=\tfrac{\chi\s_{x_{1}x_{1}}-\varphi'_{0}\chi'\s_{x_{1}}+\varphi'_{0}\chi''\s-\chi'\s_{x_{1}}}{1-\chi'\s}
+\tfrac{[\chi'\s_{x_{1}}-\chi''\s][\chi\s_{x_{1}}-\chi'\s]}{[1-\chi'\s]^{2}}+\tfrac{\chi''\s[\chi\s_{x_{1}}-\chi'\s]^{2}}{[1-\chi's]^{3}}
\\
\qquad-\tfrac{[\chi\s_{x_{1}}-\chi'\s][\varphi'_{0}\chi''\s-\chi'\s_{x_{1}}]}{[1-\chi'\s]^{2}},\\
\tfrac{\partial^{2}x_{2}}{\partial
y_{2}^{2}}=\tfrac{\chi''\s}{[1-\chi'\s]^{3}}.
\end{cases}
\]
Performing standard calculations and taking into account these
relations, we get the following representations to $\nabla^{2}_{\s}$
and the vector of the normal in the neighborhood of
$\mathbf{A}_{0}$:
\begin{align*}
\nabla_{\s}^{2}&=\Delta_{x}+2\frac{\partial x_{1}}{\partial
y_{1}}\frac{\partial^{2}}{\partial x_{1}\partial
x_{2}}+\Big[\Big(\frac{\partial x_{2}}{\partial
y_{1}}\Big)^{2}+\Big(\frac{\partial x_{2}}{\partial
y_{2}}\Big)^{2}-1\Big]\frac{\partial^{2}}{\partial x_{2}^{2}}
+\Big(\frac{\partial^{2}x_{2}}{\partial
y_{1}^{2}}+\frac{\partial^{2}x_{2}}{\partial
y_{2}^{2}}\Big)\frac{\partial}{\partial x_{2}},\\
n_{\tau}&=\Big([\varphi'_{0}-\s_{x_{1}}][(1+(\varphi'_{0}-\s_{x_{1}})^{2}]^{-1/2};-(1+[\varphi'_{0}-\s_{x_{1}}]^{2})^{-1/2}\Big).
\end{align*}
Besides, appealing to the definition of $\Phi_{\s}$, we have
\[
\Phi_{\s}=-y_{2}+\varphi_{0}(y_{1})-\s(x_{1},t),
\]
and, accordingly, we easily conclude that
\[
\nabla_{\s}\Phi_{\s}=(\varphi'_{0}(x_{1})-\s_{x_{1}};-1).
\]
At this point, setting
\[
\widetilde{\mathfrak{N}}_{0,i}(\s,U_{i})=-2\frac{\partial
x_{1}}{\partial y_{1}}\frac{\partial^{2}U_{i}}{\partial
x_{1}\partial x_{2}}-\Big[\Big(\frac{\partial x_{2}}{\partial
y_{1}}\Big)^{2}+\Big(\frac{\partial x_{2}}{\partial
y_{2}}\Big)^{2}-1\Big]\frac{\partial^{2}U_{i}}{\partial x_{2}^{2}} -
\Big(\frac{\partial^{2}x_{2}}{\partial
y_{1}^{2}}+\frac{\partial^{2}x_{2}}{\partial
y_{2}^{2}}\Big)\frac{\partial U_{i}}{\partial x_{2}},
\]
and appealing to the equalities above, we rewrite \eqref{3.3} in the
neighborhood of $\mathbf{A}_{0}$
\begin{equation}\label{5.8}
\begin{cases}
\Delta_{x}
U_{i}=\widetilde{\mathfrak{N}}_{0,i}(\s,U_{i})\quad\qquad\qquad\text{in}\quad(\Omega_{i}\cap\mathcal{O}_{\varepsilon}(\mathbf{A}_{0}))_{T},\,
i=1,2,\\
U_{1}-U_{2}=0\qquad\qquad\qquad\qquad\text{on}\quad\overline{(\Gamma\cap\mathcal{O}_{\varepsilon}(\mathbf{A}_{0}))}_{T},\\
-\frac{\partial\s}{\partial
t}=k_{1}\sqrt{1+(\varphi'_{0})^{2}}\frac{\partial U_{1}}{\partial
n}-k_{1}\frac{\partial\s}{\partial r_{0}}\Big[\frac{\partial
U_{1}}{\partial x_{1}}+\frac{\partial\s}{\partial
x_{1}}\frac{\partial U_{1}}{\partial x_{2}}\Big] \qquad\qquad\qquad\text{on}\quad\overline{(\Gamma\cap\mathcal{O}_{\varepsilon}(\mathbf{A}_{0}))}_{T},\\
\frac{\partial U_{1}}{\partial n}-k\frac{\partial U_{2}}{\partial
n}-\frac{\partial\s}{\partial r_{0}}\Big[\frac{\partial
U_{1}}{\partial x_{1}}-k\frac{\partial U_{2}}{\partial
x_{1}}+\frac{\partial\s}{\partial x_{1}}\Big(\frac{\partial
U_{1}}{\partial x_{2}}-k\frac{\partial U_{2}}{\partial
x_{2}}\Big)\Big]=0 \qquad\text{on}\quad\overline{(\Gamma\cap\mathcal{O}_{\varepsilon}(\mathbf{A}_{0}))}_{T},\\
U_{i}=\mathrm{p}_{i}(x_{1},x_{2}-\chi\s)\qquad\text{ on}\qquad
\overline{(\Gamma_{i}\cap
\mathcal{O}_{\varepsilon}(\mathbf{A}_{0}))}_{T},\quad i=1,2,\\
\s(x_{1},0)=0\qquad\qquad\qquad\text{in}\quad
\overline{(\Gamma\cap\mathcal{O}_{\varepsilon}(\mathbf{A}_{0}))}.
\end{cases}
\end{equation}
Here, we utilized the easily verified relations in
$\overline{\Omega\cap\mathcal{O}_{\varepsilon}(\mathbf{A}_{0})}$:
\begin{align*}
\frac{\partial\s}{\partial
x_{1}}&=\sqrt{1+(\varphi'_{0})^{2}}\frac{\partial\s}{\partial
r_{0}}(r_{0},t),\\
\frac{\partial}{\partial
x_{1}}&=\frac{1}{\sqrt{1+(\varphi'_{0})^{2}}}\frac{\partial}{\partial
r_{0}}+\frac{\varphi'_{0}}{\sqrt{1+(\varphi'_{0})^{2}}}\frac{\partial}{\partial
n},\\
\frac{\partial}{\partial
x_{2}}&=\frac{\varphi'_{0}}{\sqrt{1+(\varphi'_{0})^{2}}}\frac{\partial}{\partial
r_{0}}-\frac{1}{\sqrt{1+(\varphi'_{0})^{2}}}\frac{\partial}{\partial
n}.
\end{align*}
Next, bearing in mind these equalities, we arrive at
\begin{equation*}\label{5.9}
\begin{cases}
\langle\nabla_{x}\mathcal{U}_{1,0},\mathbf{e}\rangle-\langle\nabla_{x}\mathcal{U}_{2,0},\mathbf{e}\rangle
=\frac{1-k}{k\sqrt{1+(\varphi'_{0})^{2}}}\frac{\partial
\mathcal{U}_{1,0}}{\partial n}\sigma,\\
\frac{\partial \mathcal{U}_{1,0}}{\partial x_{1}}-k\frac{\partial
\mathcal{U}_{1,0}}{\partial
x_{1}}=\frac{1-k}{\sqrt{1+(\varphi'_{0})^{2}}}\frac{\partial
\mathcal{U}_{1,0}}{\partial r_{0}}.
\end{cases}
\end{equation*}
In fine, substituting \eqref{5.6} to \eqref{5.8}, we end up with the
system
\[
\begin{cases}
\Delta
U_{i}=\mathfrak{N}_{0,i}(U_{i},\sigma)\qquad\qquad\qquad\text{in}\quad
(\Omega_{i}\cap\mathcal{O}_{\varepsilon}(\mathbf{A}_{0}))_{T},\quad
i=1,2,\\
U_{1}-U_{2}=\frac{1-k}{k\sqrt{1+(\varphi'_{0})^{2}}}\frac{\partial\mathcal{U}_{1,0}}{\partial
n}\sigma\qquad\text{on}\qquad
\overline{(\Gamma\cap\mathcal{O}_{\varepsilon}(\mathbf{A}_{0}))}_{T},\\
\frac{\partial\sigma}{\partial
t}=-k_{1}\sqrt{1+(\varphi'_{0})^{2}}\frac{\partial U_{1}}{\partial
n}+k_{1}\frac{\partial\sigma}{\partial
r_{0}}\Big[\frac{\partial\mathcal{U}_{1,0}}{\partial
r_{0}}+\varphi'_{0}\frac{\partial\mathcal{U}_{1,0}}{\partial
n}\Big]+\widetilde{\mathfrak{N}}_{1}(U_{1},U_{2},\sigma)\qquad\text{on}\qquad
\overline{(\Gamma\cap\mathcal{O}_{\varepsilon}(\mathbf{A}_{0}))}_{T},\\
\frac{\partial U_{1}}{\partial n}-k\frac{\partial U_{2}}{\partial
n}-\frac{1-k}{\sqrt{1+(\varphi'_{0})^{2}}}\frac{\partial\mathcal{U}_{1,0}}{\partial
r_{0}}\frac{\partial \sigma}{\partial
r_{0}}+\widetilde{\mathfrak{N}}_{2}(U_{1},U_{2},\sigma)=0\qquad\qquad\qquad\qquad\text{on}\qquad
\overline{(\Gamma\cap\mathcal{O}_{\varepsilon}(\mathbf{A}_{0}))}_{T},\\
U_{1}=\mathfrak{N}_{3}(\sigma)\quad\text{on}\quad\overline{(\Gamma_{1}\cap\mathcal{O}_{\varepsilon}(\mathbf{A}_{0}))},\qquad
U_{2}=\mathfrak{N}_{4}(\sigma)\quad\text{on}\quad\overline{(\Gamma_{2}\cap\mathcal{O}_{\varepsilon}(\mathbf{A}_{0}))},\\
\sigma(r_{0},0)=0\quad\text{on}\quad\overline{(\Gamma\cap\mathcal{O}_{\varepsilon}(\mathbf{A}_{0}))},\qquad
U_{i}(x,0)=0\quad\text{in}\quad\overline{(\Omega_{i}\cap\mathcal{O}_{\varepsilon}(\mathbf{A}_{0}))},\quad
i=1,2,
\end{cases}
\]
where we set
\begin{align*}
&\mathfrak{N}_{0,i}(U_{i},\sigma)=\widetilde{\mathfrak{N}}_{0,i}\Big(\sigma+\rho,
U_{i}+\mathcal{U}_{i,0}-\frac{\partial\mathcal{U}_{i,0}}{\partial
x_{2}}\sigma\Big)+\chi\sigma_{x_1x_1}\frac{\partial\mathcal{U}_{i,0}}{\partial
x_{2}}\\
&+\sigma\Delta_{x}\Big(\chi
\frac{\partial\mathcal{U}_{i,0}}{\partial
x_{2}}\Big)+2\frac{\partial\sigma}{\partial
x_{1}}\frac{\partial}{\partial x_{1}}\Big(\chi
\frac{\partial\mathcal{U}_{i,0}}{\partial x_{2}}\Big),\qquad
i=1,2,\\
&\widetilde{\mathfrak{N}}_{1}(U_{1},U_{2},\sigma)=k_{1}\sqrt{1+(\varphi'_{0})^{2}}\frac{\partial^{2}\mathcal{U}_{1,0}}{\partial
n\partial x_{2}}\sigma+
k_{1}\sqrt{1+(\varphi'_{0})^{2}}\frac{\partial\mathcal{U}_{1,0}}{\partial
x_{1}}\frac{\partial\rho}{\partial r_{0}}\\
& +k_{1}\sqrt{1+(\varphi'_{0})^{2}}\Big[\frac{\partial\rho}{\partial
r_{0}}+\frac{\partial\sigma}{\partial r_{0}}\Big]\Big[
\frac{\partial
U_{1}}{x_{1}}-\frac{\partial^{2}\mathcal{U}_{1,0}}{\partial
x_{1}\partial x_{2}}\sigma-\frac{\partial\mathcal{U}_{1,0}}{\partial
x_{2}}\frac{\partial\sigma}{\partial x_{1}}\\
& + \Big(\frac{\partial\rho}{\partial
x_{1}}+\frac{\partial\sigma}{\partial x_{1}}\Big)\Big(
\frac{\partial U_{1}}{\partial
x_{2}}+\frac{\partial\mathcal{U}_{1,0}}{\partial
x_{2}}-\frac{\partial\mathcal{U}_{1,0}}{\partial x_{2}}\sigma \Big)
 \Big],\\
&\widetilde{
\mathfrak{N}}_{2}(U_{1},U_{2},\sigma)=-\sigma\Big[\frac{\partial^{2}\mathcal{U}_{1,0}}{\partial
x_{2}\partial n}-k\frac{\partial^{2}\mathcal{U}_{2,0}}{\partial
x_{2}\partial n}\Big]+\Big[\frac{\partial\rho}{\partial
r_{0}}+\frac{\partial\sigma}{\partial r_{0}}\Big]\Big[
\frac{\partial U_{1}}{\partial x_{1}}-k\frac{\partial
U_{2}}{\partial x_{1}}
-\frac{\partial^{2}\mathcal{U}_{1,0}}{\partial x_{2}\partial
r_{0}}\frac{(1-k)\sigma}{\sqrt{1+(\varphi'_{0})^{2}}}\\&
+\Big[\frac{\partial\rho}{\partial
x_{1}}+\frac{\partial\sigma}{\partial x_{1}}\Big]\Big[\frac{\partial
U_{1}}{x_{2}}-k\frac{\partial U_{2}}{\partial
x_{2}}+\frac{(1-k)\varphi'_{0}}{\sqrt{1+(\varphi'_{0})^{2}}}\frac{\partial
\mathcal{U}_{1,0}}{\partial
r_{0}}-\sigma\Big(\frac{\partial^{2}\mathcal{U}_{1,0}}{\partial
x_{2}^{2}}-k\frac{\partial^{2}\mathcal{U}_{2,0}}{\partial
x_{2}^{2}}\Big)\Big]
 \Big];\\
&
\mathfrak{N}_{3}(\sigma)=\mathrm{p}_{1}(x_{1},x_{2}-\chi(\rho+\sigma))-\mathrm{p}_{1}(x_{1},x_{2})+
\chi\sigma\frac{\partial\mathrm{p}_{1}}{\partial
x_{2}}(x_{1},x_{2}),\\
&
\mathfrak{N}_{4}(\sigma)=\mathrm{p}_{2}(x_{1},x_{2}-\chi(\rho+\sigma))-\mathrm{p}_{2}(x_{1},x_{2})+
\chi\sigma\frac{\partial\mathrm{p}_{2}}{\partial
x_{2}}(x_{1},x_{2}).
\end{align*}
To obtain the terms $\mathfrak{N}_{3}$ and $\mathfrak{N}_{4}$, we
appealed that $(\mathcal{U}_{1,0},\mathcal{U}_{2,0})$ is a classical
solution of \eqref{3.4}.

At last, differentiating the first condition on
$\overline{(\Gamma\cap\mathcal{O}_{\varepsilon}(\mathbf{A}_{0}))}_{T}$
with respect to $r_{0}$, we deduce that
\[
\frac{\partial\sigma}{\partial r_{0}}=
\frac{k\sqrt{1+(\varphi'_{0})^{2}}}
{(1-k)\frac{\partial\mathcal{U}_{1,0}}{\partial n}}
\frac{\partial}{\partial r_{0}}(U_{1}-U_{2}) -\sigma
\frac{\sqrt{1+(\varphi'_{0})^{2}}}{\frac{\partial\mathcal{U}_{1,0}}{\partial
n}} \frac{\partial}{\partial r_{0}}
\Big(\frac{\partial\mathcal{U}_{1,0}}{\partial n}
(1+(\varphi'_{0})^{2})^{-1/2}\Big)\frac{k}{1-k}.
\]
Then, taking into account these equalities and performing technical
calculations, we end up with the system
\begin{equation}\label{5.10*}
\begin{cases}
\Delta
U_{i}=\mathfrak{N}_{0,i}(U_{i},\sigma)\qquad\qquad\qquad\text{in}\quad
(\Omega_{i}\cap\mathcal{O}_{\varepsilon}(\mathbf{A}_{0}))_{T},\quad
i=1,2,\\
U_{1}-U_{2}=\frac{1-k}{k\sqrt{1+(\varphi'_{0})^{2}}}\frac{\partial\mathcal{U}_{1,0}}{\partial
n}\sigma\qquad\text{on}\qquad
\overline{(\Gamma\cap\mathcal{O}_{\varepsilon}(\mathbf{A}_{0}))}_{T},\\
\frac{\partial\sigma}{\partial
t}=\frac{k_{2}\sqrt{1+(\varphi'_{0})^{2}}}{1-k}\Big[\frac{\partial
U_{1}}{\partial n}- \frac{\partial U_{2}}{\partial n}\Big] +
\frac{k_{2}\varphi'_{0}\sqrt{1+(\varphi'_{0})^{2}}}{1-k}\Big[\frac{\partial
U_{1}}{\partial r_{0}}- \frac{\partial U_{2}}{\partial r_{0}}\Big]
+\mathfrak{N}_{1}(U_{1},U_{2},\sigma) \qquad\text{on}\qquad
\overline{(\Gamma\cap\mathcal{O}_{\varepsilon}(\mathbf{A}_{0}))}_{T},\\
\frac{\partial U_{1}}{\partial n}-k\frac{\partial U_{2}}{\partial
n}-k\frac{\frac{\partial\mathcal{U}_{1,0}}{\partial
r_{0}}}{\frac{\partial\mathcal{U}_{1,0}}{\partial
n}}\Big[\frac{\partial U_{1}}{\partial r_{0}}-\frac{\partial
U_{2}}{\partial
r_{0}}\Big]=\mathfrak{N}_{2}(U_{1},U_{2},\sigma)\qquad\qquad\qquad\qquad\,
\qquad\qquad\text{on}\qquad
\overline{(\Gamma\cap\mathcal{O}_{\varepsilon}(\mathbf{A}_{0}))}_{T},\\
U_{1}=\mathfrak{N}_{3}(\sigma)\quad\text{on}\quad\overline{(\Gamma_{1}\cap\mathcal{O}_{\varepsilon}(\mathbf{A}_{0}))},\qquad
U_{2}=\mathfrak{N}_{4}(\sigma)\quad\text{on}\quad\overline{(\Gamma_{2}\cap\mathcal{O}_{\varepsilon}(\mathbf{A}_{0}))},\\
\sigma(r_{0},0)=0\quad\text{on}\quad\overline{(\Gamma\cap\mathcal{O}_{\varepsilon}(\mathbf{A}_{0}))},\qquad
U_{i}(x,0)=0\quad\text{in}\quad\overline{(\Omega_{i}\cap\mathcal{O}_{\varepsilon}(\mathbf{A}_{0}))},\quad
i=1,2.
\end{cases}
\end{equation}
Here, we put
\begin{align*}
\mathfrak{N}_{1}(U_{1},U_{2},\sigma)&=\widetilde{\mathfrak{N}}_{1}(U_{1},U_{2},\sigma)+\frac{k_{1}\sqrt{1+(\varphi'_{0})^{2}}}{1-k}
\widetilde{\mathfrak{N}}_{2}(U_{1},U_{2},\sigma)\\&
-\sigma\frac{k_{2}\varphi'_{0}\sqrt{1+(\varphi'_{0})^{2}}}{1-k}\frac{\partial}{\partial
r_{0}} \Big(\frac{\partial\mathcal{U}_{1,0}}{\partial n}
(1+(\varphi'_{0})^{2})^{-1/2}\Big),\\
\mathfrak{N}_{2}(U_{1},U_{2},\sigma)&=-\widetilde{\mathfrak{N}}_{2}(U_{1},U_{2},\sigma)+\sigma
\frac{\frac{\partial\mathcal{U}_{1,0}}{\partial
r_{0}}}{\frac{\partial\mathcal{U}_{1,0}}{\partial
n}}\frac{\partial}{\partial
r_{0}}\Big(\frac{\partial\mathcal{U}_{1,0}}{\partial n}
(1+(\varphi'_{0})^{2})^{-1/2}\Big).
\end{align*}
At last, collecting \eqref{5.10*} with Lemma \ref{l5.1} and
Corollary \ref{c5.1} arrives at  (i)-(iii) of this lemma. Thus, we
are left only to verify the bound in (iv) in
$\overline{(\Omega\cap\mathcal{O}_{\varepsilon}(\mathbf{A}_{0}))}$.
Appealing to explicit form of $\mathfrak{N}_{0,i}$ and
$\mathfrak{N}_{j}$, we easily compute  $\mathfrak{N}_{0,i}(0,0)$ and
$\mathfrak{N}_{i}(0,0,0)$, $i=1,2,$ $\mathfrak{N}_{j}(0)$ $j=3,4$.
In particulary, we have
\begin{align*}
&\mathfrak{N}_{2}(0,0,0)=(k-1)\varphi'_{0}\Big(\frac{\partial\rho}{\partial
r_{0}}\Big)^{2}\frac{\partial\mathcal{U}_{1,0}}{\partial r_{0}},\\
&\mathfrak{N}_{3}(0)=-\chi\rho\int_{0}^{1}\frac{\partial\mathrm{p}_{1}(x_{1},x_{2}-z\chi(\rho+\sigma))}{\partial(x_{2}-z\chi(\rho+\sigma))}dz,\quad
\mathfrak{N}_{4}(0)=-\chi\rho\int_{0}^{1}\frac{\partial\mathrm{p}_{2}(x_{1},x_{2}-z\chi(\rho+\sigma))}{\partial(x_{2}-z\chi(\rho+\sigma))}dz,\\
&\mathfrak{N}_{1}(0,0,0)=k_{1}(1+(\varphi'_{0})^{2})\Big(\frac{\partial\rho}{\partial
r_{0}}\frac{\partial \mathcal{U}_{1,0}}{\partial r_{0}}+
\Big(\frac{\partial\rho}{\partial r_{0}}\Big)^{2}\frac{\partial
\mathcal{U}_{1,0}}{\partial x_{2}}
+\frac{\varphi'_{0}}{\sqrt{1+(\varphi'_{0})^{2}}}\Big(\frac{\partial\rho}{\partial
r_{0}}\Big)^{2}\frac{\partial \mathcal{U}_{1,0}}{\partial r_{0}}\Big),\\
&\mathfrak{N}_{0,i}(0,0,0)=-2\frac{\chi\rho_{x_{1}}-\chi'\rho}{1-\chi'\rho}\frac{\partial^{2}\mathcal{U}_{i,0}}{\partial
x_{1}\partial x_{2}}
-\frac{(\chi\rho_{x_{1}}-\chi'\rho)^{2}+\chi'\rho(2-\chi'\rho)}{(1-\chi'\rho)^{2}}\frac{\partial^{2}\mathcal{U}_{i,0}}{\partial
x_{2}^{2}}\\
& -\frac{\partial\mathcal{U}_{i,0}}{\partial
x_{2}}\Big[\frac{[\chi'\rho_{x_{1}}-\chi''\rho][(\chi\rho_{x_{1}}-\chi'\rho)(1-\chi'\rho)^{-1}-\varphi'_{0}]+\chi\rho_{x_{1}x_{1}}-\chi'\rho_{x_{1}}}{1-\chi'\rho}
+\frac{\chi''\rho}{(1-\chi'\rho)^{2}}
\\
&+
\frac{[\chi\rho_{x_{1}}-\chi'\rho][\chi'\rho_{x_{1}}+\chi''\rho(\frac{\chi\rho_{x_{1}}-\chi'\rho}{1-\chi'\rho}-\varphi'_{0})]}{(1-\chi'\rho)^{2}}
 \Big].
\end{align*}
After that, these explicit representations together with the
smoothness of the functions $\rho,\mathcal{U}_{i,0}$ and
$\mathrm{p}_{i}$ (see assumption (h6), Lemma \ref{l5.1} and
Corollary \ref{c5.1}) provide the desired estimates.

\end{proof}


\subsection{Solvability of the linear problem corresponding to
\eqref{5.7}}\label{s5.3}
 Relations in \eqref{5.7} suggest that the linear system
 corresponding to the linear operator $\mathfrak{L}$ in
 \eqref{5.1} has the form
 \begin{equation}\label{5.10}
\begin{cases}
\Delta_{x}w_{1}=F_{0,1}(x,t)\qquad\qquad\text{in}\quad\Omega_{1,T},\\
\Delta_{x}w_{2}=F_{0,2}(x,t)\qquad\qquad\text{in}\quad\Omega_{2,T},\\
w_{1}-w_{2}=A_{0}(x)\sigma\qquad\qquad\text{on}\quad\Gamma_{T},\\
\frac{\partial\sigma}{\partial t}=A_{1}(x)[\tfrac{\partial
w_{1}}{\partial n}-\tfrac{\partial w_{2}}{\partial
n}]+A_{2}(x)[\tfrac{\partial w_{1}}{\partial \omega}-\tfrac{\partial
w_{2}}{\partial \omega}]+F_{1}(x,t)\quad\text{on}\quad\Gamma_{T},\\
\tfrac{\partial w_{1}}{\partial n}-k\tfrac{\partial w_{2}}{\partial
n}-kA_{3}(x)[\tfrac{\partial w_{1}}{\partial \omega}-\tfrac{\partial
w_{2}}{\partial \omega}]=F_{2}(x,t)\qquad\qquad\quad\text{ on}\quad\Gamma_{T},\\
w_{1}=F_{3}(x,t)\qquad\qquad\text{on}\quad\Gamma_{1,T},\\
w_{2}=F_{4}(x,t)\qquad\qquad\text{on}\quad\Gamma_{2,T},\\
\sigma(\omega,0)=0\qquad\text{in}\quad\Gamma,\quad
w_{i}(x,0)=0\quad\text{in}\quad\bar{\Omega}_{i},\,\,  i=1,2,
\end{cases}
\end{equation}
where $F_{0,i},$ $i=1,2,$ and $F_{j}$ are given functions specified
below. As for  the coefficients $A_{l},$ $l=0,1,2,3,$ they are
described in point (i) of Lemma \ref{l5.2}, in particulary, the
asymptotic holds
\begin{equation}\label{5.11}
A_{0}(x)\sim\begin{cases}
\mathrm{A}_{0,1}r^{s*-1}_{0}(x)\quad\text{as}\quad
x\to\mathbf{A}_{0},\\
\mathrm{A}_{0,0}r^{s*-1}_{1}(x)\quad\text{as}\quad
x\to\mathbf{A}_{1}
\end{cases}
\end{equation}
with negative constants $\mathrm{A}_{0,0}$ and $\mathrm{A}_{0,1}$.
\begin{theorem}\label{t5.1}
Let $k\in(0,1)$ and assumptions (h1)--(h3), (h5) and (h7) hold. We
assume that
\[
F_{0,i}\in\underset{0}{E}\,_{s}^{\beta,\beta,\beta}(\bar{\Omega}_{i,T}),\quad
F_{1},F_{2}\in\underset{0}{E}\,_{s+1}^{1+\beta,\beta,\beta}(\Gamma_{T}),
\,
F_{3}\in\underset{0}{E}\,_{2+s}^{2+\beta,\beta,\beta}(\Gamma_{1,T}),\,
F_{4}\in\underset{0}{E}\,_{2+s}^{2+\beta,\beta,\beta}(\Gamma_{2,T}).
\]
Then problem \eqref{5.10} admits a unique local classical solution
\[
w_{1}\in
\underset{0}{E}\,_{s+2}^{2+\beta,\beta,\beta}(\bar{\Omega}_{1,T}),\quad
w_{2}\in
\underset{0}{E}\,_{s+2}^{2+\beta,\beta,\beta}(\bar{\Omega}_{2,T}),\quad
\sigma\in
\underset{0}{\mathcal{E}}\,_{s+2,s^{*}-1}^{2+\beta,\beta,\beta}(\Gamma_{T}).
\]
Besides, the estimate holds
\begin{align}\label{5.11*}\notag
&\|w_{1}\|_{E_{s+2}^{2+\beta,\beta,\beta}(\bar{\Omega}_{1,T})}+\|w_{2}\|_{E_{s+2}^{2+\beta,\beta,\beta}(\bar{\Omega}_{2,T})}
+
\|\sigma\|_{\mathcal{E}_{s+2,s^{*}-1}^{2+\beta,\beta,\beta}(\Gamma_{T})}\\
&\leq
C[\sum_{i=1}^{2}\|F_{0,i}\|_{E_{s}^{\beta,\beta,\beta}(\bar{\Omega}_{i,T})}+\|F_{1}\|_{E_{s+1}^{1+\beta,\beta,\beta}(\Gamma_{T})}
+\|F_{2}\|_{E_{s+1}^{1+\beta,\beta,\beta}(\Gamma_{T})}
\\\notag
& + \|F_{3}\|_{E_{s+2}^{2+\beta,\beta,\beta}(\Gamma_{1,T})} +
\|F_{4}\|_{E_{s+2}^{2+\beta,\beta,\beta}(\Gamma_{2,T})}]
\end{align}
with the positive constant $C$ being independent of the right-hand
sides.
\end{theorem}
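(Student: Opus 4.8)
The plan is to combine localization near the corner points $\mathbf{A}_{0},\mathbf{A}_{1}$ --- where the problem reduces, after straightening the boundaries, to the model problem \eqref{4.2} analysed in Section~\ref{s4} --- with the classical theory in the interior, and then to close the argument by the method of continuity. As a preliminary reduction, following the passage used in the proof of Theorem~\ref{t4.2}, I would first dispose of the data $F_{0,i},F_{2},F_{3},F_{4}$: by Lemma~\ref{l5.0}(ii) (its hypothesis \eqref{7.3*} being consistent with (h7), since the $F$'s lie in the subscript-zero classes so all consistency conditions hold) the auxiliary transmission problem \eqref{7.1} with $\phi_{0,i}=F_{0,i}$, $\phi_{1}=0$, $\phi_{2}=F_{2}$, $\phi_{3}=F_{3}$, $\phi_{4}=F_{4}$ has a unique solution $(W_{1},W_{2})\in\underset{0}{E}\,_{s+2}^{2+\beta,\beta,\beta}(\bar{\Omega}_{1,T})\times\underset{0}{E}\,_{s+2}^{2+\beta,\beta,\beta}(\bar{\Omega}_{2,T})$ satisfying \eqref{7.3}. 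Setting $\tilde w_{i}=w_{i}-W_{i}$ turns \eqref{5.10} into the same system with $F_{0,i}=F_{2}=F_{3}=F_{4}=0$ and with $F_{1}$ replaced by $F_{1}+A_{1}(\partial_{n}W_{1}-\partial_{n}W_{2})$, which still belongs to $\underset{0}{E}\,_{s+1}^{1+\beta,\beta,\beta}(\Gamma_{T})$ (as in Section~\ref{s4.4}, the traces of $\nabla W_{i}$ on $\Gamma$ carry the required weight). Hence it suffices to solve this reduced system.

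\textbf{Localization and model problems.} I would take a smooth partition of unity $\zeta_{0}+\zeta_{1}+\zeta_{\infty}\equiv1$ on $\bar{\Omega}$ with $\mathrm{supp}\,\zeta_{j}\subset\mathcal{O}_{2\varepsilon}(\mathbf{A}_{j})$, $j=0,1$, and $\mathrm{supp}\,\zeta_{\infty}$ disjoint from $\mathcal{O}_{\varepsilon}(\mathbf{A}_{0})\cup\mathcal{O}_{\varepsilon}(\mathbf{A}_{1})$, and decompose the unknown accordingly. The component $\zeta_{\infty}(w_{1},w_{2},\sigma)$ solves a transmission problem with a dynamic boundary condition over smooth domains with $\C^{2+\beta}$ coefficients (there $A_{0}=\tfrac{1-k}{k}\tfrac{\partial\mathcal{U}_{1,0}}{\partial n}$ is nonvanishing by (h4)), solvable by the arguments of \cite{V5}; the commutators $[\Delta_{x},\zeta_{\infty}]w_{i}$, $[\partial_{n},\zeta_{\infty}]w_{i}$, etc., are of lower order and supported where $r(x)$ is bounded below, so they feed back only through right-hand sides of the correct weighted class. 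For $\zeta_{0}(w_{1},w_{2},\sigma)$ I would apply the local change of variables of Section~\ref{s5.2}, which in $\mathcal{O}_{2\varepsilon}(\mathbf{A}_{0})$ is $\omega=x_{1}/\sin\delta_{0}=r_{0}$, $\lambda=x_{2}-\varphi_{0}(x_{1})$ and maps $\Omega_{1}\cap\mathcal{O}_{2\varepsilon}(\mathbf{A}_{0})$, $\Omega_{2}\cap\mathcal{O}_{2\varepsilon}(\mathbf{A}_{0})$ onto neighbourhoods of the origin in the plane corners $G_{1},G_{2}$ of opening $\delta=\pi/2-\delta_{0}$; it turns the reduced system into the model problem \eqref{4.2} with that $\delta$, $a_{0}=\mathrm{A}_{0,1}<0$, $a_{1}>0$, $a_{2}=\cot\delta_{0}$, $a_{3}=\alpha_{0}$, the latter three determined by the corner values of $A_{1},A_{2},A_{3}$ via Lemma~\ref{l5.2}(i) (so that $q_{1}=\mathfrak{x}_{0}$, $q_{2}=Q_{0}$), modulo a perturbation all of whose coefficients vanish at $\mathbf{A}_{0}$ (they are $O(r_{0})$, or of the form $\varphi_{0}'-\varphi_{0}'(0)$, and so on). By Corollary~\ref{c2.1} the operator norm of this perturbation on $\mathcal{O}_{2\varepsilon}(\mathbf{A}_{0})$, in the weighted H\"older classes, is $O(\varepsilon^{\gamma})$ for some $\gamma>0$; hence, once Theorem~\ref{t4.2} (if $Q_{0}\neq1$) or Theorem~\ref{t4.3} (if $Q_{0}=1$), together with Corollary~\ref{c4.0} (which absorbs the factor produced by the cut-off $\chi$ multiplying $k$ in the conormal condition), gives the invertibility of the frozen model operator, the perturbation is handled by a Neumann series for $\varepsilon$ small. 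The component $\zeta_{1}(w_{1},w_{2},\sigma)$ is treated identically with $\delta=\pi/2-\delta_{1}$, $q_{1}=\mathfrak{x}_{1}$, $q_{2}=Q_{1}$, using assumption (h5) on $\delta_{1}$.

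\textbf{A priori estimate, continuity, uniqueness.} Patching the three local bounds by the partition of unity --- and using that every overlap/commutator contribution carries a strictly higher power of $r(x)$, hence a strictly better weight, and is therefore absorbable --- yields the global a priori estimate \eqref{5.11*} for every classical solution in the stated classes; applied to the difference of two solutions it gives uniqueness. For existence I would run the method of continuity: embed $\mathfrak{L}$ (the operator defined by the left-hand side of \eqref{5.10}) in a family $\mathfrak{L}_{\theta}$, $\theta\in[0,1]$, continuously deforming the coefficients $A_{l}$ and the curved boundaries to their frozen/planar model values while keeping $k\in(0,1)$ throughout (Corollary~\ref{c4.0} then keeps the model estimates of Theorems~\ref{t4.1}--\ref{t4.3} valid and uniform along the deformation). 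Estimate \eqref{5.11*} holds for all $\theta$ with a $\theta$-independent constant, $\mathfrak{L}_{0}$ is surjective by Theorems~\ref{t4.1}--\ref{t4.3}, and the standard open--closed argument forces $\mathfrak{L}_{1}=\mathfrak{L}$ to be surjective, which completes the proof.

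\textbf{Main obstacle.} The delicate point is the localization step: one must verify that, after the straightening diffeomorphism, the genuine operator differs from the constant-coefficient model \eqref{4.2} only by a perturbation that is small (or at worst compact) in the anisotropic weighted H\"older scale of Definitions~\ref{d2.1}--\ref{d2.2}, with smallness uniform in the continuation parameter. The difficulty is that the dynamic transmission condition ties $\partial_{t}\sigma$ on the one-dimensional interface to conormal traces of the $w_{i}$, so the perturbation has to be controlled simultaneously in all the seminorms entering Definition~\ref{d2.1}; the saving feature is precisely that (h7) pins $s$ to the range dictated by the real zeros of $S_{i}^{\pm}$ (Corollaries~\ref{c4.2}--\ref{c4.4}), which is exactly what makes the model operator an isomorphism and leaves the margin needed to absorb the $O(\varepsilon^{\gamma})$ corrections.
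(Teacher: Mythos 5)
Your overall architecture (first absorb $F_{0,i},F_2,F_3,F_4$ via Lemma~\ref{l5.0}, then a priori estimates by localization/freezing with the corner model problems of Section~\ref{s4} and the half-plane model of \cite{BV2}, then existence by the method of continuity) matches the paper's. But your choice of continuity parameter is a genuine gap. You propose to deform the coefficients $A_l$ and the curved boundaries to ``frozen/planar model values,'' and to conclude surjectivity of the endpoint $\mathfrak{L}_0$ from Theorems~\ref{t4.1}--\ref{t4.3}. Two problems: (a) deforming $\Gamma,\Gamma_i$ changes the underlying domains $\Omega_i$, hence the Banach spaces $\mathrm{H}_1,\mathrm{H}_2$; the method of continuity needs the family to act between fixed spaces. (b) Even if one only freezes coefficients without touching the domains, the resulting endpoint operator is a transmission problem on the bounded domain $\Omega$, not the unbounded-corner model problem \eqref{4.2}; Theorems~\ref{t4.1}--\ref{t4.3} are proved for the infinite corner and do not directly yield surjectivity of such an $\mathfrak{L}_0$. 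The paper's continuation is different and cleaner: in \eqref{5.13} the quantity $k$ in the conormal transmission condition is replaced by $k\epsilon$, $\epsilon\in[0,1]$. At $\epsilon=0$ the system decouples --- problem \eqref{5.14} forces $w_1\equiv 0$, and the residual system \eqref{5.15} for $(w_2,\sigma)$ is a one-phase Hele-Shaw type problem whose solvability on a bounded domain is already available from \cite[Section 4]{V3}. Corollary~\ref{c4.0} exists precisely to make the a priori estimates for the frozen model problems uniform in $\epsilon$ along this particular homotopy; invoking it, as you do, to justify deforming $A_l$ and the geometry is not what it states.

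A second, smaller omission concerns closing the a priori estimate. After the Schauder freezing the paper obtains a bound with the spurious terms $\langle w_1\rangle^{(\beta)}_{t,s,\Omega_{1,T}}+\langle w_2\rangle^{(\beta)}_{t,s,\Omega_{2,T}}$ on the right-hand side. These are not commutator errors with better weight in $r(x)$; they are full-order time-H\"older seminorms of $w_i$ and cannot be absorbed by the weight. The paper dispatches them by Lemma~\ref{l5.0}(iii), estimating them by $C_1 T^{1-\beta}|\Gamma|^{s^*-1}\|\sigma_t\|_{E^{1+\beta,\beta,\beta}_{1+s}(\Gamma_T)}$, and then choosing $T$ small so that $C_1C_2|\Gamma|^{s^*-1}T^{1-\beta}<1/2$, which is where the ``local'' in the statement comes from. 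Your ``strictly better weight so absorbable'' argument covers the partition-of-unity overlap terms but does not address this $T$-smallness step, which is essential to close \eqref{5.11*}.
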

\begin{proof}
At first, we prove this claim in the case of the special right-hand
sides
\begin{equation}\label{5.12}
F_{0,1},\,F_{0,2},\, F_{3},\, F_{4}\equiv 0.
\end{equation}
After that, we discuss how this restriction may be removed.

To verify the one-valued local solvability  of \eqref{5.10} in the
case of \eqref{5.12}, we exploit the continuation method. To this
end, for each $\epsilon\in[0,1]$, we consider the family of problems
\begin{equation}\label{5.13}
\begin{cases}
\Delta_{x}w_{1}=0\qquad\text{in}\quad\Omega_{1,T},\quad w_{1}=0\qquad\quad\text{ on}\quad\Gamma_{1,T},\\
\Delta_{x}w_{2}=0\qquad\text{in}\quad\Omega_{2,T},\quad
w_{2}=0\qquad\quad\text{ on}\quad\Gamma_{2,T},\\
\sigma(\omega,0)=0\quad\text{ in}\quad\Gamma,\quad
w_{i}(x,0)=0\quad\text{in}\quad\bar{\Omega}_{i},\,\,  i=1,2,\\
w_{1}-w_{2}=A_{0}(x)\sigma\qquad\text{on}\quad\Gamma_{T},\\
\tfrac{\partial w_{1}}{\partial n}-k\epsilon\tfrac{\partial
w_{2}}{\partial n}-k\epsilon A_{3}(x)[\tfrac{\partial
w_{1}}{\partial \omega}-\tfrac{\partial
w_{2}}{\partial \omega}]=0\qquad\qquad\qquad\text{ on}\quad\Gamma_{T},\\
\frac{\partial\sigma}{\partial t}=A_{1}(x)[\tfrac{\partial
w_{1}}{\partial n}-\tfrac{\partial w_{2}}{\partial
n}]+A_{2}(x)[\tfrac{\partial w_{1}}{\partial \omega}-\tfrac{\partial
w_{2}}{\partial \omega}]+F_{1}(x,t)\quad\text{on}\quad\Gamma_{T},\\
\end{cases}
\end{equation}
The continuation approach  (in the case of linear problem) tells
that  the one-to-one solvability of \eqref{5.10} is provided by

\noindent (I)  the one-valued solvability in the case of
$\epsilon=0$ in \eqref{5.13};

\noindent(II) the a priory estimates of a solution to \eqref{5.13},
which will be uniform in $\epsilon\in[0,1]$.

At this point, we discuss each stage, separately.

\noindent(I) Clearly, if $\epsilon=1$, then problem \eqref{5.13}
boils down with \eqref{5.10}. The case of $\epsilon=0$ splits
\eqref{5.13} into two problems, the first concerning to $w_{1}$  is
the Dirichlet-Neumann problem for Laplace equation, while the second
dealing with $w_{2}$ and $\sigma$ is the boundary value problem
subject to a dynamic boundary condition.
 Indeed, in $\Omega_{1,T}$ we have
\begin{equation}\label{5.14}
\Delta_{x} w_{1}=0\qquad\text{in}\quad\Omega_{1,T},\quad
w_{1}(x,0)=0\quad \text{in}\quad\Omega_{1},\quad \frac{\partial
w_{1}}{\partial n}=0\quad\text{on}\quad \Gamma_{T},\quad
w_{1}=0\quad\text{on}\quad\Gamma_{1,T},
\end{equation}
where the standard theory to elliptic linear equations provides
$w_{1}\equiv 0$.

The second problem concerns with the finding $w_{2}(x,t)$ and
$\sigma(\omega,t)$ by conditions
\begin{equation}\label{5.15}
\begin{cases}
\Delta_{x}w_{2}=0\qquad\text{in}\quad\Omega_{2,T},\quad
w_{2}=0\qquad\quad\text{ on}\quad\Gamma_{2,T},\\
\sigma(\omega,0)=0\quad\text{ in}\quad\Gamma,\quad
w_{2}(x,0)=0\quad\text{in}\quad\bar{\Omega}_{2},\\
w_{2}=-A_{0}(x)\sigma\qquad\text{on}\quad\Gamma_{T},\\
\frac{\partial\sigma}{\partial t}=-A_{1}(x)\tfrac{\partial
w_{2}}{\partial n}-A_{2}(x)\tfrac{\partial w_{2}}{\partial
\omega}+F_{1}(x,t)\quad\text{on}\quad\Gamma_{T},
\end{cases}
\end{equation}
The one-valued classical solvability of this problem is proved in
\cite[Section 4]{V3}. In particulary, the following regularity:
$w_{2}\in
\underset{0}{E}\,_{s+2}^{2+\beta,\beta,\beta}(\bar{\Omega}_{2,T}),$
$ \sigma\in
\underset{0}{\mathcal{E}}\,_{s+2,s^{*}-1}^{2+\beta,\beta,\beta}(\Gamma_{T}),
$  is established as well the corresponding estimate is obtained.

In fine, we end up with the unique solution $(w_{1},w_{2},\sigma)$
of \eqref{5.13} in the case $\epsilon=0$, and, besides, the desire
bound holds for this solution. This completes the verification of
 (I).

\noindent(II) Coming to the a priori estimates in \eqref{5.13}, we
utilize the standard Schauder technique (see e.g., \cite[Sections
4.1 and 6.3]{K} and \cite[Section 6]{GT}) accounting in the case of
problem \eqref{5.13} three (pretty standard) steps:

\noindent(i) building a so-called partition of unity in
$\overline{\Omega_{1}\cup\Omega_{2}}_{T}$ (see e.g. \cite[Section
6.3]{K});

\noindent(ii) "freezing" the coefficients in \eqref{5.13} via a
standard technique (see e.g. \cite[Section 6]{GT});

\noindent(iii) obtaining the one-to-one classical  solvability of
the corresponding model problems in $E_{s+2}^{2+\beta,\beta,\beta}$.

It is worth noting that, stages (i) and (ii) recast literally
(almost) proofs from \cite[Section 6.3]{K}, and we omit them here.
Coming to the last step, the only difference compared to the
arguments from \cite[Section 6]{GT} is concerned with the discussion
of the nonclassical transmission problems with a dynamic boundary
condition stated in $G_{1,T}\cup G_{2,T}$ and
$\R_{T}^{+}\cup\R^{-}_{T}$. This problem (see asymptotic
\eqref{5.11}) in the case of $G_{1,T}\cup G_{2,T}$ is analyzed in
Section \ref{s4} (with $\delta=\pi/2-\delta_{i},$ $i=0,1$), while
the same problem in the case of $\R_{T}^{+}\cup\R^{-}_{T}$ is
studied in \cite[Section 3.2]{BV2}. Thus, collecting all the
obtained results, we end up with the bound
\begin{align*}
&\|w_{1}\|_{E_{s+2}^{2+\beta,\beta,\beta}(\bar{\Omega}_{1,T})}+\|w_{2}\|_{E_{s+2}^{2+\beta,\beta,\beta}(\bar{\Omega}_{2,T})}
+
\|\sigma\|_{\mathcal{E}_{s+2,s^{*}-1}^{2+\beta,\beta,\beta}(\Gamma_{T})}\\
& \leq C_{2}[\|F_{1}\|_{E_{s+1}^{1+\beta,\beta,\beta}(\Gamma_{T})}
+\langle w_{1}\rangle^{(\beta)}_{t,s,\Omega_{1,T}} + \langle
w_{2}\rangle^{(\beta)}_{t,s,\Omega_{2,T}}]
\end{align*}
with the constant being independent of $\epsilon$.

To manage the last two terms in the right-hand sides, we utilize
(iii) in Lemma \ref{l5.0} to \eqref{5.13} (with excepting the last
condition on $\Gamma_{T}$), where we set $\phi_{2}=A_{0}\sigma$ and
$W_{i}=w_{i},$ $i=1,2,$ and arrive at the inequalities
\begin{align*}
\langle w_{1}\rangle^{(\beta)}_{t,s,\Omega_{1,T}} + \langle
w_{2}\rangle^{(\beta)}_{t,s,\Omega_{2,T}}&\leq
T^{1-\beta}[\underset{\bar{\Omega}_{1,T}}{\sup}\,
r^{-s}(x)|w_{1}|+\underset{\bar{\Omega}_{2,T}}{\sup}\,
r^{-s}(x)|w_{2}|]\\
& \leq
C_{1}T^{1-\beta}\|\sigma_{t}r^{s^{*}-1}(y)\|_{\C([0,T],E^{1+\beta}_{1+s}(\Gamma))}\\
& \leq
C_{1}|\Upsilon|^{s^{*}-1}T^{1-\beta}\|\sigma_{t}\|_{E^{1+\beta,\beta,\beta}_{1+s}(\Gamma_{T})}.
\end{align*}
Here we used the positivity of $s^{*}-1$.

In fine, collecting all estimates and selecting positive time $T$
satisfying the inequality
\[
C_{1}C_{2}|\Upsilon|^{s^{*}-1}T^{1-\beta}<\frac{1}{2},
\]
we end up with \eqref{5.11*} in the case of \eqref{5.12} with the
constant being independent of $\epsilon$ and the right-hand sides.
This completes the proof of Theorem \ref{t5.1} in the special case
\eqref{5.12}.

In order to reach this claim in the general case, we look for a
solution to the original problem \eqref{5.10} with inhomogeneous
right-hand sides in the form
\[
w_{1}=W_{1}+\mathcal{W}_{1},\qquad w_{2}=W_{2}+\mathcal{W}_{2},
\]
where $(W_{1},W_{2})$ solves transmission problem \eqref{7.1} with
$\phi_{0,i}=F_{0,i},$ $\phi_{1}=0,$ $\phi_{2}=F_{2},$
$\phi_{3}=F_{3},$ $\phi_{4}=F_{4}$, while
$(\mathcal{W}_{1},\mathcal{W}_{2},\sigma)$ solves \eqref{5.13} with
$\epsilon=1$ and new
\[
F_{1}:=F_{1}+A_{1}\Big[\frac{\partial W_{1}}{\partial
n}-\frac{\partial W_{2}}{\partial n}\Big].
\]
Thus, exploiting Lemma \ref{l5.0} and Theorem \ref{t5.1} with
assumption \eqref{5.12} completes the proof of this claim in the
general case.
\end{proof}


\subsection{Completion of the proof of Theorem \ref{t3.1}}\label{s5.4}

Coming to the nonlinear problem \eqref{5.1} with
$\mathrm{z}=(U_{1},U_{2},\sigma)$ (see also \eqref{5.7}), we first
introduce the functional spaces $\mathrm{H}_{1}$ and
$\mathrm{H}_{2}$, $\mathrm{z}\in\mathrm{H}_{1}$ and
$\mathfrak{N}(\mathrm{z})\in\mathrm{H}_{2},$
\begin{align*}
\mathrm{H}_{1}&=
\underset{0}{E}\,_{s+2}^{2+\beta,\beta,\beta}(\bar{\Omega}_{1,T})\times
\underset{0}{E}\,_{s+2}^{2+\beta,\beta,\beta}(\bar{\Omega}_{2,T})\times
\underset{0}{\mathcal{E}}\,_{s+2,s^{*}-1}^{2+\beta,\beta,\beta}(\Gamma_{T}),\\
\mathrm{H}_{2}&=\underset{0}{E}\,_{s}^{\beta,\beta,\beta}(\bar{\Omega}_{1,T})\times
\underset{0}{E}\,_{s}^{\beta,\beta,\beta}(\bar{\Omega}_{1,T})\times
\underset{0}{E}\,_{s+1}^{1+\beta,\beta,\beta}(\Gamma_{T})\times
\underset{0}{E}\,_{s+1}^{1+\beta,\beta,\beta}(\Gamma_{T})\\
&
\times\underset{0}{E}\,_{2+s}^{2+\beta,\beta,\beta}(\Gamma_{1,T})\times\underset{0}{E}\,_{2+s}^{2+\beta,\beta,\beta}(\Gamma_{2,T})
\end{align*}
endowed with the product norms
\begin{align*}
\|\mathrm{z}\|_{\mathrm{H}_{1}}&=\|U_{1}\|_{E_{s+2}^{2+\beta,\beta,\beta}(\bar{\Omega}_{1,T})}
+ \|U_{2}\|_{E_{s+2}^{2+\beta,\beta,\beta}(\bar{\Omega}_{2,T})} +
\|\sigma\|_{E_{s+2,s^{*}-1}^{2+\beta,\beta,\beta}(\Gamma_{T})},\\
\|\mathfrak{N}(\mathrm{z})\|_{\mathrm{H}_{2}}&=\sum_{i=1}^{2}[\|\mathfrak{N}_{0,i}(\mathrm{z})\|_{E_{s}^{\beta,\beta,\beta}(\bar{\Omega}_{i,T})}
+\|\mathfrak{N}_{i}(\mathrm{z})\|_{E_{s+1}^{1+\beta,\beta,\beta}(\Gamma_{T})}]\\
& +
\|\mathfrak{N}_{3}(\mathrm{z})\|_{E_{s+2}^{2+\beta,\beta,\beta}(\Gamma_{1,T})}
+
\|\mathfrak{N}_{4}(\mathrm{z})\|_{E_{s+2}^{2+\beta,\beta,\beta}(\Gamma_{2,T})}.
\end{align*}
Taking into account \eqref{5.7} and Lemma \ref{l5.2}, we rewrite
equation \eqref{5.1} in the form
\[
\mathfrak{L}\mathrm{z}=\mathrm{F}(x,t)+\overline{\mathfrak{N}}(\mathrm{z}),
\]
where $\mathfrak{L}:\mathrm{H}_{1}\to\mathrm{H}_{2}$ is the linear
operator whose properties are described in   Section \ref{s5.3}; the
vector $\mathrm{F}(x,t)$ is constructed via initial data, and
$\overline{\mathfrak{N}}(\mathrm{z})$ being similar to
$\mathfrak{N}(\mathrm{z})$ is described by Lemma \ref{l5.2}.

Utilizing Theorem \ref{t5.1}, we conclude that
\begin{equation}\label{5.16}
\mathrm{z}=\mathfrak{L}^{-1}\mathrm{F}(x,t)+\mathfrak{L}^{-1}\overline{\mathfrak{N}}(\mathrm{z}):=\mathfrak{M}(\mathrm{z}).
\end{equation}
\begin{lemma}\ref{l5.1}
Let $\mathrm{B}_{\mathrm{R}}\subset\mathrm{H}_{1}$ be a ball
centered in the origin and having the radius $\mathrm{R}$. For each
$\mathrm{z}_{1}, \mathrm{z}_{2}\in \mathrm{B}_{\mathrm{R}}$, the
following estimates hold
\[
\|\overline{\mathfrak{N}}(0)\|_{\mathrm{H}_{2}}\leq C_{3}(T),\qquad
\|\overline{\mathfrak{N}}(\mathrm{z}_{1})-\overline{\mathfrak{N}}(\mathrm{z}_{2})\|_{\mathrm{H}_{2}}\leq
C_{4}(T,\mathrm{R})\|\mathrm{z}_{1}-\mathrm{z}_{2}\|_{\mathrm{H}_{1}}
\]
with quantities $C_{3}(T)$ and $C_{4}(T,\mathrm{R})$ vanishes if $T,
\mathrm{R}$ tend to zero.
\end{lemma}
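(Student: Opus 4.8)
The plan is to reduce everything to the structural description of $\overline{\mathfrak{N}}$ supplied by Lemma~\ref{l5.2}, combined with the multiplicative properties of the weighted H\"older classes and the smallness gained from the vanishing of all data at $t=0$. Recall from points (ii)--(iv) of Lemma~\ref{l5.2} that each component of $\overline{\mathfrak{N}}$ is a finite sum of three kinds of terms: (A) a higher-order derivative of a component of $\mathrm{z}$ multiplied by a coefficient assembled from $\rho$, $\mathcal{U}_{1,0}$, $\mathcal{U}_{2,0}$, $\chi$ and their derivatives, which vanishes together with all relevant derivatives at $t=0$; (B) ``quadratic'' products of (derivatives of) the components of $\mathrm{z}$, possibly multiplied by a smooth function of $x$ or by a composition of a smooth function with $\mathrm{z}$ (the compositions occurring through $\mathrm{p}_i(x_1,x_2-\chi(\rho+\sigma))$ and through $\S$, $\S_{1}$); and (C) lower-order derivatives of $\mathrm{z}$ with smooth bounded coefficients. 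The first asserted bound $\|\overline{\mathfrak{N}}(0)\|_{\mathrm{H}_2}\le C_3(T)$ is then literally Lemma~\ref{l5.2}(iv): we put
\[
C_3(T)=CT^{1-\beta}\Big(\|\rho\|_{C^2([0,T],E^{2+\beta}_{s^{*}-1}(\Gamma))}+\|\mathcal{U}_{1,0}\|_{E^{3+\beta}_{s^{*}}(\bar{\Omega}_1)}+\|\mathcal{U}_{2,0}\|_{E^{3+\beta}_{s^{*}}(\bar{\Omega}_2)}\Big),
\]
and the right-hand side is controlled by $\|\mathrm{p}_1\|_{E^{3+\beta}_{s^{*}}(\bar{\Gamma}_1)}+\|\mathrm{p}_2\|_{E^{3+\beta}_{s^{*}}(\bar{\Gamma}_2)}$ via Lemma~\ref{l5.1} and Corollary~\ref{c5.1}, so $C_3(T)\to0$ as $T\to0$.

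For the Lipschitz bound I would estimate $\overline{\mathfrak{N}}(\mathrm{z}_1)-\overline{\mathfrak{N}}(\mathrm{z}_2)$ in $\mathrm{H}_2$ term by term. The terms of type (A) are linear in $\mathrm{z}$, so their contribution to the difference is the same linear expression applied to $\mathrm{z}_1-\mathrm{z}_2$; since the coefficient $c$ satisfies $\mathcal{D}^{\iota}_x c(x,0)=0$, the elementary estimates $\sup|c(\cdot,t)|\le T^{\gamma}\langle c\rangle^{(\gamma)}_t$ on the subspaces $\underset{0}{E}$, together with the multiplicativity $\|c\,v\|_{E^{\beta,\beta,\beta}_s}\le C\|c\|_{E^{\beta,\beta,\beta}_0}\|v\|_{E^{\beta,\beta,\beta}_s}$ and its analogues, give a contribution bounded by $CT^{\gamma}\|\mathrm{z}_1-\mathrm{z}_2\|_{\mathrm{H}_1}$ for some $\gamma\in(0,1)$. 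For the terms of type (B) I would write $v_1w_1-v_2w_2=v_1(w_1-w_2)+(v_1-v_2)w_2$ and apply the Banach-algebra-type estimate for the weighted H\"older classes (a product of functions with weights $\sigma_1$ and $\sigma_2$ lies in the class of weight $\sigma_1+\sigma_2$), noting that in the formulas of Lemma~\ref{l5.2} the weights arising in each product add up at least to the weight required by the target factor of $\mathrm{H}_2$; in particular the extra weight $s^{*}-1$ carried by $\sigma$ only improves matters. Since $\mathrm{z}_1,\mathrm{z}_2\in\mathrm{B}_{\mathrm{R}}$, each such difference is bounded by $C\mathrm{R}\|\mathrm{z}_1-\mathrm{z}_2\|_{\mathrm{H}_1}$, and because the factors vanish at $t=0$ one extracts in addition a power $T^{\gamma}$; the compositions with smooth functions are treated by the chain-rule estimate in weighted H\"older classes combined with a Taylor expansion, which again produces Lipschitz dependence with constant $\le C(\mathrm{R}+T^{\gamma})$. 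Finally, the terms of type (C) are linear in $\mathrm{z}$ and send lower-order derivatives of $\mathrm{z}$ into a factor of $\mathrm{H}_2$ of strictly lower order, so that on the ``zero'' subspaces the map gains a power $T^{\gamma}$, whence a contribution $\le CT^{\gamma}\|\mathrm{z}_1-\mathrm{z}_2\|_{\mathrm{H}_1}$.

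Summing the three types of contributions yields $\|\overline{\mathfrak{N}}(\mathrm{z}_1)-\overline{\mathfrak{N}}(\mathrm{z}_2)\|_{\mathrm{H}_2}\le C_4(T,\mathrm{R})\|\mathrm{z}_1-\mathrm{z}_2\|_{\mathrm{H}_1}$ with $C_4(T,\mathrm{R})=C(\mathrm{R}+T^{\gamma})$, which vanishes as $\mathrm{R},T\to0$, and the required smallness of $C_3(T)$ follows from the first paragraph. The main obstacle is the weight bookkeeping: one has to verify, for every product and every composition occurring in the explicit expressions for $\mathfrak{N}_{0,i}$ and $\mathfrak{N}_j$ in Lemma~\ref{l5.2}, that the weight indices add up correctly so that each term really lands in the designated factor of $\mathrm{H}_2$ (e.g.\ that the terms quadratic in $\sigma$ do not spoil the target weights $s$ and $s+1$), and that the vanishing at $t=0$ can always be converted into a genuine positive power of $T$ uniformly in the structural data --- this last point rests on the same mechanism already used at the end of the proof of Theorem~\ref{t5.1}, namely that on $\underset{0}{E}$ and $\underset{0}{\mathcal{E}}$ the lower-order seminorms are dominated by $T^{\gamma}$ times the full norm.
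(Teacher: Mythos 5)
Your proof is correct and reconstructs in detail exactly the argument the paper relies on (the paper's own ``proof'' is a one-line citation to \cite[Section 5]{V2} together with Lemmas \ref{l5.0}--\ref{l5.2}, Theorem \ref{t5.1}, and Corollary \ref{c5.1}). The trichotomy into linear terms with coefficients vanishing at $t=0$, quadratic terms, and lower-order linear terms, combined with the $T^{\gamma}$-gain available on the zero-trace subspaces $\underset{0}{E}$, $\underset{0}{\mathcal{E}}$ and the weight bookkeeping in the weighted H\"older algebra, is precisely the intended mechanism.
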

The proof of this claim is verified with recasting the arguments of
\cite[Section 5]{V2} and utilizing Lemmas \ref{l5.0}-\ref{l5.2},
Theorem \ref{t5.1} and Corollary \ref{c5.1}.

In fine, Lemma \ref{l5.1} tells that for sufficiently small
$T=T^{*}$ and $\mathrm{R}$, the nonlinear operator $\mathfrak{M}$
meets the requirements of the fixed point theorem for a contraction
operator. This, in turn, arrives at  a unique fixed point of
\eqref{5.16}, which will be a unique local solution of nonlinear
problem \eqref{5.7}. This completes the proof of Theorem
\ref{t3.1}.\qed


\section{Solvability of \eqref{3.3} in the case of arbitrary
$\delta_{i}\in(0,\pi/4)$}\label{s6}

\noindent In this section we aim to obtain the results similar to
Theorem \ref{t3.1} if $\delta_{0},\delta_{1}\in(0,\pi/4)$ do not
obey  assumption (h5), that is we not assume that these angles are
rational part of $\pi$. The arguments describing in Sections
\ref{s3}-\ref{s5} tell that this assumption is exploited only to
verify Theorem \ref{t4.2} if
\begin{equation}\label{6.1}
f_{0,i},f_{2}\equiv 0,
\end{equation}
while the remaining parts of the proof to Theorem \ref{t3.1} work in
the case of arbitrary $\delta_{i}\in(0,\pi/4)$. Thus, we are left to
remove restriction \eqref{4.1} in the arguments leading to  Theorem
\ref{t4.2} if \eqref{6.1} holds. To this end, we utilize the
technique proposed in \cite[Section 3.3]{BV1} and modified here
below to our target. Namely, we construct the solution
$(u_{1},u_{2})$ of \eqref{4.7} corresponding to any irrational
$\delta\in(\pi/4,\pi/2)$ as a limit of the approximating solutions
$(u^{m}_{1},u^{m}_{2})$ corresponding to $\delta^{m}$, which
satisfies  \eqref{4.1} and converges to $\delta$ as $m\to+\infty$.

At first, for any irrational $\delta$, we introduce an infinite
sequence of rational $c_{m}$ such that
\[
\delta^{m}=c_{m}\pi\to\delta\qquad\text{as}\quad m\to+\infty.
\]
Then, for each $m$, we define $G_{i,T}^{m}$ and $g_{m,T}$ via
replacing $\delta$ by $\delta^{m}$ in the relations of $G_{i,T}$ and
$g_{T}$. In fine, for each $m$, we consider \eqref{4.7} in the
unknown $(u^{m}_{1},u^{m}_{2})$ defined in $G_{1,T}\times G_{2,T}$
and the same right-hand side $f_{1}$. It is apparent that, the
assumptions of Theorem \ref{t4.2} hold in this case and, hence,  we
end up with the unique solution $(u^{m}_{1},u^{m}_{2})$ satisfying
the estimates
\begin{align}\label{6.2}\notag
&
\sum_{i=1}^{2}\Big(\|u_{i}^{m}\|_{E_{s_{m}+2}^{2+\beta,\beta,\beta}(\bar{G}_{i,T}^{m})}+
\Big\|r_{0}(y)^{1-s^{*}}\frac{\partial u_{i}^{m}}{\partial
t}\Big\|_{E_{s_{m}+1}^{1+\beta,\beta,\beta}(\bar{g}_{T}^{m})}
\Big)\leq
C\|f_{1}\|_{_{E_{s_{m}+1}^{1+\beta,\beta,\beta}(\bar{g}_{T}^{m})}},\\
&
\sum_{i=1}^{2}\|u_{i}^{m}\|_{E_{s_{m}+2}^{1+\beta,\beta,\beta}(\bar{G}_{i,T}^{m})}
\leq
CT^{\beta^{*}-\beta}\|f_{1}\|_{_{E_{s_{m}+1}^{1+\beta,\beta,\beta}(\bar{g}_{T}^{m})}}
\end{align}
with the constants being independent of $m$ and
\[
s_{m}+2\in(\max\{2,\z^{*}_{2,m}\},\max\{3,\pi/\delta^{m}\},
\]
where
$\z^{*}_{2,m}=\max\{\underline{\z_{2,m}},\overline{\z_{2,m}}\}$ with
$\underline{\z_{2,m}},\,\overline{\z_{2,m}}$ satisfying \eqref{4.5}.

Actually, in these estimates,  we can select the same value $s$ for
all $m$. Indeed, setting
\[
\z^{*}=\max\{\underset{m\to+\infty}{\overline{\lim}}\underline{\z_{2,m}},\underset{m\to+\infty}{\overline{\lim}}\overline{\z_{2,m}}
\}
\]
and bearing in mind the definition of $\delta^{m},$
$\underline{\z_{2,m}},\,\overline{\z_{2,m}}$, we may set $s_{m}=s$
in \eqref{6.2}, where
\begin{equation}\label{6.0}
s+2\in (\max\{2,\z^{*}\},\max\{3,\pi/\delta\}.
\end{equation}
Then, performing the change of variables \eqref{2.2} in \eqref{4.7},
we arrive at the problem
\begin{equation}\label{6.3}
\begin{cases}
\Delta_{x} u_{i}^{m}=0\qquad \text{in}\quad B^{m}_{i,T},\\
u_{i}^{m}(x,0)=0\quad\text{in}\quad \bar{B}_{i}^{m},\, i=1,2,
\\
e^{-s^{*}x_1}\frac{\partial(u_1^{m}-u_2^{m})}{\partial t} -
\frac{\partial(u_1^{m}-u_2^{m})}{\partial x_2}+a_2
\frac{\partial (u_1^{m}-u_2^{m})}{\partial x_1}=-e^{x_{1}}f_{1}(x,t)\quad\text{on}\quad b^{m}_{T},\\
\frac{\partial u_1^{m}}{\partial x_2}-k\frac{\partial
u_2^{m}}{\partial x_2}+ka_3
\frac{\partial (u_1^{m}-u_2^{m})}{\partial x_1}=0\qquad\qquad\text{on}\quad b^{m}_{T},\\
u_1^{m}(x_1,-\pi/2,t)=0\qquad\text{and}\qquad
u^{m}_2(x_1,\pi/2,t)=0,\quad \quad x_1\in\R,\, t\in[0,T],
\end{cases}
\end{equation}
where $B_{i}^{m}$ and $b^{m}$ are obtained from $B_{i}$ and $b$ via
replacing $\delta$ by $\delta^{m}$.

After that,  the domains $B_{i}^{m}$ and $b^{m}$ are transformed to
$B_{i}$ and $b$ by the change of variables:
\[
\bar{x}_{1}=x_{1}\qquad\text{and}\qquad
\bar{x}_{2}=x_{2}-\chi(x_{2})(\delta^{m}-\delta),
\]
where $\chi\in\C_{0}^{\infty}$ and $\chi\in[0,1],$
\[
\chi(x_{2})=\begin{cases} 1\qquad\text{if}\quad
x_{2}\in(\delta^{m}-\epsilon,\delta^{m}+\epsilon),\\
0\qquad\text{if}\quad
x_{2}\notin(\delta^{m}-2\epsilon,\delta^{m}+2\epsilon)
\end{cases}
\]
with the same $\epsilon$ for all $m$,
$0<\epsilon<\frac{\pi}{4}(\frac{1}{2}-\delta^{m})$.

\noindent In sum, saving the previous notations for $u_{i}^{m}$ and
$f_{1}$, we rewrite \eqref{6.3} in the form
\begin{equation}\label{6.4}
\begin{cases}
\Delta_{\bar{x}}
u_{i}^{m}=(\delta^{m}-\delta)[2-(\delta^{m}-\delta)\chi']\chi'\frac{\partial^{2}u_{i}^{m}}{\partial\bar{x}_{2}^{2}}+
(\delta^{m}-\delta)\chi''\frac{\partial
u_{i}^{m}}{\partial\bar{x}_{2}}\equiv f_{0,i}^{m}
\quad \text{in}\quad B_{i,T},\\
u_{i}^{m}(\bar{x},0)=0\quad\text{in}\quad \bar{B}_{i},\, i=1,2,
\\
e^{-s^{*}\bar{x}_1}\frac{\partial(u_1^{m}-u_2^{m})}{\partial t} -
\frac{\partial(u_1^{m}-u_2^{m})}{\partial \bar{x}_2}+a_2
\frac{\partial (u_1^{m}-u_2^{m})}{\partial \bar{x}_1}=-e^{\bar{x}_1}f_{1}(\bar{x},t)\qquad\text{on}\quad b_{T},\\
\frac{\partial u_1^{m}}{\partial \bar{x}_2}-k\frac{\partial
u_2^{m}}{\partial \bar{x}_2}+ka_3
\frac{\partial (u_1^{m}-u_2^{m})}{\partial \bar{x}_1}=0\qquad\qquad\text{on}\quad b_{T},\\
u_1^{m}(\bar{x}_1,-\pi/2,t)=0\qquad\text{and}\qquad
u^{m}_2(\bar{x}_1,\pi/2,t)=0,\quad \quad \bar{x}_1\in\R,\,
t\in[0,T].
\end{cases}
\end{equation}
Taking into account \eqref{6.2} and performing the straightforward
calculations, we yield
\begin{equation}\label{6.5}
\|f_{0,i}^{m}e^{-s\bar{x}_{1}}\|_{\C^{\beta,\beta,\beta}(\bar{B}_{i,T})}\to
0\qquad \text{as}\quad m\to+\infty.
\end{equation}
Arguments of Section \ref{s4} ensure the one-valued classical
solvability of \eqref{6.4} for each $m$ and, besides, estimates
\eqref{6.2} and Corollary \ref{c2.2} derive the inequalities
\begin{align}\label{6.6}\notag
&
\sum_{i=1}^{2}(\|e^{-(s+2)\bar{x}_{1}}u_{i}^{m}\|_{\C^{2+\beta,\beta,\beta}(\bar{B}_{i,T})}+
\|e^{-(s+s^{*})\bar{x}_{1}}\tfrac{\partial u_{i}^{m}}{\partial
t}\|_{\C^{1+\beta,\beta,\beta}(\bar{b}_{T})} )\leq
C\|f_{1}e^{-(s+1)\bar{x}_{1}}\|_{\C^{1+\beta,\beta,\beta}(\bar{b}_{T})},\\
&
\sum_{i=1}^{2}\|e^{-(s+2)\bar{x}_{1}}u_{i}^{m}\|_{\C^{1+\beta,\beta,\beta}(\bar{B}_{i,T})}
\leq
CT^{\beta^{*}-\beta}\|f_{1}e^{-(s+1)\bar{x}_{1}}\|_{\C^{1+\beta,\beta,\beta}(\bar{b}_{T})}.
\end{align}
Finally, since each bounded subset of $\C^{1+\beta,\beta,\beta}$ is
a compact, we exploit \eqref{6.5}, \eqref{6.6} and pass to limit in
\eqref{6.4}. As a result, we end up with a unique solution
$(u_{1},u_{2})$ of \eqref{4.7} for any $\delta\in(\pi/4,\pi/2)$.
Besides, \eqref{6.6} ensures the desired bounds and the regularity
of the constructed solution.
\begin{theorem}\label{t6.1}
Let $\delta\in(\pi/4,\pi/2)$ be irrational and $s$ satisfy
\eqref{6.0}. Then, under assumptions of Theorem \ref{t4.2}, problem
\eqref{4.2} admits a unique classical solution having the regularity
established by Theorem \ref{t4.2}.
\end{theorem}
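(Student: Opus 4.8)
The plan is to build the solution for an irrational $\delta\in(\pi/4,\pi/2)$ as a limit of solutions constructed for rational angles, where Theorem \ref{t4.2} already applies. First I would recall, exactly as in Section \ref{s4.4}, that the inhomogeneous data $f_{0,i}$ and $f_{2}$ are stripped off by solving the auxiliary transmission problem \eqref{4.21} with the help of \cite[Theorem 6, Proposition 2]{V1}; this reduction uses only $\delta\in(\pi/4,\pi/2)$ and never the rationality condition \eqref{4.1}. Hence it suffices to treat problem \eqref{4.7} in the case $f_{0,i}=f_{2}\equiv 0$. Next I would fix a sequence of rationals $c_{m}$ with $\delta^{m}:=c_{m}\pi\to\delta$, each $\delta^{m}$ obeying \eqref{4.1}, and for every $m$ apply Theorem \ref{t4.2} in the corner $(G_{1}^{m}\cup G_{2}^{m})_{T}$ to obtain a unique classical solution $(u_{1}^{m},u_{2}^{m})$ together with the bounds \eqref{6.2}. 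The crucial observation is that the weight window may be chosen once and for all: setting $\z^{*}=\max\{\overline{\lim}_{m}\underline{\z_{2,m}},\overline{\lim}_{m}\overline{\z_{2,m}}\}$, one can fix a single $s$ with $s+2$ satisfying \eqref{6.0}, valid for all large $m$.

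Then I would flatten the geometry onto one fixed domain. Passing to the logarithmic coordinates \eqref{2.2} turns \eqref{4.7} into the strip problem \eqref{6.3}, and the additional diffeomorphism $\bar{x}_{1}=x_{1}$, $\bar{x}_{2}=x_{2}-\chi(x_{2})(\delta^{m}-\delta)$, with $\chi$ a cut-off fixed independently of $m$ and supported near $x_{2}=\delta^{m}$, carries $B_{i}^{m}$ onto the fixed strip $B_{i}$ while leaving the dynamic and transmission conditions on $b_{T}$ unchanged, at the cost of a perturbation $f_{0,i}^{m}$ of the Laplacian; by \eqref{6.2} this perturbation satisfies \eqref{6.5}. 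The arguments of Section \ref{s4} (equivalently Theorems \ref{t4.1}--\ref{t4.2}) then yield the one-to-one solvability of the transformed system \eqref{6.4} for every $m$ and the uniform estimates \eqref{6.6} in the H\"older norm $\C^{2+\beta,\beta,\beta}$. Since bounded subsets of $\C^{1+\beta,\beta,\beta}$ are precompact, I would extract a subsequence along which $u_{i}^{m}\to u_{i}$ in $\C^{1+\beta,\beta,\beta}$, pass to the limit in \eqref{6.4} using \eqref{6.5}, and obtain a solution $(u_{1},u_{2})$ of \eqref{4.7} for the given irrational $\delta$; lower semicontinuity of the sup-type seminorms under this limit, together with \eqref{6.6} and Corollary \ref{c2.2}, gives the regularity $u_{i}\in\underset{0}{E}\,^{2+\beta,\beta,\beta}_{s+2}(\bar{G}_{i,T})$ and the estimates asserted by Theorem \ref{t4.2}. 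Uniqueness is immediate: the difference of two solutions solves \eqref{4.7} with $f_{1}\equiv 0$, and the homogeneous estimate forces it to vanish.

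The main obstacle is the uniformity in $m$ of the constants produced by Theorem \ref{t4.2}. As $\delta^{m}\to\delta$ with $\delta$ irrational, the periods $\mathbb{T}_{q_{m}}=4q_{m}\pi$ and the number of zeros of $S^{\pm}$ in the fundamental strip grow without bound, so one must check that the zeros of $S^{\pm}$ actually entering the construction, namely those closest to the real axis defining $\z^{+}_{i^{*}_{j}}$, $\z^{-}_{1}$ and the index sets $\mathbb{M}^{\pm}_{j}$, remain bounded, and that the associated quantities $q_{1,m},q_{2,m},\theta_{1,m},\theta_{2,m}$ converge, so that the admissible interval for $s$ in \eqref{6.0} is nonempty and the Schauder-type constants stay controlled. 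Once this stability is established, the passage to the limit follows the scheme of \cite[Section 3.3]{BV1} and is purely technical, introducing no new conceptual difficulties beyond the estimates \eqref{6.2}, \eqref{6.5} and \eqref{6.6}.
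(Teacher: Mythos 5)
Your proposal reproduces the paper's proof almost verbatim: the same reduction to the case $f_{0,i}=f_{2}\equiv 0$ via \eqref{4.21}, the same approximation of $\delta$ by rational angles $\delta^{m}$ with the uniform window \eqref{6.0} for $s$, the same two-step change of variables (logarithmic map \eqref{2.2} followed by the cut-off shift $\bar{x}_{2}=x_{2}-\chi(x_{2})(\delta^{m}-\delta)$) producing the perturbed strip system \eqref{6.4} with decaying perturbation \eqref{6.5}, and the same compactness-and-limit argument from \eqref{6.6}. The closing paragraph where you flag the need to verify that the constants in \eqref{6.2}, the relevant low-lying zeros of $S^{\pm}$, and the quantities $q_{1,m},q_{2,m},\theta_{1,m},\theta_{2,m}$ stay uniformly controlled as $m\to\infty$ is a legitimate observation -- the paper asserts this stability but does not spell it out -- so making it explicit would strengthen the argument, but it does not change the route taken.
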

\begin{remark}\label{r6.1}
Recasting the arguments above provides  Theorem \ref{t4.3}
 in the case of an arbitrary
$\delta\in(\pi/4,\pi/2)$.
\end{remark}

Now, we are ready to prove Theorem \ref{t3.1} in the case of
arbitrary $\delta_{0},\delta_{1}\in(0,\pi/4)$. To this end, for each
irrational $\delta_{i}$ we introduce
\[
\delta_{0}^{m}=\pi(\tfrac{1}{2}-c^{0}_{m})\qquad \text{and}\quad
\delta_{1}^{m}=\pi(\tfrac{1}{2}-c^{1}_{m})
\]
with $c^{0}_{m}$ and $c^{1}_{m}$ being rational sequences such that
\[
\delta_{0}^{m}\to\delta_{0}\qquad\text{and}\qquad
\delta_{1}^{m}\to\delta_{1}\quad\text{as}\quad m\to+\infty.
\]
After that, we set
\begin{equation}\label{6.7}
\mathfrak{h}^{*}=\max\{\underset{m\to+\infty}{\overline{\lim}}\underline{\mathfrak{h}_{m}},
\underset{m\to+\infty}{\overline{\lim}}\overline{\mathfrak{h}_{m}}\},\qquad
\mathfrak{f}^{*}=\max\{\underset{m\to+\infty}{\overline{\lim}}\underline{\mathfrak{f}_{m}},
\underset{m\to+\infty}{\overline{\lim}}\overline{\mathfrak{f}_{m}}\},
\end{equation}
where $\overline{\mathfrak{f}_{m}},\overline{\mathfrak{h}_{m}},$
$\underline{\mathfrak{f}_{m}},\underline{\mathfrak{h}_{m}}$ are
constructed via \eqref{0.1} with $\delta_{0}^{m}$ and
$\delta_{1}^{m}$ instead of $\delta_{0}$ and $\delta_{1}$. Then,
recasting the arguments leading to Theorem 3.1 (where Theorems
\ref{t4.2} and \ref{t4.3} are replaced by Theorem \ref{t6.1} and
Remark \ref{r6.1}), we claim.
\begin{theorem}\label{t6.2}
Let $\delta_{0},\delta_{1}\in(0,\pi/4)$ be irrational. Then, under
assumptions (h1)--(h4), (h6) and (h7) with $\mathfrak{h}^{*}$ and
$\mathfrak{f}^{*}$ given by \eqref{6.7},  the results of Theorem
\ref{t3.1} hold.
\end{theorem}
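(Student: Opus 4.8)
The plan is to run the proof of Theorem~\ref{t3.1} (carried out in Sections~\ref{s4}--\ref{s5}) once more, verbatim, with the single exception of the one place where assumption (h5) is actually used. As observed at the beginning of this section, rationality of $\delta_{0},\delta_{1}$ enters the whole argument only through the global classical solvability of the model transmission problem \eqref{4.2} with a dynamic boundary condition in a plane corner in the case $f_{0,i},f_{2}\equiv 0$, i.e.\ through Theorems~\ref{t4.2} and \ref{t4.3}. Every other ingredient --- the Hanzawa-type reduction of \eqref{1.1} to the fixed-domain problem \eqref{3.3} (Subsection~\ref{s3.2}), the one-valued solvability of the initial elliptic transmission problem \eqref{3.4} (Lemma~\ref{l5.1}) and the construction of $\rho$ (Corollary~\ref{c5.1}), the perturbation form of \eqref{3.3} together with the properties of $A_{l}$ and of $\mathfrak{N}_{0,i},\mathfrak{N}_{j}$ (Lemma~\ref{l5.2}), the continuation/Schauder analysis of the linear problem (Theorem~\ref{t5.1}), and the contraction-mapping argument (Subsection~\ref{s5.4}) --- is established for arbitrary $\delta_{0},\delta_{1}\in(0,\pi/4)$ and uses (h5) nowhere. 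Hence the first step is simply to feed Theorem~\ref{t6.1} and Remark~\ref{r6.1} into these arguments in place of Theorems~\ref{t4.2}--\ref{t4.3}.

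The point that requires care is that the approximation scheme of Section~\ref{s6} produces, for each irrational $\delta_{i}$, a sequence of rational angles $\delta_{i}^{m}\to\delta_{i}$, and the weight $s$ in assumption (h7) must be admissible \emph{simultaneously} for all $m$ large enough; this is exactly why $\mathfrak{h}^{*}$ and $\mathfrak{f}^{*}$ are redefined through the limit superiors in \eqref{6.7}. I would argue as follows: the zeros $\mathfrak{h}^{\pm}_{\bullet},\mathfrak{f}^{\pm}_{\bullet}$ of $S^{\pm}_{i}$ are localized by Corollaries~\ref{c4.2}--\ref{c4.4} inside intervals whose endpoints depend continuously on $\delta_{i}^{m},k,\alpha_{i}$; therefore $\underline{\mathfrak{h}_{m}},\overline{\mathfrak{h}_{m}},\underline{\mathfrak{f}_{m}},\overline{\mathfrak{f}_{m}}$ stay bounded as $m\to+\infty$, the limsups in \eqref{6.7} are finite, and the exceptional values $\tfrac{\pi}{2\delta_{i}^{m}}\to\tfrac{\pi}{2\delta_{i}}$ can be avoided for all large $m$. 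Consequently the interval prescribed in (h7) (with $\mathfrak{h}^{*},\mathfrak{f}^{*}$ as in \eqref{6.7}) is nonempty, and any $s$ in it satisfies the hypotheses of Theorems~\ref{t4.2}--\ref{t4.3} for the corners $\delta_{i}^{m}$ once $m$ is large.

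With this choice of $s$, the whole machinery of Sections~\ref{s4}--\ref{s5} then applies to the corners $\delta_{i}^{m}$ and yields, for each large $m$, a unique local classical solution $(\mathcal{U}^{m}_{1},\mathcal{U}^{m}_{2},\s^{m})$ of \eqref{3.3} on a common interval $[0,T^{*}]$, $T^{*}$ independent of $m$, with norms in $E_{s+2}^{2+\beta,\beta,\beta}\times E_{s+2}^{2+\beta,\beta,\beta}\times\mathcal{E}^{2+\beta,\beta,\beta}_{s+2,s^{*}-1}$ bounded uniformly in $m$ --- the constants in Theorem~\ref{t6.1}, Lemma~\ref{l5.0}, Theorem~\ref{t5.1}, Lemma~\ref{l5.2} and in the contraction estimate of Lemma~\ref{l5.1} being independent of $m$ because the structural quantities entering them (in particular $\pi-2\delta_{i}^{m}$, which stays bounded away from $0$, and the $\C^{l+\beta}$-geometry of $\Omega_{i}$ near the corners) are controlled uniformly along the sequence; cf.\ \eqref{6.2}, \eqref{6.6}. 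After flattening the corners $\delta_{i}^{m}$ onto $\delta_{i}$ by the cut-off change of variables used in \eqref{6.4}, which introduces only perturbation terms whose weighted H\"older norms tend to zero (as in \eqref{6.5}), one uses the compactness of bounded subsets of $E^{2+\beta,\beta,\beta}_{s+2}$ in $E^{1+\beta,\beta,\beta}_{s+2}$ (and of the corresponding space for $\s^{m}$), extracts a convergent subsequence, and passes to the limit in the transformed version of \eqref{3.3}. The limit $(\mathcal{U}_{1},\mathcal{U}_{2},\s)$ solves \eqref{3.3} for the irrational angles, meets the smallness constraint \eqref{3.0*} in the limit, has the regularity of Theorem~\ref{t3.1}, and defines the free boundary via \eqref{3.1}; uniqueness follows from the contraction-mapping argument of Subsection~\ref{s5.4}, which, as noted, does not involve (h5). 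The main obstacle is precisely the bookkeeping just described: proving that the weight $s$ is uniformly admissible along $\delta_{i}^{m}\to\delta_{i}$ and that all constants --- continuation, trace/embedding, and the Lipschitz constants $C_{3}(T),C_{4}(T,\mathrm{R})$ --- remain bounded independently of $m$, so that $T^{*}$ and the a priori bounds survive the passage to the limit.
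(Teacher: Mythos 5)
Your opening paragraph is exactly the paper's argument: assumption (h5) enters only through Theorems~\ref{t4.2} and \ref{t4.3}, so once Theorem~\ref{t6.1} and Remark~\ref{r6.1} provide the same conclusions for irrational corner opening $\delta$, one recasts the proof of Theorem~\ref{t3.1} verbatim with these substitutions, the only additional change being that $\mathfrak{h}^{*},\mathfrak{f}^{*}$ must be redefined by the limit superiors \eqref{6.7} so that (h7) still makes sense when $\delta_{0},\delta_{1}$ are irrational and the quantities $\mathfrak{p}_{i},\mathfrak{q}_{i}$ are unavailable. Your second paragraph, on the finiteness of those limit superiors and the nonemptiness of the admissible weight interval, is also aligned with what the paper needs.

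Where the proposal goes wrong is the third paragraph, which grafts a second, superfluous approximation onto the argument at the level of the full nonlinear problem \eqref{3.3}: you propose to solve \eqref{3.3} for the rational corners $\delta_{i}^{m}$, obtain $m$-uniform bounds and a common $T^{*}$, flatten the corners, and pass to the limit. This is not the paper's proof and, more importantly, it is not needed. Once Theorem~\ref{t6.1} and Remark~\ref{r6.1} are in hand, the Schauder/continuation analysis for Theorem~\ref{t5.1} already runs directly at the irrational corners $\delta_{i}$ --- the model problems frozen at $\mathbf{A}_{0},\mathbf{A}_{1}$ are exactly of the form \eqref{4.2} with irrational opening, which is now covered --- and the contraction mapping of Subsection~\ref{s5.4} then yields the fixed point for \eqref{5.7} directly, with no sequence of nonlinear solutions and no passage to the limit. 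The sequences $\delta_{i}^{m}$ appear in the proof of Theorem~\ref{t6.2} only inside the definition \eqref{6.7}. Moreover, your detour is not well posed as written: the angles $\delta_{i}$ are intrinsic to the given interface $\Gamma$, so "solving \eqref{3.3} for $\delta_{i}^{m}$" presupposes a family of modified curves $\Gamma^{m}$ (hence modified $\Omega_{i}^{m},\Gamma_{i}^{m}$) that you do not construct; the cut-off flattening \eqref{6.4} is designed to map the model strips $B_{i}^{m}$ onto $B_{i}$, not the full bounded domains $\Omega_{i}^{m}$ onto $\Omega_{i}$, and would need a separate construction; and the $m$-independence of $T^{*}$ and of the Lipschitz constants in the contraction step is genuine additional work that the direct route avoids altogether. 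In short, only the first two paragraphs of your proposal are needed; the limiting procedure in $m$ belongs inside the proof of Theorem~\ref{t6.1}, not inside the proof of Theorem~\ref{t6.2}.
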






\begin{thebibliography}{60}


\bibitem{Ad}
R.A. Adams, Sobolev spaces, Academic Press, New York,
1975.

\bibitem{APW}
S. Agrawal, N. Patel, S. Wu, Regidity of acute angled corners for on
phase Muskat interfaces, Adv. Math. \textbf{412}(1) 108801 (2023).

\bibitem{Am}
D. Ambrose, Well-posedness of two-phase Hele-Shaw flow without surface tension, European J. Appl. Math. \textbf{15} (2004) 597--607.

\bibitem{BF}
B.V. Bazaliy, A. Friedman, The Hele-Shaw problem with surface
tension in a half plane, J. Differ. Equa. \textbf{216} (2005)
439--469.

\bibitem{BV1}
B.V. Bazaliy, N. Vasylyeva, The Muskat problem with surface tension and a nonregular initial interface, Nonlinear Anal. \textbf{74} (2011) 6074--6096.

\bibitem{BV2}
B.V. Bazaliy, N. Vasylyeva, The two-phase Hele-Shaw problem with a nonregular initial interface and without surface tension,
 J. Math. Phys. Anal. Geometry \textbf{10}(1) (2014) 3--43.

\bibitem{BV4}
B.V. Bazaliy, N. Vasylyeva, On the solvability of the Hele-Shaw
model problem in weighted H\"{o}lder spaces in a plane angle,
Ukrainian Math. J. \textbf{52} (2000) 1647--1660.

\bibitem{BCG}
L.C. Berselli, D. C\'{o}rdoba, R. Granero-Belinch\'{o}n, Local
solvability and turning for the inhomogeneous Muskat problem,
Interfaces Free Bound. \textbf{16} (2014) 175--213.

\bibitem{CCFG}
A. Castro, D. C\'{o}rdoba, C. Fefferman, F. Gancedo, Breakdown of
smoothness for the Muskat problem, Arch. Rational Mech. Anal.
\textbf{208} (2013) 805--909.

\bibitem{CCG}
A. C\'{o}rdoba, D. C\'{o}rdoba, F. Gancedo, Interface evolution: the
Hele-Shaw and Muskat problems, Ann. Math. \textbf{173} (2011)
477--542.

\bibitem{CCGS}
P. Constantin, D. C\'{o}rdoba, F. Gancedo, R.M. Strain, On the
global existence for the Muskat problem, J. Eur. Math. Soc.
\textbf{15} (2013) 201--227.



\bibitem{CGZ}
D. C\'{o}rdoba, J. G\'{o}mez-Serrano, A. Zlato\v{s}, A note on
stability shifting for the Muskat problem II: stable to unstable and
back to stable, Anal. PDE \textbf{10}(2) (2017) 367--378.

\bibitem{CGO}
D. C\'{o}rdoba Gazolaz, R. Granero-Belinch\'{o}n, R. Orive-Illera,
The confined Muskat problem: differences with the deep water regime,
Commun. Math. Sci. \textbf{12} (2014) 4223--455.

\bibitem{CGS}
C.H.A. Cheng, R. Granero-Belinch\'{o}n, S. Shkoller, Well-posedness
of the Muskat problem with $H^{2}$ initial data, Adv. Math.
\textbf{286} (2016) 32--104.


\bibitem{EEM}
M. Ehrustr\"{o}m,  J. Escher,  B-V. Matioc, Steady-state fingering
patterns for a periodic Muskat problem, Methods Appl. Anal.
\textbf{20} (2013) 33--46.

\bibitem{EMM}
J. Escher, A-V. Matioc, B-V. Matioc, A generalized Rayleigh-Taylor
condition for the Muskat problem, Nonlinearity \textbf{25} (2012)
73--92.

\bibitem{EMM2}
J. Escher, A-V. Matioc, B-V. Matioc, Modelling and analysis of the
Muskat problem for thin fluid layers, J. Math. Fluid Mech.
\textbf{14} (2012) 267--277.

\bibitem{EMW}
J. Escher, B-V. Matioc, C. Walker, The domain of parabolicity for
the Muskat problem, Indiana Univ. Math. J. \textbf{2} (2018)
679--737.

\bibitem{FT}
A. Friedman, Y. Tao, Nonlinear stability of the Muskat problem with
capillary pressure at the free boundary, Nonlinear Anal. \textbf{53}
(2003) 45--80.

\bibitem{GGHP}
E. Garcia-Ju\'{a}rez, J. G\'{o}mez-Serrano, S.V. Hazios, B.
Pausader, Desingularization of small moving corners for the Muskat
equation, Annal.  PDE, \textbf{10}(17) (2024) 1--71.

\bibitem{GGNP}
E. Garcia-Ju\'{a}rez, J. G\'{o}mez-Serrano, H.Q. Nguyen, B.
Pausader, Self-similar solutions for the Muskat equation, Adv. Math.
\textbf{399} 108294 (2022).

\bibitem{GT}
D. Gilbarg, N.S. Trudinger, Elliptic partial differential equations
of second order, 2 nd. edn. Springer, Berlin (1983).


\bibitem{H}
E.I. Hanzawa, Classical solution of the Stefan problem, Tohoku Math. J. \textbf{33} (1981) 297--335.

\bibitem{HTY}
J. Hong, Y. Tao, F. Yi, Muskat problem with surface tension, J.
Partial Diff. Equa. \textbf{10} (1997) 213--231.

\bibitem{K}
N.V. Krylov, Lecture on elliptic and parabolic equations in
H\"{o}lder spaces, Graduate Studies in Mathematics, v. 12, AMS
(1996).

\bibitem{MM}
 A-V. Matioc, B-V. Matioc, The Muskat problem with surface tension
 and equal viscosities in subcritical $L_{p}-$Sobolev spaces, J.
 Ellipt. Parabolic. Equa. \textbf{7} (2021) 635--670.


\bibitem{Mat}
 B-V. Matioc, The Muskat problem in two dimensions: equivalence of
 formulations, well-posedness, and regularity results, Anal.  PDE
 \textbf{12}(2) (2019) 281--332.

 \bibitem{Mat2}
 B-V. Matioc, Viscous displacement in porous media: the Muskat
 problem in $2$D, Transact. American Math. Soc. \textbf{370}(10)
 (2018) 7511--7556.

\bibitem{Mu}
M. Muskat, Two fluid systems in porous media: the encoroachment of
water into an oil sand, Physics \textbf{5} (1934) 250--264.

\bibitem{MW}
M. Muskat, R.D. Wyckoff, The flow of homogeneous fluids through
porous media, McGraw-Hill, New York, London (1937).

\bibitem{MV}
T. Mel'nyk, N. Vasylyeva, Asymptotic analysis of a contact Hele-Shaw
problem in a thin domain, Nonlinear Differ. Equa. Appl.
\textbf{27}(48) (2020) 1--37.

\bibitem{PS}
J. Pr\"{u}ss, G. Simonett, Moving interfaces and quasilineaar
parabolic evolution equations, Monographs in Mathematics,
\textbf{105}, Birkh\"{a}user, (2016).

\bibitem{SCH}
M. Soegel, R.E. Caflish, S. Howison, Global existence, singular
solutions and ill-posedness for the Muskat problem, Comm. Pure Appl.
Math. \textbf{57} (2004) 1374--1411.

\bibitem{V1}
N. Vasylyeva, Mixed Dirichlet-transmission problems in non-smooth
domains, In: Sadovnichiy, V.A., Zgurovsky, M.Z. (eds) Contemporary
Approaches and Methods in Fundamental Mathematics and Mechanics.
Understanding Complex Systems. \textbf{9}  Springer, Cham, (2021)
195--229. doi.org/10.1007/978-3-030-50302-4\underline{\,\,\,}9

\bibitem{V2}
N. Vasylyeva, On the solvability of some nonclassical boundary-value problem for the Laplace equation in the plane corner,
 Advan. Diff. Equa. \textbf{2}(10) (2007) 1167--1200.

\bibitem{V3}
N. Vasylyeva, Existence of smooth solutions of the Hele-Shaw problem
in a nonregular domain, Nonlinear Boundary Value Problems
\textbf{19} (2009) 12--28.

\bibitem{V4}
N. Vasylyeva, On a class of functional difference equations:
explicit solutions, asymptotic behavior and applications, Aequations
Mathematicae \textbf{98} (2024) 99--171.

\bibitem{V5}
N. Vasylyeva,  On a local solvability of the multidimensional Muskat
problem with a fractional derivative in time on the boundary
condition, Fract. Diff. Calculus, \textbf{4}(2) (2014) 89--124.

\bibitem{Y}
F. Yi, Local classical solution of Muskat free boundary problem, J.
Partial Diff. Equa. \textbf{9}(1) (1996) 84--96.

\bibitem{Y1}
F. Yi, Global classical solution of Muskat free boundary problem, J.
Math. Anal. Appl. \textbf{288} (2003) 442--461.


\end{thebibliography}
\end{document}